\RequirePackage{ifpdf}          
\ifpdf 
\documentclass[10pt,pdftex]{article}
\usepackage{hyperref} 
\hypersetup{colorlinks, citecolor=MyDarkRed, filecolor=MyDarkBlue,
	linkcolor=MyDarkBlue, urlcolor=MyDarkBlue}
\fi 

\usepackage[in]{fullpage}       
\usepackage{amsmath,amssymb}	

\usepackage[T1]{fontenc}
\usepackage{tgtermes} 
\usepackage[usenames]{color}   
\definecolor{MyDarkBlue}{rgb}{0, 0.0, 0.45} 
\definecolor{MyDarkRed}{rgb}{0.45, 0.0, 0} 
\definecolor{MyDarkGreen}{rgb}{0, 0.45, 0} 
\definecolor{MyLightGray}{gray}{.90}
\definecolor{MyLightGreen}{rgb}{0.5, 0.99, 0.5}
\DefineNamedColor{named}{Peach}         {cmyk}{0,0.50,0.70,0}
\DefineNamedColor{named}{Cyan}          {cmyk}{1,0,0,0}

\usepackage{tikz}
\usetikzlibrary{shapes,arrows}
\usepackage{graphicx} 
\usepackage{floatflt}
\usepackage{wrapfig}
\usepackage[parfill]{parskip}    
\usepackage{amsfonts, amscd, amssymb, amsthm, amsmath}
\usepackage{mathrsfs}
\usepackage{urwchancal}
\usepackage{xcolor}
\usepackage{MnSymbol}
\usepackage{pdfsync}
\usepackage{enumitem} 
\usepackage[mathscr]{euscript}
\pagestyle{plain}                                                      
\setlength{\textwidth}{6.5in}     
\setlength{\oddsidemargin}{0in}   
\setlength{\evensidemargin}{0in}  
\setlength{\textheight}{8.5in}    
\setlength{\topmargin}{0in}       
\setlength{\headheight}{0in}      
\setlength{\headsep}{0in}         
\setlength{\footskip}{.5in}       
\bibliographystyle{plain}                                              

\theoremstyle{plain}
\newtheorem{thm}{Theorem}[section]
\newtheorem*{theorem-non}{Theorem}
\newtheorem{lemma}[thm]{Lemma}

\newtheorem{proposition}[thm]{Proposition}
\newtheorem{corollary}[thm]{Corollary}

\theoremstyle{remark}
\newtheorem*{remark}{Remark}

\theoremstyle{definition}
\newtheorem{defn}{Definition}[section]
\newtheorem*{econj}{Eisenstein Conjecture}
\newtheorem*{exmp}{Example}


\usepackage{color}
\definecolor{dred}{rgb}{.65, 0, 0.15}



\def\<{\langle} \def\>{\rangle}

\def\dim{\mathrm{dim}}

\def\Tr{\mathrm{Tr}}

\newcommand{\res}[2]{\left(\frac{#1}{#2}\right)}
\def\ga{\gamma}

\newcommand{\dx}{\partial_{\scriptscriptstyle X_{1}}}
\newcommand{\dy}{\partial_{\scriptscriptstyle Y_{1}}}
\newcommand{\dz}{\partial_{\scriptscriptstyle Z_{1}}}

%

\usepackage{listings}           
\lstset{
	language=Java,                  
	basicstyle=\footnotesize,       
	numbers=left,                   
	numberstyle=\footnotesize,      
	stepnumber=0,                   
	backgroundcolor=\color{white},  
	frame=single, 
	framerule=0pt,
	captionpos=b,                   
	showstringspaces=false,         
	showspaces=false,               
	showtabs=false,                 
	breaklines=true                 
}
\lstset{escapeinside={(*@}{@*)}} 

\pagecolor{white}		



\begin{document}
	
\title{Moduli of Hyperelliptic Curves and Multiple Dirichlet Series}	
\author{Adrian Diaconu\footnote{{School of Mathematics, University of Minnesota, Minneapolis, MN 55455, Email: cad@umn.edu}} \, 
and Vicen\c{t}iu Pa\c{s}ol\footnote{{Simion Stoilow, Institute of Mathematics of the Romanian Academy, Bucharest, ROMANIA, Email: vpasol@gmail.com}}}         
\date{}	                
\maketitle

\begin{abstract} 
\noindent In this paper we provide an explicit construction of a {\it distinctive} multiple Dirichlet series associated to products of quadratic Dirichlet L-series, which we believe should be tightly connected to a generalized metaplectic Whittaker function on the double cover of a Kac-Moody group. To do so, we first impose a set of axioms, independent of any group of functional equations, which the aforementioned object should satisfy. As a consequence, we deduce that the coefficients of the $p$-parts of the multiple Dirichlet series satisfy certain recurrence relations. These relations lead to a family of identities, which turns out to be {\it encoded} in the combinatorial structure of certain moduli spaces of admissible double covers. 
Finally, via this crucial connection, we apply Deligne's theory of weights to express inductively the coefficients of the $p$-parts in terms of the eigenvalues of Frobenius acting on the $\ell$-adic \'etale cohomology of local systems on the moduli $\mathscr{H}_{g}[2]$ of hyperelliptic curves of genus $g$ with level 2 structure. 
\end{abstract}

\tableofcontents

\section{Introduction}\label{section 1} 
For a reduced root system $\Phi$ of rank $r$ and a number field $F$ containing the $2n$-th roots of unity, one associates a Weyl group multiple Dirichlet series (see \cite{BBCFH}, \cite{BBF1} and \cite{BBFH}) 
\begin{equation} \label{eq: intro0-WMDS-fin-dim-case} 
Z_{\Psi}(s_{1}, \ldots, s_{r}; \Phi) = \sum\, H (c_{1}, \ldots, c_{r}) 
\Psi(c_{1}, \ldots, c_{r}) \mathrm{N}(c_{1})^{-  s_{1}} \cdots \, \mathrm{N}(c_{r})^{-  s_{r}}
\end{equation} 
the sum being over non-zero ideals $\mathfrak{c}_{1} = (c_{1}), \ldots, \mathfrak{c}_{r} = (c_{r})$ of the ring $\mathfrak{o}_{S}$ of $S$-integers for some sufficiently large set $S$ of places; it is assumed that the finite set $S$ contains all archimedean places, 
and that $\mathfrak{o}_{S}$ is a principal ideal domain. The product $H\Psi$ 
remains unchanged if $c_{i}$ ($1\le i \le r$) is multiplied by a unit, i.e., it is a function of ideals in $\mathfrak{o}_{S}.$ The function $H$ is very important, giving the structure of the multiple Dirichlet series; it is completely determined, via a {\it twisted} multiplicativity, by the function field analog of $Z_{\Psi}(s_{1}, \ldots, s_{r}; \Phi),$ which turns out to be a rational function. Alternatively, one specifies the $p$-parts of $Z_{\Psi}$ for primes $(p),$ that is, the generating series 
\begin{equation*}
\sum_{k_{1}\!, \ldots, k_{r} \ge 0} H (p^{k_{1}}\!, 
\ldots, p^{k_{r}}) \mathrm{N}(p)^{-  k_{1} s_{1} - \cdots -  k_{r} s_{r}}.
\end{equation*} 
The factor $\Psi$ is less important, and represents a technical device chosen from a finite-dimensional vector space of functions on $F_{S} = \prod_{v\in S} F_{v},$ constant on cosets of an open subgroup, such that, together with $H,$ it gives a multiple Dirichlet series possessing meromorphic continuation to $\mathbb{C}^{r}$ and satisfying a finite group of functional equations isomorphic to the Weyl group of $\Phi.$

Over the past ten years or so, it has emerged that these multiple Dirichlet series can be 
understood in the world of Eisenstein series. More precisely, let $G$ be a simply connected 
algebraic group over $F$ whose root system is the {\it dual} of the root system $\Phi,$ 
i.e., $\Phi$ is the root system of the $L$-group ${_{}^L}G.$ Then Brubaker, Bump and Friedberg \cite{BBF1} made the following conjecture: 

\vskip10pt
\begin{econj} --- {\it The Weyl group multiple Dirichlet series 
$Z_{\Psi}(s_{1}, \ldots, s_{r}; \Phi)$ in \eqref{eq: intro0-WMDS-fin-dim-case} is a Whittaker coefficient of a minimal parabolic Eisenstein series on an $n$-fold metaplectic cover $\tilde{G}$ of $G.$}
\end{econj} 

\vskip5pt 
This conjecture has been established in the case of metaplectic covers of ${\rm GL}_{n}$ by 
Brubaker, Bump and Friedberg \cite{BBF2, BBF3}. Shortly after \cite{BBF2} and \cite{BBF3} 
have been published, Chinta and Offen \cite{CO} obtained formulas for the spherical Whittaker functions on the $m$-fold metaplectic cover of ${\rm GL}_{n}$ over a $p$-adic field. Their formulas generalize the well-known formula of Shintani for the spherical ${\rm GL}_{n}$--Whittaker function, which corresponds to the nonmetaplectic case, i.e., $m =1.$ Furthermore, the authors provide in \cite{CO} the relationship between $p$-adic metaplectic Whittaker functions and the local parts of Weyl group multiple Dirichlet series associated to root systems of type $A_{n - 1}.$ Very recently, McNamara \cite{McN} extended the results of Chinta and Offen to the more general case of covers of unramified reductive groups; see also \cite{P-P1}.

For very important applications, it is necessary, however, to establish the relevant analytic properties of Weyl group multiple Dirichlet series possessing {\it infinite} groups of functional equations. While the theory of Weyl group multiple Dirichlet series associated with classical (finite) root systems has seen great advances in the last decade, it is not at all clear how to extend this theory to the more general setting of Kac-Moody Lie algebras and their Weyl groups. Undoubtedly, such an extension will require deep foundational work. In this regard, a Casselman-Shalika type formula for Whittaker functions on metaplectic covers of Kac-Moody groups over non-archimedean local fields has been recently established by Patnaik and Pusk\'as in their beautiful paper \cite{P-P2}; their work builds on \cite{Pat} and \cite{P-P1}. However, except for some special cases of affine Kac-Moody groups, it is not at all clear, even conjecturally, how their formula is related to the local parts of Weyl group multiple Dirichlet series.

In this paper we provide an explicit construction of a {\it distinctive} multiple Dirichlet series associated to products of quadratic Dirichlet L-series, which we believe should be tightly connected to a generalized metaplectic Whittaker function on the double cover of a Kac-Moody group. To do so, we shall first impose a set of axioms, independent of any group of functional equations, which the aforementioned object should satisfy\footnote{These axioms are easily verified in the finite-dimensional case.}; see also \cite{White1} and \cite{White2}, where the same axiomatic approach was taken up to construct Weyl group multiple Dirichlet series associated to simply-laced {\it affine} root systems. As a consequence, we deduce that the coefficients of the $p$-parts of the multiple Dirichlet series we are interested in satisfy certain recurrence relations. These relations lead to a family of identities, which in turn, as shown in Sections \ref{section 7} and \ref{section: The series BarC(X, T, q)}, is {\it encoded} in the combinatorial structure of certain moduli spaces of admissible double covers. Finally, via this crucial connection, we apply Deligne's theory of weights to express inductively the coefficients of the $p$-parts of the multiple Dirichlet series in terms of certain $q$-Weil numbers (the eigenvalues of Frobenius acting on the $\ell$-adic \'etale cohomology of local systems on the moduli $\mathscr{H}_{g}[2]$ of hyperelliptic curves of genus $g$ with level 2 structure).

More concretely, let $\chi_{d}$ denote the real primitive Dirichlet character associated to 
$\mathbb{Q}\big(\sqrt{d} \big).$ For $r \ge 1,$ we take the multiple Dirichlet series attached to the $r$-th 
moment of quadratic Dirichlet L-series to be of the form (cf. \cite{CG1} in the finite-dimensional case) 
\begin{equation*}
\sum_{\substack{m_{1}, \ldots, m_{r},\, d-\text{odd} \\ d= d_{0}d_{1}^{2},\, d_{0}-\text{sq.\,free}}} \frac{\chi_{_{d_{0}}}\!(\widehat{m}_{1})\,\cdots\, 
\chi_{_{d_{0}}}\!(\widehat{m}_{r})}{m_{1}^{s_{1}}
\cdots\, m_{r}^{s_{r}}(d_{0}^{} d_{1}^2)^{s_{r+1}}}\cdot
b(m_{1}, \ldots, m_{r}, d)      \;\; \qquad \;\;  \text{(with\, $b(1, \ldots, 1) = 1$)}
\end{equation*} 
where $\widehat{m}_{i},$ for $i = 1, \ldots, r,$ denotes the part of $m_{i}$ coprime to $d_{0}.$ The coefficients 
$
b(m_{1}, \ldots, m_{r}, d)
$ 
are assumed to be multiplicative, 
$$ 
b(m_{1}, \ldots, m_{r}, d)\;\,=  \prod_{\substack{p^{k_{i}} \parallel m_{i}
\\ p^{l} \parallel d}} \hskip-3pt b(p^{k_{1}}\!, \ldots, p^{k_{r}}\!, p^{l})
$$ 
the product being over primes $p\ne 2.$ The generating series 
$$
\sum_{k_{1}\!, \ldots, k_{r}\!,\, l \, \geq \, 0} 
b(p^{k_{1}}\!, \ldots, p^{k_{r}}\!, p^{l})\, p^{-k_{1}s_{1} - \cdots - k_{r}s_{r} - l s_{r+1}}
$$ 
is usually referred to as the $p$-part of the multiple Dirichlet series.

To specify the coefficients $b(p^{k_{1}}\!, \ldots, p^{k_{r}}\!, p^{l}),$ let 
$\mathbb{F}_{\! q}$ be a finite field 
of odd characteristic, and consider a similar multiple Dirichlet series over 
$\mathbb{F}_{\! q}(x).$ In particular, for $m_{1}, \ldots, m_{r}, d$ monic polynomials in 
$\mathbb{F}_{\! q}[x],$ we have multiplicative coefficients $a(m_{1}, \ldots, m_{r}, d),$ 
$$ 
a(m_{1}, \ldots, m_{r}, d)\;=  \prod_{\substack{\pi^{k_{i}} \parallel m_{i} 
\\ \pi^{l} \parallel d}} \hskip-3pt a(\pi^{k_{1}}\!, \ldots, \pi^{k_{r}}\!, \pi^{l})
$$ 
the product being taken this time over monic irreducibles $\pi$ in $\mathbb{F}_{\! q}[x].$ 
For compatibility reasons, we shall assume that $a(\pi^{k_{1}}\!, \ldots, \pi^{k_{r}}\!, 1) = 1$
for all $k_{1}, \ldots, k_{r} \in \mathbb{N},$ and that $a(\pi^{k_{1}}\!, \ldots, \pi^{k_{r}}\!, \pi) = 0$ 
unless $k_{1} = \cdots  = k_{r} = 0,$ in which case this coefficient is $1.$ Furthermore, 
the multiple Dirichlet series can also be expressed as a power series
$$
\sum_{k_{1}\!, \ldots, k_{r}\!, \, l \, \geq \, 0} 
\lambda(k_{1}, \ldots, k_{r}, l; q)\, q^{-k_{1}s_{1} - \cdots - k_{r}s_{r} - l s_{r+1}}
$$ 
in this case. \!We require that the coefficients $\lambda(k_{1}, \ldots, k_{r}, l; \, \cdot \, )$ be defined at $1\slash \mathfrak{q}$ for every odd prime power $\mathfrak{q},$ and that 
$$
a(\pi^{k_{1}}\!, \ldots, \pi^{k_{r}}\!, \pi^{l})
= |\pi|^{k_{1} + \cdots + k_{r} + l}\lambda(k_{1}, \ldots, k_{r}, l; 1\slash |\pi|)
$$ 
for every monic irreducible $\pi$ in $\mathbb{F}_{\! q}[x].$ In addition, we require that 
$$
b(p^{k_{1}}\!, \ldots, p^{k_{r}}\!, p^{l}) 
= p^{k_{1} + \cdots + k_{r} + l}\lambda(k_{1}, \ldots, k_{r}, l; 1\slash p)
$$ 
for all primes $p \ne 2.$ In other words, the $p$-part of the multiple Dirichlet series 
over the rationals coincides with the $\pi$-part of the multiple Dirichlet series over 
$\mathbb{F}_{\! p}(x),$ for every linear monic polynomial $\pi$ in $\mathbb{F}_{\! p}[x].$ 
It may be worth remarking that the same principle applies over any number field \cite{CG1, CG2}, 
allowing one to obtain the corresponding multiple Dirichlet series in this context. Thus 
we need only discuss the rational function field case.

It turns out that any (untwisted) Weyl group multiple Dirichlet series over the rational function field and any of its own $\pi$-parts can be transformed into each other by a simple change of variables. This remarkable fact (which is also assumed in this work) was first noticed in \cite{C08} and \cite{CFH} for $\Phi = A_{2}.$ Of course, the conditions imposed so far do {\it not} suffice to determine a Weyl group multiple Dirichlet series. In the finite-dimensional case, one starts with a reduced root system $\Phi,$ and seeks to construct a multiple Dirichlet series satisfying a group of functional equations isomorphic to the Weyl group of $\Phi.$ Over a rational function field this object can be obtained by using the Chinta-Gunnells averaging 
(over the Weyl group of $\Phi$) method, introduced in \cite{CG1} and further developed in 
\cite{CFG} and \cite{CG2}. This method has been extended by Lee and Zhang \cite{LZ} to symmetrizable Kac-Moody root systems, but in general, the series constructed in this way fails to satisfy the required conditions. For the Weyl group multiple Dirichlet series associated to moments of quadratic Dirichlet L-series, for example, the underlying group of functional equations is isomorphic to the Weyl group $W_{ r}$ of the Kac-Moody 
algebra $\mathfrak{g}(A)$ with generalized Cartan matrix (cf. \cite{Kac}) 
$$
A \,= \begin{pmatrix} 2 & & & & -1 \cr  & 2 & & & -1 \cr  & & & &\vdots \cr 
& & & 2 & -1 \cr  -1 &-1 &\cdots & -1 & 2\end{pmatrix}.
$$ 
In particular, this group is {\it finite} if $r\le 3$ and {\it infinite} if $r\ge 4.$ The Chinta-Gunnells construction 
gives in this context a series whose coefficients are easily seen (see \cite{BD} and \cite{LZ}) to be polynomial 
functions of $q.$ On the other hand, as long as $r$ is large enough\footnote{For instance, one can take $r\ge 9.$} 
this property is {\it not} compatible with the above conditions imposed on this series. What happens is that 
this construction is missing the precise contribution corresponding to the so-called {\it imaginary} roots 
of $\mathfrak{g}(A),$ which grows in complexity as $r$ increases.

Instead, we proceed as follows. The conditions imposed on the multiple Dirichlet series over 
$\mathbb{F}_{\! q}(x)$ imply immediately the identity:
\begin{equation} \label{eq: intro-eq-1}
\lambda(k_{1}, \ldots, k_{r}, l; q)  
= \sum \chi_{_{d_{0}}}\!(\hat{m}_{1}) \, \cdots \,  
\chi_{_{d_{0}}}\!(\hat{m}_{r}) \, a(m_{1}, \ldots, m_{r}, d)
 \;\; \qquad \;\;  \text{(for fixed $k_{1}, \ldots, k_{r}, l \in \mathbb{N}$)}
\end{equation} 
the sum being over all monic polynomials $m_{1}, \ldots, d$ of degrees 
$k_{1}, \ldots, l,$ respectively; the coefficients $a(m_{1}, \ldots, m_{r}, d)$ 
can be expressed as
\begin{equation*}  
a(m_{1}, \ldots, m_{r}, d)\;\, =  \prod_{\substack{\pi^{k_{i}} \parallel m_{i} 
\\ \pi^{l} \parallel d}} \hskip-3pt q^{(k_{1} + \cdots + k_{r} + l)\deg \pi}
\lambda(k_{1}, \ldots, k_{r}, l;  q^{-\deg \pi}).
\end{equation*} 
These identities lead to a recurrence relation in the coefficients $\lambda(k_{1}, \ldots, k_{r}, l; q).$ (For this, we shall need to impose one additional constraint on these coefficients.) The main question arises whether, for any given $r,$ there exists a multiple Dirichlet series satisfying all our requirements; this will be answered affirmatively by providing the explicit construction of this (uniquely determined) 
series.

To construct this multiple Dirichlet series, we proceed by induction on $l.$ We have: 
\begin{equation*}
\lambda(k_{1}, \ldots, k_{r}, 0; q)  = q^{k_{1} + \cdots +k_{r}}
\end{equation*}
and  
\begin{equation*}
 \lambda(k_{1}, \ldots, k_{r}, 1; q)\, = \, 
 \begin{cases}
 q  & \text{if $k_{1} = \cdots  = k_{r} = 0$}\\ 
 0 & \text{otherwise};  
\end{cases}
\end{equation*} 
these follow directly from the compatibility/initial conditions. The coefficients 
corresponding to $l = 2$ can be read off from the identity 
\begin{equation} \label{eq: intro-l=2}
\sum_{k_{1}\!, \ldots, k_{r}\, \geq \, 0} \lambda(k_{1}, \ldots, k_{r}, 2; q)\, t_{1}^{k_{1}} \cdots \, t_{r}^{k_{r}} 
= \;\prod_{i = 1}^{r} \, (1 - q t_{i})^{-1} \, \cdot \; \underset{z = 0}{\rm{Res}}\,\bigg[\frac{q  z^{2} - q^{2}}{z(z - 1)} 
\prod_{i = 1}^{r}\, (1 - z t_{i})  \bigg(1 - \frac{q t_{i}}{z} \bigg)\bigg]
\end{equation}
\vskip-2pt
\noindent
(see Proposition \ref{Prop1-appendixC}, Appendix \ref{C}); in particular, the coefficients 
$
\lambda(k_{1}, \ldots, k_{r}, 2; q)
$ 
are polynomial functions of $q.$

The first non-trivial coefficients occur when $l = 3.$ To describe how these coefficients can be obtained, for $\kappa = (k_{1},\ldots, k_{r})\in \mathbb{N}^{r},$ put $|\kappa| =  k_{1} + \cdots + k_{r}.$ 
Let $r_{1} = r_{1}(\kappa)$ and $r_{2}  = r_{2}(\kappa)$ denote the number of components of $\kappa$ 
equal to $1$ and $2,$ respectively. Then, in this case, \eqref{eq: intro-eq-1} can be written as 
\begin{equation} \label{eq: intro-eq-2}
\lambda(\kappa, 3; q)  - q^{|\kappa| + 4}
\lambda(\kappa, 3; 1\slash q)   =   a(\kappa, 3; q)
\end{equation}
where $a(\kappa, 3; q) = 0,$ unless $r_{1}$ is even and $r_{1} + 2 r_{2} = |\kappa|,$ in which case,  
\begin{equation*} 
a(\kappa, 3; q) \, =\, 
q^{r_{2}}\mathcal{M}_{3}(r_{1}; q)\;   + \; (q - 1)\; \cdot 
\sum_{\substack{\;\;\; \kappa' \in \, \mathbb{N}^{r} \\ k_{i}^{} - k_{i}' = 0, 1}}  
(- 1)^{|\kappa - \kappa'|}\, q^{|\kappa'| + 3} \lambda(\kappa', 2; 1\slash q). 
\end{equation*}  
Here $\mathcal{M}_{3}(r; q)$ is the moment-sum defined by 
\begin{equation*}   
\mathcal{M}_{3}(r; q)\;\;\; := \hskip-3pt \sum_{\substack{\deg  d_{0} \, = \, 3 \\ d_{0}-\text{monic \& square-free}}}
\left(- \sum_{\theta \, \in \, \mathbb{F}_{\! q}}\, \chi(d_{0}(\theta))  \right)^{\! r}
\end{equation*}  
where $\chi$ is the non-trivial real character of $\mathbb{F}_{\! q}^{\times},$ extended to 
$\mathbb{F}_{\! q}$ by setting $\chi(0) := 0.$ One will notice that \eqref{eq: intro-eq-2} implies the functional equation 
\begin{equation*} 
a(\kappa, 3; q) \,= -\, q^{|\kappa| + 4} a(\kappa, 3; 1\slash q)
\end{equation*}  
which is {\it not} at all a priori clear from the above expression of $a(\kappa, 3; q),$ when $r_{1}$ is even and $r_{1} + 2 r_{2} = |\kappa|.$ The reason for this functional equation is, roughly, that $a(\kappa, 3; q)$ is a quantity attached to the one-point compactification $\bar{\mathcal{A}}_{1}$ of the moduli space $\mathcal{A}_{1}$ of elliptic curves, with $q^{r_{2}}\!\mathcal{M}_{3}(r_{1}; q)$ corresponding to $\mathcal{A}_{1}$ and the remaining 
piece of $a(\kappa, 3; q)$ corresponding to the boundary. In the general case, a similar geometric interpretation 
of \eqref{eq: intro-eq-1} is required for our purposes, and establishing it (which is quite non-trivial) represents the 
main contribution of this work.

For example, take $r = 10, k_{1} = \cdots = k_{r} = 1,$ and for simplicity, assume that $q\ge 3$ is a prime. One finds that 
\begin{equation*} 
a(1, \ldots, 1, 3; q) =  
42 q^{8} - 42 q^{6} -  q^{2}\tau(q) + q \tau(q)
\end{equation*} 
where the function $n \mapsto \tau(n)$ is Ramanujan's $\tau$-function, i.e., the $n$-th Fourier coefficient 
of the cusp form 
\begin{equation*}
\Delta(z) := e^{2\pi i z} \prod_{m = 1}^{\infty} (1 - e^{2\pi i m  z} )^{24} = 
\sum_{n = 1}^{\infty} \tau(n)  e^{2\pi i n  z}   \;\;\,  \qquad  \;\;\,  \text{($z = x + i y$ with $y > 0$)}
\end{equation*} 
of weight 12 for the group $\mathrm{SL}_{2}(\mathbb{Z}).$ This is an immediate consequence of 
\eqref{eq: intro-l=2} and a well-known identity of Bryan Birch \cite{Bi}. We shall impose an additional 
(dominance) condition to determine the coefficients $\lambda(\kappa, l; q),$ which in this case gives
\begin{equation*}  
\lambda(1, \ldots, 1, 3; q) = 42 \,q^{8}  -  q^{2}\tau(q).
\end{equation*}

More generally, we will prove: 
\vskip10pt
\newtheorem*{DP1}{Theorem}
\begin{DP1} --- {\it Let $q\ge 3$ be a prime, and let $T_{2k} (q) : = \Tr(T_{ q} \, \vert \, S_{2k})$ denote the trace of the Hecke operator $T_{ q}$ acting on the space of elliptic cusp forms of weight $2k$ on $\mathrm{SL}_{2}(\mathbb{Z}).$ For $\kappa = (k_{1},\ldots, k_{r})\in \mathbb{N}^{r},$ let $r_{1} = r_{1}(\kappa)$ and $r_{2}  = r_{2}(\kappa)$ denote the number of components 
of $\kappa$ equal to $1$ and $2,$ respectively. If 
$
r_{1} + 2 r_{2} = |\kappa|,
$ 
and 
$r_{1} = 2R$ is even, then 
\begin{equation*} 
\lambda(\kappa, 3; q)  = 
\frac{(2R)!}{R!(R + 1)!}q^{R + r_{2} + 3}\, - 
\; \sum_{j=1}^{R} \, (2j + 1)\, \frac{(2R)! \,q^{R + r_{2} - j + 2}}{(R - j)!(R + j + 1)!}\, 
T_{2j + 2} (q).
\end{equation*}
Otherwise, the coefficients $\lambda(\kappa, 3; q)$ all vanish.}
\end{DP1}

\vskip5pt
When dealing with the general case, it is more convenient to work with moments defined for partitions $\mathfrak{n} = (1^{n_{1}}\!, 2^{n_{2}}\!, \ldots)$ by  
\begin{equation*} 
A_{\mathfrak{n}}(t_{1}, \ldots, t_{r}, q) \; := \sum_{d \, \in \, \mathscr{P}(\mathfrak{n}, q)} 
\bigg(\prod_{k = 1}^{r} P_{\scriptscriptstyle C_{ d}}(t_{k})  \bigg); 
\end{equation*} 
here $\mathscr{P}(\mathfrak{n}, q) = \mathscr{P}(\mathfrak{n}, \mathbb{F}_{\! q}) \subset \mathbb{F}_{\! q}[x]$ is the set of all monic square-free polynomials $d$ with factorization type $\mathfrak{n},$ and $P_{\scriptscriptstyle C_{ d}}$ ($d \in \mathscr{P}(\mathfrak{n}, q)$) is the numerator of the zeta function of the (hyper)elliptic curve 
$C_{ d}\slash \mathbb{F}_{\! q}$ defined by the affine model $y^{2} = d(x).$ The connection with the cohomology of local systems on moduli spaces of hyperelliptic curves comes by expressing the symmetric polynomial $A_{\mathfrak{n}}(t_{1}, \ldots, t_{r}, q)$ in terms of symplectic Schur functions \cite[Appendix A, \S A.3., A.45]{FH}, and by applying Behrend's Lefschetz trace formula \cite{Beh} to each of the corresponding coefficients. This alternative expression and the Poincar\'e-Verdier duality will be used to define $A_{\mathfrak{n}}(t_{1}, \ldots, t_{r}, 1\slash q).$

We note here that an identity of independent interest is obtained by taking a suitable linear combination of the 
quantities $A_{\mathfrak{n}}(q^{-1\slash 2}, \ldots, q^{-1\slash 2}, q)$ (see \eqref{eq: euler-characteristic-w1}). 
Concretely, for a partition $\lambda = (\lambda_{1} \ge \cdots  \ge \lambda_{g} \ge 0)$ of weight
$|\lambda|,$ let $V_{\lambda}$ denote the corresponding irreducible representation of 
$\mathrm{Sp}_{2g}(\mathbb{C}).$ For $\lambda \subseteq (r^{g})$ (i.e., $\lambda_{i} \le r$ for all $1\le i \le g$), 
define $\lambda^{\dag} :=  (g - \lambda_{r}' \ge \cdots  \ge g - \lambda_{1}'),$ 
where the non-zero integers among $\lambda_{j}'$ ($j = 1, \ldots, r$) are the parts of the conjugate partition 
of $\lambda.$ As usual, write 
\begin{equation*} 
\prod_{k = 1}^{r}\frac{k !}{(2 k)!} =
\frac{g_{r}}{(r (r + 1)\slash 2)!}
\end{equation*} 
for the constant appearing in the moment-conjectures of Conrey-Farmer \cite{ConrF} and 
Keating-Snaith \cite{KeS} for the leading-order asymptotics of the moments of L-functions within 
symplectic families, and let 
\begin{equation*}
P_{r}(x) = \prod_{i = 1}^{r}(x + 2 i) \prod_{1\le i < j \le r} (x + i + j).
\end{equation*} 
The polynomial $P_{r}(x - 1)$ appears in the main term of the polynomial 
$Q_{r}(x)$ conjectured by Conrey, Farmer, Keating, Rubinstein and Snaith \cite{CFKRS}; 
see also the recent work of Andrade and Keating \cite[Conjecture 5]{AnK} in the function-field setting. With this notation, we have
\begin{equation*}
\frac{1}{q^{2g}(q - 1)}  \!\!\sum_{\substack{\deg d \, = \, 2g + 1 \\ \text{$d$ \rm{monic \& square-free}}}}  
L({\scriptstyle \frac{1}{2}}, \chi_{_{d}})^{r}  =\, 
\frac{g_{r} P_{r}(2 g)}{\big(r (r + 1)\slash 2 \big)!}  \, + \, 
\sum\, (\dim \, V_{\lambda^{\dag}}) \Tr(F^{*} \vert \, e_{c}^{\lambda}(\mathscr{H}_{g}(w^{1})  \otimes_{_{\mathbb{F}_{\! q}}} \! \overline{\mathbb{F}}_{\! q}))\,
q^{1 - 2g - \frac{|\lambda|}{2}}
\end{equation*} 
the sum in the right-hand side being over all {\it non-trivial} partitions $\lambda \subseteq (r^{g})$ of even weight, and the trace of the geometric Frobenius $F^{*}$ on the Euler characteristic 
$
e_{c}^{\lambda}(\mathscr{H}_{g}(w^{1})  \otimes_{_{\mathbb{F}_{\! q}}} \! \overline{\mathbb{F}}_{\! q})
$ 
is given by 
\begin{equation*}  
\Tr(F^{*} \vert \,
e_{c}^{\lambda}(\mathscr{H}_{g}(w^{1})  \otimes_{_{\mathbb{F}_{\! q}}} \! \overline{\mathbb{F}}_{\! q}))\,
= \frac{q^{\frac{|\lambda|}{2} - 1}}{q - 1} \!\!\sum_{\substack{\deg d \, = \, 2g + 1 \\ \text{$d$ \rm{monic \& square-free}}}}
s_{\scriptscriptstyle \langle \lambda \rangle}(\omega_{\scriptscriptstyle 1}(C_{d})^{\pm 1}, \ldots, \omega_{\scriptscriptstyle g}(C_{d})^{\pm 1});
\end{equation*} 
here $s_{\scriptscriptstyle \langle \lambda \rangle}$ is the symplectic Schur function associated to $V_{\lambda},$ and for a hyperelliptic curve $C_{d},$ 
$\omega_{\scriptscriptstyle 1}(C_{d}), \ldots, \omega_{\scriptscriptstyle g}(C_{d}),$ 
$\omega_{\scriptscriptstyle 1}(C_{d})^{-1}, \ldots, \omega_{\scriptscriptstyle g}(C_{d})^{-1}$ 
are the {\it normalized} (i.e., unitary) eigenvalues of the endomorphism $F^{*}$ of 
$H_{\text{\'et}}^{1}(\bar{C}_{d},\mathbb{Q}_{\ell})$ (with $\ell \nmid q$). The sum in the right-hand side of the above moment-identity can be estimated using the following well-known result \cite[Theorem 10.8.2]{KaS} of Katz and Sarnak: 
\vskip10pt
\newtheorem*{Ka-Sa}{Theorem (Katz-Sarnak)}
\begin{Ka-Sa} --- {\it There exist positive constants $A(g)$ and $C(g)$ 
such that, for every partition $\lambda \subseteq (r^{g}),$ $\lambda \ne \bf{0},$ of even weight, we have the estimate 
\begin{equation*} 
\frac{1}{q^{2g}(q - 1)}\Bigg|\sum_{\substack{\deg d \, = \, 2g + 1 \\ \text{$d$ \rm{monic \& square-free}}}} s_{\scriptscriptstyle \langle \lambda \rangle}(\omega_{\scriptscriptstyle 1}(C_{d})^{\pm 1}, \ldots, \omega_{\scriptscriptstyle g}(C_{d})^{\pm 1}) 
\Bigg| \, \le  \, \frac{2 C(g)(\dim \, V_{\lambda})}{\sqrt{q}}
\end{equation*} 
as long as $q = |\mathbb{F}_{\! q}|  \ge  A(g).$}
\end{Ka-Sa}

\vskip5pt
Returning now to our main goal, we shall see that \eqref{eq: intro-eq-1} (suitably transformed) reflects in fact certain relations among slightly more general moments of characteristic polynomials $A_{\mathfrak{n}, \mathfrak{i}, \mathfrak{j}}(t_{1}, \ldots, t_{r}, q),$ defined for partitions $\mathfrak{n}, \mathfrak{i}$ and $\mathfrak{j}.$ 
These moments can be obtained by applying certain differential operators to $A_{\mathfrak{n}}(t_{1}, \ldots, q);$ in particular, we can define $A_{\mathfrak{n}, \mathfrak{i}, \mathfrak{j}}(t_{1}, \ldots, t_{r}, 1\slash q).$ We reiterate 
that the identity \eqref{eq: intro-eq-1} (and implicitly the relations among $A_{\mathfrak{n}, \mathfrak{i}, \mathfrak{j}}(t_{1}, \ldots, t_{r}, q)$) is just {\it hypothetical} at this point.

For notational convenience, we state here our main result only in the {\it split} case, i.e., for partitions $\mathfrak{n}$ of the form $(1^{n}).$ Although quite technical, the relations among the moments $A_{\mathfrak{n}, \mathfrak{i}, \mathfrak{j}}(t_{1}, \ldots, t_{r}, q)$ and their moduli interpretation in the general case, discussed in detail in Sections \ref{section 7} and \ref{section: The series BarC(X, T, q)}, follow the same principle as in the split case.

For $n \ge 1,$ let $\mathscr{P}_{\! n}\subset \mathbb{F}_{\! q}[x]$ denote the set 
of all monic square-free polynomials of degree $n$ splitting in $\mathbb{F}_{\! q}.$ 
Define $N_{i, j}(d, q)$ (for $d \in \mathscr{P}_{\! n}$ and $i, j \ge 0$) by 
\begin{equation*}
N_{i, j}(d, q)  \,=\,  \prod_{k = 0}^{i - 1} \bigg(\frac{q - n + a_{1}(C_{d}) + \epsilon - 2k}{2} \bigg)\, 
\prod_{l = 0}^{j - 1}  \bigg(\frac{q - n - a_{1}(C_{d}) -\epsilon  - 2l}{2} \bigg) 
\end{equation*} 
with $\epsilon = 0$ or $1$ according as $n$ is odd or even, and 
$
a_{1}(C_{d}) := \Tr(F^{*} \vert \, H_{\text{\'et}}^{1}(\bar{C}_{d}, \mathbb{Q}_{\ell})).
$ 
(If $i = 0$ or $j = 0,$ we take the corresponding product to be $1.$) Put 
\begin{equation*}
A_{n, i, j}(t_{1}, \ldots, t_{r}, q) \, := \sum_{d \, \in \, 
\mathscr{P}_{\! \scriptscriptstyle n}} 
\bigg(N_{i, j}(d, q) \prod_{k = 1}^{r} P_{\scriptscriptstyle C_{d}}(t_{k}) \bigg).
\end{equation*} 
Note that if we denote 
$^{D}\!\!A_{n, 0, 0}(t_{1}, \ldots, t_{r}, q) = \frac{\partial A_{n, 0, 0}}{\partial t_{r+1}} (t_{1}, \ldots, t_{r}, 0, q),$ then 
\begin{equation*}
^{D}\!\!A_{n, 0, 0}(t_{1}, \ldots, t_{r}, q) \, = \, - \sum_{d \, \in \, \mathscr{P}_{\! \scriptscriptstyle n}} 
\bigg(a_{1}(C_{d}) \prod_{k = 1}^{r} P_{\scriptscriptstyle C_{d}}(t_{k}) \bigg).
\end{equation*} 
Define $^{D^{k}}\!\!\!A_{n, 0, 0}(t_{1}, \ldots, t_{r}, q)$ ($k\in \mathbb{N}$) by iterating. Then 
\begin{equation*}
A_{n, i, j}(t_{1}, \ldots, t_{r}, q)  \, = \,  
^{\scriptscriptstyle i! j! \binom{(q - n + \epsilon - D)\slash 2}{i}\binom{(q - n -\epsilon + D)\slash 2}{j}}\!A_{n, 0, 0}(t_{1}, \ldots, t_{r}, q)
\end{equation*} 
where the two binomial symbols are viewed as differential polynomials in $D.$ The right-hand side of this identity makes sense if we replace $q$ by $1\slash q,$ allowing us to define 
$A_{n, i, j}(t_{1}, \ldots, t_{r}, 1\slash q).$

Now, for $x, y$ and $z$ algebraically independent variables, consider the (exponential) generating functions:
\begin{equation*}
c_{\text{odd}}(x, y, z; t_{1}, \ldots, t_{r}, q) \; =
\sum_{n, i, j \, \ge \, 0}\, \frac{A_{2n + 1, i, j}(t_{1}, \ldots, t_{r}, q) \, x^{2n + 1} y^{i}  z^{j}}{i! j! \tilde{E}(- t_{1}, \ldots, - t_{r})^{i} \tilde{E}(t_{1}, \ldots, t_{r})^{j}}
\end{equation*} 
and 
\begin{equation*} 
\begin{split}
c_{\text{even}}(x, y, z; t_{1}, \ldots, t_{r}, q)  =\, & \tilde{E}(t_{1}, \ldots, t_{r}) 
\big[\big(1 + z\slash \tilde{E}(t_{1}, \ldots, t_{r}) \big)^{q} - 1\big] \\  
& \hskip40pt  + \, \sum_{n \, \ge \, 1}\, \sum_{i, j \, \ge \, 0}\, \frac{A_{2n, i, j}(t_{1}, \ldots, t_{r}, q) \, x^{2n} y^{i}  z^{j}}{i! j! \tilde{E}(- t_{1}, \ldots, - t_{r})^{i}   \tilde{E}(t_{1}, \ldots, t_{r})^{j}}
\end{split}
\end{equation*} 
where 
\begin{equation*} 
\tilde{E}(t_{1}, \ldots, t_{r}) = \prod_{k = 1}^{r} \frac{1}{(1 - t_{k}) (1 -  q  t_{k})};
\end{equation*} 
we have  
\begin{equation*}
c_{\text{odd}}(x, z, y; - t_{1}, \ldots, - t_{r}, q) = c_{\text{odd}}(x, y, z; t_{1}, \ldots, t_{r}, q).
\end{equation*}
Finally, define 
\begin{equation*}
c(\underline{x}, t_{1}, \ldots, t_{r}, q) = (c_{\text{odd}}(x, y, z; t_{1}, \ldots, t_{r}, q), 
c_{\text{even}}(x, z, y; - t_{1}, \ldots, - t_{r}, q), c_{\text{even}}(x, y, z; t_{1}, \ldots, t_{r}, q))
\end{equation*} 
where $\underline{x}: = (x, y, z).$ Then the relations required in this case are precisely the ones encoded in the following theorem, which itself is of independent interest. 
\vskip10pt
\newtheorem*{DP2}{Theorem (The Split Case)}
\begin{DP2} --- {\it We have 
\begin{equation*}
c(c(\underline{x}, T, q), q  \, T, 1\slash q) = \underline{x}.
\end{equation*}
In other words, $c(\underline{x}, T, q)$ is the formal compositional inverse of 
$c(\underline{x}, q\, T, 1\slash q).$}
\end{DP2}

\vskip5pt
To prove this, we shall make essential use of the functional relation satisfied by the power series $\bar{c}(x, T, q)$ constructed later in Section \ref{section: The series BarC(X, T, q)}. The corresponding functional relation in the general case is a simple consequence of Theorem \ref{ess-main-thm}, whose proof in turn rests on a careful analysis of the combinatorial structure of certain moduli spaces of admissible double covers. 

\vskip3pt
The above result is in the same spirit as the following beautiful theorem of Getzler: 
\vskip10pt
\newtheorem*{Getzler0}{Theorem (Getzler)}
\begin{Getzler0} --- {\it Let $\mathcal{M}_{0, n}$ denote the moduli space of Riemann surfaces of genus zero with $n$ ordered marked points, and let $\overline{\mathcal{M}}_{0, n}$ denote 
its Deligne-Mumford compactification. Then the generating series 
\begin{equation*}
f(x) = x - \sum_{n = 2}^{\infty} \chi(\mathcal{M}_{0, n + 1})\frac{x^{n}}{n!} \,=\, 2  x - (1 + x)  \log (1 + x) \;\;\,  \text{and} \;\;\;
g(x) = x + \sum_{n = 2}^{\infty} \chi(\overline{\mathcal{M}}_{0, n + 1})\frac{x^{n}}{n!}
\end{equation*} 
of Euler characteristics are compositional inverses of one another.}
\end{Getzler0}

\vskip5pt
In fact, Getzler's theorem is easily seen to be the limiting case of our theorem as 
$$
t_{1} = \cdots = t_{r} = 0, \;\; y = z \;\;\text{and}\;\; q\to 1
$$ 
see Section \ref{section: The series BarC(X, T, q)} for details. Closely related results can be found in the work of Kisin and Lehrer \cite{KiLe}, where it is shown how, in certain circumstances, an equivariant comparison theorem in $\ell$-adic cohomology may be used to convert the computation of the graded character of the induced action of a finite group (acting as a group of automorphisms of a smooth complex algebraic variety defined over a number field) on cohomology into questions about numbers of rational points of varieties over finite fields.

Having established a similar result in the general case, we shall apply Deligne's theory of weights to determine inductively (essentially as in the case when $l = 3$) the coefficients 
$\lambda(k_{1}, \ldots, k_{r}, l; q);$ see Section \ref{section 9} for details. The upshot is that we obtain inductively an expression for the differences 
\begin{equation*}
\lambda(k_{1}, \ldots, k_{r}, l; q)  \, - \, q^{k_{1} + \cdots + k_{r} + l + 1} 
\lambda(k_{1}, \ldots, k_{r}, l; 1\slash q)
\end{equation*} 
in terms of $q$-Weil numbers, from which the coefficients $\lambda(k_{1}, \ldots, k_{r}, l; q)$ are determined by imposing the dominance condition. Notice that, by applying a well-known result of Deligne \cite[Proposition 4.8]{Del1} which, in the notation of \ref{Loc-sys Ag}, gives that
\begin{equation*} 
T_{\lambda + 2}(q) = \Tr(T_{q} \, \vert \, S_{\lambda + 2}) 
=  \Tr(F^{*} \vert \, H_{!}^{1}(\mathscr{A}_{1}\otimes_{_{\mathbb{F}_{\! q}}} 
\! \overline{\mathbb{F}}_{\! q}, \mathbb{V}(\lambda))) 
\end{equation*} 
the above formula for the coefficients $\lambda(k_{1}, \ldots, k_{r}, 3; q)$ is, indeed, of this type.

The multiple Dirichlet series over $\mathbb{Q}$ constructed in this paper may be written symbolically as 
\begin{equation} \label{eq: into-MDS-symb}
\sum_{d-\text{odd}} \frac{L^{\scriptscriptstyle (2)}(s_{1}, \chi_{d}) \, \cdots \,  
L^{\scriptscriptstyle (2)}(s_{r}, \chi_{d})}{d^{s_{r+1}}} 
\end{equation} 
although, strictly speaking, the numerator is a product of L-series {\it only} when $d$ is square-free. When $r\le 3,$ this multiple Dirichlet series and its analytic properties have been studied in \cite{Sie}, \cite{GH}, \cite{BFH1}, \cite{DGH} and \cite{Di-Wh}.

On the other hand, it was noticed by Bump, Friedberg and Hoffstein in \cite{BFH1} 
that multiple Dirichlet series satisfying {\it infinite} groups of functional equations 
(e.g., \eqref{eq: into-MDS-symb} when $r = 4$) {\it cannot} be continued in 
$s_{1}, s_{2}, \ldots$ everywhere, i.e., they must have a wall of singularities; see the conjectures in loc. cit., p. 167 and p. 170. We should say that obtaining the continuation of 
\eqref{eq: into-MDS-symb}, when $r \ge 4,$ to a domain in $\mathbb{C}^{r+1}$ containing the point ${\scriptstyle \left(\frac{1}{2}, \ldots, \frac{1}{2}, 1\right)}$ is an {\it extremely difficult} problem; the analogous problem for {\it affine} Weyl group multiple Dirichlet series over rational function fields of odd characteristic has been investigated in \cite{BD}, \cite{White1} and \cite{White2} (see also \cite{Florea}, where an asymptotic formula for the fourth moment of quadratic Dirichlet L-functions in the rational function field case, summed over monic square-free polynomials, 
has been established). As far as we can see, when $r \ge 5,$ this is a very difficult problem even in the rational function field case. We should point out here that a similar ``natural boundary'' phenomenon occurs when dealing with the meromorphic continuation of negative (i.e., lower triangular) parabolic Eisenstein series on loop groups, introduced in \cite{BK} in the function field case; see also \cite{GMP1}, and the introduction of \cite{GMP2}. 
As already alluded to at the beginning of this introduction, the meromorphic continuation of similar metaplectic loop group Eisenstein series could potentially yield the relevant analytic properties of \eqref{eq: into-MDS-symb} when $r = 4.$

Finally, let us briefly describe the structure of the paper. In Section \ref{section 2}, we collect some generalities about quadratic Dirichlet L-functions, which will be used later in the paper. We then discuss the method introduced by Bump, Friedberg and Hoffstein in \cite{BFH2} to construct multiple Dirichlet series attached to moments of quadratic L-series. In particular, this method gives the ``correct'' Weyl group multiple Dirichlet series {\it only} when $r\le 3.$ In Section \ref{section 3}, we first list the conditions which the Weyl group multiple Dirichlet series (to be constructed) has to satisfy; these conditions imply immediately the identity \eqref{eq: intro-eq-1}. Next, we wish to express the left-hand side of this identity more conveniently. Probably the most efficient way to proceed is to reformulate \eqref{eq: intro-eq-1} in terms of generating functions; after some preliminary considerations, we do so in Section \ref{section 4}. It is this form of the original identity where the moments $A_{\mathfrak{n}, \mathfrak{i}, \mathfrak{j}}(t_{1}, \ldots, t_{r}, q)$ arise. In Section \ref{section 5}, we combine a well-known formula for the above moment-sums $\mathcal{M}_{3}(r; q)$ (see \cite{Bi} and \cite{Ih}) with Proposition \ref{Prop2-appendixB}, Appendix \ref{B}, Proposition \ref{Prop-appendixD}, Appendix \ref{D}, and the formula given in Proposition \ref{Prop2-appendixC}, Appendix \ref{C} to illustrate how the coefficients $\lambda(k_{1}, \ldots, k_{r}, l; q)$ ($l = 3, 4$) are constructed. In Section \ref{section 6}, we review some facts about the cohomology of symplectic local systems on moduli of hyperelliptic curves, and discuss the cohomological interpretation of the above moment-sums $A_{\mathfrak{n}}(t_{1}, \ldots, t_{r}, q).$ We then use this interpretation to define $A_{\mathfrak{n}}(t_{1}, \ldots, t_{r}, 1\slash q)$ by duality. In Section \ref{section 7}, we prove our main result, first in the split case, and then in general, subject to a functional relation satisfied by a certain generating function, whose proof is postponed to Section \ref{section: The series BarC(X, T, q)}. In particular, this ensures the compatibility of the relations generated by \eqref{eq: intro-eq-1} among the coefficients of the multiple Dirichlet series. In Section \ref{section: The series BarC(X, T, q)}, we study various generating series attached to moduli spaces of admissible double covers, and then show how these series are connected to the generating series introduced in the previous sections. We then use this connection to establish the functional relation needed to finish the proof of Theorem \ref{princ-theorem}. This section, which may be of some interest in its own right, is essentially independent from the rest of the paper. In Section \ref{section 9}, we recall some facts from Deligne's theory \cite{Del2}, which will then be used to determine inductively the coefficients $\lambda(k_{1}, \ldots, k_{r}, l; q).$ It should be pointed out that the arguments can be reversed to obtain new information about the cohomology of local systems on moduli of hyperelliptic curves, assuming full knowledge\footnote{When $r\le 3,$ these coefficients can be computed from the explicit expressions of $Z(t_{1}, \ldots, t_{r + 1}; q)$ ($r \le 3$) recorded in Appendix \ref{A}.} of the coefficients $\lambda(k_{1}, \ldots, k_{r}, l; q).$ We illustrate this in Section \ref{section 10} by computing explicitly the trace of Frobenius on the (motivic) Euler characteristic corresponding to the Eisenstein cohomology of local systems on moduli spaces of principally polarized abelian surfaces, and thus recovering a result of 
Bergstr\"om, Faber and van der Geer (see \cite[Corollary 4.6]{BFvdG1} and 
\cite[Theorem 9.1]{vdG}). 

Here is a roadmap summarizing the key steps of our construction:

\vskip5pt
\tikzstyle{block} = [rectangle, draw, fill=black!10, text width=7em, text badly centered, minimum height=5em, font=\bfseries\footnotesize\sffamily]
\tikzstyle{line} = [draw, -latex']

\begin{tikzpicture}[node distance = 3.75cm, auto]
\node [block] (init) {Relations among the coefficients determining MDS};
\node [block, above of=init, node distance=2.70cm] (axioms) {Axioms of MDS};
\node [block, right of=init, node distance=3.75cm] (enhance) {Theorem \ref{princ-theorem} reformulating enhanced relations among the coefficients of MDS};
\node [block, right of=enhance, node distance=3.75cm] (functionalrel) {$\bar{C}(X, T, q)$ satisfying relation \eqref{eq: func-rel-section8}};
\node [block, right of=functionalrel, node distance=3.75cm] (geometric) {Geometric construction of a $\bar{C}(X, T, q)$ satisfying \eqref{eq: func-rel-section8}};
\path [line] (geometric) -- (functionalrel);
\path [line] (functionalrel) -- (enhance);
\path [line] (axioms) -- (init);
\path [line] (enhance) -- (init);
\end{tikzpicture} 

\vskip5pt
For the reader's convenience, we have also included four appendices. In the first one, we provide the (previously known) simple description of the Weyl group multiple Dirichlet series discussed in this paper, for each $r\le 4.$ In Appendix \ref{C}, we compute explicitly the generating series of the coefficients $\lambda(k_{1}, \ldots, k_{r}, 2; q),$ and in Appendices \ref{B} and \ref{D}, we discuss some elementary relations among moments of character sums used in previous sections.

\vskip5pt
{\bf Acknowledgements.} We are very grateful to Jeff Hoffstein, Will Sawin and Ian Whitehead for their comments and suggestions. We would also like to thank Max Planck Institute for Mathematics, where parts of this work have been completed, for support and for providing a wonderful working environment. The second author (VP) would like to thank the School of Mathematics of the University of Minnesota for its hospitality. He was supported by UEFISCDI grant PN-II-RU-TE-2014-4-2077.

\section{Preliminaries}\label{section 2}

We recall that for a smooth, projective, and geometrically connected 
curve $C$ of genus $g \ge 1$ over a finite field $\mathbb{F} = \mathbb{F}_{\! q}$ with $q$ elements, 
one defines the zeta function $Z_{C}(t)$ of $C$ by 
$$
Z_{C}(t)   =  \exp  \Bigg(\sum_{n \ge 1}  |C(\mathbb{F}_{\! q^{n}}  )| \frac{t^n}{n}  \Bigg) \qquad 
(\text{for $| t | < 1/q$}).
$$ 
It is well-known that $Z_{C}(t)$ is a rational function
$$
Z_{C}(t) = \frac{P_{C}(t)}{(1 - t)(1 - q t)}
$$ 
with numerator $P_{C}(t) \in \mathbb{Z}[t]$ of degree $2g,$ and 
constant term $P_{C}(0) = 1.$ Moreover, it satisfies the functional 
equation 
\begin{equation} \label{eq: tag 2.1}
Z_{C}(t) = \big(q t^2 \big)^{ g - 1}  Z_{C}(1\slash qt). 
\end{equation} 
Although we shall not need it here, we recall for completeness that 
the analogue of the Riemann hypothesis, proved by Andr\'e Weil \cite{W}, 
states that $P_{C}(t)$ has all its zeros on the circle $|t| = q^{-1/2}.$

From now on, we shall assume that $q$ is odd, and fix once for all 
an algebraic closure $\overline{\mathbb{F}}$ of $\mathbb{F}.$ For a non-zero 
polynomial $m  \in  \mathbb{F}[x],$ we define its norm by $|m| = q^{\deg m}.$ 
Let $\pi \in \mathbb{F}[x]$ be monic and irreducible. For a polynomial 
$d\in \mathbb{F}[x]$ coprime to $\pi,$ we define the Legendre symbol 
$(d/\pi) = 1$ or $-1$ according as $d$ is a square modulo $\pi$ or not; 
if $d\in \mathbb{F}[x]$ and $\pi$ divides $d,$ we set $(d/\pi) = 0.$ 
Finally, define $(d/m)$ for arbitrary $d\in \mathbb{F}[x]$ 
and monic $m\in \mathbb{F}[x]$ by setting $(d/m) = 1$ if $m = 1,$ 
and by multiplicativity for general $m.$ 

The quadratic residue symbol $(d/m)$ we just defined is completely multiplicative 
in both $d$ and $m,$ and satisfies the following reciprocity law \cite{R}:

{\it Reciprocity.} For $d, m \in \mathbb{F}[x]$ monic polynomials, we have
$$
\res{d}{m} = (-1)^{\frac{|d| - 1}{2}\frac{|m| - 1}{2}} \res{m}{d}.
$$ 
In addition, we have the supplement
$$
\res{d}{m} = \mathrm{sgn}^{\deg m}(d) \;\;\; \qquad 
(\text{for $d\in \mathbb{F}^{\times}$})
$$ 
where $\mathrm{sgn}(d) = 1$ or $-1$ according as $d$ is a square in 
$\mathbb{F}$ or not.

{\it L-functions.} For $d\in \mathbb{F}[x]$ square-free and a monic polynomial 
$m \in \mathbb{F}[x],$ let $\chi_{d}(m) := (d/m).$ 
We define the quadratic Dirichlet $L$-function attached to $\chi_{d}$ by 
\begin{equation*}
L(s, \chi_{d}) \;\,
= \sum_{\substack{m \, \in \, \mathbb{F}[x] \\ m-\text{monic}}}
\chi_{d}(m)|m|^{ - s} =\, \prod_{\pi} (1 - \chi_{d}(\pi)|\pi|^{-s})^{-1} \;\qquad (\text{for complex $s$ with $\Re(s) > 1$});
\end{equation*}
see Artin \cite{Art}. Here the product is over all monic irreducible polynomials $\pi \in \mathbb{F}[x].$ 
In particular, when $d = 1$ (i.e., $\chi_{d}$ is trivial) we obtain 
the zeta function of $\mathbb{F}(x),$
\begin{equation*}
\zeta_{\mathbb{F}(x)}(s)\;\, 
= \sum_{m-\text{monic}} |m|^{ - s} \,
=\, \frac{1}{1 - q^{1 - s}}
\end{equation*} 
and for 
$d\in \mathbb{F}^{\times}  \setminus  (\mathbb{F}^{\times})^{2}$
\begin{equation*}
L(s, \chi_{d}) \,=\, \frac{1}{1 + q^{1 - s}}.
\end{equation*}
If $\deg d = k \ge  1,$ the $L$-function $L(s, \chi_{d})$ is a polynomial (in $q^{-s}$) 
of degree $k - 1.$ To be more precise, let $C_{d}$ denote 
the hyperelliptic curve defined in affine form by $y^2 = d(x),$ and 
consider the numerator $P_{\scriptscriptstyle C_{d}}(t)$ of the zeta function 
$Z_{\scriptscriptstyle C_{d}}(t);$ if $\deg d = 1, 2,$ we take 
$P_{\scriptscriptstyle C_{d}}(t) = 1.$ 
Then
\begin{equation*}
L(s, \chi_{d})\, = \, \sum_{i = 0}^{k-1} \, q ^{ - i s} 
\sum_{\substack{\deg m = i \\  m-\text{monic}}} 
\chi_{d} (m) \,=\, (1 \pm q^{- s})^{\epsilon_{\scriptscriptstyle k}} 
P_{\scriptscriptstyle C_{ d}}(q^{-s})
\end{equation*}
where the $+$ or $-$ sign is determined according to whether 
the leading coefficient of $d$ is a square in $\mathbb{F}$ or not, 
and $\epsilon_{\scriptscriptstyle k} = 0$ or $1$ according as $k$ is odd or even. 
From \eqref{eq: tag 2.1}, we deduce that $L(s, \chi_{d})$ satisfies 
the functional equation 
\begin{equation} \label{eq: tag 2.2}
(1 \pm q^{- s})^{-\epsilon_{\scriptscriptstyle k}}L(s, \chi_{d})\, = \, 
q^{(\epsilon_{\scriptscriptstyle k} + 1)(s - \frac{1}{2})}|d|^{\frac{1}{2} - s}
(1 \pm q^{s - 1})^{-\epsilon_{\scriptscriptstyle k}} L(1 - s, \chi_{d}).
\end{equation}

{\it Multiple Dirichlet series attached to moments of quadratic L-functions.} 
Adapting the discussion from \cite{BFH2}, this is a function of several complex 
variables admitting series representations of the form 
\begin{equation} \label{eq: tag 2.3}
Z(s_{1}, \ldots, s_{r + 1})\;\;
= \sum_{\substack{d \, \in \, \mathbb{F}[x] \\ d-\text{monic}}} 
\frac{A(s_{1}, \ldots, s_{r}; d)}{|d|^{s_{r+1}}}  \;\;\;\;\;=
\sum_{\substack{m_{1}\!, \ldots, m_{r} \, \in \, \mathbb{F}[x] \\ m_{1}\!, \ldots, m_{r}-\text{monic}}} \frac{B(s_{r+1}; m_{1}, \ldots, m_{r})}{|m_{1}|^{s_{1}}
\cdots \, |m_{r}|^{s_{r}}} 
\end{equation}
for $s_{1}, \ldots, s_{r + 1}$ with sufficiently large real parts, and whose coefficients are Euler products 
\begin{equation*}
A(s_{1}, \ldots, s_{r}; d) \,= \prod_{\substack{\pi \\ \pi^{l} \parallel d}}
A_{\pi,\,  l}(s_{1}, \ldots, s_{r}, d_{0}) 
\;\;\; \text{and}\;\;\; B(s_{r+1}; m_{1}, \ldots, m_{r})\; = 
\prod_{\substack{\pi \\ \pi^{k_{i}} \parallel m_{i}}} B_{\pi,\, \kappa}(s_{r+1}, M_{0})
\end{equation*}
over all monic irreducibles $\pi \in \mathbb{F}[x].$ Here we set 
$\kappa = (k_{1}, \ldots, k_{r}),$ and for $d, m_{1}, \ldots, m_{r}$ monic polynomials, 
we denoted by $d_{0}$ and $M_{0}$ the monic square-free parts of $d$ and 
$M = m_{1} \cdots \, m_{r},$ respectively. Letting $\widetilde{\chi}_{\scriptscriptstyle M_{0}} (\pi) = (\pi/M_{0})$ and $|\kappa| = k_{1} + \cdots + k_{r},$ we can present the local factors as
$$
A_{\pi,\,  l}(s_{1}, \ldots, s_{r}, d_{0}) 
\,=\,
 \begin{cases} 
 \prod_{i = 1}^{r}(1 - \chi_{_{d_{0}}}\!(\pi)|\pi|^{-s_{i}})^{-1} 
 \cdot \, P_{ \pi,\, l}(s_{1}, \ldots, s_{r}, \chi_{_{d_{0}}}\!(\pi)) 
 & \text{if $l\ge 0$ is even}\\ 
P_{\pi, \, l}(s_{1}, \ldots, s_{r})  
 & \text{if $l$ is odd} 
\end{cases}
$$
and 
$$
B_{\pi,\, \kappa}(s_{r+1}, M_{0}) 
\,=\,
 \begin{cases} 
 (1 - \widetilde{\chi}_{\scriptscriptstyle M_{0}}(\pi)|\pi|^{-s_{r+1}})^{ -1} 
 \cdot \, Q_{\pi,\,  \kappa}(s_{r+1}, \widetilde{\chi}_{\scriptscriptstyle M_{0}}(\pi)) 
 & \text{if $|\kappa|\ge 0$ is even}\\ 
Q_{\pi, \,  \kappa}(s_{r+1})  
 & \text{if $|\kappa|$ is odd}  
\end{cases}
$$ 
for a certain class of polynomials $P_{\pi,\, l}$ (respectively $Q_{\pi,\,  \kappa}$) in 
$|\pi|^{ - s_{1}}\!, \ldots, |\pi|^{ - s_{r}}$ (respectively $|\pi|^{ - s_{r + 1}}$) characterized as follows.

For arbitrary odd prime power $\mathfrak{q},$ $\kappa = (k_{1}, \ldots, k_{r})\in \mathbb{N}^{r}$ and $l\in \mathbb{N},$ there exist (see \cite{BFH2}) polynomials $P_{\scriptscriptstyle l}(t_{1}, \ldots, t_{r}; \mathfrak{q})$ and 
$Q_{\kappa}(t_{r+1}; \mathfrak{q})$ in $t_{1}, \ldots, t_{r}$ (respectively $t_{r + 1}$) 
with coefficients depending on $l$ and $\mathfrak{q}$ (respectively $\kappa$ and $\mathfrak{q}$)
such that:

$(1).$ For sufficiently small $|t_{1}|, \ldots, |t_{r+1}|,$ we have 
\begin{equation*}
\begin{split}
& (1 - t_{1})^{ -1}\cdots \, (1 - t_{r})^{ -1} \!\sum_{l-\text{even}} P_{\scriptscriptstyle l}(t_{1}, \ldots, t_{r}; \mathfrak{q})t_{r+1}^{l} \; 
+ \, 
\sum_{l-\text{odd}} P_{\scriptscriptstyle l}(t_{1}, \ldots, t_{r}; \mathfrak{q})t_{r+1}^{l}\\ 
& = \; (1 - t_{r + 1})^{-1} \! \sum_{|\kappa|-\text{even}} Q_{\kappa}(t_{r+1}; \mathfrak{q})t_{1}^{k_{1}}\cdots \, t_{r}^{k_{r}} \; + \, 
\sum_{|\kappa|-\text{odd}} Q_{\kappa}(t_{r+1}; \mathfrak{q}) t_{1}^{k_{1}}\cdots \, t_{r}^{k_{r}}
\end{split}
\end{equation*} 
i.e., the generating series of the polynomials $P_{\scriptscriptstyle l}(t_{1}, \ldots, t_{r}; \mathfrak{q})$ and 
$Q_{\kappa}(t_{r+1}; \mathfrak{q})$ coincide. We take this series normalized by assuming that $P_{\scriptscriptstyle 0}(0, \ldots, 0; \mathfrak{q}) = 1.$ 

$(2).$ The power series in $t_{1}, \ldots, t_{r+1}$ obtained by expanding 
$$
\sum_{l \, \ge \, 0} P_{\scriptscriptstyle l}(t_{1}, \ldots, t_{r}; \mathfrak{q})t_{r+1}^{l}
$$ 
is absolutely convergent for arbitrary $t_{1}, \ldots, t_{r} \in \mathbb{C},$ provided 
$|t_{r+1}|$ is sufficiently small, and for arbitrary $t_{r+1} \in \mathbb{C},$ provided 
all $|t_{1}|, \ldots, |t_{r}|$ are sufficiently small.

$(3).$ The polynomials $P_{\scriptscriptstyle l}(t_{1}, \ldots, t_{r}; \mathfrak{q})$ are symmetric in the variables $t_{1}, \ldots, t_{r},$ and they satisfy the functional equation 
\begin{equation*}
P_{\scriptscriptstyle l}(t_{1}, t_{2}, \ldots, t_{r}; \mathfrak{q}) \, =\, 
 \begin{cases} 
(\mathfrak{q} t_{1}^2)^{l\slash 2} P_{\scriptscriptstyle l}(1\slash \mathfrak{q}t_{1}, t_{2}, \ldots, t_{r}; \mathfrak{q})
 & \text{if $l$ is even}\\ 
(\mathfrak{q} t_{1}^2)^{(l - 1)\slash 2} P_{\scriptscriptstyle l}(1 \slash \mathfrak{q}t_{1}, t_{2}, \ldots, t_{r}; \mathfrak{q})  
 & \text{if $l$ is odd}.  
\end{cases}
\end{equation*} 

$(4).$ The polynomials $Q_{\kappa}(t_{r+1}; \mathfrak{q})$ satisfy the functional equation 
\begin{equation*}
Q_{\kappa}(t_{r+1}; \mathfrak{q}) \, =\, 
 \begin{cases} 
(\mathfrak{q} t_{r + 1}^2)^{|\kappa| \slash 2} Q_{\kappa}(1\slash \mathfrak{q}t_{r+1}; \mathfrak{q})
& \text{if $|\kappa|$ is even}\\ 
(\mathfrak{q} t_{r + 1}^2)^{(|\kappa| - 1)\slash 2} Q_{\kappa}(1 \slash \mathfrak{q}t_{r+1}; \mathfrak{q})
& \text{if $|\kappa|$ is odd}.  
\end{cases}
\end{equation*} 

$(5).$ If $l$ is odd, $P_{\scriptscriptstyle l}(t_{1}, \ldots, t_{r}; \mathfrak{q}) = P_{\scriptscriptstyle l}(- t_{1}, \ldots, - t_{r}; \mathfrak{q}).$

Defining 
\begin{equation*}
\begin{split}
&P_{\pi,\, l}(s_{1}, \ldots, s_{r}, \chi_{_{d_{0}}}\!(\pi)) = 
P_{\scriptscriptstyle l}(\chi_{_{d_{0}}}\!(\pi) |\pi|^{- s_{1}}\!, \ldots, \chi_{_{d_{0}}}\!(\pi) |\pi|^{- s_{r}}; |\pi|)\;\;\;\; \text{if $l\ge 0$ is even}\\  
& P_{\pi, \, l}(s_{1}, \ldots, s_{r})  = P_{\scriptscriptstyle l}(|\pi|^{- s_{1}}\!, \ldots, |\pi|^{- s_{r}}; |\pi|) \qquad \qquad \qquad \qquad \quad\hskip9.1pt \text{if $l$ is odd}
\end{split}
\end{equation*}
and 
\begin{equation*}
\begin{split}
& Q_{\pi,\,  \kappa}(s_{r+1}, \widetilde{\chi}_{\scriptscriptstyle M_{0}}(\pi)) = 
Q_{\kappa}(\widetilde{\chi}_{\scriptscriptstyle M_{0}}(\pi) |\pi|^{- s_{r+1}}; |\pi|) \;\;\;\; \text{if $|\kappa|\ge 0$ is even} \\
& Q_{\pi, \,  \kappa}(s_{r+1})  = 
Q_{\kappa}(|\pi|^{- s_{r+1}}; |\pi|) \qquad \qquad \qquad \;\;\;\;\;\;\;\hskip0.9pt \text{if $|\kappa|$ is odd}
\end{split}
\end{equation*} 
one checks that, indeed, the identity \eqref{eq: tag 2.3} holds. Note that 
$P_{\scriptscriptstyle l}(t_{1}, \ldots, t_{r}; \mathfrak{q}) = 1$ if $l = 0, 1,$ and hence 
$
A(s_{1}, \ldots, s_{r}; 1) = \zeta_{\mathbb{F}(x)}(s_{1}) \, \cdots \,  \zeta_{\mathbb{F}(x)}(s_{r}),
$ 
and 
$
A(s_{1}, \ldots, s_{r}; d_{0}) = L(s_{1}, \chi_{\scriptscriptstyle d_{0}})\, \cdots \, L(s_{r}, \chi_{\scriptscriptstyle d_{0}})
$ 
if $d = d_{0}$ is monic and square-free; we also have 
$
B(s_{r+1}; m_{1}, \ldots, m_{r}) = L(s_{r+1},  \widetilde{\chi}_{m_{1}\cdots \, m_{r}})
$ 
if the product $m_{1} \cdots \, m_{r}$ is monic and square-free. \!Accordingly, it makes sense to denote the coefficients $A(s_{1}, \ldots, s_{r}; d)$ and $B(s_{r+1}; m_{1}, \ldots, m_{r}),$ 
for arbitrary monic polynomials $d, m_{1}, \ldots, m_{r},$ by 
$
L(s_{1}, \chi_{d}) \, \cdots \, L(s_{r}, \chi_{d})
$ 
and
$
L(s_{r+1},  \widetilde{\chi}_{m_{1}\cdots \, m_{r}}),
$ 
respectively. We hope that our notation is not causing any confusion, especially with 
a product of $r$ {\it imprimitive} $L$-functions in case $d$ is {\it not} a square-free polynomial.

It can be observed that the multiple Dirichlet series \eqref{eq: tag 2.3} can be expressed 
as a power series in $q^{- s_{i}},$ $i = 1, \ldots, r+1,$ for $s_{1}, \ldots, s_{r + 1}$ 
with sufficiently large real parts. \!In what follows, we shall substitute $t_{i} = q^{- s_{i}}$ 
for $i = 1, \ldots, r + 1,$ set $T = (t_{1}, \ldots, t_{r}),$ and denote the power series 
of \eqref{eq: tag 2.3} by $Z(T, t_{r + 1}; q).$ This function satisfies a group of functional 
equations generated by the $r+1$ involutions 
$\alpha_{i}: (T, t_{r + 1}) \to (t_{1}, \ldots, 1\slash q t_{i}, \ldots, t_{r}, \sqrt{q}\, t_{i} t_{r + 1})$ for $1\le i \le r,$ and $\alpha_{r + 1}: (T, t_{r + 1}) \to (\sqrt{q}\, t_{r + 1}T, 1\slash q t_{r+1}).$ Indeed, from the definition of $Z(T, t_{r + 1}; q)$ and \eqref{eq: tag 2.2}, one finds that: 
\begin{equation} \label{eq: tag 2.4}
\begin{split}
& Z(T, t_{r + 1}; q) \,=\,  
\frac{1}{\sqrt{q}\, t_{i}}\bigg(\frac{Z(\alpha_{i}(T, t_{r + 1}); q) - Z(\alpha_{i}(T, - t_{r + 1}); q)}{2}\bigg) \\ 
& \hskip56pt - \, \frac{1 - t_{i}}{t_{i}(1 - qt_{i})}
\bigg(\frac{Z(\alpha_{i}(T, t_{r + 1}); q) + Z(\alpha_{i}(T, - t_{r + 1}); q)}{2} \bigg)
\end{split}
\end{equation}
for $1\le i \le r,$ and 
\begin{equation} \label{eq: tag 2.5}
\begin{split}
& Z(T, t_{r + 1}; q) \,=\,  
\frac{1}{\sqrt{q}\, t_{r + 1}}\bigg( \frac{Z(\alpha_{r + 1}(T, t_{r + 1}); q) - Z(\alpha_{r + 1}(- T, t_{r + 1}); q)}{2}  \bigg)  \\
& \hskip55pt 
- \, \frac{1 - t_{r + 1}}{t_{r + 1}(1 - qt_{r + 1})}\bigg( \frac{Z(\alpha_{r + 1}(T, t_{r + 1}); q) 
+ Z(\alpha_{r + 1}(- T, t_{r + 1}); q)}{2}  \bigg)
\end{split} 
\end{equation} 
where for the latter, we use in addition the quadratic reciprocity. 
\!The group of functional equations is isomorphic to the Coxeter group $W_{r}$ mentioned in the introduction. \!In particular, it is finite if $r\le 3,$ and infinite if $r\ge 4.$ It is straightforward to check that the functional equations \eqref{eq: tag 2.4} and \eqref{eq: tag 2.5} are equivalent to certain recurrence relations among the 
coefficients of the power series $Z(T, t_{r + 1}; q).$ Fortunately, when $r\le 3,$ one can apply these recurrence 
relations as in the proof of Theorem 3.7 in \cite {BD} to determine all the coefficients of $Z(T, t_{r + 1}; q);$ 
we refer the reader to Appendix \ref{A} for an expression of 
$Z(T, t_{r+1}; q)$ as a rational function in each case. When $r \ge 4,$ we just remark for the moment that additional
information is {\it required} to completely characterize the power series $Z(T, t_{r+1}; q);$ the case $r = 4$ 
discussed in \cite{BD} is still very special in many respects.

On the other hand, letting $P(T, t_{r + 1}; \mathfrak{q})$ denote the generating series in $(1),$ the properties 
$(3)$ and $(4)$ satisfied by the polynomials $P_{\scriptscriptstyle l}$ and $Q_{\kappa},$ respectively, translate into the functional equations: 
\begin{equation*}
\begin{split}
& P(T, t_{r + 1}; \mathfrak{q}) \,= 
\frac{1}{\sqrt{\mathfrak{q}}\, t_{i}}\bigg( \frac{P(\alpha_{i}(T, t_{r + 1}); \mathfrak{q}) 
- P(\alpha_{i}(T, - t_{r + 1}); \mathfrak{q})}{2}  \bigg)\\
& \hskip60pt - \, \frac{1 - \mathfrak{q}t_{i}}{\mathfrak{q}t_{i}(1 - t_{i})}\bigg( \frac{P(\alpha_{i}(T, t_{r + 1}); \mathfrak{q}) + P(\alpha_{i}(T, - t_{r + 1}); \mathfrak{q})}{2}  \bigg)
\end{split}
\end{equation*} 
for $i = 1, \ldots, r,$ and 
\begin{equation*}
\begin{split}
& P(T, t_{r + 1}; \mathfrak{q}) \,=\,  
\frac{1}{\sqrt{\mathfrak{q}}\, t_{r + 1}}\bigg( \frac{P(\alpha_{r + 1}(T, t_{r + 1}); \mathfrak{q})
- P(\alpha_{r + 1}(- T, t_{r + 1}); \mathfrak{q})}{2}  \bigg)\\
& \hskip60pt - \, \frac{1 - \mathfrak{q}t_{r + 1}}{\mathfrak{q}t_{r + 1}(1 - t_{r + 1})}
\bigg(  \frac{P(\alpha_{r + 1}(T, t_{r + 1}); \mathfrak{q}) + P(\alpha_{r + 1}(- T, t_{r + 1}); \mathfrak{q})}{2}  \bigg).
\end{split}
\end{equation*} 
As in \cite{CG1}, we call $P(T, t_{r + 1}; \mathfrak{q}),$ for $\mathfrak{q} = |\pi|,$ the $\pi$-part of $Z(T, t_{r + 1}; q).$ We note that there is a slight difference, as explained in \cite[Remark 4.3]{CG1}, between this definition of the $\pi$-part of our multiple Dirichlet series, and the generating series of the $p$-part-coefficient 
$H(p^{k_{1}}\!,\ldots, p^{k_{r}})$ in \cite{BBCFH}, \cite{BBF1}, \cite{BBFH} and \cite{CG2}.

Just as for $Z(T, t_{r + 1}; q),$ the power series $P(T, t_{r + 1}; \mathfrak{q})$ is also uniquely determined 
by its properties, when $r\le 3.$ Then, we must have that 
\begin{equation} \label{eq: tag 2.6}
P(T, t_{r + 1}; \mathfrak{q}) = Z(\mathfrak{q}T, \mathfrak{q}t_{r + 1}; 1\slash \mathfrak{q}) \qquad \;\; (\text{for $r\le 3$})
\end{equation} 
as the power series in the right-hand side satisfies the required properties. The reader might want 
to verify the relevant facts about $Z(\mathfrak{q}T, \mathfrak{q}t_{r + 1}; 1\slash \mathfrak{q}),$ when $r\le 3,$ 
using the expression of $Z(T, t_{r+1}; q)$ given in Appendix \ref{A}. Unfortunately, when $r\ge 4$ 
the properties $(1)$ -- $(5)$ are insufficient to determine $P(T, t_{r + 1}; \mathfrak{q}),$ 
the obvious reason being that in this case there exist non-trivial power series in $t_{1}, \ldots, t_{r + 1}$ 
invariant under the group of functional equations. For example, if $r = 4$ a power series in the 
variable $t_{1}^{}\cdots \, t_{4}^{}t_{5}^{2}$ is, indeed, invariant under this group. Very shortly, 
we shall propose a way to remedy this deficiency, but before doing so, let us 
conclude this section with the following remark.

\vskip10pt
\begin{remark} \!As indicated in the introduction, every choice of the generating series $(1)$ gives rise, as above (with just some standard adjustments), to the corresponding 
number field version of \eqref{eq: tag 2.3}. When $r \le 3,$ this Weyl group multiple Dirichlet series is a Whittaker coefficient of a minimal parabolic Eisenstein series on the double cover of a split, semisimple, simply-connected algebraic group. To investigate the possibility of extending this result, it is important (as previous experience shows) to have the potential candidates for the generating series $P(T, t_{r + 1}; \mathfrak{q})$ ($r\ge 4$) available beforehand. 
\end{remark}

\section{The series $Z(T, t_{r + 1}; q)$ and moments of character sums}\label{section 3}

To correct the deficiency indicated at the end of the previous section, 
we begin by imposing more stringent conditions on the multiple Dirichlet series \eqref{eq: tag 2.3}. It is interesting to note that our assumptions make {\it no} reference to the group $W_{r}$ of functional equations.

As in \cite{CFH}, we first require $Z(s_{1}, \ldots, s_{r + 1})$ to be of the form
\begin{equation} \label{eq: tag 3.1}
Z(s_{1}, \ldots, s_{r + 1})\;\;\,   = 
\sum_{\substack{m_{1}\!, \ldots, m_{r}, \, d \, \in \, \mathbb{F}[x] \\  d = d_{0}^{}d_{1}^2, \; d_{0}^{}\; \text{square-free}}}\,  \frac{\chi_{_{d_{0}}}\!(\hat{m}_{1}) \, \cdots \,  
\chi_{_{d_{0}}}\!(\hat{m}_{r})}{|m_{1}|^{s_{1}}
\cdots \, |m_{r}|^{s_{r}} |d|^{s_{r + 1}}}
\cdot a(m_{1}, \ldots, m_{r}, d) 
\end{equation}
the sum taken over all $(r+1)$-tuples of monic polynomials, 
with $\hat{m}_{i}$ the part of $m_{i}$ coprime to $d_{0}$ for $i = 1, \ldots, r.$ 
The coefficients $a(m_{1}, \ldots, m_{r}, d)$ are assumed to be multiplicative, 
i.e., 
\begin{equation} \label{eq: tag 3.2}
a(m_{1}, \ldots, m_{r}, d)\,\,= 
\prod_{\substack{\pi^{k_{i}}\parallel m_{i} \\ \pi^{l} \parallel d}} a(\pi^{k_{1}}\!, \ldots, \pi^{k_{r}}\!,\pi^{l})   
\end{equation} 
the product being taken over monic irreducibles $\pi  \in  \mathbb{F}[x].$
Now, express the series \eqref{eq: tag 3.1} as  
\begin{equation} \label{eq: tag 3.3}
Z(T, t_{r + 1}; q)\;\;\,  = \sum_{k_{1}\!, \ldots,  k_{r}\!,\, l \, \geq \, 0} \,
\lambda(k_{1}, \ldots, k_{r}, l; q)\, t_{1}^{k_{1}} \cdots \, t_{r}^{k_{r}} t_{r+1}^{l}
\end{equation} 
where, as before, we substituted $t_{i} = q^{- s_{i}},$ for $i = 1, \ldots, r + 1,$ and 
set $T = (t_{1}, \ldots, t_{r}).$ We require 
$a(\pi^{k_{1}}\!, \ldots, \pi^{k_{r}}\!,\pi^{l})$ and 
$\lambda(k_{1}, \ldots, k_{r}, l; q)$ be such that:
\begin{enumerate}[label=(\roman*)]
\item {\it Initial Conditions.} The subseries
$$
\sum_{k_{1}\!, \ldots, k_{r} \, \ge \, 0}
\lambda(k_{1}, \ldots, k_{r}, 0; q)\, t_{1}^{k_{1}} \cdots \, t_{r}^{k_{r}}
=\; \prod_{i = 1}^{r} \frac{1}{1 - qt_{i}}
$$ 
i.e., is a product of $r$ zeta functions. In addition,
$$
\sum_{k_{1}\!, \ldots, k_{r} \, \ge \, 0}
\lambda(k_{1}, \ldots, k_{r}, 1; q)\, t_{1}^{k_{1}}\cdots \, t_{r}^{k_{r}} \;\, = 
\sum_{\substack{\deg d = 1 \\ d-\text{monic}}}
L(s_{1}, \chi_{d}) \, \cdots \, L(s_{r}, \chi_{d}) \,=\, q 
$$ 
and 
$$
\sum_{l \, \ge \, 0}\,  \lambda(0, \ldots, 0, l; q)\, t_{r + 1}^{l}
= \frac{1}{1 - qt_{r + 1}}.
$$ 
In particular, $a(1, \ldots, 1, 1) = \lambda(0, \ldots, 0, 0; q) = 1.$

\item {\it Extension of \eqref{eq: tag 2.6}.} Fix nonnegative integers $k_{1}, \ldots, k_{r}, l.$ Then, for each odd prime $p,$ there exists a finite number of complex numbers $c_{j}$ and $w_{j} = p^{a_{j} + \, i b_{j}}$ ($a_{j}, b_{j}\in \mathbb{R}$) such that 
\begin{equation*}
\lambda(k_{1}, \ldots, k_{r}, l; p^{n}) = \sum_{j} c_{j}(w_{j}^{n} + \overline{w}_{j}^{n}) 
\qquad \text{(for all $n\in \mathbb{Z}$)}
\end{equation*} 
all data in the right-hand side depending on $k_{1}, \ldots, k_{r}, l$ and $p.$ Moreover, 
\begin{equation*}
a(\pi^{k_{1}}\!, \ldots, \pi^{k_{r}}\!, \pi^{l})
\,=\, |\pi|^{k_{1} + \cdots + k_{r} + l}\lambda(k_{1}, \ldots, k_{r}, l; 1\slash |\pi|)
\end{equation*}
for every monic irreducible $\pi \in \mathbb{F}[x].$

\item {\it Dominance.} For every $(r+1)$-tuple 
$(k_{1}, \ldots, k_{r}, l) \in \mathbb{N}^{r + 1}$ 
with $k_{1} + \cdots + k_{r} + l \ge 2,$ 
the coefficient $\lambda(k_{1}, \ldots, k_{r}, l; q)$ represents 
the dominant half (as in the example below) of      
\begin{equation*}
\lambda(k_{1}, \ldots, k_{r}, l; q)\, -\, 
q^{k_{1} + \cdots + k_{r} + l + 1}\lambda(k_{1}, \ldots, k_{r}, l; 1\slash q) 
\end{equation*} 
that is, $\mathrm{min}\{a_{j}\} > (k_{1} + \cdots + k_{r} + l + 1)\slash 2.$ 
\end{enumerate} 

\vskip10pt
\begin{exmp}
When $r = 10, \,k_{1} = \cdots = k_{r} = 1,\,l = 3,$ and $q$ 
is an odd prime, it turns out (see Section \ref{section 5}) that 
\begin{equation*}
\lambda(1, \ldots, 1, 3; q)\, -\, 
q^{14}\lambda(1, \ldots, 1, 3; 1\slash q) \, =\,  
42 q^{8} - 42 q^{6} -  q^{2}\tau(q) + q\tau(q)  
\end{equation*}
where $\tau(q)$ is Ramanujan's tau function; we recall, see \cite{Del1}, that $\tau(q) = \alpha_{q} + \overline{\alpha}_{q}$ with $|\alpha_{q}| = q^{11\slash 2}.$ By (iii), it follows that
$
\lambda(1, \ldots, 1, 3; q) =  
42 q^{8}  -  q^{2}\tau(q).
$ 
(Here we set $\tau(1\slash q) = q^{-11} \tau(q).$) 
\end{exmp}

To obtain concrete information about the coefficients of 
the multiple Dirichlet series \eqref{eq: tag 3.1}, fix $k_{1},\ldots, k_{r}, l \in \mathbb{N}.$ 
Comparing the coefficients $(k_{1}, \ldots, k_{r}, l)$ of \eqref{eq: tag 3.1} and \eqref{eq: tag 3.3}, we see that 
\begin{equation} \label{eq: tag 3.4}
\lambda(k_{1}, \ldots, k_{r}, l; q) \, 
= \,\sum\, \chi_{_{d_{0}}}\!(\hat{m}_{1}) \, \cdots \,  
\chi_{_{d_{0}}}\!(\hat{m}_{r}) \, a(m_{1}, \ldots, m_{r}, d)
\end{equation} 
with the sum over all monic polynomials $m_{1}, \ldots, m_{r}, d$ 
of degrees $k_{1}, \ldots, k_{r}, l,$ respectively. \!By \eqref{eq: tag 3.2} and (ii), 
the dependence of the coefficients 
$a(m_{1}, \ldots, m_{r}, d)$ on monic irreducible polynomials 
$\pi$ is only upon their degrees $\mu_{\pi},$ and the exact powers 
$\nu_{1, \pi}, \ldots, \nu_{r, \pi},\, \nu_{\pi}$ 
to which $\pi$ is dividing $m_{1}, \ldots, m_{r}, d.$ 
This simple observation will be used to express the right-hand side of \eqref{eq: tag 3.4} in terms of classical moments of (projective) character sums defined for partitions 
$\mu = (\mu_{_1}\ge \cdots \ge \mu_{_n} \ge 1)$ and 
$\gamma = (1^{\gamma_{_{1}}}\!, \ldots, m^{\gamma_{_{m}}} \!)$ by   
\begin{equation*}
M_{\mu, \gamma}(q) \;\, =  
\sum_{d_{0} \, \in \, \mathscr{P}(\mu)} \, \prod_{j = 1}^{m}  \left(-\, \frac{1 + (- 1)^{\deg d_{0}}}{2}  - \sum_{\theta \, \in  \, \mathbb{F}_{\! j}} \chi_{_{j}}(d_{0}(\theta))\right)^{\!\gamma_{_{ j}}}\!.
\end{equation*}
Here the sum is over the subset $\mathscr{P}(\mu)\subset \mathbb{F}[x]$ of all monic square-free polynomials $d_{0}$ having factorization type (over $\mathbb{F}$) into irreducibles
$
d_{0} = \prod_{i = 1}^{n} \pi_{_{ i}}
$ 
with $\deg \pi_{_{ i}} = \mu_{_i};$ 
for $j \ge 1,$ we put $\mathbb{F}_{\! j} = \mathbb{F}_{\! q^{j}},$ 
and $\chi_{_{j}} : = \chi \circ \mathrm{N}_{_{\mathbb{F}_{\! j}\slash \mathbb{F}}}$ 
is the non-trivial real character of $\mathbb{F}_{\! j}^{\times},$ extended to $\mathbb{F}_{\! j}$ 
by setting $\chi_{_{j}}(0) := 0.$ The connection to moments of character sums 
will play a major role in this work. It will become apparent that 
the conditions (i) -- (iii) together with the identity \eqref{eq: tag 3.4} 
allow a recursive computation of the coefficients 
$
\lambda(k_{1}, \ldots, k_{r}, l; q),
$ 
provided certain precise information about the moments 
$M_{\mu, \gamma}(q)$ could be obtained.

We begin by first summing over all $m_{1}, \ldots, m_{r}, d$ 
with prescribed factorizations into irreducibles by giving a partition  
$\mu = (\mu_{1}\ge \cdots \ge \mu_{n} \ge 1)$ together with a collection of 
$(r+1)$-tuples $(\nu_{1j}, \ldots, \nu_{rj},\, \nu_{j})\in \mathbb{N}^{r+1},$ 
with $1\le j\le n,$ such that
\begin{equation} \label{eq: tag 3.5}
\sum_{j = 1}^{n}\,  \nu_{ij}\mu_{j} \,=\,  k_{i} \;\;\; \text{and} \;\;\;
\sum_{j = 1}^{n}\, \nu_{j}\mu_{j}  \,=\, l \qquad (\text{for $i = 1, \ldots, r$}).
\end{equation} 
To $\mu$ and $\mathcal{N} : = (\nu_{1j}, \ldots, \nu_{rj},\, \nu_{j})_{1\le j\le n}$ as above, 
we associate the coefficients 
\begin{equation} \label{eq: tag 3.6}
\lambda_{\mu}(\mathcal{N}; q) := \prod_{j=1}^{n}\,
q^{\mu_{j}(\nu_{1j} + \cdots + \nu_{rj} + \nu_{j})}
\lambda(\nu_{1j}, \ldots, \nu_{rj},\, \nu_{j}; 1\slash q^{\mu_{j}})
\end{equation} 
and certain character sums $S_{\mu, \, \mathcal{N}}$ 
defined as follows. Split the set of indices $1\le j \le n$ 
into two parts $J_{0}$ and $J_{1}$ according as $\nu_{j}$ is 
odd or even. Let $\nu := (\nu_{1},\ldots,\nu_{n}),$ and let 
$\mathscr{P}_{\! \nu}(\mu)\subset \mathbb{F}[x]$ denote the set of all 
monic square-free polynomials $d_{0}$ having factorization type 
into irreducibles, 
$
\prod_{j\in J_{0}} \pi_{j}
$ 
with $\deg \pi_{j} = \mu_{j}.$ 
With this notation, we define 
\begin{equation*}
S_{\mu,\, \mathcal{N}} \, :=\,  C_{\mu, \nu} \, \cdot
\sum_{d_{0} \, \in \, \mathscr{P}_{\!\nu}(\mu)} \;\,
\sideset{}{^*}\sum_{(\theta_{j})_{j\in J_{1}}}\, \prod_{j\, \in \, J_{1}} 
\chi_{\mu_{_{ j}}}\!(d_{0}(\theta_{j}))^{\nu_{1j} + \cdots + \nu_{rj}}
\end{equation*}
where ${\sum}^{*}$ indicates that we are summing over all tuples 
$(\theta_{j})_{j\in J_{1}}$ such that $\mathbb{F}(\theta_{j}) = \mathbb{F}_{\! \mu_{_{j}}}$ 
for every $j\in J_{1},$ and $\sigma(\theta_{j}) \ne \theta_{j'}$ for all 
$\sigma \in \mathrm{Gal}(\overline{\mathbb{F}}\slash \mathbb{F}),$ and all $j\ne j'.$ 
The normalizing constant is given by 
$$
C_{\mu, \nu} =\, \prod_{i = 1}^{m}\frac{1}{n_{i}!\, \mu_{j_{_{i}}}^{n_{i}}} 
$$ 
where $\mu_{j_{_{1}}}\!, \ldots, \mu_{j_{_{m}}}$ are the distinct components of 
$
(\mu_{j})_{j\in J_{1}},
$ 
and $n_{i},$ for $i = 1, \ldots, m,$ is the multiplicity of $\mu_{j_{_{i}}}$ in $J_{1},$ 
i.e., the cardinality of the set 
$
J_{1}^{(i)}  =\{j\in J_{1}  : \mu_{j} = \mu_{j_{_{i}}} \}.
$

Recalling that $\chi_{_{d_{0}}}\!(\pi) = \chi_{_{j}}(d_{0}(\theta))$ 
(see \cite{R}) for monic $d_{0}, \pi \in \mathbb{F}[x],$ with $d_{0}$ 
square-free and $\pi$ irreducible of degree $j,$ and 
$\theta \in \overline{\mathbb{F}}$ any root of $\pi,$ 
we see that the identity \eqref{eq: tag 3.4} can be presented 
in the equivalent form
\begin{equation} \label{eq: tag 3.7}
\lambda(\kappa, l; q) \, = \,\sum \lambda_{\mu}(\mathcal{N}; q) S_{\mu,\, \mathcal{N}}
\qquad \;\;\; ((\kappa, l):= (k_{1}, \ldots, k_{r}, l))
\end{equation} 
with the sum over all pairs $(\mu, \mathcal{N})$ satisfying 
\eqref{eq: tag 3.5} and the following simple condition: \!for all 
$j\in J_{0}$ and $j' \in J_{1}$ such that $\mu_{j} = \mu_{j'},$ we shall 
always assume that $j < j'.$ 

From the initial conditions (i), we compute 
\begin{equation} \label{eq: tag 3.8}
\lambda(k_{1}, \ldots, k_{r}, 0; q) \, =\, q^{k_{1} + \cdots + k_{r}}
\end{equation}
for all $k_{1}, \ldots, k_{r} \in \mathbb{N},$ and  
\begin{equation} \label{eq: tag 3.9}
\lambda(k_{1}, \ldots, k_{r}, 1; q)\; = \; 
 \begin{cases}
 q  & \text{if $k_{1} = \cdots  = k_{r} = 0$}\\ 
 0 & \text{otherwise}.  
\end{cases}
\end{equation}
Thus, it suffices to focus our investigation on the 
coefficients $\lambda(\kappa, l; q)$ with $l \ge 2.$

\vskip7pt 
The following proposition computes the part of the sum in \eqref{eq: tag 3.7} corresponding 
to coefficients $\lambda_{\mu}(\mathcal{N}; q)$ containing a factor 
$\lambda(\kappa', l'; 1\slash q)$ with $l' = l.$ 

\vskip10pt 
\begin{proposition} \label{Proposition 3.10} --- For 
$\kappa = (k_{1},\ldots, k_{r})\in \mathbb{N}^{r}$ and $l \ge 2,$ let 
$a(\kappa, l; q)$ denote the part of the sum in \eqref{eq: tag 3.7} over all pairs 
$(\mu, \mathcal{N})$ for which $\lambda_{\mu}(\mathcal{N}; q)$ does not contain 
a factor of the form $\lambda(\kappa', l; 1\slash q).$ Then, 
\begin{equation*}
\lambda(\kappa, l; q) \, -\, a(\kappa, l; q)
\;=\; 
 \begin{cases}
q^{|\kappa| + l + 1}
\lambda(\kappa, l; 1\slash q)    & \text{if $l$ is odd}\\
{\displaystyle \sum_{\kappa' \, \le \, \kappa}} \, 
q^{|\kappa| + l - \mathrm{r}(\kappa - \kappa') + 1}\, 
(q-1)^{\mathrm{r}(\kappa - \kappa')}\lambda(\kappa', l; 1\slash q) & \text{if $l$ is even}  
\end{cases}
\end{equation*}
where $|\kappa| = k_{1} + \cdots + k_{r},$ and $\mathrm{r}(\kappa - \kappa')$ 
is the number of non-zero components of $\kappa - \kappa'.$ 
\end{proposition}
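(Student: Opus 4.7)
The plan is to isolate, within the sum \eqref{eq: tag 3.7}, the pairs $(\mu, \mathcal{N})$ omitted from $a(\kappa, l; q)$ and compute their contribution directly. By \eqref{eq: tag 3.6}, the product $\lambda_\mu(\mathcal{N}; q)$ contains a factor of the form $\lambda(\kappa', l; 1/q)$ exactly when there is an index $j_0$ with $\mu_{j_0} = 1$ and $\nu_{j_0} = l$; the constraint $\sum_j \nu_j \mu_j = l$ together with $\mu_j \geq 1$ then forces $\nu_j = 0$ for every $j \neq j_0$. Translating this back to the underlying factorization of $(m_1, \ldots, m_r, d)$, it says precisely that $d$ has a single irreducible factor, of degree one and multiplicity $l$: that is, $d = (x-a)^l$ for some $a \in \mathbb{F}$.

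For each such $a$, write $m_i = (x-a)^{k_i'} \tilde m_i$ with $(x-a) \nmid \tilde m_i$ and $\kappa' = (k_1', \ldots, k_r') \leq \kappa$. Multiplicativity \eqref{eq: tag 3.2} together with condition (ii) gives
\begin{equation*}
a(m_1, \ldots, m_r, (x-a)^l) \;=\; q^{|\kappa'| + l}\,\lambda(\kappa', l; 1/q)\cdot a(\tilde m_1, \ldots, \tilde m_r, 1),
\end{equation*}
and since $\hat m_i = \tilde m_i$ in either parity case (trivially when $d_0 = 1$, by coprimality when $d_0 = x-a$), the omitted contribution equals $\sum_{a \in \mathbb{F}} \sum_{\kappa' \leq \kappa} q^{|\kappa'| + l}\lambda(\kappa', l; 1/q)\,S_l(\kappa - \kappa'; a)$, where
\begin{equation*}
S_l(\kappa''; a) \;:=\; \sum_{\substack{\tilde m_i \text{ monic, } \deg \tilde m_i = k_i'' \\ (x-a)\, \nmid\, \tilde m_i}}\chi_{d_0}(\tilde m_1) \cdots \chi_{d_0}(\tilde m_r)\,a(\tilde m_1, \ldots, \tilde m_r, 1).
\end{equation*}

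The sum $S_l(\kappa''; a)$ is then computed by an Euler-product argument. By \eqref{eq: tag 3.8}, $a(\pi^{k_1}, \ldots, \pi^{k_r}, 1) = 1$ for every monic irreducible $\pi$, so multiplicativity of both $a$ and $\chi_{d_0}$ yields
\begin{equation*}
\sum_{\kappa'' \, \ge \, 0} S_l(\kappa''; a)\,t_1^{k_1''} \cdots t_r^{k_r''} \;=\; \prod_{\pi \, \neq \, x-a}\,\prod_{i=1}^r \bigl(1 - \chi_{d_0}(\pi)\,t_i^{\deg \pi}\bigr)^{-1}.
\end{equation*}
For odd $l$ we have $d_0 = x - a$ and the right-hand side equals $\prod_i L(s_i, \chi_{x-a})$ with $t_i = q^{-s_i}$; since $\deg(x-a) = 1$ forces $L(s, \chi_{x-a}) \equiv 1$ (a direct orthogonality computation over $\mathbb{F}_q^\times$), one finds $S_l(\kappa''; a) = \delta_{\kappa'', 0}$, and summing over $a$ collapses the expression to $q \cdot q^{|\kappa| + l}\,\lambda(\kappa, l; 1/q) = q^{|\kappa|+l+1}\,\lambda(\kappa, l; 1/q)$. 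For even $l$ we have $d_0 = 1$, the character is trivial, and the product collapses to $\prod_i (1 - t_i)/(1 - qt_i)$, giving $S_l(\kappa''; a) = \prod_{i \,:\, k_i'' > 0} q^{k_i'' - 1}(q-1)$, independent of $a$. Summing over $a$ and over $\kappa' \leq \kappa$, and simplifying the exponent via $\sum_{i \,:\, k_i > k_i'}(k_i - k_i' - 1) = |\kappa - \kappa'| - \mathrm{r}(\kappa - \kappa')$, yields the stated formula in the even case.

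The only real subtlety is the combinatorial bookkeeping in the first step — verifying that a factor $\lambda(\kappa', l; 1/q)$ in $\lambda_\mu(\mathcal{N}; q)$ corresponds uniquely to the configuration $d = (x-a)^l$. Once this is settled the argument splits cleanly along the parity of $l$ and reduces to the two parallel Euler-product evaluations above, both of which are essentially automatic given the input $a(\pi^{k_1}, \ldots, \pi^{k_r}, 1) = 1$ coming from \eqref{eq: tag 3.8}.
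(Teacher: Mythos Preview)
Your proof is correct and follows essentially the same approach as the paper's own proof. Both arguments identify that the omitted pairs $(\mu,\mathcal{N})$ correspond precisely to $d=(x-\theta)^{l}$ for some $\theta\in\mathbb{F}$, then compute the resulting contribution as the coefficient of $T^{\kappa-\kappa'}$ in $\sum_{\theta}\prod_{i}L(s_{i},\chi_{x-\theta})=q$ (odd $l$) or in $q\prod_{i}(1-t_{i})/(1-qt_{i})$ (even $l$), and sum over $\kappa'\le\kappa$; your explicit bookkeeping via $S_{l}(\kappa'';a)$ is just a more detailed unpacking of the same Euler-product evaluation.
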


\begin{proof} Let $\lambda_{\mu}(\mathcal{N}; q)$ be a coefficient 
containing a factor of the form $\lambda(\kappa', l; 1\slash q).$ Since 
$$
\lambda_{\mu}(\mathcal{N}; q)
\,=\, \prod_{j=1}^{n}\,
q^{\mu_{j}(\nu_{1j} + \cdots + \nu_{rj} + \nu_{j})}
\lambda(\nu_{1j}, \ldots, \nu_{rj},\, \nu_{j}; 1\slash q^{\mu_{j}})
 \;\; \text{with}\;\; 
\sum_{j = 1}^{n}\, \nu_{j}\mu_{j}  \,=\, l
$$ 
it follows that $\nu_{j_{_{0}}} \hskip-4pt = l$ and $\mu_{j_{_{0}}} \hskip-4pt = 1$ 
for some $1\le j_{_{0}}\le n,$ and $\nu_{j}  =  0$ for $j\ne j_{_{0}}.$ 
By \eqref{eq: tag 3.8}, we see that
$
\lambda_{\mu}(\mathcal{N}; q)
\,=\,
q^{|\kappa'| + l}\lambda(\kappa', l; 1\slash q).
$ 
The contribution of the part in \eqref{eq: tag 3.7} attached to 
this coefficient comes then from $(r+1)$-tuples of 
monic polynomials 
$$
((x - \theta)^{k_{1}'} m_{1}, \ldots, (x - \theta)^{k_{r}'} m_{r}, (x - \theta)^{l})   
\qquad \;\;\; (\text{with $\theta \in \mathbb{F}$ and $m_{i}(\theta)\ne 0$ 
for $i = 1, \ldots, r$})  
$$ 
and is obtained immediately from the coefficient 
$\kappa - \kappa' = (k_{1}^{} - k_{1}', \ldots, k_{r}^{} - k_{r}')$ of 
$$
\sum_{\theta \, \in \, \mathbb{F}} \, \sum_{m_{1}\!, \ldots, m_{r}}\frac{\chi_{x - \theta}(m_{1})\, \cdots \, \chi_{x - \theta}(m_{r})}{|m_{1}|^{s_{1}}\cdots \, |m_{r}|^{s_{r}}}  \, = \,  
\sum_{\theta \, \in \, \mathbb{F}} \; \prod_{i=1}^{r}L(s_{i}, \chi_{x - \theta}) = q \;\;\; (\text{for $l$ odd})
$$ 
and 
$$
q\cdot \prod_{i=1}^{r} \frac{1 - q^{-s_{i}}}{1 - q^{1-s_{i}}}
\;\;\; (\text{for $l$ even}).
$$
The coefficient 
$(k_{1}^{} - k_{1}', \ldots, k_{r}^{} - k_{r}')$ of the last product is 
$
q^{|\kappa| + l - \mathrm{r}(\kappa - \kappa') + 1} 
(q-1)^{\mathrm{r}(\kappa - \kappa')}.
$ 

The proposition now follows by summing over all $\kappa'  \le \kappa.$  
\end{proof}

\begin{remark}\label{rem2} \hskip-2ptOne can use a similar argument as in the proof of 
Proposition \ref{Proposition 3.10} to obtain the part of the sum in 
$
a(\kappa, l; q)
$ 
corresponding to an arbitrary coefficient 
$
\lambda_{\mu}(\mathcal{N}; q).
$ 
Indeed, recall that to a given pair $(\mu, \mathcal{N}),$ we associated a 
set of polynomials $\mathscr{P}_{\! \nu}(\mu),$ and we split
the set of indices $1\le j \le n$ into two parts $J_{0}$ and $J_{1}.$ 
By \eqref{eq: tag 3.6} and \eqref{eq: tag 3.8}, we have  
$$
\lambda_{\mu}(\mathcal{N}; q)\, = \prod_{j \, \notin \, J_{1}^{0}}\,
q^{\mu_{j}(\nu_{1j} + \cdots + \nu_{rj} + \nu_{j})}
\lambda(\nu_{1j}, \ldots, \nu_{rj},\, \nu_{j}; 1\slash q^{\mu_{j}})
$$ 
where $J_{1}^{0} := \{j \in J_{1}: \nu_{j} = 0 \}.$ 
Set 
$
\mu_{_{0}}  := (\mu_{j})_{j \in \{1, \ldots, n \} \backslash J_{1}^{0}}
$ 
and 
$
\mathcal{N}_{0}  : = (\nu_{1j}, \ldots, \nu_{rj},\, \nu_{j})_{j \in \{1, \ldots, n \} 
\backslash J_{1}^{0}},
$ 
i.e., $(\mu_{_{0}}, \mathcal{N}_{0})$ is the pair obtained by removing the components of $\mu$ and $\mathcal{N}$ corresponding to all elements in $J_{1}^{0}.$ For convenience, 
let us refer to $(\mu_{_{0}}, \mathcal{N}_{0})$ as a {\it primitive} pair,
and to $(\mu, \mathcal{N})$ as a pair {\it derived} from $(\mu_{_{0}}, \mathcal{N}_{0}).$ Since
$
\lambda_{\mu}(\mathcal{N}; q) = \lambda_{\mu_{_{0}}}\!(\mathcal{N}_{0}; q),
$ 
we can express the identity \eqref{eq: tag 3.7} as 
\begin{equation} \label{eq: tag 3.12}
\lambda(\kappa, l; q) \;\; = \sum_{(\mu_{_{0}},\, \mathcal{N}_{0})-\text{primitive}}
\lambda_{\mu_{_{0}}}\!(\mathcal{N}_{0}; q) 
\sum_{(\mu,\, \mathcal{N})-\text{derived from $(\mu_{_{0}},\, \mathcal{N}_{0})$}} S_{\mu,\, \mathcal{N}}.
\end{equation}
To achieve our goal, consider 
$
\mu_{_{0}}  = (\mu_{1},\ldots, \mu_{n_{_{0}}})
$ 
and 
$
\mathcal{N}_{0} = (\nu_{1j}, \ldots, \nu_{rj},\, \nu_{j})_{1 \le j \le n_{_{0}}}
$ 
giving a primitive pair $(\mu_{_{0}},\, \mathcal{N}_{0}).$ Let 
$\nu_{_{0}} \! = (\nu_{1}, \ldots, \nu_{n_{_{0}}}),$ and let 
$J_{0}$ and $J_{1}$ be, as before, the sets of indices 
$1\le j \le n_{_{0}}$ associated to $\nu_{_{ 0}}.$ If 
$\kappa' = (k_{1}', \ldots, k_{r}'),$ with 
$k_{i}' = \sum_{1\le j  \le  n_{_{0}}}  \nu_{ij}\mu_{j}$ for $i = 1, \ldots, r,$ 
we see that the inner sum in \eqref{eq: tag 3.12} (corresponding to 
$(\mu_{_{0}},\, \mathcal{N}_{0})$) is  
\begin{equation} \label{eq: tag 3.13}
C_{\mu_{_{0}},\,  \nu_{_{0}}}  \, \cdot 
\sum_{d_{0} \, \in \, \mathscr{P}_{\! \nu_{_{0}}}\!(\mu_{_{0}})} \;\;
\sideset{}{^*}\sum_{\substack{(\theta_{j})_{j\in J_{1}}}}
\left(\, \prod_{j \, \in \, J_{1}} 
\chi_{\mu_{_{j}}}\!(d_{0}(\theta_{j}))^{\nu_{1j} + \cdots + \nu_{rj}}\right)  
b_{d_{0}}^{(\theta_{j})}(\kappa - \kappa').
\end{equation} 
Here,
\begin{equation} \label{eq: tag 3.14}
b_{d_{0}}^{(\theta_{j})}(\kappa - \kappa') \,  = \,
\text{Coefficient}_{(k_{1}^{} - k_{1}', \ldots, k_{r}^{} - k_{r}')} \Bigg[ 
\prod_{i = 1}^{r}\; (1 - q t_{i})^{-1}  \prod_{j \, \in \, J_{1}} (1 -  t_{i}^{\mu_{j}}) \Bigg] 
\qquad (\text{if $d_{0} = 1$}).
\end{equation}
When $d_{0}$ is non-constant, 
\begin{equation} \label{eq: tag 3.15}
b_{d_{0}}^{(\theta_{j})}(\kappa - \kappa') \,  = \,
\text{Coefficient}_{(k_{1}^{} - k_{1}', \ldots, k_{r}^{} - k_{r}')} 
\Bigg[\prod_{i = 1}^{r}\; (1 - t_{i})^{\epsilon_{\scriptscriptstyle d_{0}}} P_{\scriptscriptstyle C_{d_{0}}}\!(t_{i}) \prod_{j \, \in \, J_{1}} 
(1 - \chi_{\mu_{_{ j}}}\!(d_{0}(\theta_{ j}))\, t_{i}^{\mu_{j}})
\Bigg]
\end{equation}
where $P_{\scriptscriptstyle C_{d_{0}}}\!(t)$ denotes, as in Section \ref{section 2}, the numerator of the zeta function $Z_{\scriptscriptstyle C_{d_{0}}}\!(t),$ and 
$\epsilon_{\scriptscriptstyle d_{0}} \! = 0$ or $1$ according as $\deg d_{0}$ 
is odd or even. As $\deg d_{0}\equiv l \pmod 2,$ the exponent 
$\epsilon_{\scriptscriptstyle d_{0}} \! = \epsilon_{\scriptscriptstyle l}$ depends, in fact, only on the parity of $l.$ 
\end{remark}

\vskip10pt
\begin{exmp}\label{exmpl=2} Let us take $l = 2.$ By Proposition \ref{Proposition 3.10}, we know that 
\begin{equation} \label{eq: tag 3.16}
\lambda(\kappa, 2; q)\;=\; a(\kappa, 2; q)\, + 
\sum_{\substack{\kappa' \, \le \, \kappa}}\, 
q^{|\kappa| - \mathrm{r}(\kappa - \kappa') + 3} 
(q-1)^{\mathrm{r}(\kappa - \kappa')}\lambda(\kappa', 2; 1\slash q).
\end{equation} 
Note that every coefficient $\lambda_{\mu}(\mathcal{N}; q)$ 
appearing in $a(\kappa, 2; q)$ is either $0$ or $1;$ 
the product \eqref{eq: tag 3.6} involves only coefficients 
$
\lambda(\nu_{1j}, \ldots, \nu_{rj},\, \nu_{j}; 1\slash q^{\mu_{j}})
$ 
with $\nu_{j} \in \{0, 1 \},$ from which, by \eqref{eq: tag 3.8} and \eqref{eq: tag 3.9}, one deduces that 
$\lambda_{\mu}(\mathcal{N}; q) \in \{0 , 1\}.$ Moreover, the above remark implies that 
$$ 
a(\kappa, 2; q)\;\;\;\, = \sum_{\substack{\deg d_{0} = 2  \\   d_{0}\; \text{monic and square-free}}}
b_{d_{0}}\!(\kappa)
$$ 
where $b_{d_{0}}\!(\kappa)$ is the coefficient 
$(k_{1},\ldots, k_{r})$ of the product 
$$
\prod_{i = 1}^{r}\, (1 - t_{i}) P_{\scriptscriptstyle C_{d_{0}}}\!(t_{i}) \, 
=\, \prod_{i = 1}^{r}\, (1 - t_{i}).
$$ 
Hence,
$$
a(\kappa, 2; q)
\;=\; 
 \begin{cases} 
(- 1)^{|\kappa|}\, q (q-1) 
    & \text{if $k_{i} = 0$ or $1$ for all $i = 1, \ldots, r$}\\ 
0 & \text{otherwise}.  
\end{cases}
$$ 
The identity \eqref{eq: tag 3.16} allows a recursive computation of the coefficients 
$\lambda(\kappa, 2; q).$ To see this, it is convenient to simplify 
notation by setting 
$
\lambda_{2}(r; q) := \lambda(1,\ldots,1, 2; q).
$ 
Since
$
\lambda(\kappa, 2; q) = \lambda_{2}(j; q)
$ 
when $\kappa = (k_{1}, \ldots, k_{r})$ with $k_{i} = 0$ or $1$ for all $i = 1, \ldots, r,$ 
and $|\kappa| = j,$ we deduce that
$$
\lambda_{2}(r;q) \, - \, q^{r+3} \lambda_{2}(r;1\slash q)\,=\, (-1)^{r} q(q-1) \, +\,
\sum_{j = 0}^{r-1}\, \binom{r}{j}\,q^{j+3} (q-1)^{r-j} \lambda_{2}(j;1\slash q).
$$ 
To put things in perspective, one should think of the quantity 
$(-1)^{r} q(q-1),$ representing $a(\kappa, 2; q)$ for 
$\kappa = (1, \ldots,1),$ as obtained from the simple identity 
\begin{equation} \label{eq: tag 3.17}
\sum_{\substack{\deg d_{0} = 2   \\   d_{0}\; \text{monic and square-free} }}  
\left(\sum_{\theta \, \in \, \mathbb{F}}\,  \chi(d_{0}(\theta))  \right)^{r} 
\, =\,\,  (-1)^{r} q(q-1) \qquad \;  (\text{for $r \in \mathbb{N}$}). 
\end{equation} 
For example, take $r = 6.$ By the method we are just describing, 
one finds inductively that: 
$$ 
\lambda_{2}(0; q) = q^2 \;\;\; \lambda_{2}(1; q) = 0 \;\;\;
\lambda_{2}(2; q) = q^3 \;\;\;  \lambda_{2}(3; q)  = q^4  \;\;\;
\lambda_{2}(4; q)  = 2 q^4 +\, q^5 \;\;\;
\lambda_{2}(5; q)  = 5 q^5 +\,  q^6.
$$ 
(An alternative way of finding these coefficients, but {\it only} 
when $r\le 3,$ comes by expressing the rational function 
$
Z(T, t_{4}; q)
$ 
into a power series.) Replacing $q$ by $1\slash q$ in these values of $\lambda_{2}(j; q)$ 
($0 \le j\le 5$), we obtain:  
$$
\lambda_{2}(6;q) \, - \, q^{9} \lambda_{2}(6;1\slash q)\,=\,  
q^7 +\,  9 q^6 + \,  5 q^5 - \, 5 q^4 - \,  9 q^3 - \, q^2.
$$ 
By condition (iii), we identify $\lambda_{2}(6; q)$ 
with the {\it dominant half} of the right-hand side, obtaining 
$$
\lambda_{2}(6; q)  =  q^7 +  9 q^6 +  5 q^5. 
$$ 
In Appendix \ref{C} we shall see that
$$
\lambda_{2}(r;q) \,=\, \sum_{j = 1}^{[\frac{r}{2}]}\, 
\frac{1}{(r - j + 1)(r - j)}\,
\frac{r!}{j!(j-1)!(r - 2j)!}\,q^{r + 2 -  j} \qquad \;
(\text{for $r \ge 1$}) 
$$
where $[x]$ denotes the integer part of a real number $x.$
\end{exmp} 

Presumably, all coefficients 
$
\lambda(\kappa, l; q)
$ 
can be determined by a similar procedure using 
an induction over $l$ and $r,$ 
should one be able to get some information about the moment-sums
$
M_{\mu, \gamma}(q)
$ 
defined in this section. More precisely, we need a different expression for 
$
M_{\mu, \gamma}(q)
$ 
playing the same role as \eqref{eq: tag 3.17} did 
in the above example. To gain more insight into the 
general problem, consider a coefficient 
$
\lambda(\kappa, l; q)
$ 
with $l \ge 3.$ 
For simplicity, let us assume that $l$ is odd. \hskip-1ptBy Proposition \ref{Proposition 3.10}, 
we have 
$$
\lambda(\kappa, l; q) \, -\, q^{|\kappa| + l + 1}
\lambda(\kappa, l; 1\slash q) \,  = \, a(\kappa, l; q).
$$ 
The right-hand side involves only coefficients $\lambda(\kappa, l'; q)$ with $l' < l$ 
(which are supposed to be known by the induction hypothesis), 
and hence, $a(\kappa, l; q)$ is an explicit combination of 
moments of character sums. \hskip-2ptMoreover, the 
left-hand side of the identity suggests that 
$
a(\kappa, l; q)
$ 
should satisfy the functional equation
$$
a(\kappa, l; q) \,= -\, q^{|\kappa| + l + 1}
a(\kappa, l; 1\slash q). 
$$ 
Accordingly, one needs to find an {\it alternative} expression of $a(\kappa, l; q),$ 
allowing to identify (by condition (iii)) the two parts corresponding to  
$
\lambda(\kappa, l; q)
$ 
and  
$
- q^{|\kappa| + l + 1}\lambda(\kappa, l; 1\slash q). 
$ 
This will be completely clarified in Sections \ref{section 7} and 
\ref{section: The series BarC(X, T, q)}.

\section{The generating series of $a(\kappa, l; q)$}\label{section 4}

For $l \in \mathbb{N}$ and algebraically independent variables $t_{1},\ldots, t_{r},$ consider the generating series 
$$
\Lambda_{l}(T, q) \, = \sum_{\kappa \, \in \, \mathbb{N}^{r}} \lambda(\kappa, l; q)\, T^{\kappa}\;\;\; \text{and}\;\;\; 
A_{l}(T, q) \,= \sum_{\kappa \, \in \, \mathbb{N}^{r}} a(\kappa, l; q) \, T^{\kappa} 
$$ 
where we set $T^{\kappa} := t_{1}^{k_{1}} \cdots \, t_{r}^{k_{r}}.$ By Proposition \ref{Proposition 3.10}, $\Lambda_{l}(T, q)$ should satisfy: 
\begin{equation} \label{eq: tag 4.1}
\Lambda_{l}(T, q) \, = \, A_{l}(T, q) \, 
+ \, q^{l + 1} \frac{E(q \, T)^{\epsilon_{\scriptscriptstyle l}}}
{E(T)^{\epsilon_{\scriptscriptstyle l}}}\Lambda_{l}(q \, T, 1\slash q) \qquad (\text{for $l\ge 2$}) 
\end{equation}
where 
$$
E(T)\,=\, \prod_{i = 1}^{r}\, (1 - t_{i})^{-1}
$$ 
and $\epsilon_{\scriptscriptstyle l} \! = 0$ or $1$ according as $l$ is odd or even. The product $E(T)$ should be interpreted as the gamma factor of $\Lambda_{l}(T, q)$ when $l$ is even. 
The identity \eqref{eq: tag 4.1} implies that $A_{l}(T, q)$ should satisfy the functional equation
\begin{equation}\label{eq: tag 4.1'}
E(T)^{\epsilon_{\scriptscriptstyle l}} A_{l}(T, q) \, 
=\, - \, q^{l + 1}E(q \, T)^{\epsilon_{\scriptscriptstyle l}} A_{l}(q \, T, 1\slash q).
\end{equation} 
Note that $\Lambda_{l}(T, q)$ is just what 
\begin{equation*} 
\sum_{\substack{\deg d = l \\ d-\text{monic}}} \, L(s_{1}, \chi_{d}) \, \cdots \,  
L(s_{r}, \chi_{d})
\end{equation*} 
(with $q^{-s_{i}}$ replaced by $t_{i}$ for $i = 1, \ldots, r$) should be.

To gain a better understanding of what these generating functions should be, fix 
a partition $\mu = (\mu_{1} \ge \cdots \ge \mu_{n} \ge 1)$ and an $n$-tuple 
of positive integers $\nu = (\nu_{1}, \ldots, \nu_{n})$ 
such that
$$
\sum_{j = 1}^{n} \nu_{j} \mu_{j}  = \, l.
$$ 
We shall assume throughout the section that $l \ge 2.$ Let $A_{\mu, \nu}(T, q)$ 
denote the generating series whose $\kappa$-th coefficient is 
the part of the sum in \eqref{eq: tag 3.12} corresponding to all primitive pairs 
$(\mu_{_{0}}, \mathcal{N}_{0})$ for which $\mu_{_{0}} = \mu$ and 
$\nu_{_{ 0}} = \nu.$ For instance, if $\mu = (1)$ and $\nu = (l)$ 
we have 
$$
A_{\mu, \nu}(T, q)\, 
=\, q^{l + 1} \frac{E(q \, T)^{\epsilon_{\scriptscriptstyle l}}}
{E(T)^{\epsilon_{\scriptscriptstyle l}}}\Lambda_{l}(q \, T, 1\slash q).
$$ 
Note that 
\begin{equation} \label{eq: tag 4.2}
A_{l}(T, q) \, + \, q^{l + 1} \frac{E(q \, T)^{\epsilon_{\scriptscriptstyle l}}}
{E(T)^{\epsilon_{\scriptscriptstyle l}}}\Lambda_{l}(q \,T, 1\slash q)\; 
= \sum_{(\mu, \nu)} A_{\mu, \nu}(T, q).  
\end{equation} 
This reduces the study of $A_{l}(T, q)$ to the study of the 
generating series $A_{\mu, \nu}(T, q),$ for arbitrary $\mu$ and $\nu.$

\vskip10pt 
The following special case is an immediate consequence of \eqref{eq: tag 3.12}, \eqref{eq: tag 3.13} and \eqref{eq: tag 3.14}. 

\vskip10pt 
\begin{proposition}\label{Proposition 4.3} --- Let $\mu_{j_{_{1}}}, \ldots, \mu_{j_{_{m}}}$ 
be the distinct components of $\mu,$ each $\mu_{j_{_{i}}}$ occurring with multiplicity $n_{i}.$ 
If all components $\nu_{j},$ $j = 1, \ldots, n,$ of $\nu$ are even, then 
\begin{equation*}
A_{\mu, \nu}(T, q) \, =\, q^{l}\prod_{i = 1}^{m} \frac{\big({\mathrm{Irr}}_{q}(\mu_{j_{_{i}}} \!)\big)!}{n_{i}!\, \big({\mathrm{Irr}}_{q}(\mu_{j_{_{i}}}\!) - n_{i} \big)!}
\cdot E(q \, T) \prod_{j = 1}^{n} 
\frac{\Lambda_{\nu_{j}}(q^{\mu_{j}}T^{\mu_{j}}\!, 1\slash q^{\mu_{j}})}
{E(T^{\mu_{j}})} 
\end{equation*}
where ${\mathrm{Irr}}_{q}\big(\mu_{j_{_{i}}} \!\big)$ denotes the number 
of irreducible polynomials of degree $\mu_{j_{_{i}}}$ over $\mathbb{F}.$
\end{proposition}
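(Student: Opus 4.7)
The plan is to unfold the definition of $A_{\mu,\nu}(T,q)$ directly, using the expressions \eqref{eq: tag 3.12}, \eqref{eq: tag 3.13} and \eqref{eq: tag 3.14}, and then to collect terms. Concretely, since every $\nu_j$ is even, the sets $J_0$ and $J_1$ attached to $\nu_0 = \nu$ satisfy $J_0 = \emptyset$ and $J_1 = \{1,\ldots,n\}$. Consequently $\mathscr{P}_{\!\nu_0}(\mu) = \{1\}$: only the polynomial $d_0 = 1$ contributes to the inner sum in \eqref{eq: tag 3.13}. In particular the formula \eqref{eq: tag 3.14} applies, and since $\chi_{\mu_j}(1) = 1$, the factor $b_1^{(\theta_j)}(\kappa - \kappa')$ is independent of the tuple $(\theta_j)_{j\in J_1}$ and equals the coefficient $(k_1-k_1',\ldots,k_r-k_r')$ of $E(qT)\prod_{j=1}^n\prod_{i=1}^r(1-t_i^{\mu_j})$.

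Next I would carry out the combinatorial count of the tuples $(\theta_j)_{j \in J_1}$ appearing in the $\sum^*$. Grouping the indices $j$ according to the distinct values $\mu_{j_1},\ldots,\mu_{j_m}$ of $\mu_j$ (with multiplicities $n_1,\ldots,n_m$), a valid assignment on the $n_i$ positions with $\mu_j = \mu_{j_i}$ consists of an ordered choice of $n_i$ distinct monic irreducibles of degree $\mu_{j_i}$ together with a choice of root for each, giving $\mu_{j_i}^{n_i}\,\frac{(\mathrm{Irr}_q(\mu_{j_i}))!}{(\mathrm{Irr}_q(\mu_{j_i})-n_i)!}$ tuples. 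Multiplying by $C_{\mu_0,\nu_0} = \prod_i \frac{1}{n_i!\,\mu_{j_i}^{n_i}}$ produces precisely the product $\prod_{i=1}^m \frac{(\mathrm{Irr}_q(\mu_{j_i}))!}{n_i!\,(\mathrm{Irr}_q(\mu_{j_i})-n_i)!}$ that appears in the statement.

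The final step is to assemble the generating series. Swap the two summations: for fixed $\mathcal{N}_0$ (with $\mu_0 = \mu$, $\nu_0 = \nu$, and $(\nu_{1j},\ldots,\nu_{rj})=\kappa^{(j)}$ arbitrary), writing $\kappa = \kappa' + (\kappa-\kappa')$ with $k_i' = \sum_j \nu_{ij}\mu_j$ gives
\[
\sum_{\kappa} T^\kappa\, b_1(\kappa - \kappa') \;=\; T^{\kappa'}\cdot E(qT)\prod_{j=1}^n \frac{1}{E(T^{\mu_j})}.
\]
Since $T^{\kappa'} = \prod_j (T^{\mu_j})^{\kappa^{(j)}}$, the remaining sum over $\mathcal{N}_0$ factorizes as a product over $j$, and by definition of $\Lambda_{\nu_j}$ I get
\[
\sum_{\kappa^{(j)}} q^{\mu_j(|\kappa^{(j)}| + \nu_j)}\,\lambda(\kappa^{(j)},\nu_j;1/q^{\mu_j})\,(T^{\mu_j})^{\kappa^{(j)}} \;=\; q^{\mu_j\nu_j}\,\Lambda_{\nu_j}\!\bigl(q^{\mu_j}T^{\mu_j},1/q^{\mu_j}\bigr).
\]
Collecting $\prod_j q^{\mu_j\nu_j} = q^l$ together with the binomial factors and the $E(qT)/\prod_j E(T^{\mu_j})$ piece yields the asserted formula.

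There is no real obstacle here; the argument is a careful bookkeeping exercise. The only point that requires attention is the combinatorial count of the starred tuples, where one must correctly interpret the Galois-conjugacy constraint as selecting $n_i$ \emph{distinct} irreducibles of degree $\mu_{j_i}$ and then a root of each, so that the product of $C_{\mu_0,\nu_0}$ with this count collapses into the stated binomial coefficients.
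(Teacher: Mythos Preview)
Your proposal is correct and follows exactly the route the paper indicates: the paper states that Proposition~\ref{Proposition 4.3} ``is an immediate consequence of \eqref{eq: tag 3.12}, \eqref{eq: tag 3.13} and \eqref{eq: tag 3.14}'' without further elaboration, and your argument simply supplies the details of this unfolding (noting $J_{0}=\emptyset$, $\mathscr{P}_{\!\nu}(\mu)=\{1\}$, performing the combinatorial count of the starred sum, and assembling the generating series). There is nothing to add.
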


For general $\mu$ and $\nu,$ split, as before, the set of indices $1\le j \le n$ 
into two parts $J_{0}$ and $J_{1}$ according as $\nu_{j}$ is 
odd or even. For all $j\in J_{0}$ and $j' \in J_{1}$ such that 
$\mu_{j} = \mu_{j'},$ we shall assume that $j < j'.$ 
For $\varepsilon = (\varepsilon_{j})_{j\in J_{1}}$ with $\varepsilon_{j} \in \{1, 2\},$ let
\begin{equation*}
\Lambda_{\mu, \nu}^{\varepsilon}(T, q)  = 
q^{l} \!\prod_{j \, \in \, J_{0}} 
\Lambda_{\nu_{j}}(q^{\mu_{j}}T^{\mu_{j}}\!, 1\slash q^{\mu_{j}}) \; \cdot  
\prod_{j \, \in \, J_{1}} 
\frac{\Lambda_{\nu_{j}}(q^{\mu_{j}}T^{\mu_{j}}\!, 1\slash q^{\mu_{j}}) 
+ (-1)^{\varepsilon_{j}} \!\Lambda_{\nu_{j}}(-q^{\mu_{j}}T^{\mu_{j}}\!, 1\slash q^{\mu_{j}})}{2}
\end{equation*} 
and 
\begin{equation*}
U_{\mu, \nu}^{\varepsilon}(T, q)  = 
\frac{C_{\mu, \nu}}{E(T)^{^{\epsilon_{\scriptscriptstyle l}}}}\!\sum_{d_{0} \, \in \, \mathscr{P}_{\! \nu}(\mu)} \; \sideset{}{^*}\sum_{(\theta_{j})} \left(\prod_{j \, \in \, J_{1}} 
\chi_{\mu_{j}}(d_{0}(\theta_{j}))^{\varepsilon_{j}}  \cdot 
\prod_{i = 1}^{r} P_{\scriptscriptstyle C_{d_{0}}}\!(t_{i})\prod_{j \, \in \, J_{1}}(1 - \chi_{\mu_{j}}(d_{0}(\theta_{j}))\, t_{i}^{\mu_{j}}) \right).
\end{equation*} 
Here $C_{\mu, \nu}$ is the normalizing constant defined in the previous section. 
By \eqref{eq: tag 3.12}, \eqref{eq: tag 3.13} and \eqref{eq: tag 3.15}, it is not hard to see that 
the contribution to $A_{l}(T, q)$ corresponding to $(\mu, \nu)$ is  
$$
A_{\mu, \nu}(T, q)  \,= \, \sum_{\varepsilon}\, 
U_{\mu, \nu}^{\varepsilon}(T, q) \Lambda_{\mu, \nu}^{\varepsilon}(T, q)
$$  
the sum being taken over all $\varepsilon$ as above.

To allow more flexibility in these expressions, introduce a sum in 
$U_{\mu, \nu}^{\varepsilon}(T, q)$ 
over all tuples $\delta = (\delta_{j})_{j\in J_{1}}$ with $\delta_{j} \in \{-1, 1\},$ 
and sum over $d_{0}\in \mathscr{P}_{\! \nu}(\mu)$ and  
$(\theta_{j})_{j\in J_{1}}$ such that 
$\chi_{\mu_{j}}(d_{0}(\theta_{j})) = \delta_{j}.$ Note that for 
every $d_{0}\in \mathscr{P}_{\! \nu}(\mu)$ and $\delta,$ the number 
$N_{\mu, \nu}(d_{0}, \delta)$ of such tuples $(\theta_{j})_{j\in J_{1}}$ can be expressed as 
\begin{equation*}
N_{\mu, \nu}(d_{0}, \delta) \;\, = 
\sideset{}{^*}\sum_{\substack{\;\;(\theta_{j})_{_{_{\phantom{J}}}} \\ d_{0}(\theta_{j})\ne 0}}  
\left(\prod_{j \, \in \, J_{1}} 
\frac{\delta_{j}\chi_{\mu_{j}}(d_{0}(\theta_{j})) + 1}{2}\right)\, = \, 
\frac{1}{2^{^{| J_{1}|}}}\!\sum_{S\subseteq J_{1}} \delta_{S} b_{\mu}^{S}(d_{0})
\end{equation*} 
with
$$
\delta_{S} \, :=\,  \prod_{j \, \in \, S} \delta_{j} \;\;\; \text{and}\;\;\;\, 
b_{\mu}^{S}(d_{0}) \, :=  
\sideset{}{^*}\sum_{\substack{(\theta_{j})_{j\in J_{1}} \\  d_{0}(\theta_{j})\ne 0}}
\left(\prod_{j \, \in \, S} \chi_{\mu_{j}}(d_{0}(\theta_{j}))\right).
$$ 
Moreover, for every $\delta = (\delta_{j})_{j\in J_{1}},$ we have 
\begin{equation*}
\sum_{\varepsilon} \, \left(\prod_{j \, \in \, J_{1}} \delta_{j}^{\varepsilon_{j}}\right)
\Lambda_{\mu, \nu}^{\varepsilon}(T, q) \,=\, 
q^{l}\!\prod_{j \, \in \, J_{0}} 
\Lambda_{\nu_{j}}(q^{\mu_{j}}T^{\mu_{j}}\!, 1\slash q^{\mu_{j}}) \; \cdot  
\prod_{j \, \in \, J_{1}} \Lambda_{\nu_{j}}(\delta_{j} q^{\mu_{j}}T^{\mu_{j}}\!, 1\slash q^{\mu_{j}}). 
\end{equation*}
From these two observations it follows that 
\begin{equation} \label{eq: tag 4.4}
A_{\mu, \nu}(T, q)  \,= \, \frac{C_{\mu, \nu}}{E(T)^{^{\epsilon_{\scriptscriptstyle l}}}}
\sum_{\delta}\left(\sum_{d_{0} \, \in \, \mathscr{P}_{\! \nu}(\mu)}
N_{\mu, \nu}(d_{0}, \delta) \prod_{i = 1}^{r} P_{\scriptscriptstyle C_{d_{0}}}\!(t_{i})\right)
\cdot \Lambda_{\delta, \mu, \nu}(T, q) 
\end{equation}
with
\begin{equation} \label{eq: tag 4.5}
\Lambda_{\delta, \mu, \nu}(T, q)  :=  q^{l}\!\prod_{j \, \in \, J_{0}} \Lambda_{\nu_{j}}  (q^{\mu_{j}}T^{\mu_{j}}\!, 1\slash q^{\mu_{j}}) \; \cdot  
\prod_{j \, \in \, J_{1}} \frac{\Lambda_{\nu_{j}}(\delta_{j} q^{\mu_{j}}T^{\mu_{j}}\!, 1\slash q^{\mu_{j}})}{E(\delta_{j} T^{\mu_{j}})}. 
\end{equation} 
Our next objective is to obtain an explicit expression for the 
character sum $b_{\mu}^{S}(d_{0})$ in $N_{\mu, \nu}(d_{0}, \delta)$ in terms of the coefficients of 
the characteristic polynomial $P_{\scriptscriptstyle C_{d_{0}}}\!(t).$ For this purpose, 
let us first fix some notation. 

For every $\omega\ge 1,$ let $\mathbb{F}_{\! \omega}'$ denote 
the set of all elements in $\mathbb{F}_{\! \omega} = \mathbb{F}_{\! q^{\omega}}$ 
of degree $\omega$ over $\mathbb{F}.$ For $\mathrm{s}, \mathrm{t}\ge 0$ with 
$\mathrm{s} + \mathrm{t} \ge 1,$ and a monic square-free polynomial $d_{0}$ 
over $\mathbb{F},$
we set
\begin{equation*}
b_{\omega}^{(\mathrm{s}, \mathrm{t})}(d_{0})\; :=  
\sideset{}{^*}\sum_{\substack{(\theta_{1}\!, \ldots, \theta_{\mathrm{s} + \mathrm{t}})}}
\; \left(\prod_{j = 1}^{\mathrm{s}} 
\chi_{\omega}(d_{0}(\theta_{j})) \; \cdot \prod_{j = \mathrm{s} + 1}^{\mathrm{s} + \mathrm{t}} 
\chi_{\omega}(d_{0}(\theta_{j}))^{2} \right)
\end{equation*}
where, as before, the sum is over all tuples 
$(\theta_{j})_{\scriptstyle{j = 1}}^{\scriptstyle{\mathrm{s}+\mathrm{t}}}$ 
with $\theta_{j}\in \mathbb{F}_{\! \omega}'$ 
for every $1\le j\le \mathrm{s} + \mathrm{t},$ and $\sigma(\theta_{j}) \ne \theta_{j'}$ for all 
$\sigma \in \mathrm{Gal}(\overline{\mathbb{F}}\slash \mathbb{F}),$ and all $j\ne j'.$ Finally, let 
\begin{equation*}
a_{\omega}^{*}(d_{0}) \,=\, \sum_{\theta \, \in \, \mathbb{F}_{\! \omega}'} 
\chi_{\omega}(d_{0}(\theta)) \;\;\; \text{and}\;\;\;
c_{\omega}(d_{0}) \,=\, \sum_{\theta \, \in \, \mathbb{F}_{\! \omega}'} 
\chi_{\omega}(d_{0}(\theta))^{2}.
\end{equation*} 
Note that $c_{\omega}(d_{0})$ is constant on $\mathscr{P}_{\! \nu}(\mu).$

Now write $b_{\mu}^{S}(d_{0})$ in $N_{\mu, \nu}(d_{0}, \delta)$ as 
\begin{equation*}
b_{\mu}^{S}(d_{0}) \, =  
\sideset{}{^*}\sum_{(\theta_{j})_{j\in J_{1}}} 
\left(\prod_{j \, \in \, S} \chi_{\mu_{j}}(d_{0}(\theta_{j})) \, \cdot 
\prod_{j \, \in \, S'} \chi_{\mu_{j}}(d_{0}(\theta_{j}))^{2} \right)
\end{equation*} 
where $S'$ denotes the complement of the subset $S$ in $J_{1}.$ 
Let $\mu_{j_{_{1}}}, \ldots, \mu_{j_{_{m}}}$ be, as before, 
the distinct components of 
$
(\mu_{j})_{j\in J_{1}},
$ 
and for $i = 1, \ldots, m,$ set 
$
J_{1}^{(i)}  :\, =\{j\in J_{1}  : \mu_{j} = \mu_{j_{_{i}}} \}.
$ 
With this notation, we can further write 
\begin{equation*}
b_{\mu}^{S}(d_{0}) \; = \; 
\prod_{i = 1}^{m}\;\; \sideset{}{^*}\sum_{(\theta_{j})_{\!\! j\in J_{1}^{(i)}}} 
\left(\prod_{j \, \in \, S\cap J_{1}^{(i)}} \chi_{\mu_{j_{_{i}}}}\!(d_{0}(\theta_{j})) \;\; \cdot 
\prod_{j \, \in \, S'\cap J_{1}^{(i)}} \chi_{\mu_{j_{_{i}}}}\!(d_{0}(\theta_{j}))^{2} \right)\, = \; 
\prod_{i = 1}^{m}\,
b_{\mu_{j_{_{i}}}}^{(\mathrm{s}_{_{i}},\, \mathrm{t}_{_{i}})}(d_{0})
\end{equation*}  
where $\mathrm{s}_{_{i}}  \! = \big|S\cap J_{1}^{(i)} \big|$ and 
$\mathrm{t}_{_{i}} \! = \big|S'\cap J_{1}^{(i)} \big|$ 
(so that $\big|J_{1}^{(i)}\big| = \mathrm{s}_{_{i}}  + \mathrm{t}_{_{i}}$) 
for $i = 1, \ldots, m.$ Moreover, it is easy to see that 
\begin{equation*}
b_{\omega}^{(\mathrm{s}, \mathrm{t})}(d_{0}) \,=\, 
b_{\omega}^{(\mathrm{s}, 0)}(d_{0}) \cdot
\prod_{j = 0}^{\mathrm{t} - 1} (c_{\omega}(d_{0}) - \omega \mathrm{s} - \omega j) \qquad \;\;\;
(\text{if $\mathrm{t} \ge 1$})
\end{equation*} 
and hence, we can restrict ourselves to the character sums  
\begin{equation*}
b_{\omega}^{\mathrm{s}}(d_{0})\, =\, b_{\omega}^{(\mathrm{s}, 0)}(d_{0}) \; =  
\sideset{}{^*}\sum_{\substack{(\theta_{1}\!, \ldots, \theta_{\mathrm{s}})}}
\left(\prod_{j = 1}^{\mathrm{s}} 
\chi_{\omega}(d_{0}(\theta_{j})) \right).
\end{equation*} 
The following lemma expresses $b_{\omega}^{\mathrm{s}}(d_{0})$ 
in terms of $a_{\omega}^{*}(d_{0})$ and $c_{\omega}(d_{0}).$

\vskip10pt
\begin{lemma} --- For $\omega, \rm{s} \ge 1,$ and $d_{0}$
a monic square-free polynomial over $\mathbb{F},$ we have
\begin{equation*}
b_{\omega}^{\rm{s}}(d_{0})\, = \, 
\rm{s}!\sum \, \frac{(-\omega)^{\rm{s} - |\mathfrak{i}|}}{z(\mathfrak{i})} \cdot 
a_{\omega}^{*}(d_{0})^{|\mathfrak{i}_{\rm{odd}}|}
c_{\omega}^{}(d_{0})^{|\mathfrak{i}_{\rm{even}}|}
\end{equation*} 
where the sum is over all $\rm{s}$-tuples 
$\mathfrak{i} = (i_{1}, \ldots, i_{\rm{s}})$ 
of nonnegative integers with 
$i_{1} + 2i_{2} + \cdots + {\rm{s}} i_{{\rm{s}}} = {\rm{s}},$ 
$z(\mathfrak{i}) = i_{1}! 1^{i_{1}}\cdot i_{2}! 2^{i_{2}}\cdots \, i_{{\rm{s}}}! {\rm{s}}^{i_{{\rm{s}}}}\!,$ 
and $|\mathfrak{i}| = |\mathfrak{i}_{\rm{odd}}| + |\mathfrak{i}_{\rm{even}}|;$ 
here $|\mathfrak{i}_{\rm{odd}}| = i_{1} + i_{3} + \cdots$ {\rm{(}}respectively 
$|\mathfrak{i}_{\rm{even}}| =  i_{2} + i_{4} + \cdots${\rm{)}} is the sum of the odd 
{\rm{(}}respectively even{\rm{)}} components of $\mathfrak{i}.$ 
Equivalently, we have the identity:
\begin{equation*}
1\, +\, \sum_{\rm{s} \, \ge \, 1}\, \frac{b_{\omega}^{\rm{s}}(d_{0})}{\rm{s}!}X^{\rm{s}}\, =\, 
(1\, +\, \omega X)^{\frac{c_{\omega}^{}(d_{0}) \, + \, a_{\omega}^{*}(d_{0})}{2\omega}}  
(1\, - \, \omega X)^{\frac{c_{\omega}^{}(d_{0}) \, - \, a_{\omega}^{*}(d_{0})}{2\omega}}.  
\end{equation*}
\end{lemma}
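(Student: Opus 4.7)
My plan is to reduce everything to the generating-function identity, since the coefficient formula is equivalent to it by the exponential formula.

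The first step is to reparametrize the sum in terms of monic irreducible polynomials of degree $\omega$. Each $\theta \in \mathbb{F}_{\!\omega}'$ is a root of a unique monic irreducible $\pi \in \mathbb{F}[x]$ of degree $\omega$, and the condition $\sigma(\theta_j) \ne \theta_{j'}$ in the starred sum says precisely that the $\theta_j$ lie in pairwise distinct Galois orbits, i.e., correspond to distinct $\pi_j$. Moreover, by the identification $\chi_{\omega}(d_0(\theta)) = (d_0/\pi_\theta)$ already recorded in Section~\ref{section 3}, the quantity $\chi_\omega(d_0(\theta_j))$ depends only on $\pi_j$, and each $\pi$ of degree $\omega$ contributes exactly $\omega$ roots. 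Writing $x_\pi := (d_0/\pi) \in \{-1,0,1\}$, this yields
\begin{equation*}
b_\omega^{\mathrm{s}}(d_0) \,=\, \omega^{\mathrm{s}} \!\!\sum_{\substack{(\pi_1,\ldots,\pi_{\mathrm{s}}) \\ \text{distinct, ordered}}} \prod_{j=1}^{\mathrm{s}} x_{\pi_j} \,=\, \mathrm{s}!\,\omega^{\mathrm{s}}\, e_{\mathrm{s}}\!\left((x_\pi)_\pi\right),
\end{equation*}
where $e_{\mathrm{s}}$ is the elementary symmetric polynomial and $\pi$ runs over monic irreducibles of degree $\omega$.

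Next, package these into a generating series. Since $e_{\mathrm{s}}$ has generating function $\prod_\pi (1+x_\pi Y)$, substituting $Y=\omega X$ gives
\begin{equation*}
1 + \sum_{\mathrm{s}\ge 1} \frac{b_\omega^{\mathrm{s}}(d_0)}{\mathrm{s}!}\, X^{\mathrm{s}} \,=\, \prod_{\pi}(1 + x_\pi \omega X).
\end{equation*}
Because $x_\pi \in \{-1,0,1\}$, introduce $n_\pm := \#\{\pi \colon x_\pi = \pm 1\}$; then the product collapses to $(1+\omega X)^{n_+}(1-\omega X)^{n_-}$. The identities
\begin{equation*}
\omega(n_+ - n_-) \,=\, \sum_\pi \omega\, x_\pi \,=\, a_\omega^{*}(d_0), \qquad \omega(n_+ + n_-) \,=\, \sum_\pi \omega\, x_\pi^2 \,=\, c_\omega(d_0)
\end{equation*}
(using that $\chi_\omega(d_0(\theta))$ is constant along each Galois orbit of size $\omega$) immediately give $n_+ = (c_\omega + a_\omega^{*})/(2\omega)$ and $n_- = (c_\omega - a_\omega^{*})/(2\omega)$, establishing the second (product) form of the lemma.

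To extract the closed formula for $b_\omega^{\mathrm{s}}(d_0)$, take logarithms: with $S_k := \sum_\pi x_\pi^k$, one has $S_k = a_\omega^{*}/\omega$ for $k$ odd and $S_k = c_\omega/\omega$ for $k$ even, so
\begin{equation*}
\log \prod_\pi(1+x_\pi\omega X) \,=\, \sum_{k\ge 1} \frac{(-1)^{k-1} S_k\,(\omega X)^k}{k}.
\end{equation*}
Exponentiating and collecting the coefficient of $X^{\mathrm{s}}$ over all tuples $\mathfrak{i}=(i_1,\ldots,i_{\mathrm{s}})$ with $\sum k\,i_k = \mathrm{s}$ produces a product with normalization $\prod_k i_k!\,k^{i_k} = z(\mathfrak{i})$, a sign $\prod_k ((-1)^{k-1})^{i_k} = (-1)^{|\mathfrak{i}_{\mathrm{even}}|}$, and a factor $S_1^{|\mathfrak{i}_{\mathrm{odd}}|} S_2^{|\mathfrak{i}_{\mathrm{even}}|} = a_\omega^{*|\mathfrak{i}_{\mathrm{odd}}|} c_\omega^{|\mathfrak{i}_{\mathrm{even}}|}/\omega^{|\mathfrak{i}|}$, with an overall $\omega^{\mathrm{s}}$. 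The sign check $(-1)^{|\mathfrak{i}_{\mathrm{even}}|} = (-1)^{\mathrm{s}-|\mathfrak{i}|}$ follows from $\mathrm{s}-|\mathfrak{i}| = \sum(k-1)i_k \equiv |\mathfrak{i}_{\mathrm{even}}|\pmod 2$, yielding the desired $(-\omega)^{\mathrm{s}-|\mathfrak{i}|}/z(\mathfrak{i})$. The only real subtlety is this bookkeeping; the heart of the argument is the reduction from starred sums over $\mathbb{F}_{\!\omega}'$ to elementary symmetric functions in the Legendre symbols $(d_0/\pi)$, which collapses because $x_\pi$ takes only the values $-1,0,1$.
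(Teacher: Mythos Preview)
Your proof is correct and follows essentially the same approach as the paper: both reduce the starred sum over $\mathbb{F}_{\!\omega}'$ to $\mathrm{s}!\,\omega^{\mathrm{s}}$ times an elementary symmetric function in the values $\chi_\omega(d_0(\mathcal{O}))$ indexed by Galois orbits (equivalently, irreducibles of degree $\omega$), and then exploit that these values lie in $\{-1,0,1\}$ so that the odd and even power sums are $a_\omega^{*}(d_0)/\omega$ and $c_\omega(d_0)/\omega$ respectively. The only cosmetic difference is ordering: the paper invokes the Newton identity $e_{\mathrm{s}} = \sum (-1)^{|\mathfrak{i}|-\mathrm{s}} p^{(\mathfrak{i})}/z(\mathfrak{i})$ to obtain the coefficient formula first and then reads off the product formula, whereas you collapse the product $\prod_\pi(1+x_\pi\omega X)$ to obtain the product formula first and then expand the logarithm to recover the coefficient formula.
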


\begin{proof} Consider the $j$-th elementary symmetric and power sum polynomials in $k$ variables, 
$$
e_{j}(x_{1}, \ldots, x_{k}) \;\;\;\;= \sum_{1\le i_{1} < \cdots < i_{j} \le k} x_{i_{1}}\cdots \,  x_{i_{j}} \;\;\, \text{and}\;\;\;\, 
p_{j}(x_{1}, \ldots, x_{k}) = x_{1}^{j} + \cdots + x_{k}^{j}
$$ 
respectively. For $\mathfrak{i} = (i_{1}, \ldots, i_{\mathrm{s}})$ an $\mathrm{s}$-tuple 
of nonnegative integers with 
$i_{1} + 2i_{2} + \cdots + \mathrm{s}i_{\mathrm{s}} = \mathrm{s},$ set 
$p^{(\mathfrak{i})} : = p_{1}^{i_{1}} p_{2}^{i_{2}} \cdots \, p_{\mathrm{s}}^{i_{\mathrm{s}}}.$ 
By \cite[Appendix A, \S A.1., Exercise $\text{A.32}^{*}$ (vi)]{FH}, we have the identity
\begin{equation} \label{eq: tag 4.7}
e_{\mathrm{s}} \,=\, \sum \frac{(-1)^{|\mathfrak{i}| - \mathrm{s}}}{z(\mathfrak{i})} \cdot p^{(\mathfrak{i})} 
\end{equation}
where the sum is over all $\mathfrak{i} = (i_{1}, \ldots, i_{\mathrm{s}})$ as above, 
$|\mathfrak{i}| = i_{1} + i_{2} + \cdots + i_{\mathrm{s}},$ and 
$z(\mathfrak{i})$ is the product in the statement of this lemma. 
This identity is encoded in the well-known identity of formal power series: 
\begin{equation} \label{eq: tag 4.8}
1\,+\, \sum_{\mathrm{s} = 1}^{k} \,
(-1)^{\mathrm{s}}e_{\mathrm{s}} t^{\mathrm{s}}\,=\, 
\exp  \Bigg( - \sum_{j \, \ge \, 1} \, p_{_{ j}} \frac{t^{j}}{j}  \Bigg).
\end{equation}

We apply \eqref{eq: tag 4.7} and \eqref{eq: tag 4.8} in our context as follows. Let 
$
\text{O}_{ \omega}^{\mathrm{Gal}} = \{\mathcal{O}_{1}, \ldots, \mathcal{O}_{k_{\omega}}\}
$ 
denote the set of Galois orbits in $\mathbb{F}_{\! \omega}'$, and set 
$
\chi_{\omega}(d_{0}(\mathcal{O})) := \chi_{\omega}(d_{0}(\theta))
$ 
for $\mathcal{O} \in \text{O}_{ \omega}^{\mathrm{Gal}}$ and 
$\theta \in \mathcal{O}.$ Clearly the value of 
$\chi_{\omega}(d_{0}(\mathcal{O}))$ is independent of the choice of 
$\theta \in \mathcal{O}.$ 
Since 
\begin{equation*}
\frac{1}{\mathrm{s}!} \frac{1}{\omega^{\mathrm{s}}} b_{\omega}^{\mathrm{s}}(d_{0})\;\;\;
= \sum_{1\le i_{1} < \cdots < i_{\mathrm{s}}\le k_{\omega}} 
 \chi_{\omega}(d_{0}(\mathcal{O}_{i_{1}}))\, \cdots \,  \chi_{\omega}(d_{0}(\mathcal{O}_{i_{\mathrm{s}}})) \, =\,
e_{\mathrm{s}}(\chi_{\omega}(d_{0}(\mathcal{O}_{1})), \ldots, \chi_{\omega}(d_{0}(\mathcal{O}_{k_{\omega}})))
\end{equation*}  
as it can be easily checked, the lemma follows by applying \eqref{eq: tag 4.7} and \eqref{eq: tag 4.8}. \end{proof}

Using this lemma, we can write 
\begin{equation} \label{eq: tag 4.9}
N_{\mu, \nu}(d_{0}, \delta) \, =\, 
\frac{1}{2^{^{| J_{1}|}}} 
\!\sum_{S\subseteq J_{1}} \delta_{S} \cdot 
\prod_{i = 1}^{m} b_{\mu_{j_{i}}}^{\mathrm{s}_{_{i}}}\!(d_{0}) \cdot 
\prod_{j = 0}^{\mathrm{t}_{i} - 1} (c_{\mu_{j_{i}}}\!(d_{0}) - \mu_{j_{i}}\mathrm{s}_{_{i}}  - \mu_{j_{i}} j)
\end{equation} 
with 
$$
\delta_{S} \, =\,  \prod_{j \, \in \, S} \delta_{j} \;\;\; \text{and} \;\;\;
b_{\mu_{j_{i}}}^{\mathrm{s}_{_{i}}}\!(d_{0})\, = \, 
(\mathrm{s}_{i})! \sum 
\frac{(-\mu_{j_{i}})^{\mathrm{s}_{_{i}}   -\,  |\mathfrak{i}|}}
{z(\mathfrak{i})} \cdot 
a_{\mu_{j_{i}}}^{*}\!(d_{0})^{|\mathfrak{i}_{\text{odd}}|}
c_{\mu_{j_{i}}}^{}\!(d_{0})^{|\mathfrak{i}_{\text{even}}|}.
$$ 
The sum in the expression of 
$b_{\mu_{j_{i}}}^{\mathrm{s}_{_{i}}}\!(d_{0})$ 
is over all $\mathrm{s}_{_{i}}$-tuples 
$\mathfrak{i} = (\iota_{_{1}}, \ldots, \iota_{\mathrm{s}_{_{i}}})$ 
of nonnegative integers with 
$\iota_{_{1}} + 2\iota_{_{2}} + \cdots + \mathrm{s}_{_{i}}  \iota_{\mathrm{s}_{_{i}}}  =\, \mathrm{s}_{_{i}}.$

The following proposition gives a simpler expression for $N_{ \mu, \nu}(d_{0}, \delta).$

\vskip10pt
\begin{proposition}\label{Proposition 4.9} --- For $i = 1,\ldots, m,$ let $n_{i}$ denote the cardinality of the set $J_{1}^{(i)}.$ If $\delta = (\delta_{j})_{j\in J_{1}}$ with $\delta_{j} \in \{-1, 1\}$ and $d_{0}\in \mathscr{P}_{\! \nu}(\mu),$ we have
\begin{equation*}
N_{\mu, \nu}(d_{0}, \delta) \, =\,  
\prod_{i = 1}^{m}
\frac{\Big(\frac{c_{\mu_{j_{i}}}^{}\!\!(d_{0}) \,+\, a_{\mu_{j_{i}}}^{*}\!\!(d_{0})}{2\mu_{j_{i}}}\Big)! \,
\Big(\frac{c_{\mu_{j_{i}}}^{}\!\!(d_{0}) \,-\, a_{\mu_{j_{i}}}^{*}\!\!(d_{0})}{2\mu_{j_{i}}} \Big)!\, \mu_{j_{i}}^{n_{i}}}
{\Big(\frac{c_{\mu_{j_{i}}}^{}\!\!(d_{0}) \,+\, a_{\mu_{j_{i}}}^{*}\!\!(d_{0})}{2\mu_{j_{i}}} \,- \, \mathrm{n}_{\delta^{(i)}}^{+} \Big)! \, 
\Big(\frac{c_{\mu_{j_{i}}}^{}\!\!(d_{0}) \,-\, a_{\mu_{j_{i}}}^{*}\!\!(d_{0})}{2\mu_{j_{i}}} \,-\, \mathrm{n}_{\delta^{(i)}}^{-} \Big)!}
\end{equation*} 
where $\delta^{(i)} = (\delta_{j})_{j\in J_{1}^{(i)}},$ and 
$\mathrm{n}_{ \delta^{(i)}}^{ \pm}  = (n_{i} \pm |\delta^{(i)}|)\slash 2.$   
\end{proposition}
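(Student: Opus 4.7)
The plan is to reduce the formula to a single block using the multiplicative structure of $b_\mu^S(d_0)$, then collapse the subset-sum through an exchange of summations, and finally carry out an orbit-counting argument. Using \eqref{eq: tag 4.9} together with the block-factorization $b_\mu^S(d_0) = \prod_i b_{\mu_{j_i}}^{(\mathrm{s}_i, \mathrm{t}_i)}(d_0)$ (with $\mathrm{s}_i = |S \cap J_1^{(i)}|$ and $\mathrm{t}_i = n_i - \mathrm{s}_i$) derived just before the statement, the sum over $S \subseteq J_1$ defining $N_{\mu, \nu}(d_0, \delta)$ factors block-by-block. Setting $\omega = \mu_{j_i}$, $n = n_i$, $J = J_1^{(i)}$, $\delta = \delta^{(i)}$, $\alpha = (c_\omega(d_0) + a_\omega^*(d_0))/(2\omega)$, and $\beta = (c_\omega(d_0) - a_\omega^*(d_0))/(2\omega)$, it therefore suffices to show that each block contributes $\alpha!\,\beta!\,\omega^n / [(\alpha - \mathrm{n}_\delta^+)!\,(\beta - \mathrm{n}_\delta^-)!]$.

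The key algebraic step is to absorb $\delta_S$ into the starred sum defining $b_\omega^{(|S|, n-|S|)}(d_0)$ and exchange the order of summation. Writing $\delta_S = \prod_{j \in S} \delta_j$ and applying the distributive law yields
\[
\sum_{S \subseteq J} \delta_S\, b_\omega^{(|S|, n-|S|)}(d_0) \;=\; \sideset{}{^*}\sum_{(\theta_j)_{j \in J}}\, \prod_{j \in J} \chi_\omega(d_0(\theta_j))\bigl(\delta_j + \chi_\omega(d_0(\theta_j))\bigr).
\]
For each $j$, because $\chi_\omega \in \{-1, 0, +1\}$ and $\delta_j \in \{\pm 1\}$, a four-case check shows that $\chi_\omega(d_0(\theta_j))\bigl(\delta_j + \chi_\omega(d_0(\theta_j))\bigr)$ equals $2$ precisely when $\chi_\omega(d_0(\theta_j)) = \delta_j$ and vanishes otherwise; in particular, any $\theta_j$ that is a root of $d_0$ is killed. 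The resulting product equals $2^n$ on tuples satisfying $\chi_\omega(d_0(\theta_j)) = \delta_j$ for every $j$ and $0$ otherwise, canceling the $2^{-n}$ prefactor exactly.

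What remains is to count the starred tuples $(\theta_j)_{j \in J}$ with $\chi_\omega(d_0(\theta_j)) = \delta_j$ for every $j$. Since $\chi_\omega(d_0(\cdot))$ is constant on Galois orbits and every orbit in $\mathbb{F}_\omega'$ has size exactly $\omega$, the identities $\#\{\theta \in \mathbb{F}_\omega' : \chi_\omega(d_0(\theta)) = \pm 1\} = (c_\omega(d_0) \pm a_\omega^*(d_0))/2$ show that there are $\alpha$ orbits $O^+$ on which $\chi_\omega(d_0) = +1$ and $\beta$ orbits $O^-$ on which it equals $-1$. A valid tuple then corresponds to a pair of injective maps, one from $\{j : \delta_j = +1\}$ into $O^+$ and one from $\{j : \delta_j = -1\}$ into $O^-$ (disjointness is automatic since $O^+ \cap O^- = \emptyset$), followed by an independent choice of one of $\omega$ representatives in each chosen orbit. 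This gives $\omega^n \cdot \alpha!/(\alpha - \mathrm{n}_\delta^+)! \cdot \beta!/(\beta - \mathrm{n}_\delta^-)!$ tuples, which is exactly the claimed $i$-th factor. The argument is essentially a bookkeeping exercise with no genuine obstacle; the only mild subtlety is the automatic exclusion of $\theta_j$ at which $d_0$ vanishes (relevant when $\omega$ happens to appear among the degrees of irreducible factors of $d_0$), and this is handled implicitly by the collapse in the previous step.
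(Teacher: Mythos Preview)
Your proof is correct and takes a genuinely different route from the paper. The paper proceeds via generating functions: after the same block factorization, it rewrites each $N_{\mu,\nu}^{(i)}$ as $2^{-n_i}\sum_{\mathrm{s}_i} b_{\mu_{j_i}}^{(\mathrm{s}_i,\,n_i-\mathrm{s}_i)}(d_0)\,e_{\mathrm{s}_i}(\delta^{(i)})$, introduces the two-variable generating polynomial $F_\omega(X,Y)=\sum_{\mathrm{s},\mathrm{t}}b_\omega^{(\mathrm{s},\mathrm{t})}(d_0)\frac{X^{\mathrm{s}}Y^{\mathrm{t}}}{\mathrm{s}!\mathrm{t}!}$, whose closed form is a product of two binomial series, and then extracts the desired coefficient through a differential/coefficient identity. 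Your argument instead unwinds the subset-sum back into a starred sum with the indicator product $\prod_j\chi(\delta_j+\chi)$, which collapses to $2^n$ on exactly the tuples with $\chi_\omega(d_0(\theta_j))=\delta_j$, and then counts those tuples directly by distributing the $j$'s among the Galois orbits carrying the correct sign. This is more elementary and conceptually transparent --- in effect you are observing that $N_{\mu,\nu}(d_0,\delta)$ was \emph{defined} in the paper as precisely this count, and then computing the count by elementary combinatorics. The paper's route, while heavier, has the advantage of making the closed form $F_\omega(X,Y)$ explicit, which plugs into the exponential generating series used later (e.g., in \S\ref{sub-section-7.1-split-case}--\ref{Gen-Case}); your route gives a cleaner self-contained proof of this proposition alone. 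One very minor remark: your exchange-of-summation step tacitly uses that the starred sum indexed by $J$ with $\chi$ on positions in $S$ and $\chi^2$ elsewhere equals $b_\omega^{(|S|,\,n-|S|)}(d_0)$ --- this is immediate by symmetry of the starred condition, but worth a word.
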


\begin{proof} We observe from \eqref{eq: tag 4.9} that $N_{\mu, \nu}(d_{0}, \delta)$ factors as  
$$
N_{\mu, \nu}(d_{0}, \delta) \, =\,  
\prod_{i = 1}^{m} N_{ \mu, \nu}^{({\scriptstyle i})}(d_{0}, \delta^{(i)})
$$ 
where 
\begin{equation*}
N_{\mu, \nu}^{({\scriptstyle i})}(d_{0}, \delta^{(i)}) \, = \, 
\frac{1}{2^{n_{i}}}\!\sum_{\mathrm{s}_{_{i}}  \! =\, 0}^{n_{ i}}
b_{\mu_{j_{i}}}^{(\mathrm{s}_{_{i}},\, n_{_{ i}} \! - \mathrm{s}_{_{i}})}(d_{0}) \; \cdot 
\sum_{\substack{S_{i}\subseteq J_{1}^{(i)} \\  |S_{i}| \,=\, \mathrm{s}_{_{i}}}}
\delta_{S_{i}}. 
\end{equation*}
The inner sum is 
\begin{equation*}
e_{\mathrm{s}_{_{i}}}\!(\delta^{(i)})  = 
\text{Coefficient}_{t^{\mathrm{s}_{_{i}}}}\!(1 \,+\,  t)^{(n_{ i} + |\delta^{(i)}|)\slash 2} \,
(1 \,-\,  t)^{(n_{ i} - |\delta^{(i)}|)\slash 2}
\end{equation*} 
and hence, 
\begin{equation*} 
N_{\mu, \nu}^{({\scriptstyle i})}(d_{0}, \delta^{(i)}) \, =\, 
\frac{1}{2^{n_{ i}}}\!\sum_{\mathrm{s}_{_{i}} \! =\, 0}^{n_{ i}}
b_{\mu_{j_{i}}}^{(\mathrm{s}_{_{i}},\, n_{_{ i}} \! - \mathrm{s}_{_{i}})} (d_{0})  \cdot 
\sum_{h = 0}^{\mathrm{s}_{_{i}}}\, (-1)^{\mathrm{s}_{_{i}} \! - h}\binom{(n_{i} + |\delta^{(i)}|)\slash 2}{h}
\binom{(n_{i} - |\delta^{(i)}|)\slash 2}{\mathrm{s}_{_{i}} \!- h}.
\end{equation*} 
On the other hand, by the second identity in the lemma, we deduce easily that the 
generating polynomial $F_{\omega}(X, Y) = F_{\omega, a_{\omega}^{*}(d_{0}),
c_{\omega}^{}(d_{0})}(X, Y)$ of $b_{\omega}^{(\mathrm{s}, \, \mathrm{t})}(d_{0})\slash (\mathrm{s}! \mathrm{t}!)$ is 
\begin{equation*}
F_{\omega}(X, Y)\, = \, \sum_{\mathrm{s}, \mathrm{t} \, \ge \, 0} b_{\omega}^{(\mathrm{s}, \, \mathrm{t})}(d_{0}) \frac{X^{\mathrm{s}}}{\mathrm{s}!} \frac{Y^{\mathrm{t}}}{\mathrm{t}!} = 
(1  + \omega X  + \omega Y)^{\!\frac{c_{\omega}^{}(d_{0}) \, + \, a_{\omega}^{*}(d_{0})}{2\omega}}  
(1 - \omega X + \omega Y)^{\!\frac{c_{\omega}^{}(d_{0}) \, - \, a_{\omega}^{*}(d_{0})}{2\omega}}.  
\end{equation*}
One verifies that
\begin{equation*}
N_{\mu, \nu}^{({\scriptstyle i})}(d_{0}, \delta^{(i)}) = 
\text{Coefficient}_{X^{n_{\!\! \delta^{(i)}}^{+}}} \bigg[\frac{\big(n_{\delta^{(i)}}^{+}\big)!}{2^{n_{i}}} 
\bigg(\frac{\partial}{\partial Y}  -  \frac{\partial}{\partial X} \bigg)^{\! n_{ \delta^{(i)}}^{-}} 
\!\!\! F_{\mu_{j_{i}}}\!(X, Y) \bigg|_{X = Y}\bigg].
\end{equation*}
It is also easy to see that for a power series in two variables 
$
F(X, Y) = \sum_{\mathrm{s},  \mathrm{t} \ge 0} b_{\mathrm{s}, \mathrm{t}}
\frac{X^{\mathrm{s}}}{\mathrm{s}!} \frac{Y^{\mathrm{t}}}{\mathrm{t}!},
$ 
we have the formal series identity 
\begin{equation*}
\sum_{n \, \ge \, 0}\,  \bigg[\bigg(\frac{\partial}{\partial Y}  -  \frac{\partial}{\partial X}  \bigg)^{\! n} \! F(X, Y) \bigg|_{X = Y = V}\bigg] \frac{U^{n}}{n!}  = F(V - U, V + U).
\end{equation*} 
From these simple observations, it follows (as stated in the proposition) that for 
$d_{0}\in \mathscr{P}_{\! \nu}(\mu)$ and $\delta = (\delta_{j})_{j\in J_{1}}$ with $\delta_{j} \in \{-1, 1\},$ we have 
\begin{equation*}
N_{\mu, \nu}(d_{0},\delta) \, =\,  \prod_{i = 1}^{m} \frac{\Big(\frac{c_{\mu_{j_{i}}}^{}\!\!(d_{0}) \,+\, a_{\mu_{j_{i}}}^{*}\!\!(d_{0})}{2\mu_{j_{i}}}\Big)! \,
\Big(\frac{c_{\mu_{j_{i}}}^{}\!\!(d_{0}) \,-\, a_{\mu_{j_{i}}}^{*}\!\!(d_{0})}{2\mu_{j_{i}}} \Big)!\, \mu_{j_{i}}^{n_{i}}}{\Big(\frac{c_{\mu_{j_{i}}}^{}\!\!(d_{0}) \,+\, a_{\mu_{j_{i}}}^{*}\!\!(d_{0})}{2\mu_{j_{i}}} \,- \, n_{\delta^{(i)}}^{+} \Big)! \, 
\Big(\frac{c_{\mu_{j_{i}}}^{}\!\!(d_{0}) \,-\, a_{\mu_{j_{i}}}^{*}\!\!(d_{0})}{2\mu_{j_{i}}} \,-\, n_{\delta^{(i)}}^{-} \Big)!}.
\end{equation*}
\end{proof}

Now, for a hyperelliptic curve $C_{d_{0}}$ of genus $g$ corresponding to a 
monic square-free polynomial $d_{0}\in \mathbb{F}[x]$ of degree $2 g + 1$ or $2 g + 2,$
and a prime $\ell$ different from the characteristic $p$ of $\mathbb{F},$ we recall that
\begin{equation*}
P_{\scriptscriptstyle C_{d_{0}}}\!(t)  = \det(I -   F^{*} t \, \vert \, H_{\text{\'et}}^{1}(\bar{C}_{ d_{0}},\mathbb{Q}_{\ell}))
\end{equation*} 
where $\bar{C}_{d_{0}}$ is obtained from $C_{d_{0}}$ by extending the scalars from $\mathbb{F}$ to 
$\overline{\mathbb{F}},$ and $F^{*}$ is the endomorphism of the 
$\ell$-adic \'etale cohomology induced by the Frobenius morphism 
$F : \bar{C}_{d_{0}} \rightarrow \bar{C}_{d_{0}}.$ 
Let $\alpha_{1}(C_{d_{0}}), \ldots, \alpha_{2 g}(C_{d_{0}})$ 
denote the eigenvalues of $F^{*}\!,$ ordered in such a way that 
$\alpha_{k}(C_{d_{0}})\alpha_{k + g}(C_{d_{0}}) = q,$ 
and for every positive integer $j,$ put 
\begin{equation}\label{eq: tag-special}
a_{j}(C_{d_{0}}) :\, =\, - \, \frac{1 + (-1)^{\deg d_{0}}}{2}
\, - \, \sum_{\theta \, \in \, \mathbb{F}_{\! j}} 
\chi_{j}(d_{0}(\theta)) \, =\, \Tr(F^{*j} \vert \, H_{\text{\'et}}^{1}(\bar{C}_{d_{0}},\mathbb{Q}_{\ell})).
\end{equation}
Writing $P_{\scriptscriptstyle C_{d_{0}}}\!(t)$ as 
$$
P_{\scriptscriptstyle C_{d_{0}}}\!(t) = 1 + \sum_{k = 1}^{2g}
(-1)^{k}\lambda_{k}(C_{d_{0}})t^{k}
$$ 
it follows from \eqref{eq: tag 4.7} that 
$$
\lambda_{k}(C_{d_{0}}) = e_{k}(\alpha_{1}(C_{d_{0}}), \ldots, \alpha_{2g}(C_{d_{0}})) 
= \sum \frac{(-1)^{|\mathfrak{i}_{k}| - k}}{z(\mathfrak{i}_{k})} \cdot a^{(\mathfrak{i}_{k})}(C_{d_{0}}) 
$$ 
the sum being over all $k$-tuples $\mathfrak{i}_{k} = (i_{1}, \ldots, i_{k})$ of nonnegative integers such that $i_{1} + \cdots + ki_{k} = k.$ Here we set 
$a^{(\mathfrak{i}_{k})}(C_{d_{0}}) : = a_{1} (C_{d_{0}})^{i_{1}}\cdots \,
a_{k} (C_{d_{0}})^{i_{k}}$ (with the understanding that $a_{j}(C_{d_{0}})^{i_{j}} = 1$ if $i_{j} = 0$ for some $j$).

One can easily express $a_{\omega}^{*}(d_{0})$ ($\omega \ge 1$) in terms of the classical 
character sums \eqref{eq: tag-special} and the elementary quantities 
$
c_{\omega \slash  2^{u}}(d_{0}),
$ 
with $u = 1, 2, \ldots,$ as follows. We can write 
$$
a_{k}(C_{d_{0}})  = - \, \frac{1 + (-1)^{\deg d_{0}}}{2}\, 
- \, \sum_{\omega | k}\, \sum_{\theta \, \in \, \mathbb{F}_{\! \omega}'} 
\chi_{k}(d_{0}(\theta)) \qquad (\text{for $k \ge 1$}).
$$ 
Recalling that for $\theta \in \mathbb{F}_{\! \omega},$ we have 
$\chi_{k}(d_{0}(\theta)) = \chi_{\omega}(d_{0}(\theta))$ 
or $\chi_{\omega}(d_{0}(\theta))^{2}$ according as $k\slash \omega$ is odd or even, 
we can rewrite the above equality as  
$$
- \, \frac{1 + (-1)^{\deg d_{0}}}{2} -  a_{k}(C_{d_{0}}) \;\;
= \sum_{k\slash \omega-\text{odd}} a_{\omega}^{*}(d_{0}) \; +
\sum_{k\slash \omega-\text{even}} c_{\omega}^{}(d_{0}).
$$ 
By setting $\rho_{\omega} = 1$ or $0$ according as $\omega$ is a power of two or not,  
and $\epsilon_{d_{0}} : = (1 + (-1)^{\deg d_{0}})\slash 2,$
it follows using the M\"obius inversion that
$$
a_{\omega}^{*}(d_{0}) = - \, \rho_{\omega}\epsilon_{d_{0}} \;\,  -
\sum_{\substack{k \mid \omega \\ \omega\slash k-\text{odd}}} 
\mu(\omega\slash k) a_{k}(C_{d_{0}}) \, -  
\sum_{u = 1}^{a}\, c_{\omega \slash  2^{u}}(d_{0}) 
\qquad (\text{for $2^{a} \parallel  \omega$}).
$$ 
Note that we can express $P_{\scriptscriptstyle C_{d_{0}}}\!(t)$ as 
\begin{equation}\label{eq: pol-char-in-terms-of-a*}
P_{\scriptscriptstyle C_{ d_{0}}}\!(t)  = (1 - t)^{- \epsilon_{d_{0}}} \cdot 
\prod_{m = 1}^{\infty} (1 -  t^{m})^{-\frac{c_{m}^{}(d_{0}) \,+\, a_{m}^{*}(d_{0})}{2m}}  
(1 + t^{m})^{-\frac{c_{m}^{}(d_{0})\,-\, a_{m}^{*}(d_{0})}{2m}}.
\end{equation}

\section{The coefficients $\lambda(\kappa, l; q)$ for $l = 3, 4$}\label{section 5} 
To determine the coefficients $\lambda(\kappa, 3; q),$ we follow the 
strategy outlined at the end of Section \ref{section 3}. \!By Proposition \ref{Proposition 3.10}, 
$$
\lambda(\kappa, 3; q) \, -\, q^{|\kappa| + 4}
\lambda(\kappa, 3; 1\slash q)   \, =  \, a(\kappa, 3; q)
$$ 
with $a(\kappa, 3; q)$ computed using {\it Remark} 2 in Section \ref{section 3}. 
Explicitly, 
\begin{equation*}  
a(\kappa, 3; q)\;\; = \sum_{\substack{\deg  d_{0} = 3 \\  d_{0}-\text{monic \& square-free}}}
\hskip-6pt b_{d_{0}}(\kappa)\;   +  \,
\sum_{\kappa' \, \le \, \kappa}  \left(\, \sum_{\substack{\deg  d_{0}  =  1 \\ d_{0}-\text{monic}}}\,  \sum_{\theta \, \in \, \mathbb{F}} 
\chi(d_{0}(\theta))^{|\kappa'|} \, b_{d_{0}}^{(\theta)}(\kappa - \kappa') \right) 
\cdot q^{|\kappa'| + 2}\lambda(\kappa', 2; 1\slash q) 
\end{equation*} 
where $b_{d_{0}}(\kappa)$ is the coefficient $(k_{1},\ldots,k_{r})$ of  
$$
\prod_{i = 1}^{r} P_{\scriptscriptstyle C_{d_{0}}}\!(t_{i})\,
=\, \prod_{i = 1}^{r} (1  -  a(C_{ d_{0}}) t_{i}  +  q t_{i}^2) 
$$ 
and 
$$
b_{d_{0}}^{(\theta)}(\kappa - \kappa')
\;=\; 
 \begin{cases} 
(- \chi(d_{0}(\theta)))^{|\kappa - \kappa'|}
    & \text{if $k_{i}^{} - k_{i}' = 0$ or $1$ for all $i = 1, \ldots, r$}\\ 
0 & \text{otherwise}.  
\end{cases}
$$ 
For every $\kappa' = (k_{1}',\ldots, k_{r}')\in \mathbb{N}^{r}$ whose 
components satisfy $k_{i}^{} - k_{i}' = 0$ or $1$ for all $i = 1, \ldots, r,$ 
we have 
$$  
\sum_{\substack{\deg d_{0} = 1 \\ d_{0}-\text{monic}}} \, \sum_{\theta \, \in \, \mathbb{F}} 
\chi(d_{0}(\theta))^{|\kappa'|} \,  b_{d_{0}}^{(\theta)}(\kappa - \kappa') 
\;=\; 
 \begin{cases} 
(- 1)^{|\kappa - \kappa'|}\, q (q-1) 
    & \text{if $|\kappa|$ is even}\\ 
0 & \text{if $|\kappa|$ is odd}.  
\end{cases}
$$ 
It follows that the triple sum in the expression of $a(\kappa, 3; q)$ is 
$$  
q(q - 1)\;\;\;\, \cdot  \sum_{\substack{\;\;\; \kappa' \, \le \, \kappa_{_{\phantom{X}}} \\  k_{i}^{} - k_{i}' = 0, 1}} (- 1)^{|\kappa - \kappa'|}\, q^{|\kappa'| + 2} 
\lambda(\kappa', 2; 1\slash q) 
$$ 
or zero according as $|\kappa|$ is even or odd.

To deal with the remaining part of $a(\kappa, 3; q),$ let  
\begin{equation*}  
\mathcal{M}_{3}(r; q)\;\;\; := \sum_{\substack{\deg d_{0} = 3 \\ d_{0}-\text{monic \& square-free}}}
\hskip-6pt a(C_{d_{0}})^{r} 
\;\;\;= \sum_{\substack{\deg d_{0} = 3 \\ d_{0}-\text{monic \& square-free}}}
\left(- \sum_{\theta \, \in \, \mathbb{F}}\, \chi(d_{0}(\theta))\right)^{\! r}.
\end{equation*}
Notice that a simple substitution $\theta \to \theta_{0}\theta,$ with $\chi(\theta_{0}) = -1,$ 
implies immediately that $\mathcal{M}_{3}(r; q) = 0$ for $r$ odd. \!We normalize 
$\mathcal{M}_{3}(r; q)$ by setting 
$
\mathcal{M}_{3}^{*}(r; q) := (q(q - 1))^{-1}\mathcal{M}_{3}(r; q).
$ 
Using the Eichler-Selberg trace formula \cite{Sel} 
(see also \cite[Appendix by D. Zagier, pp. 44--54]{Lang}), Birch \cite{Bi} and Ihara \cite{Ih} proved\footnote{Strictly speaking, in \cite{Bi} this is proved, for simplicity, in the case 
when $q$ is an odd prime, and it is implicit, for an arbitrary finite field of odd characteristic, 
in \cite{Ih}.} independently the following beautiful theorem:

\vskip10pt
\newtheorem*{Birch-Ihara}{Theorem (Birch-Ihara)}
\begin{Birch-Ihara} --- {\it If $\mathbb{F} = \mathbb{F}_{\! q}$ is a finite field of odd characteristic $p,$ then 
\begin{equation*}
\sum_{r = 0}^{k - 1}\, \binom{k + r - 1}{2r} (-q)^{k - 1 - r} \mathcal{M}_{3}^{*}(2r; q) \,
=  - \, T_{2k}(q) \, - \,  1 \quad  (\text{for $k \ge 1$})
\end{equation*} 
with $T_{2k}(q) = \Tr(T_{q} \, \vert \, S_{2k})$ 
if $q = p,$ and $T_{2k}(q) = \Tr(T_{ q} \, \vert \, S_{2k}) 
- p^{2k - 1} \Tr(T_{ q\slash p^{2}} \vert\, S_{2k})$ if $q \ne p,$ 
where $\Tr(T_{n} \, \vert \, S_{2k})$ is the trace of the Hecke operator
$T_{n}$ acting on the space of elliptic cusp forms of weight $2k$ on the full 
modular group. Equivalently, for $r\ge 0$ we have the identity: 
\begin{equation*}
\mathcal{M}_{3}^{*}(2r; q) \,=\,  \frac{(2r)!}{r!(r + 1)!}q^{r + 1} - 
\, \sum_{k=1}^{r} (2k + 1)\frac{(2r)! \, q^{r - k}}{(r - k)!(r + k + 1)!} 
(T_{2k + 2}(q)  + 1).
\end{equation*}}
\end{Birch-Ihara}

The above result is the main ingredient in the determination of the coefficients 
$\lambda(\kappa, 3; q)$ which is given in the following 

\vskip10pt
\begin{thm}\label{Theorem 5.2} --- For $\kappa = (k_{1},\ldots, k_{r})\in \mathbb{N}^{r},$ 
let $r_{1} = r_{1}(\kappa)$ and $r_{2}  = r_{2}(\kappa)$ denote the number of components 
of $\kappa$ equal to $1$ and $2,$ respectively. If 
$
r_{1} + 2 r_{2} =|\kappa|,
$ 
and 
$r_{1} = 2R$ is even, then 
\begin{equation*}
\lambda(\kappa, 3; q)\,  = \,
\frac{(2R)!}{R!(R + 1)!}q^{R + r_{2} + 3}\; - 
\;\, \sum_{j=1}^{R} \, (2j + 1)\, \frac{(2R)! \,q^{R + r_{2} - j + 2}}{(R - j)!(R + j + 1)!}\, 
T_{2j + 2}(q).
\end{equation*} 
Otherwise, the coefficients $\lambda(\kappa, 3; q)$ all vanish.
\end{thm}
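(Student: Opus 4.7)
The argument follows the outline at the end of Section \ref{section 3}, combining the general machinery of Sections \ref{section 3}--\ref{section 4} with the Birch-Ihara formula for $\mathcal{M}_3(r;q)$. Applying Proposition \ref{Proposition 3.10} in the odd case $l = 3$ reduces the statement to computing $a(\kappa, 3; q)$ explicitly and then extracting its "dominant half" via condition (iii).

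By Remark 2 of Section \ref{section 3}, the primitive pairs $(\mu_0, \mathcal{N}_0)$ with $\sum_j \nu_j \mu_j = 3$ that contribute to $a(\kappa, 3; q)$ are those with $d$ square-free of degree $3$ (three cases: $\mu \in \{(3), (2,1), (1,1,1)\}$ with $\nu \equiv 1$) together with the pair $((1,1), (1,2))$ corresponding to $d = \pi_1 \pi_2^2$ with $\pi_1 \ne \pi_2$ linear; the pair $((1), (3))$ is precisely what produces the $\lambda(\kappa, 3; 1/q)$ term on the left-hand side of Proposition \ref{Proposition 3.10}. Using $P_{C_{d_0}}(t) = 1 - a_1(C_{d_0}) t + q t^2$ for $d_0$ of degree $3$, one finds
\begin{equation*}
\mathrm{Coeff}_{(k_1, \ldots, k_r)} \prod_{i=1}^r P_{C_{d_0}}(t_i) = (-a_1(C_{d_0}))^{r_1} q^{r_2}
\end{equation*}
whenever every $k_i \in \{0, 1, 2\}$, and zero otherwise; summing over $d_0$ gives $q^{r_2} \mathcal{M}_3(r_1; q)$. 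This forces $r_1 + 2r_2 = |\kappa|$, and the substitution $\theta \mapsto \theta_0 \theta$ with $\chi(\theta_0) = -1$ further forces $r_1$ even. The pair $((1,1), (1,2))$, unfolded exactly as in the $l = 2$ example earlier in the paper, yields the boundary contribution
\begin{equation*}
(q - 1) \sum_{\substack{\kappa' \le \kappa \\ k_i - k_i' \in \{0,1\}}} (-1)^{|\kappa - \kappa'|} q^{|\kappa'|+3} \lambda(\kappa', 2; 1/q),
\end{equation*}
which is nonzero only when $|\kappa|$ is even, and is then an explicit polynomial in $q$ via the closed form for $\lambda_2(r;q)$ from Appendix \ref{C} (noting that $\lambda(\kappa', 2; q)$ reduces to $\lambda_2(|\kappa'|; q)$ whenever every $k_i' \in \{0,1\}$). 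If either $r_1$ is odd or some $k_i \ge 3$, then $a(\kappa, 3; q) = 0$, and condition (iii) applied to $0$ forces $\lambda(\kappa, 3; q) = 0$, giving the vanishing case.

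When $r_1 = 2R$ is even and $r_1 + 2r_2 = |\kappa|$, substituting the Birch-Ihara closed form for $\mathcal{M}_3^*(2R; q) = \mathcal{M}_3(2R;q)/(q(q-1))$ and collapsing the alternating binomial sum over $\kappa'$ by a Vandermonde-type identity produces a closed form for $a(\kappa, 3; q)$ that is manifestly antisymmetric under the involution $q \leftrightarrow 1/q$ rescaled by $-q^{|\kappa|+4}$, as dictated by Proposition \ref{Proposition 3.10}. To isolate $\lambda(\kappa, 3; q)$, I invoke condition (iii) together with Deligne's bound $|T_{2j+2}(q)| \le C\, q^{(2j+1)/2}$: the polynomial piece $\frac{(2R)!}{R!(R+1)!}\, q^{R+r_2+3}$ has weight $R + r_2 + 3$ and each Hecke piece $q^{R+r_2-j+2} T_{2j+2}(q)$ has weight $R + r_2 + 5/2$ (independent of $j$, via the functional equation for $T_{2j+2}$), both strictly greater than $(|\kappa| + 4)/2 = R + r_2 + 2$; meanwhile the $q \mapsto 1/q$-image of the claimed formula (rescaled by $q^{|\kappa|+4}$) has polynomial weight $R + r_2 + 1$ and Hecke weight $R + r_2 + 3/2$, both strictly below the threshold. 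The dominance condition thus forces the split and yields the stated formula.

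The principal technical difficulty is the binomial collapse in the third paragraph: reconciling the polynomial-in-$q$ part of $\mathcal{M}_3^*(2R; q)$ (including the $+1$'s appearing in the factor $T_{2k}(q) + 1$ of Birch-Ihara) with the boundary convolution built from $\lambda_2$, so that the final Hecke coefficients emerge precisely as $(2j+1)\frac{(2R)!}{(R-j)!(R+j+1)!}$. I expect this to follow from standard hypergeometric manipulations applied to the closed form in Appendix \ref{C}, but the sign and index bookkeeping warrants careful verification.
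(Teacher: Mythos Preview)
Your approach matches the paper's proof essentially line for line: Proposition~\ref{Proposition 3.10} for $l=3$, decomposition of $a(\kappa,3;q)$ into the nondegenerate piece $q^{r_2}\mathcal{M}_3(r_1;q)$ plus the boundary convolution with $\lambda(\cdot,2;1/q)$, Birch--Ihara for the former, Appendix~\ref{C} for the latter, and dominance to split. Two small points are worth sharpening. First, in the vanishing case ``some $k_i\ge 3$ with $|\kappa|$ even'' you need the boundary piece to vanish as well, not only the nondegenerate one; this is exactly the first clause of Proposition~\ref{Prop2-appendixC} (``it vanishes, unless $|\kappa|=r_1(\kappa)+2r_2(\kappa)$''), so cite it there. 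Second, the step you flag as the principal difficulty --- the collapse of the boundary sum to $\sum_{j=0}^{R}(2j+1)\frac{(2R)!\,q^{R+r_2-j}}{(R-j)!(R+j+1)!}$ --- is precisely the content of Proposition~\ref{Prop2-appendixC}, and its proof there is a one-line residue computation using the closed form of Proposition~\ref{Prop1-appendixC}, not a hypergeometric manipulation; once you invoke it, the cancellation of the ``$+1$'' terms from Birch--Ihara against the $j=0$ boundary term and the splitting into dominant/subdominant halves are immediate, exactly as in the paper.
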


\begin{proof} From the above considerations and the discussion in 
Appendix \ref{C}, it is easy to see that $a(\kappa, 3; q) = 0,$ 
unless $r_{1}$ is even, and $r_{1} + 2 r_{2} = |\kappa|.$ 
Moreover,
\begin{equation*}
(q(q - 1))^{-1} a(\kappa, 3; q) \, =\, 
q^{r_{2}}\mathcal{M}_{3}^{*}(r_{1}; q)\;\;\,   + \sum_{\substack{\;\;\; \kappa' \, \le \, \kappa_{_{\phantom{X}}} \\ k_{i}^{} - k_{i}' = 0, 1}}  
(- 1)^{|\kappa - \kappa'|}\, q^{|\kappa'| + 2} \lambda(\kappa', 2; 1\slash q) 
\end{equation*} 
in the remaining case. The sum in the right is 
\begin{equation*}
\sum_{j = 0}^{R} (2j + 1)\frac{(2R)! \,q^{R + r_{2} - j}}{(R - j)!(R + j + 1)!}  
\end{equation*} 
see Appendix \ref{C}, which combined with Birch-Ihara identity gives     
\begin{equation*}
\begin{split}
a(\kappa, 3; q) \, =\, & \frac{(2R)!}{R!(R + 1)!}q^{R + r_{2} + 3}\, - 
\, \sum_{j=1}^{R} (2j + 1)\frac{(2R)! \,q^{R + r_{2} - j + 2}}{(R - j)!(R + j + 1)!}\, 
T_{2j + 2}(q)\\
& - \frac{(2R)!}{R!(R + 1)!}q^{R + r_{2} + 1}\, + 
\, \sum_{j=1}^{R} (2j + 1)\frac{(2R)! \,q^{R + r_{2} - j + 1}}{(R - j)!(R + j + 1)!}\, 
T_{2j + 2}(q).
\end{split}
\end{equation*}
(Notice the cancellation that occurs in the process of obtaining 
the last expression of $a(\kappa, 3; q).$) Letting 
$$
T_{2j + 2}(1\slash q): = q^{-2j - 1}T_{2j + 2}(q)
$$  
we can present the second sum as 
$$
\sum_{j=1}^{R} (2j + 1)\frac{(2R)! \,q^{R + r_{2} + j + 2}}{(R - j)!(R + j + 1)!}\, 
T_{2j + 2}(1\slash q).
$$ 
To finish the proof, we notice that the coefficient $\lambda(\kappa, 3; q)$ in the statement of the theorem is just the dominant half of $a(\kappa, 3; q).$
\end{proof}

{\it The case $l = 4.$} To find the coefficients $\lambda(\kappa, 4; q),$ we apply Proposition \ref{Proposition 3.10} with $l = 4,$ and thus 
\begin{equation*}
\lambda(\kappa, 4; q) = a(\kappa, 4; q)\, + 
\sum_{\kappa'  \le \, \kappa} 
q^{|\kappa| - \mathrm{r}(\kappa - \kappa') + 5} 
(q-1)^{\mathrm{r}(\kappa - \kappa')}\lambda(\kappa', 4; 1\slash q).  
\end{equation*} 
The right-hand side of the identity consists of distinct 
contributions corresponding to pairs 
$\mathrm{x} = (x_{1} \ge \cdots \ge x_{m})$ and 
$\mathrm{y} = (y_{1}, \ldots,  y_{m}),$ 
with $x_{i},  y_{i}$ positive integers for all 
$i = 1, \ldots, m,$ 
such that 
\begin{equation} \label{eq: tag 5.3}
\sum_{i = 1}^{m} y_{i} x_{i}  = 4.
\end{equation}
(Notice that the contribution corresponding to $4\cdot 1$ 
is precisely the sum in the right-hand side of the identity.)

To compute $a(\kappa, 4; q),$ recall the notation introduced in Section \ref{section 3}. 
By \eqref{eq: tag 3.6}, \eqref{eq: tag 3.8} and \eqref{eq: tag 3.9}, for every partition
$
\mu = (\mu_{1}\ge \cdots \ge \mu_{n} \ge 1)
$ 
and 
$
\mathcal{N} = (\nu_{1j}, \ldots, \nu_{rj},\, \nu_{j})_{1\le j\le n},
$ 
subject to $\nu_{j} = 0$ or $1$ for all 
$j = 1, \ldots, n,$ and  
$$
\sum_{j = 1}^{n} \nu_{ij}\mu_{j} =  k_{i}, \;\;\;\;\;\;
\sum_{j \, \notin \, J_{1}^{0}}  \mu_{j}  = 4 \qquad (\text{for $i = 1, \ldots, r$})
$$ 
we have 
\begin{equation*}
\lambda_{\mu}(\mathcal{N}; q)\, = \prod_{j \, \notin \, J_{1}^{0}}
q^{\mu_{j}(\nu_{1j} + \cdots + \nu_{rj} + 1)}
\lambda(\nu_{1j}, \ldots, \nu_{rj}, 1; 1\slash q^{\mu_{j}}) \, =\,  1
\end{equation*} 
if $\nu_{1j} = \cdots = \nu_{rj} = 0$ for all $j\notin J_{1}^{0},$ and zero otherwise. \!By Remark \ref{rem2} in Section \ref{section 3}, we see that for every fixed $\kappa = (k_{1}, \ldots, k_{r})\in \mathbb{N}^{r},$ the total contribution to $a(\kappa, 4; q)$ 
corresponding to all such pairs $(\mu, \mathcal{N})$ is 
$$
\sum_{\substack{\deg d_{0} = 4 \\ d_{0}-\text{monic \& square-free}}} 
\hskip-6pt b_{d_{0}}(\kappa)
$$ 
where $b_{d_{0}}(\kappa)$ is the coefficient $(k_{1},\ldots, k_{r})$ of  
\begin{equation*}
\prod_{i = 1}^{r} (1 - t_{i})(1  -  a(C_{d_{0}}) t_{i}  +  q t_{i}^2)\;\;\; \text{with}\;\;\; 
a(C_{d_{0}}) = - \, 1  \,- \, \sum_{\theta \, \in \, \mathbb{F}} \chi(d_{0}(\theta)).
\end{equation*}
Note that this contribution corresponds to the five partitions $\mathrm{x} = (x_{1} \ge \cdots \ge x_{m}), 1 \le m\le 4,$ of $4.$ (For every partition $\mathrm{x},$ we 
choose $y_{1} =  \cdots = y_{m} = 1$ in \eqref{eq: tag 5.3}.) In what follows, we shall refer to the expression 
\begin{equation} \label{eq: tag 5.4}
\sum_{\substack{\deg d_{0} = 4 \\ d_{0}-\text{monic \& square-free}}} 
\left(\, \prod_{i = 1}^{r} (1 - t_{i})(1  -  a(C_{ d_{0}}) t_{i}  +  q t_{i}^2) \right)
\end{equation} 
as the {\it non-degenerate} part of $A_{4}(T, q).$

There are four additional (degenerate) contributions to $a(\kappa, 4; q)$ which can be computed similarly using Remark \ref{rem2} in Section \ref{section 3}. They are: 
\begin{equation*}  
\frac{1}{2}(q - 1) \;\, \cdot  \sum_{2\kappa' \, \le \, \kappa} 
q^{|\kappa| + 5} (1 - q^{-2})^{r(\kappa - 2\kappa') - r_{1}(\kappa - 2\kappa')}
\lambda(\kappa', 2; 1\slash q^{2})
\end{equation*}
with generating series
\begin{equation} \label{eq: tag 5.5}
\frac{q^{5}(q - 1)}{2} \frac{E(q\, T)\Lambda_{2}(q^2 T^2\!, 1\slash q^2)}{E(T)E(-T)}
\end{equation} 
corresponding to $2\cdot 2$ in \eqref{eq: tag 5.3},
\begin{equation*}  
\frac{1}{2} q^{|\kappa| + 5} (q - 1)\; \cdot  
\hskip-6pt \sum_{\substack{\kappa'\!,\, \kappa'' \\ \kappa' + \kappa'' \le \kappa}} 
\left(\frac{q - 1}{q} \right)^{\! 2r(\kappa - \kappa' - \kappa'')}
\left(\frac{q(q - 2)}{(q - 1)^{2}}\right)^{\! r_{1}(\kappa - \kappa' - \kappa'')}
 \lambda(\kappa', 2; 1\slash q)
\lambda(\kappa'', 2; 1\slash q)
\end{equation*} 
with generating series 
\begin{equation} \label{eq: tag 5.6}
\frac{q^{5}(q - 1)}{2} \frac{E(q\, T)\Lambda_{2}(q \, T, 1\slash q)^{2}}{E(T)^{2}}
\end{equation} 
corresponding to $2\cdot 1 \, +\,  2\cdot 1,$
\begin{equation*} 
\begin{split}
& \sum_{\substack{\;\;\; \kappa' \, \le \, \kappa_{_{\phantom{X}}}   \\  k_{i}^{} - k_{i}' \, \le \, 2 \\  r_{1}\!(\kappa - \kappa') = 0}} 
\begin{cases}
 q(q - 1)^{2}  & \text{if $|\kappa'|  + r(\kappa - \kappa')$ even}\\ 
 -q(q  - 1) & \text{if $|\kappa'|  + r(\kappa - \kappa')$ odd}  
\end{cases}  \cdot \; q^{|\kappa'| + 2}\,
\lambda(\kappa', 2; 1\slash q)\\
&+ \;
\frac{q(q - 1)(q - 2)}{2}\;\;\;\, \cdot  
\sum_{\substack{\;\;\; \kappa' \, \le \, \kappa_{_{\phantom{X}}}  \\ k_{i}^{} - k_{i}' \, \le \, 2 \\  r_{1}\!(\kappa - \kappa') \ne 0}} 
(-2)^{r_{1}\!(\kappa - \kappa')}\, q^{|\kappa'| + 2}\lambda(\kappa', 2; 1\slash q)
\end{split}
\end{equation*} 
with generating series 
\begin{equation} \label{eq: tag 5.7}
\frac{q^{3}(q - 1)(q - 2)}{2} \frac{\Lambda_{2}(q \, T, 1\slash q)}{E(T)^{2}}\,+\,
\frac{q^{4}(q - 1)}{2}\frac{\Lambda_{2}(- q \, T, 1\slash q)}{E(T)E(-  T)}
\end{equation} 
corresponding to $1\cdot 2 + 2\cdot 1$ or $2\cdot 1 + 1\cdot 1 + 1\cdot 1,$ 
and finally 
\begin{equation*}  
(q - 1)\;\;\;\, \cdot  \sum_{\substack{\;\;\; \kappa' \, \le \, \kappa_{_{\phantom{X}}} \\ k_{i}^{} - k_{i}' = 0, 1}} (- 1)^{|\kappa - \kappa'|}\, q^{|\kappa'| + 4} \lambda(\kappa', 3; 1\slash q) 
\end{equation*} 
with generating series
\begin{equation} \label{eq: tag 5.8}
q^{4}(q - 1)\frac{\Lambda_{3}(q \, T, 1\slash q)}{E(T)} 
\end{equation}
corresponding to $3\cdot 1  +  1\cdot 1.$ 
Notice that in the above degenerate contributions we have evaluated the elementary character sums 
that occur.

Now set  
\begin{equation*} 
\mathcal{M}_{4}(r; q)\;\;\; := \hskip-3pt \sum_{\substack{\deg d_{0} = 4 \\ d_{0}-\text{monic \& square-free}}}\hskip-6pt a(C_{d_{0}})^{r} 
\;\;\;= \sum_{\substack{\deg d_{0} = 4 \\ d_{0}-\text{monic \& square-free}}}
\left(-\, 1\, - \, \sum_{\theta \, \in \, \mathbb{F}} \chi(d_{0}(\theta)) \right)^{\! r}\!.
\end{equation*} 
We have: 
\begin{equation*}
\mathcal{M}_{4}(r; q)
\;=\; 
 \begin{cases} 
 -\, \mathcal{M}_{3}(r + 1; q)   & \text{if $r$ is odd}\\ 
q \mathcal{M}_{3}(r; q) & \text{if $r$ is even}  
\end{cases}
\end{equation*} 
see Appendix \ref{D} for details. Accordingly, we can express the non-degenerate contribution \eqref{eq: tag 5.4} as 
\begin{equation*} 
\frac{1}{E(T)}\sum_{\substack{\kappa \\ r_{1}(\kappa) + 2r_{2}(\kappa) = |\kappa|}} 
\mathcal{M}_{3}(r_{1}(\kappa); q)q^{r_{2}(\kappa) + 1}T^{\kappa} \; +\;
\frac{1}{E(T)}\sum_{\substack{\kappa \\ r_{1}(\kappa) + 2r_{2}(\kappa) = |\kappa|}} 
\mathcal{M}_{3}(r_{1}(\kappa) + 1; q)q^{r_{2}(\kappa)}T^{\kappa} 
\end{equation*} 
where, as before, $r_{i}(\kappa) =   \#\{j : k_{j} = i\}$ for $i = 1, 2.$ 
With this last piece of information, we are now in the position to justify the functional equation satisfied by $A_{4}(T, q).$ 

\vskip10pt
\begin{proposition}\label{functional equation A4} --- The generating series $A_{4}(T, q)$ satisfies the 
functional equation
\begin{equation*}
E(T)A_{4}(T, q) \,=\, -\, q^{5}E(q \, T)A_{4}(q \, T, 1\slash q).
\end{equation*}
\end{proposition}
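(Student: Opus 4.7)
The plan is to verify the functional equation by direct manipulation of the explicit five-piece decomposition of $A_{4}(T, q)$ as the non-degenerate part \eqref{eq: tag 5.4} together with the degenerate generating series \eqref{eq: tag 5.5}--\eqref{eq: tag 5.8}. The key inputs are the lower-weight functional identities already available: from Proposition \ref{Proposition 3.10} with $l = 2$ and the fact that $A_{2}(T, q) = q(q-1)\slash E(T)$ (Example in Section \ref{section 3}) one obtains
\[ E(T)\Lambda_{2}(T, q) - q^{3} E(qT) \Lambda_{2}(qT, 1\slash q) = q(q-1), \]
with analogous identities after the substitutions $(T, q) \to (T^{2}, q^{2})$ or $T \to -T$; for $l = 3$, Proposition \ref{Proposition 3.10} together with the $l = 3$ analogue of the present statement (which follows from the explicit form of $a(\kappa, 3; q)$ computed in the proof of Theorem \ref{Theorem 5.2}) gives $A_{3}(T, q) = -q^{4}A_{3}(qT, 1\slash q)$; finally, the Birch--Ihara formula, together with the convention $T_{2k+2}(1\slash q) := q^{-(2k+1)} T_{2k+2}(q)$, determines $\mathcal{M}_{3}(2r; 1\slash q)$ explicitly.

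Applying the substitution $(T, q) \mapsto (qT, 1\slash q)$ to each of \eqref{eq: tag 5.4}--\eqref{eq: tag 5.8} and rewriting via the identities above expresses every piece as its ``partner'' (absorbed into $-q^{5} E(qT) A_{4}(qT, 1\slash q)$) plus an explicit inhomogeneous error term. For instance, applying the $(T, q) \to (T^{2}, q^{2})$ version of the $\Lambda_{2}$ identity (and using $E(T^{2}) = E(T)E(-T)$) to the transform of \eqref{eq: tag 5.5} yields, with $A_{(2),(2)}$ denoting the piece \eqref{eq: tag 5.5},
\[ E(T) A_{(2), (2)}(T, q) + q^{5} E(qT) A_{(2), (2)}(qT, 1\slash q) = -\frac{q(q-1)(q^{2}-1)}{2\, E(-T) E(-qT)}. \]
Piece \eqref{eq: tag 5.6}, upon applying the $\Lambda_{2}$ functional equation to each factor of $\Lambda_{2}(qT, 1\slash q)^{2}$, produces mixed errors involving $\Lambda_{2}(qT, 1\slash q)$ together with an elementary constant; piece \eqref{eq: tag 5.8} produces a term proportional to $A_{3}(qT, 1\slash q)$ via the $\Lambda_{3}$ recurrence; the two summands of \eqref{eq: tag 5.7} produce further errors via the $\Lambda_{2}$ identity evaluated at $\pm qT$. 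The non-degenerate piece \eqref{eq: tag 5.4}, expanded via Birch--Ihara, contributes errors encoded by the polynomials $T_{2k+2}(q)$ matched with $T_{2k+2}(1\slash q)$.

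The core of the proof is then the verification that these five families of error terms cancel identically. The most intricate cancellation is that of the quadratic cross-terms produced by $\Lambda_{2}(qT, 1\slash q)^{2}$ in \eqref{eq: tag 5.6}, which must be matched against the $\Lambda_{2}(-qT, 1\slash q)$ summand of \eqref{eq: tag 5.7} and the $\Lambda_{2}(q^{2}T^{2}, 1\slash q^{2})$ term in \eqref{eq: tag 5.5}; the $\Lambda_{3}$-linear error coming from \eqref{eq: tag 5.8} cancels by the $l = 3$ functional equation for $A_{3}$ noted above; the remaining purely elementary error terms, involving powers of $q$ and factors of $E(\pm T)^{\pm 1}, E(\pm qT)^{\pm 1}$, are absorbed by the Birch--Ihara-generated constants in \eqref{eq: tag 5.4}. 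The main obstacle is precisely this combinatorial bookkeeping across five interlocking contributions. A conceptually cleaner derivation would instead follow from the moduli-theoretic framework developed in Sections \ref{section 7} and \ref{section: The series BarC(X, T, q)}, where consistency of such functional equations reflects compatibility of boundary strata in moduli spaces of admissible double covers.
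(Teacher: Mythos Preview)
Your plan is in the same spirit as the paper's proof --- piece-by-piece manipulation of \eqref{eq: tag 5.4}--\eqref{eq: tag 5.8} using the known $l=2$ and $l=3$ identities --- but the paper organizes the computation differently and, in one place, uses an idea you do not mention.

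Rather than transforming each piece and tracking errors, the paper first subtracts from $E(T)A_{4}(T,q)$ the combination $q\Lambda_{3}(T,q) - q^{4}\Lambda_{3}(qT,1\slash q)$, which manifestly satisfies the required functional equation. Expanding this via the $l=3$ recurrence absorbs \eqref{eq: tag 5.8}, half of \eqref{eq: tag 5.7}, and the $qA_{3}^{(0)}$ part of the non-degenerate term (using that $E(T)\cdot\text{\eqref{eq: tag 5.4}} = qA_{3}^{(0)}(T,q) + A_{3}^{(1)}(T,q)$). Two further explicit combinations, built from \eqref{eq: tag 5.5}, \eqref{eq: tag 5.6}, the remaining half of \eqref{eq: tag 5.7}, and elementary correction terms in $1\slash E(\pm T)E(\pm qT)$, are then shown to satisfy the functional equation directly from the $l=2$ identity $E(T)\Lambda_{2}(T,q) - q^{3}E(qT)\Lambda_{2}(qT,1\slash q) = q(q-1)$. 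After these subtractions, everything reduces to the single identity \eqref{eq: tag 5.10}:
\[
A_{3}^{(1)}(T,q) + q^{5}A_{3}^{(1)}(qT,1\slash q) = \tfrac{q(q^{2}-1)(q-1)}{2}\Big(\tfrac{1}{E(-T)E(-qT)} - \tfrac{1}{E(T)E(qT)}\Big).
\]
This is the step where your proposal and the paper genuinely diverge. You plan to handle \eqref{eq: tag 5.4} by expanding via Birch--Ihara and matching $T_{2k+2}(q)$ against $T_{2k+2}(1\slash q)$. The paper instead proves \eqref{eq: tag 5.10} by a \emph{variable-addition trick}: adjoin an extra variable $t_{r+1}$, write $A_{3}^{(0)}(T,t_{r+1},q) = (1+qt_{r+1}^{2})A_{3}^{(0)}(T,q) + t_{r+1}A_{3}^{(1)}(T,q)$, apply the already-established $l=3$ functional equation in $r+1$ variables, and extract the coefficient of $t_{r+1}$. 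This avoids Birch--Ihara entirely and makes no reference to Hecke eigenvalues.

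Your brute-force route would presumably succeed, but the bookkeeping you flag as ``the main obstacle'' is exactly what the paper circumvents. In particular, the error you obtain from \eqref{eq: tag 5.8} is $-(q-1)A_{3}(T,q)$, not zero, and its degenerate part must be fed back into the other pieces; the paper's reorganization makes this automatic.
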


\begin{proof} The idea of the proof is to identify the parts of $E(T)A_{4}(T, q)$ satisfying the functional equation in the statement of the proposition. Indeed, from the $l = 3$ case we know that 
\begin{equation*}
\Lambda_{3}(T, q) - q^{4}\Lambda_{3}(q \, T, 1\slash q) \, =\, A_{3}^{(0)}(T, q) +  
\frac{q^{3}(q - 1)}{2}\Big(\frac{\Lambda_{2}(q\, T, 1\slash q)}{E(T)} + 
\frac{\Lambda_{2}(- q\, T, 1\slash q)}{E(- T)}\Big)
\end{equation*} 
where 
\begin{equation*}
A_{3}^{(0)}(T, q) \;\;\;\;\;  := 
\sum_{\substack{\kappa \\ r_{1}(\kappa) + 2r_{2}(\kappa) = |\kappa|}} 
\mathcal{M}_{3}(r_{1}(\kappa); q)q^{r_{2}(\kappa)}T^{\kappa}
\end{equation*} 
is the non-degenerate part of $A_{3}(T, q).$ Using this identity together with 
\eqref{eq: tag 5.7}, \eqref{eq: tag 5.8}, and the above expression of \eqref{eq: tag 5.4}, 
compute 
\begin{equation*}
E(T)A_{4}(T, q) - q\Lambda_{3}(T, q) + q^{4}\Lambda_{3}(q\,T, 1\slash q)
\end{equation*} 
which should clearly satisfy the correct functional equation. Moreover, employing the identity 
\begin{equation*}
- q^{2} + E(T)\Lambda_{2}(T, q) = - q  + q^{3}E(q\, T)\Lambda_{2}(q\, T, 1\slash q) 
\end{equation*}
corresponding to the $l = 2$ case, it follows that both expressions:  
\begin{equation*}
\frac{q(q - 1)}{2E(T) E(q\, T)}\, -\, q^{3}(q - 1)\frac{\Lambda_{2}(q\, T, 1\slash q)}{E(T)} \, +\, 
\frac{q^{5}(q - 1)}{2} \frac{E(q\, T)\Lambda_{2}(q\, T, 1\slash q)^{2}}{E(T)}
\end{equation*} 
and 
\begin{equation*} 
-\, \frac{q(q - 1)}{2 E(-T) E(-q\, T)} \, +\, \frac{q^{5}(q - 1)}{2}
\frac{E(q\, T) \Lambda_{2}(q^{2} T^{2}\!, 1\slash q^{2})}{E(-T)}
\end{equation*} 
satisfy the desired functional equation. A simple calculation reduces then the problem 
to showing the identity 
\begin{equation} \label{eq: tag 5.10}
A_{3}^{(1)}(T, q) \,+\, q^{5}A_{3}^{(1)}(q\, T, 1\slash q) \, 
=\,\frac{q(q^{2} - 1)(q - 1)}{2}\Big(\frac{1}{E(-T) E(-q\, T)} \, -\, 
\frac{1}{E(T) E(q\, T)} \Big)
\end{equation}
where we set
\begin{equation*}
A_{3}^{(1)}(T, q) \;\;\;\;\;  := \sum_{\substack{\kappa \\ r_{1}(\kappa) + 2r_{2}(\kappa) = |\kappa|}} \mathcal{M}_{3}(r_{1}(\kappa) + 1; q)q^{r_{2}(\kappa)}T^{\kappa}.
\end{equation*}

To prove this, we add an additional variable $t_{r+1}$ (i.e., pass from $r$ to $r+1$) 
and split the sum defining 
$A_{3}^{(0)}(t_{1}, \ldots, t_{r}, t_{r+1}, q) = A_{3}^{(0)}(T, t_{r+1}, q)$ 
into three parts according to whether $k_{r + 1} = 0,$ $1$ or $2$ to obtain: 
\begin{equation} \label{eq: tag 5.11}
A_{3}^{(0)}(T, t_{r+1}, q) \,=\, 
(1 + qt_{r+1}^{2})A_{3}^{(0)}(T, q) \, +\, 
t_{r+1}A_{3}^{(1)}(T, q). 
\end{equation}
Replacing $t_{i}$ by $qt_{i}$ for all $i=1, \ldots, r+1,$ $q$ by $1\slash q,$ 
and then multiplying by $q^{4},$ we obtain 
\begin{equation} \label{eq: tag 5.12}
q^{4}A_{3}^{(0)}(q\, T, q t_{r+1}, 1\slash q) = 
q^{4}(1 + q t_{r+1}^{2})A_{3}^{(0)}(q\, T, 1\slash q) \, +\, 
q^{5}t_{r+1}A_{3}^{(1)}(q\, T, 1\slash q).  
\end{equation} 
Add \eqref{eq: tag 5.11} and \eqref{eq: tag 5.12}, and apply the identity 
\begin{equation*} 
A_{3}^{(0)}(T, q)  = \Lambda_{3}(T, q)  -  q^{4}\Lambda_{3}(q\, T, 1\slash q)  -  
\frac{q^{3}(q - 1)}{2}\Big(\frac{\Lambda_{2}(q\, T, 1\slash q)}{E(T)} + 
\frac{\Lambda_{2}(- q\, T, 1\slash q)}{E(- T)}\Big)
\end{equation*}
to express $A_{3}^{(0)}(T, t_{r+1}, q),$ 
$A_{3}^{(0)}(q\, T, q t_{r+1}, 1\slash q),$ $A_{3}^{(0)}(T, q)$ and $A_{3}^{(0)}(q\, T, 1\slash q)$ 
in terms of $\Lambda_{2}.$ Now \eqref{eq: tag 5.10} follows by applying the identity 
corresponding to the $l = 2$ case in the form 
$$
\frac{\Lambda_{2}(T, q)}{E(q\, T)}  - q^{3}\frac{\Lambda_{2}(q\, T, 1\slash q)}{E(T)} 
= \frac{q(q - 1)}{E(T)E(q\, T)}
$$ 
and completes the proof of the proposition. 
\end{proof}

Using the relation in Proposition \ref{Proposition 3.10} rewritten in the equivalent form 
\begin{equation} \label{eq: l=4-recurrence}
\lambda(\kappa, 4; q)  -  q^{|\kappa| +  5} \lambda(\kappa, 4; 1\slash q)   
\,=\, a(\kappa, 4; q)\, + \sum_{\kappa'  \, < \, \kappa} 
q^{|\kappa| - \mathrm{r}(\kappa - \kappa') + 5} 
(q-1)^{\mathrm{r}(\kappa - \kappa')}\lambda(\kappa', 4; 1\slash q) 
\end{equation} 
we can find recursively the coefficients $\lambda(\kappa, 4; q);$ the sum in 
\eqref{eq: l=4-recurrence} can be expressed as 
\begin{equation*}
\sum_{\kappa' \,  < \, \kappa} \big(a(\kappa', 4; q)  - \lambda(\kappa', 4; q)  +   
q^{|\kappa| +  5}\lambda(\kappa', 4; 1\slash q) \big)
\end{equation*} 
and hence, the right-hand side of \eqref{eq: l=4-recurrence} satisfies the same symmetry, as 
$q\to 1\slash q,$ as the $\kappa$-coefficient 
\begin{equation*}
\sum_{\kappa'  \, \le \, \kappa} a(\kappa', 4; q) 
\end{equation*}
of $E(T)A_{4}(T, q)$ does (cf. Proposition \ref{functional equation A4}). The coefficients of $A_{4}(T, q)$ are determined by expressing the moments $\mathcal{M}_{4}(r; q)$ in \eqref{eq: tag 5.4} using Birch-Ihara identity (see also Appendix \ref{D}), and by \eqref{eq: tag 5.5}, \eqref{eq: tag 5.6}, \eqref{eq: tag 5.7} (combined with the computations in Appendix \ref{C}), \eqref{eq: tag 5.8} and Theorem \ref{Theorem 5.2}.

Now starting from $\lambda(0, \ldots, 0, 4; q) = q^{4}$ and using the above information, one obtains the coefficients $\lambda(\kappa, 4; q)$ in the form 
\begin{equation*}
\lambda(\kappa, 4; q)\,  = \, P_{0}(q)\; + 
\sum_{j = 5}^{[(r_{1}(\kappa) + 1)\slash 2]} P_{j}(q)\, T_{2j + 2}(q)
\end{equation*} 
where $P_{0}(q) = P_{0}(\kappa, q), P_{5}(q) = P_{5}(\kappa, q), \ldots$ are polynomials 
with integer coefficients, independent of $q,$ such that:
\begin{equation*}
\deg  P_{0} \le |\kappa| + 4  \;\;\;\qquad \;\;\; 
\deg  P_{j} \le |\kappa| - 2j + 3 \qquad
\text{(for $j = 5, \ldots, [(r_{1}(\kappa) + 1)\slash 2]$)}
\end{equation*} 
and 
\begin{equation*}
q^{[(|\kappa| + 1)\slash 2] + 3} \mid  P_{0}(q)   \;\;\;\qquad \;\;\; 
q^{[|\kappa|\slash  2] - j + 3} \mid P_{j}(q) \qquad \text{(for $j = 5, \ldots, [(r_{1}(\kappa) + 1)\slash 2]$)}.
\end{equation*}

In the general case we shall use the same idea to determine the coefficients
$\lambda(\kappa, l; q),$ but for this we need \eqref{eq: tag 4.1'} for arbitrary $l,$ and a suitable substitute for Birch-Ihara identity; such an identity is available, unfortunately, only in very special cases.

\section{Cohomological interpretation of the moment-sums}\label{section 6} 
For the purposes of this section, it is more convenient to work with the 
slightly modified moment-sums $\tilde{M}_{\nu, \gamma}(q)$ defined 
for partitions 
$
\nu= (1^{\nu_{1}}\!, \ldots, (2g + 2)^{\nu_{2g + 2}})
$ 
of $2g + 2,$ and $\gamma= (1^{\gamma_{1}}\!, \ldots, g^{\gamma_{g}}),$ $g\ge 2,$ 
by 
\begin{equation*}
\tilde{M}_{\nu, \gamma}(q)  \,=\, |\mathrm{GL}_{2}(\mathbb{F})|^{-1} \cdot
\sum_{d_{0} \, \in \, \mathscr{P}_{\! g}(\nu,\,  \mathbb{F})} \, 
\prod_{j = 1}^{g} a_{j}(C_{d_{0}})^{\gamma_{j}}
\end{equation*} 
where $\mathscr{P}_{\! g}(\nu,\,  \mathbb{F})\subset \mathbb{F}[x]$ 
is the subset of {\it all} square-free polynomials of degree $2g + 1$ or 
$2g + 2$ defining hyperelliptic curves whose ramification points have fields 
of definition given by $\nu,$ and where 
$
a_{j}(C_{d_{0}}) =  \Tr(F^{*j} \vert \, H_{\text{\'et}}^{1}(\bar{C}_{d_{0}},\mathbb{Q}_{\ell})).
$ 
The normalizing factor $|\mathrm{GL}_{2}(\mathbb{F})|$ represents the number 
of $\mathbb{F}$-isomorphisms between curves 
$C_{d_{0}}$ ($d_{0}\in \mathscr{P}_{\! g}(\nu,\,  \mathbb{F})$), see \cite[Section 3]{Getz2} and \cite[Section 3]{Berg}. When $g = 2$ these sums appear in 
\cite[eqn. (5.1)]{BFvdG1}.

Notice that $\tilde{M}_{\nu, \gamma}(q)$ vanishes if its weight 
$|\gamma| : = \gamma_{1} + 2\gamma_{2} + \cdots + g \gamma_{g}$ is odd. 
Note also that $\tilde{M}_{\nu, \gamma}(q)$ can be expressed in terms of the sums 
$M_{\mu, \gamma}(q)$ introduced in Section \ref{section 3} and vice versa. (To see it the other way around, apply the functional equation (2.1) of $Z_{\scriptscriptstyle C_{d_{0}}}\!(t),$ $d_{0}\in \mathscr{P}(\mu),$ and Propositions \ref{Prop1-appendixB}, \ref{Prop2-appendixB} in Appendix \ref{B}.) 

It turns out that the moment-sums $\tilde{M}_{\nu, \gamma}(q)$ arise naturally when studying 
the cohomology of local systems on certain classical moduli spaces. To make this 
precise, let $\mathscr{A}_{g}$ denote the moduli stack of principally polarized 
abelian schemes of relative dimension $g;$ this is a smooth (but not proper) Deligne-Mumford stack defined over $\mathrm{Spec}(\mathbb{Z}).$ We shall need to work with the moduli stack $\mathscr{A}_{g, 2}$ over $\mathrm{Spec}(\mathbb{Z}[1\slash 2])$ classifying principally polarized abelian schemes of relative dimension $g$ with principal symplectic level-$2$ structure. (The standard reference on these moduli and their compactification is \cite{FC}.)

\subsection{Local systems on $\mathscr{A}_{g}$} \label{Loc-sys Ag} 
Let $\mathrm{GSp}_{2g}$ denote the Chevalley group scheme over $\mathbb{Z}$ of symplectic 
similitudes on the symplectic space $\mathbb{Z}^{2g}$ with its standard non-degenerate 
alternating form defined by 
$
(u, v) \times  (u', v') \mapsto u \cdot {^t}v' - v \cdot {^t}u'
$ 
for $u, v, u', v' \in \mathbb{Z}^{g}.$ We can write 
$$
\mathrm{GSp}_{2g}(R) = \left\{
\delta = \begin{pmatrix}
A & B \cr 
C & D 
\end{pmatrix}  \bigg\vert     
\begin{matrix}
A \cdot {^t}  B - B \cdot {^t}\!  A \, =\,  C \cdot {^t}  D - D \cdot {^t}  C \,=\,  0, \cr 
\text{$A\cdot {^t} D - B \cdot {^t} C \,=\, \eta(\delta) I_{g}$ for $\eta(\delta) \in R^{\times}$}  
\end{matrix}
\right\}
$$ 
for any commutative ring $R$ with identity. The homomorphism 
$\eta : \mathrm{GSp}_{2g} \to \mathbb{G}_{m}$ is called the 
{\it multiplier} representation of $\mathrm{GSp}_{2g}.$ The kernel of $\eta$ 
is by definition the group scheme $\mathrm{Sp}_{2g}.$

For $\lambda = (\lambda_{1} \ge \cdots  \ge \lambda_{g} \ge 0),$ 
we have natural $\mathbb{Q}_{\ell}$-adic smooth \'etale sheaves 
$\mathbb{V}(\lambda)$ on $\mathscr{A}_{g} \otimes \mathbb{Z}[1\slash \ell]$ corresponding 
to irreducible algebraic representations $V_{\lambda}$ of $\mathrm{GSp}_{2g}(\mathbb{Q}).$ 
Specifically, the $\mathbb{Q}_{\ell}$-sheaf $\mathbb{V} = \mathbb{V}(1, 0, \ldots, 0) = 
R^{1}\pi_{*}\mathbb{Q}_{\ell},$ defined using the universal family 
$\pi: \mathcal{X} \to \mathscr{A}_{g},$ corresponds to the contragredient of the standard 
representation $V$ of $\mathrm{GSp}_{2g}(\mathbb{Q}).$ We have a non-degenerate 
alternating pairing 
$$
\mathbb{V} \times \mathbb{V} \to \mathbb{Q}_{\ell}(-1).
$$
For general $\lambda,$ the $\mathbb{Q}_{\ell}$-sheaf $\mathbb{V}(\lambda)$ occurs in the decomposition of 
$$
\mathrm{Sym}^{\lambda_{1} - \lambda_{2}}(\mathbb{V}) \otimes \cdots \otimes 
\mathrm{Sym}^{\lambda_{g - 1} -\lambda_{g}}(\wedge^{g -1} \mathbb{V}) 
\otimes  \mathrm{Sym}^{\lambda_{g}}(\wedge^{g} \mathbb{V})
$$ 
into irreducibles; it is the $\ell$-adic coefficient system of weight 
$|\lambda| = \lambda_{1} + \cdots + \lambda_{g}$ corresponding to 
the irreducible representation $V_{\lambda}$ with dominant weight 
$(\lambda_{1} - \lambda_{2})\omega_{1} + \cdots + (\lambda_{g - 1} - \lambda_{g})\omega_{g - 1} 
+ \lambda_{g} \omega_{g} - |\lambda|\eta.$ 
Here $\omega_{1}, \ldots, \omega_{g}$ is the set of fundamental weights. 
If $\lambda_{1} > \cdots > \lambda_{g} > 0,$ the local system $\mathbb{V}(\lambda)$ 
will be called {\it regular}. Finally, the Tate sheaf $\mathbb{Q}_{\ell}(1)$ corresponds 
to $\eta.$ 

In a series of papers, Bergstr\"om, Faber, and van der Geer 
investigated the motivic Euler characteristic
\begin{equation*}
e_{c}(\mathscr{A}, \mathbb{V}(\lambda)) \; =
\sum_{i = 0}^{g(g + 1)} (-1)^{i} [H_{c}^{i}(\mathscr{A}, \mathbb{V}(\lambda))]
\end{equation*}
when $\mathscr{A} = \mathscr{A}_{g},$ $g = 2, 3$ (see \cite{FvdG}, \cite{BFvdG2}), and 
$\mathscr{A} = \mathscr{A}_{2, 2}$ (see \cite{BFvdG1}). This expression is taken in 
the Grothendieck group $K_{0}(\mathrm{Gal}_{\mathbb{Q}})$ of $\ell$-adic 
$\mathrm{Gal}(\overline{\mathbb{Q}}\slash \mathbb{Q})$-representations, 
or in $K_{0}(\mathrm{MHS}),$ the Grothendieck group of the category of 
mixed Hodge structures\footnote{In this instance, we are taking the 
compactly supported cohomology of the Betti version of 
$\mathbb{V}(\lambda),$ constructed from $\mathbb{V} = R^{1}\pi_{*}\mathbb{Q},$ 
on $\mathscr{A} \otimes \mathbb{C}.$}. (Here and throughout the section, 
we shall use the same notation $\mathbb{V}(\lambda)$ for the corresponding local 
systems obtained by pullback to $\mathscr{A}_{g, 2}.$) 

\vskip10pt
\begin{remark} \hskip-2.5pt As the compactly supported cohomology of $\mathbb{V}(\lambda)$ on 
$\mathscr{A} = \mathscr{A}_{g}$ (or $\mathscr{A}_{g, 2}$) always vanishes if $|\lambda|$ is odd, 
one can only consider local systems $\mathbb{V}(\lambda)$ of even weights.
\end{remark}

\vskip5pt
In \cite[Theorem 5.5, p. 233]{FC}, Faltings and Chai provided 
Hodge filtrations of mixed Hodge structures on the cohomology groups 
$H_{c}^{i} (\mathscr{A}_{g} \otimes \mathbb{C}, \mathbb{V}(\lambda))$ and  
$H^{i}(\mathscr{A}_{g} \otimes \mathbb{C}, \mathbb{V}(\lambda))$ (and also on 
$H_{c}^{i}(\mathscr{A}_{g, 2} \otimes \mathbb{C}, \mathbb{V}(\lambda))$ and  
$H^{i}(\mathscr{A}_{g, 2} \otimes \mathbb{C}, \mathbb{V}(\lambda))$) 
of weights $\le |\lambda|  + i$ and $\ge |\lambda| + i,$ respectively. 
(There is also an analogue of this for the $\ell$-adic cohomology.) 
A main ingredient in their theory is the construction in \cite{Fa}, 
using some ideas of Bernstein, Gelfand and Gelfand, 
of a complex $\mathcal{K}_{\lambda}^{\bullet}$ of vector bundles, 
called the {\it dual BGG-complex} for $\lambda,$ 
which is a direct summand in the de Rham complex of 
$\mathbb{V}(\lambda)^{\vee};$ this complex is a filtered resolution of 
$\mathbb{V}(\lambda)^{\vee}.$ The steps in the Hodge filtrations are 
given (see \cite{BFvdG2}) by the sums of the elements 
of any of the $2^{g}$ subsets of $\{\lambda_{g} + 1, \lambda_{g - 1} + 2, \ldots, \lambda_{1} +  g \}.$ 
It was also proved by Faltings \cite{Fa} that, for $\mathbb{V}(\lambda)$ regular, 
the inner cohomology groups $H_{!}^{i}(\mathscr{A}, \mathbb{V}(\lambda))$ 
($\mathscr{A} = \mathscr{A}_{g},$ or $\mathscr{A}_{g, 2}$),
i.e., the image of the natural map 
$H_{c}^{i}(\mathscr{A}, \mathbb{V}(\lambda)) \to H^{i}(\mathscr{A}, \mathbb{V}(\lambda)),$ vanish for 
$i \ne g(g + 1)\slash 2.$ The middle inner cohomology group $H_{!}^{g(g + 1)\slash 2}(\mathscr{A}, \mathbb{V}(\lambda))$ 
is pure of weight $|\lambda| + g (g + 1)\slash 2,$ and by \cite[Section 6, p. 237]{FC}, the step of the Hodge filtration of $H_{!}^{g(g + 1)\slash 2}(\mathscr{A} \otimes \mathbb{C}, \mathbb{V}(\lambda))$ 
corresponding to the full set $\{\lambda_{g} + 1, \lambda_{g - 1} + 2, \ldots, \lambda_{1} +  g \}$ 
can be identified with the complex vector space $S_{n(\lambda)}(\Gamma(\mathscr{A}))$ 
of Siegel modular cusp forms of weight 
$n(\lambda) = (\lambda_{1} - \lambda_{2}, \lambda_{2} - \lambda_{3}, \ldots, 
\lambda_{g - 1} - \lambda_{g}, \lambda_{g} + g + 1)$ on $\Gamma(\mathscr{A}),$ 
with $\Gamma(\mathscr{A}) = \mathrm{Sp}_{2g}(\mathbb{Z})$ if $\mathscr{A} = \mathscr{A}_{g},$ 
and $\Gamma(\mathscr{A}) = \Gamma_{ g}[2]$ is the kernel of 
$\mathrm{Sp}_{2g}(\mathbb{Z}) \to \mathrm{Sp}_{2g}(\mathbb{Z}\slash 2)$ if $\mathscr{A} = \mathscr{A}_{g, 2}.$ 
For basic facts about the theory of Siegel modular forms, we refer the reader to 
\cite{AZ}, \cite{FC} and \cite{Frei}. It is expected (see \cite{BFvdG2} for more details) that 
\begin{equation} \label{eq: moteuler-conj}
e_{c}(\mathscr{A}, \mathbb{V}(\lambda)) \, =\, (- 1)^{\frac{g(g + 1)}{2}} \widehat{S}[\Gamma(\mathscr{A}), n(\lambda)] \, +\,
e_{g, \text{endo}}(\mathscr{A}, \lambda) \, +\, e_{g, \text{Eis}}(\mathscr{A}, \lambda)
\end{equation} 
for $\lambda \ne (0, \ldots, 0)$ with $|\lambda| \equiv 0 \pmod 2,$ 
where $\widehat{S}[\Gamma(\mathscr{A}), n(\lambda)]$ is the conjectural motive in \cite{BFvdG2} 
associated to $S_{n(\lambda)}(\Gamma(\mathscr{A})),$ $e_{g, \text{endo}}(\mathscr{A}, \lambda)$ is 
the Euler characteristic of the remaining part of the inner cohomology, presumably consisting of 
contributions connected to the endoscopic groups, and $e_{g, \text{Eis}}(\mathscr{A}, \lambda)$ 
denotes the Euler characteristic corresponding to the kernel of the natural map 
$H_{c}^{i}(\mathscr{A}, \mathbb{V}(\lambda)) \to H^{i}(\mathscr{A}, \mathbb{V}(\lambda)),$ 
called the {\it Eisenstein cohomology}. The most successful method used to investigate variants of 
\eqref{eq: moteuler-conj} is that developed by Ihara, Langlands and Kottwitz, and it is based on a comparison between the Grothendieck-Lefschetz and Arthur-Selberg trace formulas, see \cite{Mo1}, \cite{Mo2} and \cite{Kott}. When $g = 2,$ see \cite[Theorem 2.1]{Peter}, \cite{Har}, \cite{Lau}, \cite{Weiss1}, \cite{Weiss2}, \cite{Tay} and \cite{Teh}; when $g = 1,$ see \cite{Sch}, \cite{ConsF}, and the classical work \cite{Del1}.

The connection to the sums $\tilde{M}_{\nu,  \gamma}(q)$ comes by considering 
the $\ell$-adic Euler characteristic $e_{c}(\mathscr{H}_{g}[2]$ $\otimes\, \overline{\mathbb{Q}}, \mathbb{V}'(\lambda))$ 
in $K_{0}(\mathrm{Gal}_{\mathbb{Q}})$ of the hyperelliptic Jacobian locus $\mathscr{H}_{g}[2]$ in $\mathscr{A}_{g, 2}.$ 
Here $\mathbb{V}'(\lambda)$ denotes the restriction of $\mathbb{V}(\lambda)$ to $\mathscr{H}_{g}[2].$ 
Let $[\mathscr{H}_{g}[2](\mathbb{F})]$ denote the set of isomorphism classes of the category 
$\mathscr{H}_{g}[2](\mathbb{F}).$ By Torelli's theorem 
\cite[Appendice, Th\'eor\`eme 1]{Laut}, we can identify the elements of $[\mathscr{H}_{g}[2](\mathbb{F})]$ by 
isomorphism classes of tuples $(C, w_{1}, \ldots, w_{2g + 2})$ of hyperelliptic curves of genus $g$ 
together with their $2g + 2$ marked Weierstrass points; we shall also identify the local system 
$\mathbb{V}'(\lambda)$ and its pullback under the Torelli morphism.

The natural action of the symmetric group $\mathbb{S}_{2g + 2}$ on $\mathscr{H}_{g}[2]$ induces a decomposition 
of the representation $H_{c}^{i}(\mathscr{H}_{g}[2] \otimes \overline{\mathbb{F}}, \mathbb{V}'(\lambda))$ into a direct sum of factors $H_{c,  \mu}^{i}(\mathscr{H}_{g}[2] \otimes \overline{\mathbb{F}}, \mathbb{V}'(\lambda))$ indexed 
by the partitions $\mu$ of $2g + 2,$ and by \cite[eqn. (2.32)]{FH}, the weighted sum  
\begin{equation*} 
\frac{\chi_{\mu}(id)}{(2g + 2)!}\sum_{\sigma \, \in \, \mathbb{S}_{2g + 2}} \chi_{\mu}(\sigma^{-1})\cdot \sigma^{*}
: H_{c}^{i}(\mathscr{H}_{g}[2] \otimes_{_{\mathbb{F}}} \! \overline{\mathbb{F}}, \mathbb{V}'(\lambda)) 
\to H_{c}^{i}(\mathscr{H}_{g}[2] \otimes_{_{\mathbb{F}}} \! \overline{\mathbb{F}}, \mathbb{V}'(\lambda)) 
\end{equation*} 
is the projection of $H_{c}^{i}(\mathscr{H}_{g}[2] \otimes \overline{\mathbb{F}}, \mathbb{V}'(\lambda))$ onto 
$H_{c,  \mu}^{i}(\mathscr{H}_{g}[2] \otimes \overline{\mathbb{F}}, \mathbb{V}'(\lambda)).$ Here $\chi_{\mu}$ is the character of the irreducible representation $R_{\mu}$ of $\mathbb{S}_{2g + 2}$ corresponding to $\mu.$ For a conjugacy class $[\nu]$ of $\mathbb{S}_{2g + 2}$ determined by a partition $\nu= (1^{\nu_{1}}\!, \ldots, (2g + 2)^{\nu_{2g + 2}})$ of $2g + 2,$ we shall denote the value $\chi_{\mu}(\sigma)$ (for $\sigma \in [\nu]$) by $\chi_{\mu}(\nu).$

Now for any partition $\lambda$ of length at most $g,$ let $s_{\scriptscriptstyle \langle \lambda \rangle}$ denote the symplectic Schur function 
(see \cite[Appendix A, \S A.3., A.45]{FH}) associated to the irreducible representation $V_{\lambda}.$ If we write 
$
s_{\scriptscriptstyle \langle \lambda \rangle} = 
\sum_{|\gamma| \le |\lambda|}  r_{\gamma}^{\lambda} p^{(\gamma)}
$ 
for some $r_{\gamma}^{\lambda} \in \mathbb{Q},$ where the sum is over 
$\gamma = (1^{\gamma_{1}}\!, \ldots, g^{\gamma_{g}}),$ and 
$p^{(\gamma)} = p_{1}^{\gamma_{1}} \cdots \, p_{g}^{\gamma_{g}}$ 
is the power sum polynomial associated to $\gamma,$ then 
by Behrend's Lefschetz trace formula (see \cite{Beh} or \cite[Thm. 19.3.4]{L-MB}),
the trace of the geometric Frobenius on 
$e_{c,  \mu}(\mathscr{H}_{g}[2] \otimes \overline{\mathbb{F}}, \mathbb{V}'(\lambda))$ 
is given by 
\begin{equation} \label{eq: eulercharschur}
\Tr(F^{*} \vert \, e_{c,  \mu}(\mathscr{H}_{g}[2] \otimes_{_{\mathbb{F}}} \! \overline{\mathbb{F}}, \mathbb{V}'(\lambda))) =
q^{\frac{|\lambda|}{2}}\frac{\chi_{\mu}(id)}{|\mathrm{GL}_{2}(\mathbb{F})|}\sum_{\nu} \chi_{\mu}(\nu) 
\sum_{d_{0} \, \in \, \mathscr{P}_{\! g}(\nu,\,  \mathbb{F})}
s_{\scriptscriptstyle \langle \lambda \rangle}(\omega_{1}(C_{ d_{0}})^{\pm 1}\!, \ldots, \omega_{g}(C_{ d_{0}})^{\pm 1}). 
\end{equation}
Here for a hyperelliptic curve $C_{d_{0}}$ ($d_{0}  \in \mathscr{P}_{\! g}(\nu,\,  \mathbb{F})$), 
$
\omega_{1}(C_{d_{0}}), \ldots, \omega_{g}(C_{d_{0}}), 
\omega_{1}(C_{d_{0}})^{-1}, \ldots, \omega_{g}(C_{d_{0}})^{-1}
$ 
denote the {\it normalized} (i.e., unitary) eigenvalues of $F^{*};$ one can also see this by 
applying the Lefschetz trace formula to the correspondence 
on $(H_{g}[2], p_{*}\!\mathbb{V}'(\lambda))$ defined by composing the action of an element 
$\sigma^{-1} \in [\nu]$ with Frobenius (i.e., the relative Frobenius with respect to the 
$\mathbb{F}$-structure induced by $\sigma^{-1}$), where $H_{g}[2]$ denotes the 
coarse moduli space of 
$
\mathscr{H}_{g}[2] \otimes_{_{\mathbb{F}}} \! \overline{\mathbb{F}},
$ 
and 
$
p :  \mathscr{H}_{g}[2] \otimes_{_{\mathbb{F}}} \! \overline{\mathbb{F}} \to H_{g}[2]
$ 
is the natural map.

In terms of the moment-sums $\tilde{M}_{\nu, \gamma}(q),$ 
\begin{equation*} \label{eq: eulercharpower}
\Tr(F^{*} \vert \, e_{c,  \mu}(\mathscr{H}_{g}[2] \otimes_{_{\mathbb{F}}} \! \overline{\mathbb{F}}, \mathbb{V}'(\lambda))) =
\chi_{\mu}(id) \sum_{\nu} \chi_{\mu}(\nu) 
\sum_{|\gamma| \, \le \, |\lambda|} r_{\gamma}^{\lambda} q^{\frac{|\lambda| - |\gamma|}{2}} 
\tilde{M}_{\nu, \gamma}(q)
\end{equation*} 
see also \cite{Berg}, \cite{BFvdG1} and \cite{BFvdG2}. (Notice that $|\gamma|$ stands here 
for the sum $\gamma_{1} + 2\gamma_{2} + \cdots + g \gamma_{g}.$) The last equation can be 
inverted to express $\tilde{M}_{\nu, \gamma}(q),$ and hence $M_{\nu, \gamma}(q),$ 
in terms of $\Tr(F^{*} \vert \, e_{c,  \mu}(\mathscr{H}_{g}[2] \otimes \overline{\mathbb{F}}, \mathbb{V}'(\lambda))).$ 
It would be very interesting to identify 
the contribution of the Euler characteristic 
$e_{c}(\mathscr{H}_{g}[2], \mathbb{V}'(\lambda))$ to \eqref{eq: moteuler-conj} when $\mathscr{A} = \mathscr{A}_{g, 2}.$

\subsection{Moments of characteristic polynomials} 
If $\lambda = (\lambda_{1} \ge \cdots  \ge \lambda_{g} \ge 0)$ is a partition, we shall write 
$\lambda \subseteq (r^{g})$ to indicate that $\lambda_{i} \le r$ for all $1\le i \le g.$ For 
$\lambda \subseteq (r^{g}),$ let $\lambda^{ \dag} =  (g - \lambda_{r}' \ge \cdots  \ge g - \lambda_{1}'),$ where the non-zero integers among $\lambda_{j}',$ $j = 1, \ldots, r,$ are the parts of the conjugate partition $\lambda'$ of $\lambda.$

\vskip10pt
\begin{lemma}
\label{BG} --- For independent variables $t_{1}, \ldots, t_{r}, z_{1}, \ldots, z_{g},$ we have the identity
\begin{equation*}
\prod_{i = 1}^{g} \prod_{j = 1}^{r}\, (z_{i}^{} + z_{i}^{-1} + t_{j}^{} + t_{j}^{-1}) \; = 
\sum_{\lambda \subseteq (r^{g})} 
s_{\scriptscriptstyle \langle \lambda \rangle}(z_{1}^{\pm 1}\!, \ldots, z_{g}^{\pm 1}) 
s_{\scriptscriptstyle \langle \lambda^{ \dag} \rangle}(t_{1}^{\pm 1}\!, \ldots, t_{r}^{\pm 1}).
\end{equation*}
\end{lemma}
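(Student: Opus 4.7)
The plan is to realize both sides of the identity as a Laplace expansion of a single symplectic Weyl denominator. Throughout, I use $\Delta_n(x_1, \ldots, x_n) := \det(x_i^{n-k+1} - x_i^{-(n-k+1)})_{1\le i,k\le n}$, with the classical product formula
\[
\Delta_n(x) = \prod_{i=1}^n(x_i - x_i^{-1}) \prod_{1 \le i < j \le n}\bigl((x_i + x_i^{-1}) - (x_j + x_j^{-1})\bigr),
\]
and the Weyl character formula for $\mathrm{Sp}_{2n}$:
\[
s_{\langle\lambda\rangle}(x^{\pm 1})\,\Delta_n(x) = \det(x_i^{\alpha_k} - x_i^{-\alpha_k})_{1\le i,k\le n}, \qquad \alpha_k = \lambda_k + n - k + 1.
\]

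First I would evaluate $\Delta_{g+r}(z_1, \ldots, z_g, -t_1, \ldots, -t_r)$ using the product formula. Because $(-t_j) + (-t_j)^{-1} = -(t_j + t_j^{-1})$, the three blocks of $(y_i - y_j)$-factors (arising from $z$-$z$, $t$-$t$, and mixed $z$-$t$ pairs) reconstruct $\Delta_g(z)$, $\Delta_r(t)$ (each up to an overall sign), and exactly $\prod_{i,j}(z_i + z_i^{-1} + t_j + t_j^{-1}) = \mathrm{LHS}$, respectively. A direct sign tally gives
\[
\Delta_{g+r}(z, -t) = (-1)^{\binom{r+1}{2}}\,\Delta_g(z)\,\Delta_r(t) \cdot \mathrm{LHS}.
\]

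Next I would Laplace-expand $\Delta_{g+r}(z, -t)$, viewed as a $(g+r) \times (g+r)$ determinant, along its first $g$ rows. For each subset $A = \{a_1 < \cdots < a_g\} \subseteq \{1, \ldots, g+r\}$ of size $g$, setting $\alpha_k := g+r-a_k+1$ gives a decreasing exponent sequence, so the $z$-minor reads directly as $s_{\langle\lambda\rangle}(z^{\pm 1})\,\Delta_g(z)$ with $\lambda_k = \alpha_k - (g-k+1)$; after absorbing the column-signs introduced by $t_j \mapsto -t_j$, the $t$-minor on the complementary columns $A^c = \{b_1 < \cdots < b_r\}$ becomes $s_{\langle\mu\rangle}(t^{\pm 1})\,\Delta_r(t)$ for the partition associated to $\beta_l := g+r-b_l+1$. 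Since $\{\alpha_k\}$ and $\{\beta_l\}$ form a complementary pair in $\{1,\ldots,g+r\}$, a short check confirms that this is precisely the rectangle-complement bijection $\mu = \lambda^\dag$, and that as $A$ varies $\lambda$ ranges over all partitions contained in $(r^g)$.

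The hard part is the sign bookkeeping. The Laplace sign $(-1)^{\binom{g+1}{2}+\sigma(A)}$ (with $\sigma(A) = \sum_{a \in A} a$), combined with the column-sign $(-1)^{\sigma'(A^c)}$ coming from $t_j \mapsto -t_j$, has $A$-dependent exponent $2\sigma(A) + \mathrm{const}$, hence is in fact constant in $A$. Adding the sign from the first step, the total exponent collapses to $2\binom{g+r+1}{2} + 2\binom{r+1}{2}$, which is even; all signs therefore cancel and the Laplace expansion reduces to $\Delta_g(z)\Delta_r(t)\sum_{\lambda \subseteq (r^g)} s_{\langle\lambda\rangle}(z^{\pm 1})\,s_{\langle\lambda^\dag\rangle}(t^{\pm 1})$, which together with the first step yields the identity.
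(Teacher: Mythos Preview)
Your proof is correct. The paper itself does not give a proof of this lemma at all --- it simply cites \cite[Section 5]{BG} (Bump--Gamburd) --- so there is no ``paper's own proof'' to compare against, only the cited reference. Your argument (Laplace expansion of the symplectic Weyl denominator $\Delta_{g+r}$ evaluated at the mixed alphabet $(z_1,\ldots,z_g,-t_1,\ldots,-t_r)$, identifying the minors with symplectic Schur functions via the Weyl character formula, and recognizing the complementary-subset bijection as $\lambda\mapsto\lambda^{\dag}$) is precisely the method used in Bump--Gamburd's paper, so you have in effect reconstructed the proof the paper is pointing to.

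One minor remark on the sign bookkeeping: your specific claim that the total exponent is $2\binom{g+r+1}{2}+2\binom{r+1}{2}$ is not quite the expression one gets if one tracks the constants carefully (a direct computation gives total exponent $r(r+1)=2\binom{r+1}{2}$ modulo $2$), but since both are manifestly even, the conclusion that all signs cancel is unaffected. The substantive steps --- that the $A$-dependent part of the Laplace sign and the $A$-dependent part of the column sign from $t_j\mapsto -t_j$ each contribute $\sigma(A)$, hence cancel in pairs --- are exactly right.
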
 

\begin{proof} See \cite[Section 5]{BG}. 
\end{proof}

\vskip10pt
\begin{lemma}
\label{odd lambda} --- With notations as above, we have 
\begin{equation*}
\sum_{d_{0} \, \in \, \mathscr{P}_{\! g}(\nu,\,  \mathbb{F})} s_{\scriptscriptstyle \langle \lambda \rangle}(\omega_{1}(C_{d_{0}})^{\pm 1}\!, \ldots, \omega_{g}(C_{d_{0}})^{\pm 1})  = 0 \qquad (\text{if $|\lambda|$ is odd}). 
\end{equation*}
\end{lemma}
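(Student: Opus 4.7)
The plan is to exploit the quadratic twist of hyperelliptic curves, together with a simple scaling symmetry of the symplectic Schur functions, to pair up terms in the sum so that they cancel when $|\lambda|$ is odd. Concretely, I would fix an element $\theta \in \mathbb{F}^{\times} \setminus (\mathbb{F}^{\times})^{2}$ and consider the map $d_{0} \mapsto \theta d_{0}$. First one checks that this is a bijection of $\mathscr{P}_{g}(\nu, \mathbb{F})$ to itself: multiplication by $\theta$ preserves the degree, preserves square-freeness, and does not change the set of roots of $d_{0}$. Hence the Galois orbits of the $2g+2$ ramification points of $C_{\theta d_{0}}$ (the roots of $\theta d_{0}$, together with $\infty$ when $\deg d_{0} = 2g+1$) coincide with those of $C_{d_{0}}$, so the field-of-definition pattern $\nu$ is preserved.

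Next, I would identify $C_{\theta d_{0}}$ as the nontrivial quadratic twist of $C_{d_{0}}$ by the character $\chi$ of $\mathbb{F}^{\times}$. Since the character sum defining $a_{j}(C_{d_{0}})$ in \eqref{eq: tag-special} involves $\chi_{j}(d_{0}(\theta))$, and $\chi_{j}(\theta) = \chi(\theta^{j}) = (-1)^{j}$, one obtains $a_{j}(C_{\theta d_{0}}) = (-1)^{j} a_{j}(C_{d_{0}})$ for every $j \ge 1$. Interpreting this via the arithmetic Frobenius eigenvalues, it follows that $\alpha_{i}(C_{\theta d_{0}}) = -\alpha_{i}(C_{d_{0}})$, and so after normalization by $\sqrt{q}$ (which is unaffected by the twist) we get the key identity
\begin{equation*}
\omega_{i}(C_{\theta d_{0}}) = -\omega_{i}(C_{d_{0}}) \qquad (i = 1, \ldots, g).
\end{equation*}

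The second ingredient is the homogeneity property of the symplectic Schur function
\begin{equation*}
s_{\scriptscriptstyle \langle \lambda \rangle}(-z_{1}^{\pm 1}, \ldots, -z_{g}^{\pm 1}) = (-1)^{|\lambda|} s_{\scriptscriptstyle \langle \lambda \rangle}(z_{1}^{\pm 1}, \ldots, z_{g}^{\pm 1}).
\end{equation*}
This can be obtained either directly from the Weyl character formula (each row of the numerator determinant in the character formula acquires a sign $(-1)^{\lambda_{i} + g - i + 1}$ under $z_{j} \mapsto -z_{j}$, leaving the ratio $(-1)^{|\lambda|}$), or more conceptually from the fact that the central element $-I \in \mathrm{Sp}_{2g}$ acts on the irreducible representation $V_{\lambda}$ by the scalar $(-1)^{|\lambda|}$.

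Assembling the three observations, substituting $d_{0} \to \theta d_{0}$ in the sum yields
\begin{equation*}
\sum_{d_{0}} s_{\scriptscriptstyle \langle \lambda \rangle}(\omega_{i}(C_{d_{0}})^{\pm 1})  =  \sum_{d_{0}} s_{\scriptscriptstyle \langle \lambda \rangle}(\omega_{i}(C_{\theta d_{0}})^{\pm 1})  =  \sum_{d_{0}} s_{\scriptscriptstyle \langle \lambda \rangle}(-\omega_{i}(C_{d_{0}})^{\pm 1})  =  (-1)^{|\lambda|} \sum_{d_{0}} s_{\scriptscriptstyle \langle \lambda \rangle}(\omega_{i}(C_{d_{0}})^{\pm 1}),
\end{equation*}
so for $|\lambda|$ odd the sum equals its own negative and hence vanishes. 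There is no real obstacle here; the only point requiring a moment of care is verifying that multiplication by $\theta$ genuinely stabilizes $\mathscr{P}_{g}(\nu, \mathbb{F})$, which is why $\mathscr{P}_{g}(\nu, \mathbb{F})$ being defined as the set of \emph{all} (not merely monic) square-free polynomials of the prescribed degree and ramification pattern is essential for the argument to apply cleanly.
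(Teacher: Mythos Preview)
Your proof is correct and rests on the same mechanism as the paper's: the quadratic twist $d_{0}\mapsto \theta d_{0}$ (equivalently, the vanishing of the moment sums $\tilde{M}_{\nu,\gamma}(q)$ for odd $|\gamma|$ noted just before this lemma) combined with the homogeneity $s_{\scriptscriptstyle \langle \lambda\rangle}(-z^{\pm 1})=(-1)^{|\lambda|}s_{\scriptscriptstyle \langle \lambda\rangle}(z^{\pm 1})$. The paper routes this through the power-sum expansion $s_{\scriptscriptstyle \langle \lambda\rangle}=\sum r_{\gamma}^{\lambda}p^{(\gamma)}$ and then invokes the odd-weight vanishing of $\sum_{d_{0}}\prod_{j}a_{j}(C_{d_{0}})^{\gamma_{j}}$, whereas you apply the twist directly at the level of the Frobenius eigenvalues and the Schur function---a slightly cleaner packaging of the same argument.
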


\begin{proof} \hskip-1ptWrite as above 
$$
s_{\scriptscriptstyle \langle \lambda \rangle}(z_{1}^{\pm 1}\!, \ldots, z_{g}^{\pm 1}) \; = 
\sum_{|\gamma| \, \le \, |\lambda|}  r_{\gamma}^{\lambda} p^{(\gamma)}(z_{1}^{\pm 1}\!, \ldots, z_{g}^{\pm 1}).
$$ 
Replacing $z_{i}$ by $-z_{i}$ for $i = 1, \ldots, g,$ we deduce that $r_{\gamma}^{\lambda} = 0$ 
unless $|\lambda|$ and $|\gamma|$ have the same parity. In particular, if $|\lambda|$ is odd 
the only terms that contribute are those corresponding to partitions $\gamma$ of odd weights. 

Now take $z_{i} = \omega_{i}(C_{d_{0}})$ ($i = 1, \ldots, g$) for $d_{0}\in \mathscr{P}_{\! g}(\nu,\,  \mathbb{F}),$ and recall that if $C_{d_{0}}$ is the hyperelliptic curve corresponding to $d_{0},$ then
\begin{equation*}
a_{k}(C_{d_{0}}) =  \Tr(F^{*k} \vert \, H_{\text{\'et}}^{1}(\bar{C}_{d_{0}}, \mathbb{Q}_{\ell}))
= - \sum_{\theta \, \in  \, {\bf{P^{1}}}(\mathbb{F}_{\! q^{k}})} 
\chi_{k}(d_{0}(\theta)).
\end{equation*} 
Note that 
\begin{equation*}
\sum_{d_{0} \, \in \, \mathscr{P}_{\! g}(\nu,\,  \mathbb{F})}
p^{(\gamma)}(\omega_{1}(C_{d_{0}})^{\pm 1}\!, \ldots, \omega_{g}(C_{d_{0}})^{\pm 1})\;\;
= \sum_{d_{0} \, \in \, \mathscr{P}_{\! g}(\nu,\,  \mathbb{F})} 
q^{-\frac{|\gamma|}{2}}\prod_{k = 1}^{g} a_{k}(C_{d_{0}})^{\gamma_{k}}  =\, 0
\qquad \text{(if $|\gamma|$ is odd)}
\end{equation*} 
and the lemma follows. 
\end{proof}

\vskip10pt
\begin{thm}\label{q->1/q} --- For any partition $\mu$ of $2 g + 2$ and independent variables $t_{1}, \ldots, t_{r},$ let $S_{\mu}(T, q) = S_{\mu}(t_{1},\ldots, t_{r}, q)$ 
be defined by 
\begin{equation*}
S_{\mu}(T, q) = \frac{\chi_{\mu}(id)}{|\rm{GL}_{2}(\mathbb{F})|}\sum_{\nu} \chi_{\mu}(\nu) 
\sum_{d_{0} \, \in \, \mathscr{P}_{\! g}(\nu,\,  \mathbb{F})}
\left(\prod_{k = 1}^{r}  P_{\scriptscriptstyle C_{d_{0}}}\!(t_{k}) \right).
\end{equation*} 
Then 
\begin{equation*}
S_{\mu}(T, q)  = (t_{1}\cdots \, t_{r})^{g}
\!\!\sum_{\substack{\lambda \subseteq (r^{g}) \\ \text{$|\lambda|$-\rm{even}}}}
\Tr(F^{*} \vert \, e_{c,  \mu}(\mathscr{H}_{g}[2] \otimes \overline{\mathbb{F}}, \mathbb{V}'(\lambda) \otimes \mathbb{Q}_{\ell}(|\lambda| \slash 2)))\,
s_{\scriptscriptstyle \langle \lambda^{\dag} \rangle}(q^{\pm \frac{1}{2}} t_{1}^{\pm 1}\!, \ldots, q^{\pm \frac{1}{2}} t_{r}^{\pm 1})\, q^{\frac{gr}{2}}.
\end{equation*}
\end{thm}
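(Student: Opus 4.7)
The plan is to rewrite each local factor $P_{\scriptscriptstyle C_{d_0}}(t_j)$ as a Weyl-denominator-type product in the unitary eigenvalues, apply Lemma \ref{BG} to expand the resulting product over $i$ and $j$ into a sum of products of symplectic Schur functions, and then recognise the inner sum via Behrend's trace formula \eqref{eq: eulercharschur} as the Frobenius trace on $e_{c,\mu}(\mathscr{H}_g[2]\otimes\overline{\mathbb{F}},\mathbb{V}'(\lambda))$.

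First I would unfold the definition $P_{\scriptscriptstyle C_{d_0}}(t)=\prod_{i=1}^g(1-\sqrt{q}\,\omega_i(C_{d_0})t)(1-\sqrt{q}\,\omega_i(C_{d_0})^{-1}t)$ and perform the elementary factorisation
\[
(1-\sqrt{q}\,\omega_i t)(1-\sqrt{q}\,\omega_i^{-1}t)\,=\,-\sqrt{q}\,t\bigl(\omega_i+\omega_i^{-1}+\tau+\tau^{-1}\bigr),
\]
where $\tau=-\sqrt{q}\,t$, so that $\tau+\tau^{-1}=-(\sqrt{q}\,t+(\sqrt{q}\,t)^{-1})$. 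Taking products over $i=1,\dots,g$ and $j=1,\dots,r$ (setting $\tau_j=-\sqrt{q}\,t_j$) yields
\[
\prod_{j=1}^r P_{\scriptscriptstyle C_{d_0}}(t_j)\,=\,(-1)^{gr}q^{gr/2}(t_1\cdots t_r)^g\prod_{i,j}\bigl(\omega_i(C_{d_0})+\omega_i(C_{d_0})^{-1}+\tau_j+\tau_j^{-1}\bigr).
\]
Applying Lemma \ref{BG} to the double product turns this into a sum over $\lambda\subseteq(r^g)$ of $s_{\scriptscriptstyle\langle\lambda\rangle}(\omega(C_{d_0})^{\pm 1})\,s_{\scriptscriptstyle\langle\lambda^\dag\rangle}(\tau^{\pm 1})$.

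Next, I substitute this expansion into the definition of $S_\mu(T,q)$, interchange summations and pull the $\lambda$-sum outside. The inner sum
\[
\frac{\chi_\mu(\mathrm{id})}{|\mathrm{GL}_2(\mathbb{F})|}\sum_{\nu}\chi_\mu(\nu)\sum_{d_0\in\mathscr{P}_{\!g}(\nu,\mathbb{F})} s_{\scriptscriptstyle\langle\lambda\rangle}(\omega(C_{d_0})^{\pm 1})
\]
is exactly $q^{-|\lambda|/2}\,\Tr(F^{*}\vert\,e_{c,\mu}(\mathscr{H}_g[2]\otimes\overline{\mathbb{F}},\mathbb{V}'(\lambda)))$ by \eqref{eq: eulercharschur}. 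By Lemma \ref{odd lambda} the contributions of partitions $\lambda$ of odd weight vanish, so the sum effectively runs over $\lambda\subseteq(r^g)$ with $|\lambda|$ even.

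It remains to convert $s_{\scriptscriptstyle\langle\lambda^\dag\rangle}(\tau^{\pm 1})=s_{\scriptscriptstyle\langle\lambda^\dag\rangle}((-\sqrt{q}\,t_1)^{\pm 1},\dots,(-\sqrt{q}\,t_r)^{\pm 1})$ into the form $s_{\scriptscriptstyle\langle\lambda^\dag\rangle}(q^{\pm 1/2}t^{\pm 1})$ that appears in the statement. Here I would use the standard fact that a symplectic character $s_{\scriptscriptstyle\langle\mu\rangle}(z^{\pm 1})$ is a Laurent polynomial whose monomials all have coordinate-sum congruent to $|\mu|\pmod 2$ (the root lattice of $\mathrm{Sp}_{2g}$ has index $2$ in the weight lattice), so that $s_{\scriptscriptstyle\langle\mu\rangle}(-z^{\pm 1})=(-1)^{|\mu|}s_{\scriptscriptstyle\langle\mu\rangle}(z^{\pm 1})$. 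Since $|\lambda^\dag|=gr-|\lambda|$ and $|\lambda|$ is even, the overall sign is $(-1)^{gr}$, which cancels the $(-1)^{gr}$ produced in the first step. Finally, interpreting the factor $q^{-|\lambda|/2}$ as the Frobenius action on the Tate twist $\mathbb{Q}_\ell(|\lambda|/2)$ via $\Tr(F^{*}\vert\mathbb{V}'(\lambda)\otimes\mathbb{Q}_\ell(|\lambda|/2))=q^{-|\lambda|/2}\Tr(F^{*}\vert\mathbb{V}'(\lambda))$ yields the stated formula.

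The calculation is essentially bookkeeping once the identification $(1-\sqrt{q}\omega t)(1-\sqrt{q}\omega^{-1}t)=-\sqrt{q}\,t(\omega+\omega^{-1}+\tau+\tau^{-1})$ is in place; the only delicate point is the careful tracking of signs and powers of $q^{1/2}$ so that the two parity cancellations (from the substitution $\tau_j\mapsto\sqrt{q}\,t_j$ and from restricting to even $|\lambda|$) combine correctly to match the Tate-twisted form on the right-hand side. The main conceptual obstacle is establishing the parity property $s_{\scriptscriptstyle\langle\mu\rangle}(-z^{\pm 1})=(-1)^{|\mu|}s_{\scriptscriptstyle\langle\mu\rangle}(z^{\pm 1})$, which is straightforward from the weight-lattice description of $\mathrm{Sp}_{2g}$ but must be invoked explicitly.
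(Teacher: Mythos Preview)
Your proposal is correct and follows exactly the route the paper takes: apply Lemma \ref{BG} with the substitution $t_j\mapsto -\sqrt{q}\,t_j$ and $z_i=\omega_i(C_{d_0})$, multiply through by $(-1)^{gr}q^{gr/2}(t_1\cdots t_r)^g$ to recover $\prod_j P_{C_{d_0}}(t_j)$, then sum over $d_0,\nu$ and invoke \eqref{eq: eulercharschur} and Lemma \ref{odd lambda}. The one point you make explicit that the paper leaves tacit is the parity identity $s_{\langle\mu\rangle}(-z^{\pm1})=(-1)^{|\mu|}s_{\langle\mu\rangle}(z^{\pm1})$, which is exactly what is needed to absorb the factor $(-1)^{gr}$ once the sum is restricted to even $|\lambda|$ (since $|\lambda^\dag|=gr-|\lambda|$); your root-lattice justification for this is fine.
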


\begin{proof} \hskip-1ptFor any $d_{0}\in \mathscr{P}_{\! g}(\nu,\,  \mathbb{F}),$
we apply Lemma \ref{BG} with $t_{k} \to - \sqrt{q}\,t_{k}$ 
for $k = 1,\ldots, r,$ and $z_{1}^{\pm 1}\!, \ldots, z_{g}^{\pm 1}$ chosen to be 
the {\it normalized} (by $\sqrt{q}$) eigenvalues of the Frobenius $F^{*}$ on
$
H_{\text{\'et}}^{1}(\bar{C}_{d_{0}},\mathbb{Q}_{\ell}),
$ 
with $\ell$ a prime different from the characteristic of $\mathbb{F}.$ Multiplying both sides of the identity by $(-1)^{g r}(\sqrt{q} t_{1}\, \cdots \, \sqrt{q} t_{r})^{g}\chi_{\mu}(\nu)$ 
and summing over $d_{0}$ and $\nu,$ the theorem follows at once from \eqref{eq: eulercharschur} and Lemma \ref{odd lambda}. 
\end{proof}

\vskip10pt
\begin{remark} \hskip-2.5pt By applying Lemma \ref{BG} and the identity 
\begin{equation*}
\prod_{i = 1}^{m} \prod_{j = 1}^{g} (1 - t_{i}^{} z_{j}^{})^{-1}  (1 - t_{i}^{} z_{j}^{-1})^{-1} \; = 
\prod_{1 \, \le \, i \, < \, j \, \le \, m}(1 - t_{i}^{} t_{j}^{})^{-1} \cdot 
\sum_{l(\lambda) \, \le \, g} s_{\scriptscriptstyle \langle \lambda \rangle}(Z^{\pm 1}) 
s_{\scriptscriptstyle \lambda}(T)
\end{equation*} 
(see \cite[Section 5, Proof of Theorem 4]{BG}) one can obtain in a similar fashion 
the variant of Theorem \ref{q->1/q} for sums of ratios of characteristic polynomials; here $s_{\scriptscriptstyle \lambda}$ is the ordinary Schur polynomial, 
$Z^{\pm 1}  = (z_{1}^{}, z_{1}^{-1}\!, \ldots, z_{g}^{}, z_{g}^{-1}),$ and $T = (t_{1}, \ldots, t_{m}).$
\end{remark}

\vskip10pt
Let 
$
\mathcal{S} = (\chi_{\mu}(\nu))_{\mu, \nu}, 
$ 
where $\mu, \nu$ indicate respectively the rows and columns of $\mathcal{S}.$ This is 
the transpose of the transition matrix from the basis of the $\mathbb{Q}$-vector space of homogeneous symmetric polynomials of degree $2g + 2$ in $2g + 2$ variables consisting of the power sum polynomials to the basis consisting of the ordinary Schur polynomials. If 
$\nu = (1^{\nu_{1}}\!, 2^{\nu_{2}}\!, \ldots, (2g + 2)^{\nu_{2g + 2}}),$ put, as before, 
$z(\nu) = \nu_{1}! 1^{\nu_{1}} \nu_{2}! 2^{\nu_{2}} \cdots \, \nu_{2g + 2}! (2g + 2)^{\nu_{2g + 2}}.$ Then by \cite[Appendix A, \S A.1, Exercise A.29]{FH}, we have 
$
\mathcal{S}^{-1} = (\chi_{\mu}(\nu)\slash z(\nu))_{\nu, \mu}.
$ 
Accordingly, if we denote the components of 
$$
^{t}\!\mathcal{S} \cdot \big(\chi_{\mu}(id)^{-1} e_{c, \mu}(\mathscr{H}_{g}[2], \mathbb{V}'(\lambda))\big)_{\mu}
$$ 
by $\tilde{e}_{c, \nu}(\mathscr{H}_{g}[2], \mathbb{V}'(\lambda)),$ then Theorem \ref{q->1/q} implies that, for every $\nu,$ we have\footnote{This can probably be better formulated using 
$\mathbb{S}_{2g + 2}$-equivariant Euler characteristics.} 
\begin{equation}\label{eq: basic-moment-id} 
\frac{z(\nu)}{|\mathrm{GL}_{2}(\mathbb{F})|}
\sum_{d_{0}\, \in \, \mathscr{P}_{\! g}(\nu,\,  \mathbb{F})}
\left(\prod_{k = 1}^{r}  P_{\scriptscriptstyle C_{d_{0}}}\!(t_{k})  \right)
=\, (t_{1}\cdots \, t_{r})^{g}
\!\!\sum_{\substack{\lambda \subseteq (r^{g}) \\ \text{$|\lambda|$-even}}}
\Tr(F^{*} \vert \, \tilde{e}_{c,  \nu}(\mathscr{H}_{g}[2] \otimes_{_{\mathbb{F}}} 
\overline{\mathbb{F}}, \mathbb{V}'(\lambda)))\,
s_{\scriptscriptstyle \langle \lambda^{ \dag} \rangle}(q^{\pm \frac{1}{2}} T^{\pm 1})\,
q^{ \frac{gr - |\lambda|}{2}}.
\end{equation} 
Recalling that $\chi_{\mu}$ is the character of the irreducible representation $R_{\mu}$ of $\mathbb{S}_{2g + 2}$ corresponding to $\mu,$ note that 
$$
\chi_{\mu}(id) = \dim \, R_{\mu} = \frac{(2g + 2)!}{\prod_{i = 1}^{k}(\mu_{i} + k - i)!}
\prod_{1 \, \le \, i \,  <  \, j \, \le \, k}(\mu_{i} - \mu_{j} + j - i)  \qquad 
\text{(if $\mu = (\mu_{1} \ge \cdots \ge \mu_{k} \ge 0)$)}
$$ 
(see, for instance, \cite[\S 4.1, eqn. (4.11)]{FH}). 

By letting 
\begin{equation}\label{eq: euler-characteristic-w1} 
e_{c}^{\lambda}(\mathscr{H}_{g}(w^{1})) : \;\;= \sum_{\nu = (1^{\nu_{1}}\!, \ldots, (2g + 2)^{\nu_{2g + 2}})}\frac{\nu_{1}}{z(\nu)} \tilde{e}_{c, \nu}(\mathscr{H}_{g}[2], \mathbb{V}'(\lambda))
\end{equation} 
one obtains the following 

\vskip10pt
\begin{thm}\label{mom-for-odd_poly} --- For $g \ge 1,$ we have 
\begin{equation}\label{eq: moments-odd-sqfree} 
\frac{1}{q (q - 1)} \sum_{\substack{\deg d = 2g + 1 \\ \text{$d$ \rm{square-free}}}}
\left(\prod_{k = 1}^{r} P_{\scriptscriptstyle C_{d}}(t_{k})  \right)  = \,
(t_{1}\cdots \, t_{r})^{g}\!\!\sum_{\substack{\lambda \subseteq (r^{g}) \\ \text{$|\lambda|$-\rm{even}}}}
\Tr(F^{*} \vert \, e_{c}^{\lambda}(\mathscr{H}_{g}(w^{1})\otimes_{_{\mathbb{F}}} \!\overline{\mathbb{F}}))\,
s_{\scriptscriptstyle \langle \lambda^{ \dag} \rangle}(q^{\pm \frac{1}{2}} T^{\pm 1})\,
q^{\frac{gr - |\lambda|}{2}}
\end{equation} 
the first sum being over monic polynomials in $\mathbb{F}[x].$ When $g = 1,$ we 
define $\mathscr{H}_{1}(w^{1}) : = \mathscr{A}_{1}.$
\end{thm}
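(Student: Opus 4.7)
The plan is to deduce Theorem \ref{mom-for-odd_poly} from the $\mathbb{S}_{2g+2}$-equivariant moment identity \eqref{eq: basic-moment-id} by a weighted summation over partitions $\nu = (1^{\nu_{1}}, \ldots, (2g+2)^{\nu_{2g+2}})$ of $2g+2$. Concretely, I would multiply both sides of \eqref{eq: basic-moment-id} by $\nu_1/z(\nu)$ and sum over $\nu$; by the definition \eqref{eq: euler-characteristic-w1} of $e_c^\lambda(\mathscr{H}_g(w^1))$, the right-hand side collapses immediately to the right-hand side of \eqref{eq: moments-odd-sqfree}. The theorem therefore reduces to the combinatorial/geometric identity
\begin{equation*}
\sum_{\nu} \frac{\nu_1}{|\mathrm{GL}_2(\mathbb{F})|} \sum_{d_0 \,\in\, \mathscr{P}_g(\nu,\,\mathbb{F})} \prod_{k=1}^r P_{\scriptscriptstyle C_{d_0}}(t_k) \;=\; \frac{1}{q(q-1)} \sum_{\substack{\deg d \,=\, 2g+1 \\ d \text{ monic sf}}} \prod_{k=1}^r P_{\scriptscriptstyle C_d}(t_k).
\end{equation*}

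I would establish this identity by showing that both sides compute the groupoid cardinality of $\mathscr{H}_g(w^1)(\mathbb{F})$ weighted by the symmetric function $\prod_k P_C(t_k)$. On the left, since $|\mathrm{GL}_2(\mathbb{F})|^{-1} \sum_{d_0 \in \mathscr{P}_g(\nu, \mathbb{F})}$ produces the stacky sum $\sum_{[C] \in \mathscr{H}_g(\nu)(\mathbb{F})}$ with weight $1/|\mathrm{Aut}(C)|$, the factor $\nu_1$ counts $\mathbb{F}$-rational Weierstrass points of $C$; an orbit-stabilizer argument for the action of $\mathrm{Aut}(C)$ on the $\nu_1$ rational Weierstrass points then rewrites the total as $\sum_{[(C, w)]} \prod_k P_C(t_k)/|\mathrm{Aut}(C, w)|$, over isomorphism classes of pairs $(C, w)$ with $w$ an $\mathbb{F}$-rational Weierstrass point. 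On the right, every such pair $(C, w)$ admits a model $y^2 = d(x)$ with $d$ monic square-free of degree $2g+1$ and $w$ placed at $\infty$; the residual symmetry group consists of transformations $(x, y) \mapsto (\delta^2 x + \beta,\, \epsilon \delta^{2g+1} y)$ for $(\delta, \beta, \epsilon) \in \mathbb{F}^\times \times \mathbb{F} \times \{\pm 1\}$ modulo $(\delta, \beta, \epsilon) \sim (-\delta, \beta, -\epsilon)$, a group of order $q(q-1)$, so the normalized sum on the right also equals $\sum_{[(C, w)]} \prod_k P_C(t_k)/|\mathrm{Aut}(C, w)|$. For $g = 1$, where by convention $\mathscr{H}_1(w^1) := \mathscr{A}_1$, the same count recovers the classical parametrization of elliptic curves with a marked rational Weierstrass point by monic cubics modulo the affine group of order $q(q-1)$.

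The main technical obstacle in this program is the careful bookkeeping around the residual symmetry group of order $q(q-1)$: one must verify that the constraint $\epsilon^2 = \delta^{4g+2}$ coming from the requirement that $d$ remain monic after the change of variables is correctly accounted for, and in particular that the $y$-scaling freedom and the identification arising from $\gcd(2g+1, 2) = 1$ combine to give exactly the factor $q(q-1)$ and not some $2$-power multiple thereof. Once this counting is in place, the identity above and hence Theorem \ref{mom-for-odd_poly} follow formally from \eqref{eq: basic-moment-id} and \eqref{eq: euler-characteristic-w1}.
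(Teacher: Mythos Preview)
Your proposal is correct and reduces the theorem to the same combinatorial identity as the paper does, but you prove that identity by a different route. The paper expands $\prod_{k} P_{C_{d}}(t_{k})$ in power sums $\prod_{j} a_{j}(C_{d})^{\gamma_{j}}$ and then, for each fixed $\gamma$ of even weight, invokes Proposition~\ref{Prop1-appendixB} (an explicit fractional-linear change of variables $d_{\alpha}(x) = (x-\alpha)^{2g+2}d(\alpha + 1/(x-\alpha))/d(\alpha)$ relating monic square-free polynomials of degree $2g+1$ to those of degree $2g+2$) to match the two sides term by term. Your argument instead interprets both sides directly as the groupoid count $\sum_{[(C,w)]} \prod_{k} P_{C}(t_{k})/|\mathrm{Aut}(C,w)|$ of $\mathscr{H}_{g}(w^{1})(\mathbb{F})$, using orbit--stabilizer for the $\mathrm{GL}_{2}$-action on $\mathscr{P}_{g}(\nu,\mathbb{F})$ on one side and for the affine group of order $q(q-1)$ acting on monic degree-$(2g+1)$ polynomials on the other. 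This is more conceptual and avoids the auxiliary power-sum reduction; the paper's approach is more self-contained in that Proposition~\ref{Prop1-appendixB} is proved by a direct bijective argument without appealing to the moduli interpretation or to the fact (used implicitly in your stabilizer computation) that every $\mathbb{F}$-isomorphism of pointed hyperelliptic curves of genus $g\ge 2$ arises from an affine change of variables on the base. Your caution about the factor $q(q-1)$ is well placed: the point is that keeping $d$ monic forces $a$ to be a square in $\mathbb{F}^{\times}$, and then $(\delta,\beta,\epsilon)\sim(-\delta,\beta,-\epsilon)$ collapses the parameter count to $(q-1)\cdot q\cdot 2/2 = q(q-1)$, with the hyperelliptic involution sitting inside this group as $(1,0,-1)$.
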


\begin{proof} \hskip-1ptWhen $g = 1$ this is clear, and so we can assume that $g \ge 2.$ 
By \eqref{eq: basic-moment-id} and \eqref{eq: euler-characteristic-w1}, 
the right-hand side clearly equals
\begin{equation*}
\frac{1}{|\mathrm{GL}_{2}(\mathbb{F})|}\sum_{\nu = (1^{\nu_{1}}\!, \ldots, (2g + 2)^{\nu_{2g + 2}})}\nu_{1} \, \cdot \sum_{d_{0} \, \in \, \mathscr{P}_{\! g}(\nu,\,  \mathbb{F})}
\left(\prod_{k = 1}^{r} P_{\scriptscriptstyle C_{d_{0}}}\!(t_{k}) \right).
\end{equation*} 
Note that $|\mathrm{GL}_{2}(\mathbb{F})| =  q (q + 1) (q - 1)^{2}.$ We split the sum in the left-hand side of \eqref{eq: moments-odd-sqfree} according to the factorization types 
$\mu = (1^{\mu_{1}}\!, \ldots, (2g + 1)^{\mu_{2g + 1}}),$ with $|\mu| = 2g + 1,$ of $d.$ 

Now for a hyperelliptic curve $C_{d_{0}},$ express the product of 
$P_{\scriptscriptstyle C_{d_{0}}}\!(t_{k})$ over $k = 1, \ldots, r$ in the form 
\begin{equation*} 
\prod_{k = 1}^{r} P_{\scriptscriptstyle C_{d_{0}}}\!(t_{k}) \,=\,  
\exp \bigg( - \sum_{j = 1}^{\infty} \frac{a_{j}(C_{d_{0}})}{j}
\cdot\sum_{k = 1}^{r}t_{k}^{j} \bigg)
\end{equation*}
with
\begin{equation*}
a_{j}(C_{d_{0}}) = 
\Tr(F^{*j}  \vert \, H_{\text{\'et}}^{1}(\bar{C}_{d_{0}}, \mathbb{Q}_{\ell}))
= - \sum_{\theta \, \in \,  {\bf{P^{1}}}(\mathbb{F}_{\! q^{j}})} 
\chi_{j}(d_{0}(\theta)).
\end{equation*} 
Applying this, we see that it suffices to show that 
\begin{equation*}
\sum_{\mu = (1^{\mu_{1}}\!, \ldots, (2g + 1)^{\mu_{2g + 1}})}\;
\sum_{d \, \in \, \mathscr{P}(\mu)}\, 
\prod_{j = 1}^{m} a_{j}(C_{d})^{\gamma_{j}}
=\, \frac{1}{q^{2} - 1}\sum_{\nu = (1^{\nu_{1}}\!, \ldots, (2g + 2)^{\nu_{2g + 2}})}
\nu_{1} \sum_{d_{0} \, \in \, \mathscr{P}_{\! g}(\nu,\,  \mathbb{F})}\, 
\prod_{j = 1}^{m} a_{j}(C_{d_{0}})^{\gamma_{j}}\!.
\end{equation*} 
for any partition $\gamma = (1^{\gamma_{1}}\!, \ldots, m^{\gamma_{m}}).$ Here $\mathscr{P}(\mu) \subset \mathbb{F}[x]$ denotes, as in Section \ref{section 3}, the set of all monic square-free polynomials $d$ with factorization type $\mu.$ It is easy to see that both sides vanish when $\gamma$ has odd weight (see also the beginning of the proof of Lemma \ref{evenness-A}). When $|\gamma|$ is even, the identity is easily reduced to 
\begin{equation}\label{eq: mom-id-even-weight} 
\sum_{\mu = (1^{\mu_{1}}\!, \ldots, (2g + 1)^{\mu_{2g + 1}})}\;
\sum_{d \, \in \, \mathscr{P}(\mu)}\, 
\prod_{j = 1}^{m} a_{j}(C_{d})^{\gamma_{j}} 
=\, \frac{1}{q + 1}\sum_{\nu = (1^{\nu_{1}}\!, \ldots, (2g + 2)^{\nu_{2g + 2}})}
\nu_{1} \sum_{\substack{d_{0} \, \in \, \mathscr{P}_{\! g}(\nu,\,  \mathbb{F}) \\ \text{$d_{0}$-monic}}}\, \prod_{j = 1}^{m} a_{j}(C_{d_{0}})^{\gamma_{j}}\!.
\end{equation} 
By Proposition \ref{Prop1-appendixB} in Appendix \ref{B}, for any partition 
$\mu = (1^{\mu_{1}}\!, \ldots, (2g + 1)^{\mu_{2g + 1}})$ of $2g + 1,$ we have 
\begin{equation*}
\sum_{d \, \in \, \mathscr{P}(\mu)}\, 
\prod_{j = 1}^{m} a_{j}(C_{d})^{\gamma_{j}} 
=\, \frac{\mu_{1} + 1}{q + 1}\Bigg(\sum_{d_{0} \, \in \, \mathscr{P}(\mu)}\, 
\prod_{j = 1}^{m} a_{j}(C_{d_{0}})^{\gamma_{j}} \; + 
\sum_{d_{0} \, \in \, \mathscr{P}(\mu')}\, 
\prod_{j = 1}^{m} a_{j}(C_{d_{0}})^{\gamma_{j}} \Bigg)
\end{equation*} 
with $\mu' : = (1^{\mu_{1} + 1}\!, \ldots, (2g + 1)^{\mu_{2g + 1}}).$ Clearly this 
implies \eqref{eq: mom-id-even-weight}, and completes the proof.
\end{proof}

\vskip10pt
\begin{corollary}\label{special case mom-for-odd_poly} --- Notation being as in Theorem \rm{\ref{mom-for-odd_poly}}, we have 
\begin{equation*}
\frac{1}{q (q - 1)}  \sum_{\substack{\deg d = 2g + 1 \\ \text{$d$ \rm{square-free}}}}  
L({\scriptstyle \frac{1}{2}}, \chi_{d})^{r}  \;= 
\sum_{\substack{\lambda \subseteq (r^{g}) \\ \text{$|\lambda|$-\rm{even}}}}
(\dim \, V_{\lambda^{ \dag}}) \Tr(F^{*} \vert \,
e_{c}^{\lambda}(\mathscr{H}_{g}(w^{1})  \otimes_{_{\mathbb{F}}} \! \overline{\mathbb{F}}))\,
q^{-\frac{|\lambda|}{2}}.
\end{equation*}
\end{corollary}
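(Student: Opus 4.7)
The plan is to derive the corollary by a direct specialization of Theorem \ref{mom-for-odd_poly} to the point $t_1 = \cdots = t_r = q^{-1/2}$. The identity then reduces to two computations: one identifying $L(\tfrac{1}{2},\chi_d)$ with $P_{C_d}(q^{-1/2})$ on the left-hand side, and one identifying the specialized symplectic Schur value with $\dim V_{\lambda^\dagger}$ on the right-hand side.

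First I would handle the left-hand side. Since $d$ runs over monic square-free polynomials of degree $2g+1$, the exponent $\epsilon_k$ attached to $d$ (in the notation of Section \ref{section 2}, where $k = \deg d$) is $0$ because $k$ is odd. Consequently the prefactor $(1\pm q^{-s})^{\epsilon_k}$ equals $1$, and the formula for $L(s,\chi_d)$ recorded in Section \ref{section 2} gives $L(s,\chi_d) = P_{C_d}(q^{-s})$. Specializing $s=\tfrac12$ and taking the $r$-th power yields
\begin{equation*}
L(\tfrac{1}{2},\chi_d)^{r} \,=\, \prod_{k=1}^{r} P_{C_d}(q^{-1/2}),
\end{equation*}
so the left-hand side of \eqref{eq: moments-odd-sqfree} at $t_k = q^{-1/2}$ is exactly the left-hand side of the corollary.

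Next I would specialize the right-hand side of \eqref{eq: moments-odd-sqfree}. With $t_k = q^{-1/2}$, each pair $q^{\pm 1/2} t_k^{\pm 1}$ collapses to the pair $(1,1)$ (since $q^{1/2}\cdot q^{-1/2} = 1$ and $q^{-1/2}\cdot q^{1/2} = 1$), so the symplectic Schur function evaluates as
\begin{equation*}
s_{\langle \lambda^{\dag}\rangle}(q^{\pm\frac{1}{2}} t_{1}^{\pm 1}\!, \ldots, q^{\pm\frac{1}{2}} t_{r}^{\pm 1}) \,=\, s_{\langle \lambda^{\dag}\rangle}(1,\ldots,1) \,=\, \dim V_{\lambda^{\dag}},
\end{equation*}
the last equality being the standard Weyl dimension formula for the irreducible representation of $\mathrm{Sp}_{2r}(\mathbb{C})$ attached to $\lambda^\dagger$ (obtained by setting every variable to $1$ in its symplectic character). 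The monomial prefactor becomes $(t_1\cdots t_r)^g = q^{-gr/2}$, and the remaining power of $q$ in \eqref{eq: moments-odd-sqfree} combines as
\begin{equation*}
q^{-gr/2}\cdot q^{(gr-|\lambda|)/2} \,=\, q^{-|\lambda|/2}.
\end{equation*}

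Combining these two specializations term by term reproduces the claimed identity, with the sum over non-trivial partitions $\lambda \subseteq (r^g)$ of even weight. There is no substantive obstacle here; the only points requiring care are bookkeeping the exponents of $q$ and invoking Lemma \ref{odd lambda} (already used in the proof of Theorem \ref{q->1/q}) to ensure the sum is indeed over even $|\lambda|$. In this sense the corollary is a clean moment-of-$L$-values reformulation of Theorem \ref{mom-for-odd_poly} at the symmetry point $s=\tfrac12$.
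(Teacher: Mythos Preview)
Your proposal is correct and follows essentially the same approach as the paper: specialize Theorem \ref{mom-for-odd_poly} at $t_i = q^{-1/2}$, use $L(s,\chi_d) = P_{C_d}(q^{-s})$ for odd $\deg d$, and evaluate the symplectic Schur function at all ones to get $\dim V_{\lambda^\dagger}$. One small slip: the sum in the corollary is over \emph{all} $\lambda \subseteq (r^g)$ of even weight, including $\lambda = \mathbf{0}$, not just the non-trivial ones (the version in the introduction separates out the $\lambda = \mathbf{0}$ term, but the corollary as stated does not); also, the restriction to even $|\lambda|$ is already present in Theorem \ref{mom-for-odd_poly}, so invoking Lemma \ref{odd lambda} again is unnecessary.
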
 

\begin{proof} \hskip-1ptWe recall from Section \ref{section 2} that 
$
L(s, \chi_{d}) = P_{\scriptscriptstyle C_{d}}(q^{-s}).
$ 
The identity stated is just the limiting case of the identity in Theorem \ref{mom-for-odd_poly} as $t_{i} \to 1\slash \sqrt{q},$ for $i = 1, \ldots, r.$ \end{proof}

When $g = 1, 2,$ one can use Theorem (Birch-Ihara), and the results in \cite{Har}, \cite{Peter} 
to express the traces of Frobenius on $e_{c}(\mathscr{A}_{g} \otimes \mathbb{F}, \mathbb{V}(\lambda))$ in terms of traces of Hecke operators on spaces of Siegel (vector-valued) modular forms. We give here the simplest version of this trace comparison in the following 
corollary.

\vskip10pt
\begin{corollary} --- Notation being as in Section \ref{section 5}, if $g = 1$ we have 
\begin{equation*}
\frac{1}{q (q - 1)} \sum_{\substack{\deg d = 3\\ 
\text{$d$ \rm{square-free}}}}   
L({\scriptstyle \frac{1}{2}}, \chi_{d})^{r} 
\, =  \sum_{\substack{\lambda  = 0\\ 
\text{$\lambda$-\rm{even}}}}^{r} (\dim\, V_{ \lambda^{\dag}}) 
(- 1 - \text{T}_{\lambda + 2}(q)) \, q^{-\frac{\lambda}{2}}
\end{equation*}
where we take $\text{T}_{\lambda + 2}(q) = - q - 1$ if $\lambda = 0.$
\end{corollary}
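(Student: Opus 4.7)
The plan is to specialize Corollary \ref{special case mom-for-odd_poly} to $g = 1$ and then identify the resulting Euler characteristics via the Eichler--Shimura--Deligne theorem. For $g = 1$, partitions $\lambda \subseteq (r^1)$ of even weight reduce to even integers $0 \le \lambda \le r$, and by the convention in Theorem \ref{mom-for-odd_poly} we have $\mathscr{H}_1(w^1) := \mathscr{A}_1$. Corollary \ref{special case mom-for-odd_poly} thus immediately yields
\begin{equation*}
\frac{1}{q(q-1)} \sum_{\substack{\deg d = 3\\ \text{$d$ square-free}}} L({\scriptstyle \frac{1}{2}}, \chi_d)^r = \sum_{\substack{\lambda = 0\\ \text{$\lambda$ even}}}^r (\dim V_{\lambda^\dag})\, \Tr(F^* \vert\, e_c^\lambda(\mathscr{A}_1 \otimes_{_{\mathbb{F}}}\!\overline{\mathbb{F}}))\, q^{-\lambda/2},
\end{equation*}
so the only remaining task is to prove that, for each even $\lambda$ with $0 \le \lambda \le r$, one has $\Tr(F^* \vert\, e_c^\lambda(\mathscr{A}_1 \otimes \overline{\mathbb{F}})) = -1 - T_{\lambda + 2}(q)$.

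From the definition \eqref{eq: euler-characteristic-w1} combined with the $\mathbb{S}_4$-equivariant decomposition and the passage from the full hyperelliptic locus $\mathscr{H}_1[2]$ to odd-degree square-free polynomials carried out in the proof of Theorem \ref{mom-for-odd_poly}, one checks that $e_c^\lambda(\mathscr{A}_1)$ coincides with the motivic Euler characteristic $e_c(\mathscr{A}_1, \mathbb{V}(\lambda))$ of the standard local system on the moduli stack of elliptic curves. For $\lambda \ge 2$ even, Deligne's theorem (cited in the introduction: $T_{\lambda + 2}(q) = \Tr(F^* \vert\, H^1_!(\mathscr{A}_1 \otimes \overline{\mathbb{F}}, \mathbb{V}(\lambda)))$) together with the vanishing $H^0_c = H^2_c = 0$ and the one-dimensional Eisenstein contribution to $H^1_c$ (arising from the unique cusp of $\mathrm{SL}_2(\mathbb{Z})\backslash \mathcal{H}^*$, on which Frobenius acts trivially) give
\begin{equation*}
e_c(\mathscr{A}_1, \mathbb{V}(\lambda)) = - [H^1_!(\mathscr{A}_1, \mathbb{V}(\lambda))] - [\mathbb{Q}_\ell]
\end{equation*}
in the Grothendieck group of Galois representations, and taking the trace of $F^*$ yields $-T_{\lambda + 2}(q) - 1$. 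For $\lambda = 0$, $\mathbb{V}(0) = \mathbb{Q}_\ell$ is constant and $e_c(\mathscr{A}_1, \mathbb{Q}_\ell) = [\mathbb{Q}_\ell(-1)]$ (by Behrend's weighted point count on $\mathscr{A}_1$, equivalently because $A_1 \cong \mathbb{A}^1$ and the stacky automorphisms at $j = 0, 1728$ cancel in the compactly supported Euler characteristic); hence the trace equals $q$, which matches $-1 - T_2(q) = -1 - (-q-1) = q$ under the convention $T_2(q) = -q-1$. This convention is exactly what arises by formally extending the motivic formula to $\lambda = 0$, absorbing the contribution of the non-holomorphic weight-$2$ Eisenstein series into the (otherwise empty) cuspidal piece.

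The main (and essentially only) obstacle is the identification $e_c^\lambda(\mathscr{A}_1) = e_c(\mathscr{A}_1, \mathbb{V}(\lambda))$, which is a combinatorial unwinding of \eqref{eq: euler-characteristic-w1} and the $\mathbb{S}_{2g+2}$-equivariant cohomology decomposition for $g = 1$: the weighting $\nu_1/z(\nu)$ restricts the sum to divisors of the hyperelliptic locus meeting the cusp $\infty$ (i.e.\ polynomials of degree $2g + 1 = 3$), which for $g = 1$ is precisely the parametrization of elliptic curves with the marked Weierstrass point at infinity. Once this identification is granted, the remainder is a direct application of classical Eichler--Shimura--Deligne theory on the modular curve.
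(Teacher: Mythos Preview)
Your argument is correct but follows a different path from the paper. The paper's proof is a one-liner: it expresses the traces $\Tr(F^{*}\mid e_{c}^{\lambda}(\mathscr{A}_{1}\otimes\overline{\mathbb{F}}))$ back in terms of the moment-sums $\mathcal{M}_{3}^{*}(2r;q)$ (via the formula in the introduction immediately preceding the Katz--Sarnak theorem, specialized to $g=1$) and then applies the Birch--Ihara identity from Section~\ref{section 5}, which already packages exactly the combination $-1-T_{2k}(q)$. You instead identify $e_{c}^{\lambda}(\mathscr{A}_{1})$ with $e_{c}(\mathscr{A}_{1},\mathbb{V}(\lambda))$ and then read off the trace from Deligne's description of $H^{1}_{!}$ together with the one-dimensional (weight-zero) Eisenstein kernel of $H^{1}_{c}\to H^{1}$.

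Both approaches encode the same Eichler--Shimura content. The paper's route is more self-contained within the text, since Birch--Ihara is stated in full and reused throughout; your route is more structural but imports the decomposition of $H^{\bullet}_{c}(\mathscr{A}_{1},\mathbb{V}(\lambda))$ (vanishing of $H^{0}_{c}$ and $H^{2}_{c}$, and the trivial Frobenius action on the Eisenstein line) as outside input. One small simplification on your side: for $g=1$ the convention $\mathscr{H}_{1}(w^{1}):=\mathscr{A}_{1}$ in Theorem~\ref{mom-for-odd_poly} already \emph{means} that $e_{c}^{\lambda}(\mathscr{A}_{1})=e_{c}(\mathscr{A}_{1},\mathbb{V}(\lambda))$, so the ``combinatorial unwinding'' of \eqref{eq: euler-characteristic-w1} you flag as the main obstacle is not actually needed---the $g=1$ case of Theorem~\ref{mom-for-odd_poly} is declared ``clear'' in the paper precisely because no passage through $\mathscr{H}_{1}[2]$ is required.
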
 

\begin{proof} \hskip-1ptExpressing the traces of Frobenius in 
\eqref{eq: moments-odd-sqfree} in terms of moment-sums, our assertion 
follows at once by combining Corollary \ref{special case mom-for-odd_poly} 
and Theorem (Birch-Ihara). \end{proof} 

\vskip10pt
\begin{remark} By \cite[\S 24.2, Exercise 24.20]{FH}, the dimension of the irreducible 
representation $V_{ \lambda^{\dag}}$ is
\begin{equation*}
\dim \, V_{\lambda^{\dag}} 
= \frac{\prod_{i = 1}^{r}(g + r - \lambda_{i}' - i + 1) \cdot  
\prod_{1\le i < j \le r}(\lambda_{j}' - \lambda_{i}' + j - i)(2g + 2r + 2 - \lambda_{i}' - \lambda_{j}' - i - j)} {1! \, \cdots \, (2r - 3)! (2r - 1)!}.
\end{equation*} 
In particular, if $\lambda = {\bf{0}}$ we can write 
\begin{equation*}
\dim \, V_{{\bf{0}}^{\dag}} 
= \prod_{k = 1}^{r}\frac{k!}{(2 k)!}\cdot \prod_{i = 1}^{r}(2 g + 2 i)
\prod_{1 \, \le \, i \, < \, j \, \le \, r} (2g + i + j).
\end{equation*} 
Note that the first product is precisely the constant $g_{r}\slash \big(r (r + 1)\slash 2 \big)!$ appearing in the moment-conjectures of Conrey-Farmer \cite{ConrF} and Keating-Snaith \cite{KeS} for the leading-order asymptotics of the moments of L-functions within symplectic families. Moreover, if we let 
\begin{equation*}
P_{r}(x) = \prod_{i = 1}^{r}(x + 2 i) \prod_{1 \, \le \, i \, < \, j \, \le \, r} (x + i + j)
\end{equation*} 
then the polynomial $P_{r}(x - 1)$ appears in the main term of the polynomial 
$Q_{r}(x)$ conjectured by Conrey, Farmer, Keating, Rubinstein and Snaith \cite{CFKRS}; 
see also the recent work of Andrade-Keating \cite[Conjecture 5]{AnK} in the function-field setting. In particular, 
$P_{1}(x - 1) = 1 + x,$ 
$P_{2}(x - 1) = 6 + 11 x + 6 x^{2} + x^{3}$ 
and 
$$
P_{3}(x - 1) = 360 + 942 x + 949 x^{2} + 480 x^{3} + 130 x^{4} + 18 x^{5} + x^{6};  
$$ 
compare this to eqns. \!(5.16), (5.21) and (5.26), respectively, in \cite{AnK}. 

By counting points over $\mathbb{F},$ one finds easily that  
$
\Tr(F^{*} \vert \,
e_{c}^{{\bf{0}}}(\mathscr{H}_{g}(w^{1})  \otimes_{_{\mathbb{F}}}  \! \overline{\mathbb{F}}))\,
= q^{2g - 1}, 
$ 
and therefore
\begin{equation*}
(\dim \, V_{{\bf{0}}^{\dag}})\Tr(F^{*} \vert \,
e_{c}^{{\bf{0}}}(\mathscr{H}_{g}(w^{1})  \otimes_{_{\mathbb{F}}} \! \overline{\mathbb{F}}))\,
= \frac{g_{r} P_{r}(2 g)}{\big(r (r + 1)\slash 2 \big)!} \, q^{2g - 1}.
\end{equation*} 
If $|\lambda| \ne  0,$ we have the well-known result \cite[Theorem 10.8.2]{KaS} of Katz and Sarnak: there exist positive constants $A(g)$ and $C(g)$ such that we have the estimate 
\begin{equation*}
\frac{1}{q (q - 1)} \Bigg| \sum_{\substack{\deg d = 2g + 1 \\ \text{$d$ \rm{monic \& square-free}}}} s_{\scriptscriptstyle \langle \lambda \rangle}(\omega_{1}(C_{d})^{\pm 1}\!, \ldots, \omega_{g}(C_{d})^{\pm 1}) \Bigg| \, \le \,  2 C(g)(\dim \, V_{\lambda})q^{2g - \frac{3}{2}} \qquad \text{(if $q  \ge  A(g)$)}.
\end{equation*} 
The (normalized) sum in the left-hand side is precisely 
$
q^{- |\lambda|\slash 2}
\Tr(F^{*} \vert \, e_{c}^{\lambda}(\mathscr{H}_{g}(w^{1}) \otimes  \overline{\mathbb{F}}));
$ 
this follows from \eqref{eq: moments-odd-sqfree} and Lemma \ref{BG}, or by combining the 
Grothendieck trace formula \cite{Groth1}, \cite{Groth2} (see also the end of Section \ref{section 9}) with \cite[Theorem 10.1.18.3]{KaS}. 
\end{remark}

Of great importance for us is that Theorem \ref{q->1/q} allows us to make the following definition:

\vskip10pt
\begin{defn} \label{Definition q->1/q}\hskip-1ptFor any partition $\mu$ of $2g + 2,$ we define 
$
q^{\dim \, \mathscr{H}_{g}[2]}S_{\mu}(q\, T, 1\slash q) = q^{2 g - 1}S_{\mu}(q\, T, 1\slash q)
$ 
by simply replacing the compactly supported cohomology
in Theorem \ref{q->1/q} with $H_{\mu}^{i}(\mathscr{H}_{g}[2] \otimes \overline{\mathbb{F}}, \mathbb{V}'(\lambda)\otimes \mathbb{Q}_{\ell}(|\lambda| \slash 2)),$ that is
\begin{equation*}
q^{2 g - 1}S_{\mu}(q\, T, 1\slash q) = 
(t_{1} \, \cdots \, t_{r})^{g}
\!\!\sum_{\substack{\lambda \subseteq (r^{g}) \\ \text{$|\lambda|$-even}}}
\Tr(\Phi_{q}^{-1}\,  \vert \, e_{\mu}(\mathscr{H}_{g}[2] \otimes \overline{\mathbb{F}}, \mathbb{V}'(\lambda) \otimes \mathbb{Q}_{\ell}(|\lambda| \slash 2)))\,
s_{\scriptscriptstyle \langle \lambda^{\dag} \rangle}(q^{\pm \frac{1}{2}} t_{1}^{\pm 1}\!, \ldots, q^{\pm \frac{1}{2}} t_{r}^{\pm 1})\, q^{\frac{gr}{2}}.
\end{equation*} 
Here $\Phi_{q}$ is the arithmetic Frobenius endomorphism relative to $\mathbb{F}$ acting, by {\it transport of structures}, on the $\ell$-adic cohomology.
\end{defn}

Note that this definition is made according to the duality between cohomology with compact support and ordinary cohomology. We recall that for a Deligne-Mumford stack $\mathscr{X},$ assumed to be smooth over some base $S$ and purely $d$-dimensional, a map $f : \mathscr{X} \to S$ and a $\mathbb{Q}_{\ell}$-local system $\mathcal{F}$ on $\mathscr{X}$ (with $\ell$ invertible on our base), the Poincar\'e-Verdier duality gives a natural isomorphism (see \cite{BvdG}):  
\begin{equation*}
R{\mathcal Hom}_{S}(R f_{!} \mathcal{F}, \mathbb{Q}_{\ell}) \cong
Rf_{*}(\mathcal{F}^{\vee}(-d)[-2d]).
\end{equation*} 
In the next section, we shall need to work with the sums of products of characteristic 
polynomials $A_{\nu}(T, q)$ defined for partitions $\nu = (1^{\nu_{1}}\!, 2^{\nu_{2}}\!, \ldots)$ of $2g + 1$ or $2g + 2$ by
\begin{equation*}
A_{\nu}(T, q) \; = \sum_{d \, \in \, \mathscr{P}(\nu)} 
\bigg(\prod_{k = 1}^{r} P_{\scriptscriptstyle C_{d}}(t_{k}) \bigg). 
\end{equation*}
Here 
$
\mathscr{P}(\nu) =  \mathscr{P}(\nu, q) \subset \mathbb{F}[x]
$ 
stands, as before, for the set of all monic square-free polynomials with factorization type $\nu.$ Since the moment-sums $M_{\mu, \gamma}(q)$ can be expressed in terms of $\tilde{M}_{\nu, \gamma}(q)$ (by Propositions \ref{Prop1-appendixB} and \ref{Prop2-appendixB} in Appendix \ref{B}), we can express $A_{\nu}(T, q),$ analogously, as a sum of traces of Euler characteristics. (This will be made precise in \ref{Decomp Al(T, q) in q-Weil numbers}.) 
Accordingly, we can define $A_{\nu}(q \, T, 1\slash q)$ similarly.

\section{The main theorem}\label{section 7} 
We recall that our main goal is to construct inductively a sequence 
of generating series $(\Lambda_{l}(T, q))_{l\ge 2}$ satisfying \eqref{eq: tag 4.1}. 
In particular, for every fixed $l\ge 2,$ the generating series 
\begin{equation*}
A_{l}(T, q) \;\;= \sum_{(\mu, \nu) \ne ((1), (l))} A_{\mu, \nu}(T, q)
\end{equation*}
with $A_{\mu, \nu}(T, q)$ as in Proposition \ref{Proposition 4.3} and \eqref{eq: tag 4.4}, 
has to satisfy the functional equation
\begin{equation}\label{eq: func-eq A_{l}(T, q)} 
E(T)^{\epsilon_{l}} A_{l}(T, q) 
+ q^{l + 1}E(q\, T)^{\epsilon_{l}} A_{l}(q\, T, 1\slash q) = 0
\end{equation} 
with $\epsilon_{l} = 0$ or $1$ according as $l$ is odd or even. 

To see how \eqref{eq: func-eq A_{l}(T, q)} can be explained, consider 
\begin{equation*} 
q^{l + 1}E(q\, T)^{\epsilon_{l}} A_{l}(q\, T, 1\slash q) 
\,=\, q^{l + 1}E(q\, T)^{\epsilon_{l}}\; \cdot
\sum_{(\mu, \nu) \ne ((1), (l))} A_{\mu, \nu}(q\, T, 1\slash q)
\end{equation*} 
and express $A_{\mu, \nu}(q\, T, 1\slash q),$ for each 
$(\mu, \nu),$ by \eqref{eq: tag 4.4} (or by the formula in Proposition \ref{Proposition 4.3}). 
The other term in \eqref{eq: func-eq A_{l}(T, q)} has a 
similar expression, and it is related to that of $E(q\, T)^{\epsilon_{l}} A_{l}(q\, T, 1\slash q)$ by replacing $q \to 1\slash q$ and $T\to q\, T.$
Assuming for the moment that the identity \eqref{eq: tag 4.1} holds for all $2 \le l' < l,$ 
and applying it to transform each factor $\Lambda_{\nu_{j}}\!(\pm T^{\mu_{j}}\!, q^{\mu_{j}})$ 
of each term in $q^{l + 1}E(q\, T)^{\epsilon_{l}} A_{l}(q\, T, 1\slash q)$ into 
$\Lambda_{\nu_{j}}\!(\pm q^{\mu_{j}}T^{\mu_{j}}\!, 1\slash q^{\mu_{j}}),$ one finds that 
\begin{equation*}
E(T)^{\epsilon_{l}} A_{l}(T, q) 
+ q^{l + 1}E(q\, T)^{\epsilon_{l}} A_{l}(q\, T, 1\slash q)
\end{equation*} 
can be expressed as a sum over monomials of the form     
\begin{equation} \label{eq: monomials1}
\prod_{\substack{j \, \in \, J_{0} \\ \nu_{j} \ne 1}} 
\Lambda_{\nu_{j}}\!(q^{\mu_{j}}T^{\mu_{j}}\!, 1\slash q^{\mu_{j}}) \; \cdot  
\prod_{j \, \in \, J_{1}} \frac{\Lambda_{\nu_{j}}\!(\delta_{j} q^{\mu_{j}}T^{\mu_{j}}\!, 1\slash q^{\mu_{j}})}{E(\delta_{j} T^{\mu_{j}})}.  
\end{equation} 
(We are omitting here the factors corresponding to all $j\in J_{0}$ for which $\nu_{j} = 1$ as 
$q \Lambda_{1}(q\, T, 1\slash q) = 1$ by \eqref{eq: tag 3.9}.) 
Henceforth, we shall slightly abuse notation and continue to use $\Lambda_{\delta, \mu, \nu}(T, q)$ to denote the monomial \eqref{eq: monomials1}. If we put 
\begin{equation*}
\Sigma_{\delta, \mu, \nu}(T, q; l): = \text{Coefficient}_{\Lambda_{\delta, \mu, \nu}(T, q)} 
\left[E(T)^{\epsilon_{l}} A_{l}(T, q) +  q^{l + 1}E(q\, T)^{\epsilon_{l}} A_{l}(q\, T, 1\slash q) \right] 
\end{equation*} 
then we will show that
\begin{equation} \label{eq: MDS-relations}
\Sigma_{\delta, \mu, \nu}(T, q; l) =  0
\end{equation} 
for all $\delta, \mu, \nu$ and $l \ge 2.$ It is clear that this strengthening of \eqref{eq: func-eq A_{l}(T, q)} makes no reference whatsoever to any of the generating series $\Lambda_{l}(T, q)$ ($l\ge 2$). Simply put, showing \eqref{eq: MDS-relations} is a completely independent problem. 
In this section and the next, we shall see how the family of identities 
\eqref{eq: MDS-relations} is encoded in the combinatorial structure of moduli spaces of admissible double covers. Once this is established, we shall apply Deligne's theory of weights 
to construct the sequence $(\Lambda_{l}(T, q))_{l\ge 2}.$

\subsection{A Special Case} \label{sub-section-7.1-split-case}
For greater clarity, let us begin by first investigating the identities \eqref{eq: MDS-relations} corresponding to partitions of the form $(1^{n}),$ that is, corresponding to 
\begin{equation} \label{eq: sigma-MDS-special}
\Sigma_{n}(T, q): = \text{Coefficient}_{\Lambda_{3}(q T, 1\slash q)^{n}} 
\left[E(T)^{\epsilon_{n}} A_{3 n}(T, q) +  q^{3 n + 1}E(q\, T)^{\epsilon_{n}} A_{3 n}(q\, T, 1\slash q) \right] \qquad \text{(for $n \ge 2$)}. 
\end{equation} 
Although the main ideas involved in the general case are essentially the same, the overall discussion of it may seem quite technical.

For $n \ge 1,$ let $\mathscr{P}_{\! n}\subset \mathbb{F}_{\! q}[x]$ denote the set 
of all monic square-free polynomials of degree $n$ splitting in $\mathbb{F}_{\! q}.$ 
For $i, j \ge 0$ and $d \in \mathscr{P}_{\! n},$ define 
\begin{equation*}
N_{i, j}(d, q)  \,=\,  \prod_{k = 0}^{i - 1} \bigg(\frac{q - n + a_{1}(C_{d}) + \epsilon - 2k}{2} \bigg) \prod_{l = 0}^{j - 1}  \bigg(\frac{q - n - a_{1}(C_{d}) -\epsilon  - 2l}{2} \bigg) 
\end{equation*}
where $\epsilon = 0$ or $1$ according as $n$ is odd or even. Here if $i = 0$ or $j = 0,$ 
we take the corresponding product to be $1.$ (Recall that 
$
a_{1}(C_{d}) = \Tr(F^{*} \vert \, H_{\text{\'et}}^{1}(\bar{C}_{d},\mathbb{Q}_{\ell}))
$ 
with $\ell$ a prime different from the characteristic of $\mathbb{F}_{\! q}.$) Set 
\begin{equation*}
A_{n, i, j}(T, q) \; := \sum_{d \, \in \, \mathscr{P}_{\! n}} 
\bigg(N_{i, j}(d, q) \prod_{k = 1}^{r} P_{\scriptscriptstyle C_{d}}(t_{k}) \bigg).
\end{equation*}
Note that if we denote $^{D}\!\!A_{n, 0, 0}(T, q) = \frac{\partial A_{n, 0, 0}}{\partial t_{r+1}} (t_{1}, \ldots, t_{r}, 0 , q),$ we can write 
\begin{equation*}
^{D}\!\!A_{n, 0, 0}(T, q) \, =  - \sum_{d \, \in \, \mathscr{P}_{\! n}} 
\bigg(a_{1}(C_{d}) \prod_{k = 1}^{r} P_{\scriptscriptstyle C_{d}}(t_{k}) \bigg).
\end{equation*}
Define $^{D^{k}}\!\!\!A_{n, 0, 0}(T, q)$ for $k\in \mathbb{N}$ by iterating. Then, it is clear that 
\begin{equation*}
A_{n, i, j}(T, q)  \, =\,  ^{\scriptscriptstyle i! j! \binom{(q - n + \epsilon - D)\slash 2}{i}\binom{(q - n -\epsilon + D)\slash 2}{j}}\!A_{n, 0, 0}(T, q) \qquad (\text{for $n\ge 1$}).
\end{equation*}
Here the two binomial symbols are viewed as differential polynomials in $D.$ Note that the right-hand side makes sense if we replace $T$ by $q\, T$ and $q$ by $1\slash q,$ allowing us to define $A_{n, i, j}(q  \, T, 1\slash q).$

For $x, y$ and $z$ algebraically independent variables, consider the (exponential) generating functions 
\begin{equation*}
c_{\text{odd}}(x, y, z; T, q) \; =
\sum_{n, i, j \, \ge \, 0} \frac{A_{2n + 1, i, j}(T, q) \, x^{2n + 1} y^{i}  z^{j}}{i! j! \tilde{E}(-T)^{i} \tilde{E}(T)^{j}}
\end{equation*}
and
\begin{equation*}
c_{\text{even}}(x, y, z; T, q) \, =\, 
\tilde{E}(T) \big[\big(1 + z\slash \tilde{E}(T) \big)^{q} - 1\big]  \,+\,
\sum_{n \, \ge \, 1} \sum_{i, j \, \ge \, 0} \frac{A_{2n, i, j}(T, q) \, x^{2n} y^{i}  z^{j}}{i! j! \tilde{E}(-T)^{i} \tilde{E}(T)^{j}}
\end{equation*}
where $\tilde{E}(T): =  E(T) E(q\, T).$ Since $A_{2n + 1, i, j}(- T, q) = A_{2n + 1, j, i}(T, q)$ (see Lemma \ref{evenness-A}), we see that  
\begin{equation*}
c_{\text{odd}}(x, z, y; -T, q) = c_{\text{odd}}(x, y, z; T, q).
\end{equation*}
Let 
$c(\underline{x}, T, q) = (c_{\text{odd}}(x, y, z; T, q), c_{\text{even}}(x, z, y; -T, q), c_{\text{even}}(x, y, z; T, q)),$ where we set $\underline{x}: = (x, y, z).$

\vskip10pt
\begin{thm}\label{split case} --- With notation as above, we have 
\begin{equation*}
c(c(\underline{x}, T, q), q  \, T, 1\slash q) = \underline{x}.
\end{equation*}
In other words, $c(\underline{x}, T, q)$ is the formal compositional inverse of 
$c(\underline{x}, q\, T, 1\slash q).$
\end{thm}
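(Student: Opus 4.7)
The plan is to identify the compositional inverse identity as an efficient repackaging of the family of vanishing relations $\Sigma_n(T,q) = 0$ coming from \eqref{eq: sigma-MDS-special} (and its obvious generalizations $\Sigma_{\delta,\mu,\nu}(T,q;l)=0$ from \eqref{eq: MDS-relations}), and then to deduce these relations from a functional equation for a closely related geometric series $\bar{c}(x, T, q)$ to be constructed in Section~\ref{section: The series BarC(X, T, q)}.

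First, I would formally expand $c(c(\underline{x}, T, q), q\,T, 1/q)$ and compare it coefficient by coefficient with $\underline{x}$. Because $c_{\text{odd}}$ and $c_{\text{even}}$ are essentially exponential generating functions for the moments $A_{n,i,j}$ with the normalizing weights $\tilde{E}(\pm T)^{-i}$ and $\tilde{E}(\pm T)^{-j}$, the coefficients of the composition are indexed naturally by compositions of $x$-degrees; a Faà di Bruno rearrangement reorganizes them into the combinatorial data $(\mu,\nu,\delta)$ used throughout Section~\ref{section 4}. Matching this rearrangement against the explicit formula \eqref{eq: tag 4.4} for $A_{\mu,\nu}(T,q)$, and invoking Definition~\ref{Definition q->1/q} to make sense of $A_{n,i,j}(q\,T, 1/q)$ by Poincaré--Verdier duality, should convert the compositional inverse identity into the family $\Sigma_n(T,q)=0$ in the split case. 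The binomial-in-$D$ factors that build $A_{n,i,j}$ from $A_{n,0,0}$ are precisely what supplies the character-sum coefficients $N_{\mu,\nu}(d_0,\delta)$ computed in Proposition~\ref{Proposition 4.9}, once Lemma~\ref{BG}-type symplectic Schur identities are invoked on the dual side.

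Second, I would import the power series $\bar{c}(x, T, q)$ to be constructed in Section~\ref{section: The series BarC(X, T, q)} as a generating series for Euler characteristics of moduli spaces of admissible double covers, together with its functional relation established in that section. Translating that relation via Behrend's Lefschetz trace formula \cite{Beh}, the cohomological expression for $A_{\mu}(T,q)$ provided by Theorem~\ref{q->1/q}, and its dual description of $A_{\mu}(q\,T, 1/q)$ from Definition~\ref{Definition q->1/q}, then yields $\Sigma_n(T,q)=0$ for every $n \ge 2$, and hence the desired compositional inverse identity.

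The hard part, which is deferred to Section~\ref{section: The series BarC(X, T, q)}, is the geometric construction of $\bar{c}$ and the verification of its functional relation. The main obstacle is to match the boundary stratification of the moduli of admissible double covers with the building blocks of the composition $c\circ\tilde c$: each node of the underlying stable curve should give rise, after the Lefschetz trace, to a factor whose Frobenius action encodes either a binomial-in-$D$ operator as in $N_{i,j}(d,q)$ or an $\tilde E(\pm T)^{\pm}$ normalization. Verifying that the boundary decomposition matches the formal-power-series composition requires a careful case analysis of separating versus non-separating nodes and branched versus unbranched double covers. A useful consistency check on the overall plan is that, under the specialization $T=0,\; y=z,\; q\to 1$, the theorem should collapse to Getzler's compositional inverse theorem for $\overline{\mathcal{M}}_{0,n+1}$, as previewed in the introduction.
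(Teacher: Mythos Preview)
Your overall framework is correct---the theorem is tied to a family of vanishing relations, and the input from Section~\ref{section: The series BarC(X, T, q)} is the series $\bar c(x,T,q)$ together with its fixed-point relation~\eqref{eq: functional-rel-special-split-case}. But two mechanisms that carry the actual weight of the paper's proof are missing from your outline, and without them the argument has genuine gaps.

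First, the identification ``compositional inverse identity $\Leftrightarrow$ $\Sigma_n(T,q)=0$ for all $n$'' is not immediate. Expanding $c(c(\underline{x},T,q),q\,T,1/q)$ yields coefficients $C_{N,I,J}(T,q)$ of $x^N y^I z^J$, and the theorem requires \emph{all} of them to vanish for $N\ge 2$. Only the $I=J=0$ ones satisfy $\Sigma_N=q\,C_{N,0,0}$; so proving $\Sigma_n=0$ for all $n$ gives $C_{N,0,0}=0$, not the full statement. The paper closes this gap by a separate combinatorial step: writing $C_{N,0,0}=\sum_{\underline{\alpha}} c_{\underline{\alpha}} M_{\underline{\alpha}}$ as a sum over ``shape'' tuples $\underline{\alpha}$ of monomials in the $A_{n,i,j}$, grouping the terms of $C_{N,I,J}$ by the underlying $\underline{\alpha}$ from which they are ``derived'', and checking that the contribution over each $\underline{\alpha}$ is exactly $\binom{(1+\epsilon q-N-D)/2}{I}\binom{(1-\epsilon q-N+D)/2}{J}$ applied to $c_{\underline{\alpha}} M_{\underline{\alpha}}$, divided by $\tilde{E}(-T)^I\tilde{E}(T)^J$. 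Thus $C_{N,I,J}$ is a fixed differential operator applied to $C_{N,0,0}$, and vanishes with it. Your first paragraph gestures at the binomial-in-$D$ factors but only in the context of identifying coefficients with Proposition~\ref{Proposition 4.9}; it does not isolate this reduction $C_{N,0,0}=0\Rightarrow C_{N,I,J}=0$ as a standalone step, and without it your first paragraph only proves one implication.

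Second, your use of $\bar c$ is underspecified. The relation~\eqref{eq: functional-rel-special-split-case} only says $\bar c$ is a \emph{fixed point} of $\underline{u}\mapsto c(c(\underline{u},T,q),q\,T,1/q)$; a nontrivial map can certainly have fixed points. The paper extracts $C_{N,0,0}=0$ from this by an induction on $N$ that is intertwined with the reduction above: assuming all $C_{n,i,j}=0$ for $2\le n<N$ (which already requires the differential-operator step at every earlier stage), the composition reads $x+\sum_{n\ge N,\,i,j}C_{n,i,j}x^n y^i z^j$; substituting $\bar c=(\bar A_{\text{odd}},\bar A_{\text{even}}(-T),\bar A_{\text{even}}(T))$ with $\bar A_{\text{odd}}=x+O(x^3)$ and reading off the $x^N$ coefficient of the fixed-point relation then forces $C_{N,0,0}=0$. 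Neither the induction nor its dependence on the reduction appears in your outline. Also, Behrend's Lefschetz trace formula is not invoked here at all: it lives entirely inside Section~\ref{section: The series BarC(X, T, q)}, where it is used to produce~\eqref{eq: functional-rel-special-split-case}; once that relation is in hand, the proof of the present theorem is purely formal.
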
 

\begin{proof} Expand 
\begin{equation*}
c_{\text{odd}}(c(\underline{x}, T, q); q\, T, 1\slash q) \; = \sum_{n, i, j \, \ge \, 0} 
C_{2n + 1, i, j}(T, q) \, x^{2n + 1} y^{i}  z^{j}
\end{equation*} 
and 
\begin{equation*}
c_{\text{even}}(c(\underline{x}, T, q); q\, T, 1\slash q) \; = \sum_{n, i, j \, \ge \, 0} 
C_{2n, i, j}(T, q) \, x^{2n} y^{i}  z^{j}.
\end{equation*}
Note that $C_{1, 0, 0}(T, q) = A_{1, 0, 0}(q\, T, 1\slash q) A_{1, 0, 0}(T, q) = 1,$ and 
$C_{1, i, j}(T, q) = 0$ if either $i \ne 0$ or $j \ne 0.$ Similarly, $C_{0, 0, 1}(T, q) = 1,$ and $C_{0, i, j}(T, q) = 0$ otherwise. Moreover, one checks that 
\begin{equation*}
\Sigma_{N}(T, q) \, = \, q \, C_{N, 0, 0}(T, q)   \qquad \;\;\; (\text{for $N \ge 2$}) 
\end{equation*}
with $\Sigma_{N}(T, q)$ given by \eqref{eq: sigma-MDS-special}.

We shall first prove that if $C_{N, 0, 0}(T, q) = 0$ for an $N\ge 2,$ then 
$C_{N, I, J}(T, q) = 0$ for all $I, J \ge 0.$ Indeed, note that for any $N,$ we can write 
\begin{equation*}
C_{N, 0, 0}(T, q) = \sum_{\underline{\alpha}} c_{\underline{\alpha}} 
M_{\underline{\alpha}}(T, q) 
\end{equation*} 
where the sum is over tuples 
$
\underline{\alpha} = (\alpha_{1}^{}, \ldots, \alpha_{n}^{}, \alpha_{n+1}^{-}, \ldots, \alpha_{n+i}^{-},\alpha_{n+i+1}^{+}, \ldots, \alpha_{n+i+j}^{+})\in \mathbb{N}^{n+i+j}
$ 
($|\underline{\alpha}| = \alpha_{1}^{} + \cdots + \alpha_{n+i+j}^{+} = N$), 
with $\alpha_{1}^{}, \ldots, \alpha_{n}^{}$ all odd, and $\alpha_{k}^{\pm}$'s all even, and where 
\begin{equation*}
M_{\underline{\alpha}}(T, q) : = \bigg(\prod_{m = 1}^{n} A_{\alpha_{m}^{}, 0, 0}(T, q) \; \cdot 
\prod_{k = n+1}^{n+i} A_{\alpha_{k}^{-}, 0, 0}(-T, q) \; \cdot 
\prod_{l = n + i + 1}^{n+i+j} A_{\alpha_{l}^{+}, 0, 0}(T, q)\bigg) \, \frac{A_{n, i, j}(q\, T, 1\slash q)}{\tilde{E}(-T)^{i}   \tilde{E}(T)^{j}}.
\end{equation*} 
Here we allow $n = 0$ if we put 
\begin{equation*}
A_{0, i, j}(T, q) 
=\begin{cases}
\frac{q!}{(q - j)!} \tilde{E}(T) &\mbox{if } i = 0\; \text{and}\; j\ge 1 \\
0 & \mbox{if } \text{$i = j = 0$ or $i\ne 0$}. 
\end{cases}  
\end{equation*}
(Of course, $\underline{\alpha} = (\alpha_{1}^{+}, \ldots, \alpha_{j}^{+})$ in 
this case.) To express the coefficient $c_{\underline{\alpha}},$ it is rather  
convenient to represent  
$
\underline{\alpha} = (1^{\kappa_{1}}\!, 3^{\kappa_{3}}\!, 5^{\kappa_{5}}\!, \ldots; 
2^{\kappa_{2}^{-}}\!, 4^{\kappa_{4}^{-}}\!, 6^{\kappa_{6}^{-}}\!, \ldots; 
2^{\kappa_{2}^{+}}\!, 4^{\kappa_{4}^{+}}\!, 6^{\kappa_{6}^{+}}\!, \ldots).$ With this notation, 
one checks that 
\begin{equation*}
c_{\underline{\alpha}} =  \frac{\binom{n}{\kappa_1, \kappa_3, \kappa_5, \ldots}}
{\kappa_{2}^{-}!\, \kappa_{2}^{+}!\, \kappa_{4}^{-}!\, \kappa_{4}^{+}! \, \cdots}.
\end{equation*} 
Now assume that $C_{N, 0, 0}(T, q) = 0$ for some $N\ge 2.$ Pick arbitrary $I, J \ge 0,$ and express $C_{N, I, J}(T, q),$ as above, in terms of the coefficients $A_{n, i, j}.$ We can relate the expression of $C_{N, I, J}(T, q)$ to the corresponding expression of $C_{N, 0, 0}(T, q)$ in the following way. Every term, say, $c_{\underline{\tilde{\alpha}}} M_{\underline{\tilde{\alpha}}}(T, q),$ occurring in $C_{N, I, J}(T, q)$ is attached to 
a tuple 
$
\underline{\tilde{\alpha}} = (\alpha_{1}^{}, i_{1}^{}, j_{1}^{}; \ldots; \alpha_{n}^{}, i_{n}^{}, j_{n}^{}; \alpha_{n+1}^{-}, i_{n+1}^{}, j_{n+1}^{}; \ldots)
$ 
by  
\begin{equation*}
M_{\underline{\tilde{\alpha}}}(T, q) = \frac{A_{\alpha_{1}^{}, i_{1}^{}, j_{1}^{}}(T, q)}{i_{1}! \, j_{1}!\, \tilde{E}(-T)^{i_{1}} \tilde{E}(T)^{j_{1}}}  \cdots 
\frac{A_{\alpha_{n+1}^{-}, i_{n+1}^{}, j_{n+1}^{}}(-T, q)}{i_{n + 1}! \,  j_{n + 1}! \,  \tilde{E}(T)^{i_{n+1}} \tilde{E}(-T)^{j_{n+1}}} \cdots 
\frac{A_{n, k, l}(q\, T, 1\slash q)}{\tilde{E}(-T)^{k} \tilde{E}(T)^{l}}.
\end{equation*} 
Here $\alpha_{1}^{}, \ldots, \alpha_{n}^{}, \alpha_{n+1}^{-}, \ldots$ are exactly as before. Let $\underline{\alpha} = (\alpha_{1}, \alpha_{2}, \ldots, \alpha_{n}, \ldots)$ be obtained by taking the corresponding {\it non-zero} components of $\underline{\tilde{\alpha}}.$ We refer to 
$\underline{\tilde{\alpha}}$ as {\it derived} from $\underline{\alpha},$ and denote this by 
$\underline{\tilde{\alpha}}\slash \underline{\alpha}.$ It is clear that 
$c_{\underline{\alpha}} M_{\underline{\alpha}}(T, q)$ occurs in the expression of 
$C_{N, 0, 0}(T, q).$ Furthermore, one checks that the contribution of all 
terms $c_{\underline{\tilde{\alpha}}} M_{\underline{\tilde{\alpha}}}(T, q),$ 
with $\underline{\tilde{\alpha}}\slash \underline{\alpha}$ for a fixed $\underline{\alpha},$ 
in $C_{N, I, J}(T, q)$ is 
\begin{equation*}
\sum_{\underline{\tilde{\alpha}}\slash \underline{\alpha}} c_{\underline{\tilde{\alpha}}} 
M_{\underline{\tilde{\alpha}}}(T, q) \, = \,
\frac{c_{\underline{\alpha}} \cdot \, 
^{^{\scriptscriptstyle \binom{(1 + \epsilon q - N - D)\slash 2}{I} \binom{(1-\epsilon q - N + D)\slash 2}{J}}}\!M_{\underline{\alpha}}(T, q)}{\tilde{E}(-T)^{I} \tilde{E}(T)^{J}}
\end{equation*} 
with $\epsilon$ as above. The last identity holds even if $\underline{\alpha}$ is such that $n = 0.$ Summing now over all $\underline{\alpha},$ it follows that 
\begin{equation*}
C_{N, I, J}(T, q) \, = \,
\frac{^{^{\scriptscriptstyle \binom{(1 + \epsilon q - N - D)\slash 2}{I}\binom{(1-\epsilon q - N + D)\slash 2}{J}}}\!C_{N, 0, 0}(T, q)}{\tilde{E}(-T)^{I} \tilde{E}(T)^{J}} \,=\, 0.
\end{equation*} 
Now, suppose that $C_{n, i, j}(T, q) = 0$ for $2\le n \le N-1,$ and all $i, j\ge 0,$ and let us show that $C_{N, 0, 0}(T, q) = 0.$ (It is easy to check that $C_{2, 0, 0}(T, q) = 0,$ hence 
$C_{2, i, j}(T, q) = 0$ for $i, j \ge 0$ by what we just proved.) We shall assume that $N$ is odd since the argument is completely analogous if $N$ is even. Accordingly, we can write 
\begin{equation*}
c_{\text{odd}}(c(\underline{x}, T, q); q\, T, 1\slash q) \, = \, x \; +  \sum_{n \, \ge \, (N - 1)\slash 2} \; \sum_{i, j \, \ge \, 0} C_{2n + 1, i, j}(T, q) \, x^{2n + 1} y^{i}  z^{j}.
\end{equation*}
To this we apply $\bar{c}(x, T, q) : = (\bar{A}_{\text{odd}}(x, T, q), \bar{A}_{\text{even}}(x, - T, q), \bar{A}_{\text{even}}(x, T, q))$ given in Section \ref{section: The series BarC(X, T, q)}, and satisfying 
\begin{equation} \label{eq: functional-rel-special-split-case}
c(c(\bar{c}(x, T, q), T, q), q  \, T, 1\slash q) = \bar{c}(x, T, q).
\end{equation} 
This implies that 
\begin{equation*}
\sum_{n \, \ge \, (N - 1)\slash 2} \; \sum_{i, j \, \ge \, 0} 
C_{2n + 1, i, j}(T, q) \, \bar{A}_{\text{odd}}(x, T, q)^{2n + 1} \bar{A}_{\text{even}}(x, - T, q)^{i} \bar{A}_{\text{even}}(x, T, q)^{j} = 0. 
\end{equation*}
Taking the coefficient of $x^{N}\!,$ it follows that $C_{N, 0, 0}(T, q) = 0.$ 

This combined with the previous step completes an induction process and the proof of the theorem. \end{proof} 

In particular, we have 
\begin{equation*}
\Sigma_{N}(T, q) = 0 \qquad \;\; (\text{for $N \ge 2$}) 
\end{equation*}
which is just \eqref{eq: MDS-relations} corresponding to $(1^{N}).$ 

\subsection{The General Case} \label{Gen-Case} 
In what follows, we shall extend the above considerations in a way that will allow us to treat 
all partitions at once. Before doing so, let us fix some notation. 

{\it Notation.} Throughout this subsection $\mathfrak{n}, \mathfrak{i}, \mathfrak{j}, \ldots$ will denote {\it partitions} written as $\mathfrak{n} = (1^{n_{1}}\!, 2^{n_{2}}\!, \ldots),$ where for all but finitely many $j \ge 1,$ $n_{j} = 0.$ As usual, 
$|\mathfrak{n}| = n_{1} + 2 n_{2} + \cdots$ stands for the {\it weight} of $\mathfrak{n}.$ For every $\mathfrak{n} = (1^{n_{1}}\!, 2^{n_{2}}\!, \ldots),$ let $\mathscr{P}(\mathfrak{n}, q) = \mathscr{P}(\mathfrak{n}, \mathbb{F}_{\! q}) \subset \mathbb{F}_{\! q}[x]$ 
denote the set of all monic square-free polynomials $d$ with factorization type $\mathfrak{n}.$ Let 
\begin{equation} \label{eq: A_{n}(T, q)}
A_{\mathfrak{n}}(T, q) \; := \sum_{d \, \in \, \mathscr{P}(\mathfrak{n}, q)} 
\bigg(\prod_{k = 1}^{r} P_{\scriptscriptstyle C_{d}}(t_{k}) \bigg). 
\end{equation}
(If $n_{1}, n_{2}, \ldots$ are all zero, we denote the corresponding partition by ${\bf 0},$ and take $A_{\bf 0}(T, q)$ to be zero.) Note that $A_{\mathfrak{n}}(T, q)$ corresponds to the sum $A_{n, 0, 0}(T, q)$ introduced at the beginning of the previous subsection. In addition, 
for $\mathfrak{n} \ne {\bf{0}}$ as above, and $\mathfrak{i} = (1^{i_{1}}\!, 2^{i_{2}}\!, \ldots),$ $\mathfrak{j} = (1^{j_{1}}\!, 2^{j_{2}}\!, \ldots),$ we define $A_{\mathfrak{n}, \mathfrak{i}, \mathfrak{j}}(T, q)$ by
\begin{equation} \label{eq: A_{n, i, j}(T, q)}
A_{\mathfrak{n}, \mathfrak{i}, \mathfrak{j}}(T, q)\;\, := 
\sum_{d \, \in \, \mathscr{P}(\mathfrak{n}, q)}
\left(\prod_{m \, \ge \, 1}
\frac{\big(\frac{c_{m}^{}\!(d, q) \,+\, a_{m}^{*}\!(d, q)}{2 m} \big)! 
\big(\frac{c_{m}^{}\!(d, q) \,-\, a_{m}^{*}\!(d, q)}{2 m} \big)!}
{\big(\frac{c_{m}^{}\!(d, q) \,+\, a_{m}^{*}\!(d, q)}{2 m} -  j_{m} \big)!  
\big(\frac{c_{m}^{}\!(d, q) \,-\, a_{m}^{*}\!(d, q)}{2 m} - i_{m} \big)!}\,\cdot \,
\prod_{k = 1}^{r} P_{\scriptscriptstyle C_{d}}(t_{k}) \right)
\end{equation} 
where $c_{m}(d, q) = c_{m}(d)$ and $a_{m}^{*}(d, q) = a_{m}^{*}(d)$ are as 
defined before. It is understood that $A_{\mathfrak{n}, {\bf 0}, {\bf 0}}(T, q) = A_{\mathfrak{n}}(T, q).$ Notice that the quantity   
\begin{equation} \label{eq: c_{m}(d, q)}
c_{m}(d, q) \;  =  \sum_{\theta \, \in \, \mathbb{F}_{\! q^{m}}'} \chi_{m}(d(\theta))^{2} 
= m (\mathrm{Irr}_{q}(m) - n_{m})  \;\;\qquad \;\; (\text{for $m\ge 1$})
\end{equation}
does not depend on $d\in \mathscr{P}(\mathfrak{n}, q).$ We shall also need the following transform of $A_{\mathfrak{n}, \mathfrak{i}, \mathfrak{j}}(T, q)$ defined by 
\begin{equation*}
^{\iota}\!\!A_{\mathfrak{n}, \mathfrak{i}, \mathfrak{j}}(T, q)  \;\, = 
\sum_{d \, \in \, \mathscr{P}(\mathfrak{n}, q)}
\left(\prod_{m \, \ge \, 1}
\frac{\big(\frac{c_{m}^{}\!(d, q) \,+\, (-1)^{m}\!a_{m}^{*}\!(d, q)}{2 m}\big)! 
\big(\frac{c_{m}^{}\!(d, q) \, - \, (-1)^{m}\!a_{m}^{*}\!(d, q)}{2 m} \big)!}
{\big(\frac{c_{m}^{}\!(d, q) \,+\, (-1)^{m}\!a_{m}^{*}\!(d, q)}{2 m} -  j_{m} \big)!  
\big(\frac{c_{m}^{}\!(d, q) \, - \, (-1)^{m}\!a_{m}^{*}\!(d, q)}{2 m} - i_{m} \big)!} \cdot 
\prod_{k = 1}^{r} P_{\scriptscriptstyle C_{d}}(- t_{k}) \right).
\end{equation*}

\begin{lemma}
\label{evenness-A} --- If $|\mathfrak{n}|$ is odd, then
$
^{\iota}\!\!A_{\mathfrak{n}, \mathfrak{i}, \mathfrak{j}}(T, q) = A_{\mathfrak{n}, \mathfrak{i}, \mathfrak{j}}(T, q) 
$ 
for arbitrary partitions $\mathfrak{i}$ and $\mathfrak{j}.$
\end{lemma}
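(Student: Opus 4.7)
My plan is to produce the identity via a simple but carefully chosen bijection on $\mathscr{P}(\mathfrak{n}, q)$, exploiting the parity hypothesis on $|\mathfrak{n}|.$ Fix a non-square $\alpha \in \mathbb{F}_{\! q}^{\times},$ and set $n := |\mathfrak{n}|.$ Consider the map $\Phi_{\alpha} : d(x) \mapsto d^{\prime}(x) := \alpha^{- n} d(\alpha x).$ Since $\alpha \in \mathbb{F}_{\! q}$ commutes with the Frobenius action on $\overline{\mathbb{F}}_{\! q},$ the substitution $\theta \mapsto \alpha^{-1}\theta$ is a bijection of $\overline{\mathbb{F}}_{\! q}$ preserving the degree of each element over $\mathbb{F}_{\! q}.$ Therefore $\Phi_{\alpha}$ is an involution on the set of monic polynomials of degree $n$ that preserves the factorization type $\mathfrak{n},$ square-freeness, and hence restricts to a bijection $\mathscr{P}(\mathfrak{n}, q) \to \mathscr{P}(\mathfrak{n}, q).$

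The key computation is to track how the three ingredients appearing in $A_{\mathfrak{n}, \mathfrak{i}, \mathfrak{j}}(T, q)$ transform under $\Phi_{\alpha}.$ First, $c_{m}(d^{\prime}, q) = c_{m}(d, q),$ since by \eqref{eq: c_{m}(d, q)} this quantity depends only on $\mathfrak{n}$ and $q.$ Next, substituting $\theta^{\prime} = \alpha\theta$ in the defining sum and using the multiplicativity of $\chi_{m},$ one finds
$$
a_{m}^{*}(d^{\prime}, q) \, = \, \chi_{m}(\alpha)^{- n}\, a_{m}^{*}(d, q).
$$
Because $q$ is odd and $\chi_{m} = \chi \circ \mathrm{N}_{\mathbb{F}_{\! q^{m}}\slash \mathbb{F}_{\! q}},$ one has $\chi_{m}(\alpha) = \chi(\alpha)^{1 + q + \cdots + q^{m - 1}} = (-1)^{m}.$ Thus $\chi_{m}(\alpha)^{- n} = (-1)^{m n} = (-1)^{m},$ the last equality using the hypothesis that $n$ is odd. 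Similarly, using $\epsilon_{d} = 0$ (as $\deg d$ is odd), the identity $a_{j}(C_{d^{\prime}}) = (-1)^{j}a_{j}(C_{d})$ follows for every $j \ge 1,$ and so the multisets of Frobenius eigenvalues on $H_{\text{\'et}}^{1}(\bar{C}_{d^{\prime}},\mathbb{Q}_{\ell})$ and $H_{\text{\'et}}^{1}(\bar{C}_{d},\mathbb{Q}_{\ell})$ differ by a sign. This gives
$$
P_{\scriptscriptstyle C_{d^{\prime}}}\!(t) \, = \, P_{\scriptscriptstyle C_{d}}(- t).
$$

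Finally, reindexing the sum defining $A_{\mathfrak{n}, \mathfrak{i}, \mathfrak{j}}(T, q)$ via the bijection $d \mapsto d^{\prime},$ and substituting the three transformation rules above into the definition \eqref{eq: A_{n, i, j}(T, q)}, the combinatorial factor becomes precisely the product appearing in the definition of $^{\iota}\!\!A_{\mathfrak{n}, \mathfrak{i}, \mathfrak{j}}(T, q)$ (the effect on the factorials is a clean substitution $a_{m}^{*} \to (-1)^{m} a_{m}^{*}$), while the product of characteristic polynomials becomes $\prod_{k} P_{\scriptscriptstyle C_{d}}(- t_{k}).$ This yields $A_{\mathfrak{n}, \mathfrak{i}, \mathfrak{j}}(T, q) = \, ^{\iota}\!\!A_{\mathfrak{n}, \mathfrak{i}, \mathfrak{j}}(T, q),$ as claimed. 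There is no genuine obstacle here; the one point requiring attention is the parity computation $\chi_{m}(\alpha)^{- n} = (-1)^{m},$ which uses both the oddness of $q$ (to give $\chi_{m}(\alpha) = (-1)^{m}$) and the oddness of $n = |\mathfrak{n}|$ (to absorb the exponent $- n$).
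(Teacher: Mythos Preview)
Your proof is correct and takes a genuinely different, more direct route than the paper. The paper first shows that the auxiliary moment-sums $M_{\mathfrak{n},\mathfrak{i}}(q)$ vanish when $|\mathfrak{i}|$ is odd (via the substitution $\theta_{j}\to\xi\theta_{j}$ with $\chi(\xi)=-1$ applied to the \emph{summation variables} in the character sums), deduces $A_{\mathfrak{n}}(-T,q)=A_{\mathfrak{n}}(T,q)$, and then bootstraps from $A_{\mathfrak{n}}$ to $A_{\mathfrak{n},\mathfrak{i},\mathfrak{j}}$ by adjoining extra variables, differentiating, and invoking Girard's formula together with the expression \eqref{eq: a_{m}^{*}} for $a_{m}^{*}$. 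You instead apply the substitution at the level of the polynomial $d$ itself via $d(x)\mapsto \alpha^{-n}d(\alpha x)$, which simultaneously tracks the effect on $c_{m}$, $a_{m}^{*}$, and $P_{\scriptscriptstyle C_{d}}$ in one stroke. The two arguments share the same underlying idea (scaling by a non-square), but your packaging avoids the differential-operator machinery entirely and is considerably shorter. The paper's route has the advantage of making explicit the link to the operators $\mathcal{D}_{m}^{*}$ used later in Section~\ref{Gen-Case}, but for the lemma itself your argument is cleaner.

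One trivial slip: $\Phi_{\alpha}$ is not literally an involution unless $\alpha^{2}=1$; however, you only ever use that it is a bijection of $\mathscr{P}(\mathfrak{n},q)$, which it certainly is, so the argument is unaffected.
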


\begin{proof} For $\mathfrak{i} = (1^{i_{1}}\!, 2^{i_{2}}\!, \ldots),$ 
consider the moment-sum 
\begin{equation*}
M_{\mathfrak{n}, \mathfrak{i}}(q) \;\, 
= \sum_{d \, \in \, \mathscr{P}(\mathfrak{n}, q)} a^{(\mathfrak{i})}(C_{d})
\;\, = \sum_{d \, \in \, \mathscr{P}(\mathfrak{n}, q)} \prod_{j} \bigg( - 
\sum_{\theta_{j} \, \in \,  \mathbb{F}_{\! q^{j}}} 
\chi_{j}(d(\theta_{j})) \bigg)^{\! i_{j}}\!.
\end{equation*}
As before, by changing $\theta_{j}  \to \xi\theta_{j},$ with $\chi(\xi) = -1$ 
($\xi\in \mathbb{F}_{\! q}$), one sees that $M_{\mathfrak{n}, \mathfrak{i}}(q) = 0$ whenever 
$
|\mathfrak{i}| 
$ 
is odd. Expressing $A_{\mathfrak{n}}(T, q)$ in the form 
\begin{equation*}
A_{\mathfrak{n}}(T, q) \;\,  = \sum_{d \, \in \, \mathscr{P}(\mathfrak{n}, q)} 
\bigg(\prod_{k = 1}^{r} P_{\scriptscriptstyle C_{d}}(t_{k}) \bigg) \;\, =
\sum_{d \, \in \, \mathscr{P}(\mathfrak{n}, q)} 
\exp \bigg( - \sum_{j = 1}^{\infty} \frac{a_{j}(C_{d})}{j}
\cdot\sum_{k = 1}^{r}t_{k}^{j}  \bigg)
\end{equation*} 
it follows that 
$
A_{\mathfrak{n}}( - T, q) = A_{\mathfrak{n}}(T, q).
$ 
This identity holds for arbitrary $r.$ Accordingly, by adding more variables 
$t_{r + 1}, t_{r + 2}, \ldots, t_{r + \mathrm{s}}$ to $t_{1}, \ldots, t_{r},$ we see that \begin{equation} \label{eq: derivative}
(-1)^{\alpha_{1} + \cdots + \alpha_{\mathrm{s}}}\frac{\partial^{\alpha_{1} + \cdots + \alpha_{\mathrm{s}}}\!A_{\mathfrak{n}}}{\partial t_{r + 1}^{\alpha_{1}} \cdots \, 
\partial t_{r + \mathrm{s}}^{\alpha_{\mathrm{s}}}}( - t_{1},\ldots, - t_{r}, 0, \ldots, 0, q) 
= \frac{\partial^{\alpha_{1} + \cdots + \alpha_{\mathrm{s}}}\!A_{\mathfrak{n}}}{\partial t_{r + 1}^{\alpha_{1}} \cdots \, \partial t_{r + \mathrm{s}}^{\alpha_{\mathrm{s}}}}(t_{1},\ldots, t_{r}, 0,\ldots, 0, q) 
\end{equation}
for arbitrary $\alpha_{1}, \ldots, \alpha_{\mathrm{s}} \in \mathbb{N}.$ Dividing both sides by 
$\alpha_{1}! \, \cdots \, \alpha_{\mathrm{s}}!,$ and expressing 
\begin{equation*}
P_{\scriptscriptstyle C_{d}}(t) = 1 + \sum_{\alpha = 1}^{2g} (-1)^{\alpha}\lambda_{\alpha}(C_{d})t^{\alpha}
\end{equation*} 
we deduce that 
\begin{equation*}
(- 1)^{\alpha_{1} + \cdots + \alpha_{\mathrm{s}}} \; \cdot  \sum_{d \, \in \,  \mathscr{P}(\mathfrak{n}, q)} \bigg(\lambda_{\alpha_{1}}\!(C_{d}) \, \cdots \,  \lambda_{\alpha_{\mathrm{s}}}\!(C_{d})\prod_{k = 1}^{r} P_{\scriptscriptstyle C_{d}}( - t_{k})  \bigg) \;\; =  \sum_{d \, \in \, \mathscr{P}(\mathfrak{n}, q)} 
\bigg(\lambda_{\alpha_{1}}\!(C_{d}) \, \cdots \, \lambda_{\alpha_{\mathrm{s}}}\!(C_{d}) \prod_{k = 1}^{r} P_{\scriptscriptstyle C_{d}}(t_{k}) \bigg).
\end{equation*} 
Now recall that we expressed the quantities $a_{m}^{*}(d, q)$ by 
\begin{equation} \label{eq: a_{m}^{*}}
a_{m}^{*}(d, q) \,= \,  - \sum_{\substack{k \, \mid \, m \\ m\slash k-\text{odd}}} 
\mu(m\slash k) a_{k}(C_{d}) \; -  
\sum_{u = 1}^{\nu_{2}(m)} c_{m \slash 2^{u}}(d, q)  \qquad (\text{for $m\ge 1$ and $d\in \mathscr{P}(\mathfrak{n}, q)$})
\end{equation}
where $\nu_{2}$ denotes the $2$-adic order. By applying Girard's formula (more commonly known as Girard-Waring formula),
\begin{equation} \label{eq: a_{k}}
a_{k}(C_{d}) = k \sum \frac{(-1)^{l_{1} + \cdots + l_{k} +  k} 
(l_{1} + \cdots + l_{k} - 1)!}{l_{1}! \, \cdots \, l_{k}!}\lambda_{1}^{l_{1}}\!(C_{d}) \, \cdots \, \lambda_{k}^{l_{k}}\!(C_{d})
\end{equation}
summed over all nonnegative integers $l_{1}, \ldots, l_{k}$ such that 
$l_{1} + 2 l_{2} + \cdots + k l_{k} = k,$ it follows that 
\begin{equation*}
\sum_{d \, \in \, \mathscr{P}(\mathfrak{n}, q)} 
(- 1)^{m}a_{m}^{*}(d, q) \cdot \prod_{k = 1}^{r} P_{\scriptscriptstyle C_{d}}( - t_{k})  \;\; = \, \sum_{d \, \in \, \mathscr{P}(\mathfrak{n}, q)} 
a_{m}^{*}(d, q) \cdot \prod_{k = 1}^{r} P_{\scriptscriptstyle C_{d}}(t_{k})  \;\; \qquad \;\; 
(\text{for $m \ge 1$}).
\end{equation*} 
This identity extends to 
\begin{equation*}
\sum_{d \, \in \, \mathscr{P}(\mathfrak{n}, q)} 
H(-a_{1}^{*}(d, q), a_{2}^{*}(d, q), \ldots) \prod_{k = 1}^{r} P_{\scriptscriptstyle C_{d}}( - t_{k})  \;\; = \, 
\sum_{d \, \in \, \mathscr{P}(\mathfrak{n}, q)} 
H(a_{1}^{*}(d, q), a_{2}^{*}(d, q), \ldots) \prod_{k = 1}^{r} P_{\scriptscriptstyle C_{d}}(t_{k})
\end{equation*} 
for arbitrary polynomial expressions $H$ in $a_{1}^{*}(d, q), a_{2}^{*}(d, q), \ldots,$ 
and the lemma follows.
\end{proof}

\vskip10pt
\begin{remark} We can express $A_{\mathfrak{n}, \mathfrak{i}, \mathfrak{j}}(T, q) = \,
^{\scriptscriptstyle \mathcal{D}_{\mathfrak{n}, \mathfrak{i}, \mathfrak{j}, q}}\!A_{\mathfrak{n}}(T, q)\footnote{The differential operator 
$\mathcal{D}_{\mathfrak{n}, \mathfrak{i}, \mathfrak{j}, q}$ 
is obtained as follows. For $l_{1}, \ldots, l_{k} \in \mathbb{N},$ let 
$$
\partial_{l_{1}\!, \ldots, l_{k}} = \Big(\frac{1}{1!^{l_{1}} 2!^{l_{2}} \, \cdots \, k!^{l_{k}}} \Big) \cdot
\frac{\partial^{l_{1} + 2l_{2} + \cdots + k l_{k}}}{\partial t_{r + 1}^{} \cdots \, \partial t_{r + l_{1}}^{}\partial t_{r + l_{1} + 1}^{2} \cdots \, \partial t_{r + l_{1} + l_{2}}^{2} \cdots \, \partial t_{r + l_{1} + \cdots + l_{k - 1} + 1}^{k} \cdots \,  
\partial t_{r + l_{1} + \cdots + l_{k}}^{k}}.
$$ 
Define 
$$
\mathcal{D}_{m}^{*} \;\, = 
\sum_{\substack{k \, \mid \, m \\ (m\slash k)-\text{odd}}} 
\mu(m\slash k)\,  k \sum \frac{(-1)^{l_{1} + \cdots + l_{k} + 1}
(l_{1} + \cdots + l_{k} - 1)!}{l_{1}! \, \cdots \, l_{k}!}\,\partial_{l_{1}\!, \ldots, l_{k}} 
$$
the inner sum being over all $l_{1}, \ldots, l_{k}\in \mathbb{N}$ satisfying 
$l_{1} + 2 l_{2} + \cdots + k l_{k} = k.$ By \eqref{eq: c_{m}(d, q)}, \eqref{eq: a_{m}^{*}} and \eqref{eq: a_{k}}
$$
\sum_{d \, \in \, \mathscr{P}(\mathfrak{n}, q)} 
a_{m}^{*}(d, q) \, \cdot \prod_{k = 1}^{r} P_{\scriptscriptstyle C_{d}}(t_{k}) 
\;= \, 
^{\scriptscriptstyle \mathcal{D}_{m}^{*}}\!A_{\mathfrak{n}}(T, q)
 \,- \, \Big[\rho_{m} (1 + (-1)^{|\mathfrak{n}|})\slash2 \, + \, 
m \; \cdot \sum_{u = 1}^{\nu_{2}(m)} (\mathrm{Irr}_{q}(m \slash 2^{u}) - n_{m \slash 2^{u}})\slash 2^{u}\Big]A_{\mathfrak{n}}(T, q)
$$
with $\rho_{m} \! = 1$ or $0$ according as $m$ is a power of two or not. 
Then 
$
^{\scriptscriptstyle \mathcal{D}_{\mathfrak{n}, \mathfrak{i}, \mathfrak{j}, q}}\!A_{\mathfrak{n}}(T, q)
$ 
is obtained by replacing $a_{m}^{*}(d, q),$ for each $m,$ by  
$$
\mathcal{D}_{m}^{*} 
- \rho_{m}(1 + (-1)^{|\mathfrak{n}|})\slash2 
\, - \, m \; \cdot \sum_{u = 1}^{\nu_{2}(m)}(\mathrm{Irr}_{q}(m \slash 2^{u}) - n_{m \slash 2^{u}})\slash 2^{u}
$$ 
in \eqref{eq: A_{n, i, j}(T, q)}.}\!, 
$ an identity which allows us to define 
$A_{\mathfrak{n}, \mathfrak{i}, \mathfrak{j}}(q\, T, 1\slash q).$ Clearly
$
{^\iota}\!\!A_{\mathfrak{n}, \mathfrak{i}, \mathfrak{j}}(q\, T, 1\slash q)
= A_{\mathfrak{n}, \mathfrak{i}, \mathfrak{j}}(q\, T, 1\slash q) 
$ 
when $|\mathfrak{n}|$ is odd.
\end{remark}

\vskip5pt
Let $X_{1}, \ldots, X_{n}, \ldots, Y_{1}, \ldots, Z_{1}, \ldots$ be algebraically independent variables. Let 
$X = (X_n)_{n \ge 1}, Y = (Y_n)_{n \ge 1},$ $Z = (Z_n)_{n\ge 1},$ and set 
$\underline{X} := (X, Y, Z).$ For $k \ge 1$ and a partition $\mathfrak{n},$ let 
$k \mathfrak{n}$ and 
$X^{k \mathfrak{n}}$ denote, respectively, the partition $(k^{n_{1}}\!, (2k)^{n_{2}}\!, \ldots)$ and the product $X_{k}^{n_{1}} X_{2k}^{n_{2}} \, \cdots.$ 

Now, for $k \ge 1,$ let $\tilde{E}(T^{k}\!, q^{k})  =  E(T^{k}) E(q^{k} T^{k}),$ 
where we recall that 
$
E(T) = \prod_{i = 1}^{r}\, (1 - t_{i})^{-1}.
$  
Define the generating series
\begin{equation*}
C_{k, \text{odd}}(\underline{X}, T, q) \;\, =
\sum_{\substack{\mathfrak{n}, \mathfrak{i}, \mathfrak{j} \\  |\mathfrak{n}|-\text{odd}}}  
\frac{A_{\mathfrak{n}, \mathfrak{i}, \mathfrak{j}}(T^{k}\!, q^{k}) \, X^{k \mathfrak{n}} Y^{k\mathfrak{i}} Z^{k\mathfrak{j}}}{\mathfrak{i}! \mathfrak{j}! E^{^{-}}\!(T^{k}\!, q^{k})^{\mathfrak{i}} E^{^{+}}\!(T^{k}\!, q^{k})^{\mathfrak{j}}}
\end{equation*}  
\begin{equation*}
C_{k, \text{even}}^{+}(\underline{X}, T, q) \;\, =
\sum_{\substack{\mathfrak{n}, \mathfrak{i}, \mathfrak{j}\\  |\mathfrak{n}|-\text{even}}}  \frac{A_{\mathfrak{n}, \mathfrak{i}, \mathfrak{j}}(T^{k}\!, q^{k}) \, X^{k  \mathfrak{n}} 
Y^{k \mathfrak{i}} Z^{k \mathfrak{j}}}{\mathfrak{i}! \mathfrak{j}! E^{^{-}}\!(T^{k}\!, q^{k})^{\mathfrak{i}} E^{^{+}}\!(T^{k}\!, q^{k})^{\mathfrak{j}}}
\end{equation*} 
and 
\begin{equation*}
C_{k, \text{even}}^{-}(\underline{X}, T, q) \;\, = 
\sum_{\substack{\mathfrak{n}, \mathfrak{i}, \mathfrak{j}\\  |\mathfrak{n}|-\text{even}}}  \frac{{^\iota}\!\!A_{\mathfrak{n}, \mathfrak{i}, \mathfrak{j}}(T^{k}\!, q^{k}) \, X^{k  \mathfrak{n}} Y^{k \mathfrak{i}}  
Z^{k \mathfrak{j}}}{\mathfrak{i}! \mathfrak{j}! E^{^{-}}\!(T^{k}\!, q^{k})^{\mathfrak{i}} E^{^{+}}\!(T^{k}\!, q^{k})^{\mathfrak{j}}};
\end{equation*}
here $A_{{\bf 0}, {\mathfrak{i}}, \mathfrak{j}}(T, q)$ and ${^\iota}\!\!A_{{\bf 0}, {\mathfrak{i}}, \mathfrak{j}}(T, q)$ are taken such that: 
\begin{equation} \label{eq: f0}
f_{{\bf 0}}^{}(Y, Z, T, q) \, : = \, 
\sum_{\mathfrak{i}, \mathfrak{j}} A_{{\bf 0}, {\mathfrak{i}}, \mathfrak{j}}(T, q)\frac{Y^{\mathfrak{i}} Z^{\mathfrak{j}}}{\mathfrak{i}! \mathfrak{j}!}  \, = \,  E(T) E(q\,T)
\left[ - 1 + \prod_{m = 1}^{\infty} (1 + Z_{m})^{\mathrm{Irr}_{q}(m)} \right] 
\end{equation} 
and 
\begin{equation} \label{eq: iotaf0}
\sum_{\mathfrak{i}, \mathfrak{j}}\, 
{^\iota}\!\!A_{{\bf 0}, {\mathfrak{i}}, \mathfrak{j}}(T, q)\frac{Y^{\mathfrak{i}} Z^{\mathfrak{j}}}{\mathfrak{i}! \mathfrak{j}!}  \, = \,  E(-T) E(- q\,T)
\left[ - 1 \, + \prod_{m-\text{odd}} (1 + Y_{m})^{\mathrm{Irr}_{q}(m)} \, \cdot 
\prod_{m-\text{even}} (1 + Z_{m})^{\mathrm{Irr}_{q}(m)} \right].
\end{equation} 
Also, $E^{-}(T^{k}\!, q^{k})^{\mathfrak{i}}$ and $E^{+}(T^{k}\!, q^{k})^{\mathfrak{j}}$ 
are defined by 
\begin{equation*}
E^{-}(T^{k}\!, q^{k})^{\mathfrak{i}}  = \tilde{E}( - T^{k}\!, q^{k})^{i_{1}} \cdots \, 
\tilde{E}( - T^{m k}\!, q^{m k})^{i_{m}} \cdots 
\end{equation*}
and 
\begin{equation*}
E^{+}(T^{k}\!, q^{k})^{\mathfrak{j}}  = \tilde{E}(T^{k}\!, q^{k})^{j_{1}} \cdots \, 
\tilde{E}(T^{m k}\!, q^{m k})^{j_{m}} \cdots. 
\end{equation*}
Let $C_{\text{odd}}(\underline{X}, T, q) = (C_{k, \text{odd}}(\underline{X}, T, q))_{k\ge 1},$ 
$C_{\text{even}}^{\pm}(\underline{X}, T, q) = (C_{k, \text{even}}^{\pm}(\underline{X}, T, q))_{k\ge 1},$ and set 
\begin{equation*}
C(\underline{X}, T, q) : = (C_{\text{odd}}(\underline{X}, T, q), C_{\text{even}}^{-}(\underline{X}, T, q), 
C_{\text{even}}^{+}(\underline{X}, T, q)).
\end{equation*} 
Note that by a passage from $q$ to $q^{k}$ we understand a base change to 
$\mathbb{F}_{\! q^{k}}.$ Accordingly, $C_{k, \text{odd}}(\underline{X}, T, q),$ 
for example, is just $C_{1, \text{odd}}(\underline{X}, T, q^{k})$ in which we replace 
$$
t_{1}, \ldots, t_{r}, \ldots, X_{m}, \ldots, Y_{m}, \ldots, Z_{m}, \ldots
$$ 
by 
$$
t_{1}^{k}, \ldots, t_{r}^{k}, \ldots, X_{k m}, \ldots, Y_{k m}, \ldots, Z_{k m}, \ldots,
$$ 
respectively. One defines $C(\underline{X}, q\, T, 1\slash q)$ as follows. For a partition 
$\mathfrak{n} = (1^{n_{1}}\!, 2^{n_{2}}\!, \ldots),$ put 
\begin{equation*}
f_{\mathfrak{n}}(Y, Z, T, q) = \sum_{\mathfrak{i}, \mathfrak{j}} 
A_{\mathfrak{n}, \mathfrak{i}, \mathfrak{j}}(T, q)
\frac{Y^{\mathfrak{i}} Z^{\mathfrak{j}}}{\mathfrak{i}! \mathfrak{j}!}.  
\end{equation*}
Note that, for $\mathfrak{n} \ne {\bf{0}},$ we can write 
\begin{equation*}
f_{\mathfrak{n}}(Y, Z, T, q) \;\; =
\sum_{d \, \in \, \mathscr{P}(\mathfrak{n}, q)} \Bigg[\prod_{m = 1}^{\infty} 
(1 +  Y_{m})^{ \frac{c_{m}^{}\!(d, q) \, - \, a_{m}^{*}\!(d, q)}{2 m}}
(1  +  Z_{m})^{ \frac{c_{m}^{}\!(d, q) \, + \,  a_{m}^{*}\!(d, q)}{2 m}}  
\cdot \; \prod_{k = 1}^{r} P_{\scriptscriptstyle C_{d}}(t_{k})  \Bigg]. 
\end{equation*}
Replacing $c_{m}(d, q)$ by $m (\mathrm{Irr}_{q}(m) - n_{m})$ and $a_{m}^{*}(d, q)$ by
\begin{equation*}
\mathscr{D}_{m}^{*}(\mathfrak{n}, q) = 
\mathcal{D}_{m}^{*} 
- \, \rho_{m}(1 + (-1)^{|\mathfrak{n}|})\slash 2 
\, - \, m \; \cdot \sum_{u = 1}^{\nu_{2}(m)} 
(\mathrm{Irr}_{q}(m \slash 2^{u}) - n_{m \slash 2^{u}})\slash 2^{u}
\end{equation*}
where $\mathcal{D}_{m}^{*}$ is the differential operator defined above, 
we can further write 
\begin{equation*}
f_{\mathfrak{n}}(Y, Z, T, q)
= \,^{\tiny{\tiny{\prod_{m = 1}^{\infty} 
(1  +   Y_{m})^{(\mathrm{Irr}_{q}(m) - n_{m})\slash 2 
- \mathscr{D}_{m}^{*}(\mathfrak{n}, q) \slash 2m}
(1 + Z_{m})^{(\mathrm{Irr}_{q}(m) - n_{m})\slash 2 
+ \mathscr{D}_{m}^{*}(\mathfrak{n}, q) \slash 2m}}}}
\!\!\!A_{\mathfrak{n}}(T, q).
\end{equation*}
Here $(1 +  U)^{\mathscr{D}}\!,$ for a variable $U$ and a (differential) operator $\mathscr{D},$ stands for the formal power series
\begin{equation*}
1 \,+\, \sum_{n = 1}^{\infty} \frac{\mathscr{D}(\mathscr{D} - 1) \, \cdots \, (\mathscr{D} - n  + 1)}{n!} U^{n}.
\end{equation*}
In particular, if $\mathfrak{n} =  {\bf 1}$ (i.e., $\mathfrak{n} = (1^{n_{1}}\!, 2^{n_{2}}\!, \ldots)$ with $n_{1} = 1$ and $n_{m} = 0$ for $m\ge 2$), then
\begin{equation} \label{eq: f1}
f_{{\bf 1}}(Y, Z, T, q) \,=\, 
q \prod_{m = 1}^{\infty} 
(1 + Y_{m})^{c_{m \slash 2^{\nu_{2}(m)}}\big(q^{2^{\nu_{2}(m)}}\big)\slash (2 m)}
(1 + Z_{m})^{\big[2 c_{m}(q) \, - \, c_{m \slash 2^{\nu_{2}(m)}}\big(q^{2^{\nu_{2}(m)}}\big) \big]\slash (2 m)}
\end{equation}
where 
\begin{equation}
\label{eq: c_{m}}
c_{m}(q) 
= \begin{cases}
q - 1 &\mbox{if}\; m = 1 \\
m\, \mathrm{Irr}_{q}(m) & \mbox{if}\; m \ge 2.
\end{cases}  
\end{equation}

We define 
\begin{equation*}
f_{\mathfrak{n}}(Y, Z, q\, T, 1\slash q) \, = \, ^{\tiny{\tiny{\prod_{m = 1}^{\infty} 
(1 + Y_{m})^{(\mathrm{Irr}_{1\slash q}(m) - n_{m})\slash 2 - \mathscr{D}_{m}^{*}(\mathfrak{n}, 1\slash q) \slash 2m}(1 + Z_{m})^{(\mathrm{Irr}_{1\slash q}(m) - n_{m})\slash 2 + \mathscr{D}_{m}^{*}(\mathfrak{n}, 1\slash q) \slash 2m}}}}
\!\!\!A_{\mathfrak{n}}(q \,T, 1\slash q) 
\end{equation*}
and $C_{k, \text{odd}}(\underline{X}, q\, T, 1\slash q),$ for all integers $k \ge 1,$ by  
\begin{equation*}
C_{k, \text{odd}}(\underline{X}, q\, T, 1\slash q) \;\, =
\sum_{\substack{\mathfrak{n} \\  |\mathfrak{n}|-\text{odd}}} 
f_{\mathfrak{n}}((Y_{m  k}\slash \tilde{E}(- T^{m k}\!, q^{m k}))_{m\ge 1}, 
(Z_{m  k}\slash \tilde{E}(T^{m k}\!, q^{m k}))_{m\ge 1}, q^{k} T^{k}\!, q^{-k}) X^{k \mathfrak{n}}.
\end{equation*} 
Define $C_{k, \text{even}}^{\pm}(\underline{X}, q\, T, 1\slash q)$ similarly, and let 
$C_{\text{odd}}(\underline{X}, q\, T, 1\slash q),$ $C_{\text{even}}^{\pm}(\underline{X}, q\, T, 1\slash q)$ and $C(\underline{X}, q\, T, 1\slash q)$ be as defined above.

Using the generating function $C(\underline{X}, T, q),$ we can reinterpret \eqref{eq: tag 4.1} 
and the relations \eqref{eq: MDS-relations} as follows. Let 
\begin{equation*}
Z_{\text{odd}}(T, t_{r + 1}; q) = \left[Z(T, t_{r + 1}; q) - Z(T, - t_{r + 1}; q) \right]\slash 2 \;\,  = \sum_{l-\text{odd}} \Lambda_{l}(T, q) t_{r+1}^{l}
\end{equation*} 
and 
\begin{equation*}
Z_{\text{even}}(T, t_{r + 1}; q) = \left[Z(T, t_{r + 1}; q) + Z(T, - t_{r + 1}; q) \right]\slash 2 \;\,  = \sum_{l-\text{even}} \Lambda_{l}(T, q) t_{r+1}^{l}
\end{equation*} 
denote the odd (respectively even) part of the series $Z(T, t_{r + 1}; q)$ in \eqref{eq: tag 3.3}. Put 
\begin{equation*} 
\underline{Z}_{\text{odd}}(T, t_{r+1}; q) = \big(Z_{\text{odd}}(T^{k}\!, t_{r+1}^{k}; q^{k}) \big)_{k\ge 1} \;\;\; \text{and} \;\;\; 
\underline{Z}_{\text{even}}^{\pm}(T, t_{r+1}; q) 
= \big(E(\pm T^{k}) Z_{\text{even}}(\pm T^{k}\!, t_{r + 1}^{k}; q^{k})\big)_{k\ge 1}.
\end{equation*} 
If we set 
\begin{equation*}
\underline{Z}(T, t_{r + 1}; q) : = \big(\underline{Z}_{\text{odd}}(T, t_{r+1}; q), \underline{Z}_{\text{even}}^{-}(T, t_{r+1}; q), 
\underline{Z}_{\text{even}}^{+}(T, t_{r+1}; q) \big) 
\end{equation*} 
then by \eqref{eq: tag 4.2}, Proposition \ref{Proposition 4.3} and \eqref{eq: tag 4.4}, 
the relation \eqref{eq: tag 4.1} can be expressed as 
\begin{equation} \label{eq: eqn-4.1-rewritten1} 
\underline{Z}(T, t_{r + 1}; q) = C(\underline{Z}(q\, T, q  t_{r + 1}; 1\slash q), T, q).
\end{equation} 
For the purpose of interpreting the relations \eqref{eq: MDS-relations}, let 
$C_{0}(\underline{X}, T, q) = C(\underline{X}, T, q) - q \star \underline{X},$ 
where for $\underline{X}$ as above, 
$ 
q \star \underline{X} = \big((q^{n} X_{n})_{n \ge 1}, (q^{n} Y_{n})_{n \ge 1}, (q^{n} Z_{n})_{n\ge 1} \big)
$ 
is the linear part of $C(\underline{X}, T, q).$ Then \eqref{eq: eqn-4.1-rewritten1} can be rewritten as 
\begin{equation*}
\underline{Z}(T, t_{r + 1}; q) \,-\,  q \star \underline{Z}(q\, T, q  t_{r + 1}; 1 \slash q) =  
C_{0}(\underline{Z}(q\, T, q  t_{r + 1}; 1\slash q), T, q).
\end{equation*} 
Replacing $q \to 1\slash q, T \to q\, T, t_{r + 1} \to q  t_{r + 1},$ and substituting 
$\underline{Z}(T, t_{r + 1}; q)$ by using \eqref{eq: eqn-4.1-rewritten1}, 
we see that $\underline{Z}(q\, T, q t_{r + 1}; 1\slash q)$ has to satisfy 
\begin{equation*}
\underline{Z}(q\, T, q t_{r + 1}; 1\slash q) \,-\,  (q^{-1}) \star 
C(\underline{Z}(q\, T, q  t_{r + 1}; 1\slash q), T, q) =  
C_{0}(C(\underline{Z}(q\, T, q  t_{r + 1}; 1\slash q), T, q), q\, T, 1\slash q).
\end{equation*} 
The last identity prompts us to wonder if we have in fact: 
\begin{equation} \label{eq: MDS-relations-gen-series}
\underline{X}\,- \,  (q^{-1}) \star 
C(\underline{X}, T, q) = C_{0}(C(\underline{X}, T, q), q\, T, 1\slash q)
\end{equation} 
or what amounts to the same, 
\begin{equation} \label{eq: MDS-relations-gen-series-equiv}
C(C(\underline{X}, T, q), q  \, T, 1\slash q) = \underline{X}.
\end{equation} 
Clearly \eqref{eq: MDS-relations-gen-series} and \eqref{eq: MDS-relations-gen-series-equiv} are equivalent to the relations \eqref{eq: MDS-relations}.

Shortly, we will show (see Theorem \ref{princ-theorem}) that, indeed, 
\eqref{eq: MDS-relations-gen-series-equiv} holds. \!Its proof depends chiefly 
upon understanding how \eqref{eq: MDS-relations-gen-series-equiv} is encoded in the combinatorial structure of certain moduli spaces, see Section 
\ref{section: The series BarC(X, T, q)}. 

\vskip5pt
We shall need the following well-known identities:

\vskip10pt
\begin{lemma} 
\label{Identity Irr} --- For $m\in \mathbb{N}^{\times}\!,$ we have 
\begin{equation*}
\sum_{h \mid  m} \mathrm{Irr}_{q_{1}^{}}\!(h) \mathrm{Irr}_{q_{2}^{h}}(m\slash h)
= \mathrm{Irr}_{q_{1} q_{2}}(m)
\end{equation*}
where for an indeterminate $t,$ we put 
\begin{equation*}
\mathrm{Irr}_{t}(m) = m^{-1}\!\sum_{k \mid m} \mu(m\slash k) t^{k}.
\end{equation*}
In particular, if $m \ge 2,$ then
\begin{equation*}
\sum_{h \mid m} \mathrm{Irr}_{q^{-1}}(h) \mathrm{Irr}_{q^{h}}(m\slash h) = 0
\end{equation*} 
and
\begin{equation*} 
\sum_{h \mid m} h\, \mathrm{Irr}_{q}(h) = q^{m} \qquad (\text{for $m \ge 1$}).
\end{equation*}
\end{lemma}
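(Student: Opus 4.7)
The plan is to derive all three claims from a single fundamental fact: the defining formula $\mathrm{Irr}_t(m) = m^{-1}\sum_{k\mid m}\mu(m/k)t^k$ is, by classical M\"obius inversion, equivalent to the identity $\sum_{h\mid m} h\,\mathrm{Irr}_t(h) = t^m$ (valid for any indeterminate $t$). This immediately establishes the third assertion $\sum_{h\mid m}h\,\mathrm{Irr}_q(h) = q^m$, and it will serve as the workhorse for the main identity as well.

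For the main identity, I set $f(m) := \sum_{h\mid m}\mathrm{Irr}_{q_1}(h)\,\mathrm{Irr}_{q_2^h}(m/h)$. Since $\mathrm{Irr}_{q_1 q_2}(m)$ is uniquely characterized among arithmetic functions $g$ by the relation $\sum_{d\mid m} d\cdot g(d) = (q_1 q_2)^m$ (again via M\"obius inversion), it suffices to verify this same relation with $g$ replaced by $f$. The computation is a straightforward double-sum rearrangement: I would swap the order of summation in $\sum_{d\mid m} d\cdot f(d)$ and substitute $d = he$ with $e\mid m/h$, so that the inner sum becomes $\sum_{e\mid m/h} e\,\mathrm{Irr}_{q_2^h}(e)$; by the base identity applied with parameter $t = q_2^h$, this evaluates to $(q_2^h)^{m/h} = q_2^m$. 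Pulling this $h$-independent factor out and applying the base identity once more with $t = q_1$ to the remaining sum $\sum_{h\mid m} h\,\mathrm{Irr}_{q_1}(h) = q_1^m$ yields $(q_1 q_2)^m$, as required.

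The second particular case follows by specializing $q_1 = q^{-1}$ and $q_2 = q$, so that $q_1 q_2 = 1$: the right-hand side becomes $\mathrm{Irr}_1(m) = m^{-1}\sum_{k\mid m}\mu(m/k)$, which vanishes for $m\ge 2$ by the standard M\"obius identity $\sum_{k\mid m}\mu(m/k) = \delta_{m,1}$.

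I do not anticipate any substantive obstacle; the argument is purely formal, essentially manipulating two nested instances of M\"obius inversion. The only minor point requiring care is the bookkeeping of the exponent in $q_2^h$, which changes with the outer summation variable $h$, but this is precisely what makes the computation collapse cleanly: after evaluating the inner sum the $h$-dependence disappears, leaving a single application of the base identity in the variable $q_1$.
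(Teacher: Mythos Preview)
Your argument is correct but proceeds in a genuinely different order from the paper's. The paper establishes the main identity first by a direct computation: it expands both $\mathrm{Irr}$-factors on the left using the defining M\"obius sum, rearranges the resulting triple sum so that a factor $\sum_{n\mid l}\mu(n)$ appears, and uses the vanishing of this for $l>1$ to collapse the expression to $m^{-1}\sum_{k\mid m}\mu(m/k)(q_1 q_2)^k$. The third identity $\sum_{h\mid m} h\,\mathrm{Irr}_q(h)=q^m$ is then deduced \emph{from} the main identity, by specializing $q_2 = t$ and comparing the leading coefficients in $t$.

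You invert this logical dependence: you observe that the third identity is nothing but the M\"obius inverse of the definition of $\mathrm{Irr}_t$, so it comes for free; then you use it twice (once for $t=q_2^h$, once for $t=q_1$) to check that the left side $f(m)$ satisfies the characterizing relation $\sum_{d\mid m} d\,f(d)=(q_1 q_2)^m$. This is slightly more conceptual---it frames the main identity as a statement about Dirichlet convolution with the identity function, and avoids any explicit manipulation of nested M\"obius sums---whereas the paper's route is a hands-on unwinding of the definitions. Both are short and elementary; your version has the small advantage that the three assertions are proved in the order in which they are logically used.
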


\begin{proof} Indeed, the left-hand side of the identity can be written as  
\begin{equation*}
\frac{1}{m}\sum_{h \mid m}
\bigg(\sum_{k\mid h} \mu(h\slash k) q_{1}^{k} \bigg) 
\bigg(\sum_{l\mid \frac{m}{h}} \mu(m\slash hl) q_{2}^{hl} \bigg) 
\,=\, \frac{1}{m}\sum_{k \mid m} q_{1}^{k} \sum_{l\mid \frac{m}{k}} 
\mu\Big(\frac{m}{l k} \Big) q_{2}^{l k} \sum_{n\mid l} \mu(n).
\end{equation*}
Since $\sum_{n\mid l} \mu(n) = 0$ if $l > 1,$ it follows that 
\begin{equation*}
\sum_{h \mid m} \mathrm{Irr}_{q_{1}^{}}\!(h) \mathrm{Irr}_{q_{2}^{h}}(m\slash h)\,=\, 
\frac{1}{m}\sum_{k \mid m} \mu\Big(\frac{m}{k}\Big) 
(q_{1}q_{2})^{k} = \mathrm{Irr}_{q_{1}q_{2}}(m).
\end{equation*} 
The second identity in the statement of the lemma follows by replacing $q_{1}$ and $q_{2}$ by $q^{-1}$ and $q,$ respectively. By equating the coefficients of $t^{m}\!,$ i.e., the leading coefficients, on both sides of the identity 
\begin{equation*}
\sum_{h \mid m} \mathrm{Irr}_{q}(h) \mathrm{Irr}_{t^{h}}(m\slash h)
= \mathrm{Irr}_{q t}(m)
\end{equation*} 
we deduce that 
$ 
\sum_{h \mid m} h\, \mathrm{Irr}_{q}(h) = q^{m}
$ 
for $m \ge 1.$ \end{proof} 

For $N\ge 1,$ let $\mathcal{D}_{N}^{*}$ be the differential operator defined by 
\begin{equation} \label{eq: D_{N}^{*}}
\mathcal{D}_{N}^{*} \;\; = 
\sum_{\substack{k \mid N \\ (N\slash k)-\text{odd}}} 
\mu(N\slash k) \, k \sum \frac{(-1)^{l_{1} + \cdots + l_{k} + 1} 
(l_{1} + \cdots + l_{k} - 1)!}{l_{1}! \, \cdots \, l_{k}!}\partial_{l_{1}\!, \ldots, l_{k}}
\end{equation}
the inner sum being over all $l_{1}, \ldots, l_{k}\in \mathbb{N}$ satisfying 
$l_{1} + \cdots + k l_{k} = k.$ We shall need some elementary properties 
of the action of this operator on the $\mathbb{C}$-algebra $\mathscr{B}$ 
consisting of linear combinations
\begin{equation*}
\sum_{\alpha} c_{\alpha} F_{\alpha}(T) 
\qquad \;\;\; \text{\bigg($c_{\alpha} \in \mathbb{C}$ and $F_{\alpha}(T) = 
\prod_{k = 1}^{r}f_{\alpha}(t_{k})$ \bigg)}
\end{equation*}
with $f_{\alpha}(t)$ of the form 
\begin{equation*}
f_{\alpha}(t)\,=\, 
\exp  \bigg(\sum_{j = 1}^{\infty} \frac{a_{\alpha}(j)}{j} t^{j} \bigg) 
\qquad \text{($a_{\alpha}(j) \in \mathbb{C}$ for all $j\ge 1$)}
\end{equation*}
(viewed as a composition of formal power-series), for all $\alpha.$ In particular, 
$A_{\mathfrak{n}, \mathfrak{i}, \mathfrak{j}}(T, q)$ is in $\mathscr{B},$ and by using 
the results of Section \ref{section: The series BarC(X, T, q)} (specifically, see 
\eqref{eq: funct-equation-needed-proof-main-thm}), one can express inductively $A_{\mathfrak{n}, \mathfrak{i}, \mathfrak{j}}(q\, T, 1\slash q)$ 
as a linear combination (independent of $r$) of derivatives 
$^{\scriptscriptstyle \partial_{l_{1}\!, \ldots, l_{s}}}
\!A_{\mathfrak{m}, \mathfrak{p}, \mathfrak{l}}(T, q)$ ($l_{1}, \ldots, l_{s}\in \mathbb{N}$), hence $A_{\mathfrak{n}, \mathfrak{i}, \mathfrak{j}}(q\, T, 1\slash q)$ is also in $\mathscr{B}$ for any partitions $\mathfrak{n}, \mathfrak{i}$ and $\mathfrak{j}.$ It is understood that, for $F(T) \in \mathscr{B},$ its formal derivative 
$^{\scriptscriptstyle \mathcal{D}_{N}^{*}}\!F(T)$ is taken as in the right-hand side of 
\eqref{eq: derivative}. Clearly $\mathcal{D}_{N}^{*}$ is $\mathbb{C}$-linear, and it 
satisfies the Leibniz rule 
$^{\scriptscriptstyle \mathcal{D}_{N}^{*}}\!(FG)(T) 
= G(T)\,  ^{\scriptscriptstyle \mathcal{D}_{N}^{*}}\!F(T) 
+ F(T)\,  ^{\scriptscriptstyle \mathcal{D}_{N}^{*}} G(T),$ for $F(T)$ and $G(T)$ in $\mathscr{B}.$ In addition, if $F_{\alpha}(T)$ is as above, then
\begin{equation*}
^{\scriptscriptstyle \mathcal{D}_{N}^{*}}\!F_{\alpha}(T) = F_{\alpha}(T)
\;\,  \cdot \sum_{\substack{k \mid N \\ N\slash k-\text{odd}}} 
\mu(N\slash k)\, a_{\alpha}(k).
\end{equation*} 
In particular, we have

\vskip10pt
\begin{lemma} \label{Identity1} --- For sequences $\{\ga_{l}\}_{l \ge 1}$ and $\{\eta_{l}\}_{l \ge 1}$ of complex numbers, define 
\begin{equation*}
f(t) = \prod_{l = 1}^{\infty} \big(1 - q^{l}t^{2 l}\big)^{\ga_{l} \slash 2}
\; \mathrm{and} \;\; 
g(t)  = \prod_{l \, \ge \, 1} 
\bigg(\frac{1 + t^{l}}{1 - t^{l}}\bigg)^{\eta_{l} \slash  2l}. 
\end{equation*}
Put 
$
F(T) = \prod_{k = 1}^{r} f(t_{k})
$ 
and
$
G(T) = \prod_{k = 1}^{r} g(t_{k}).
$ 
Then, for any $N\in \mathbb{N}^{\times}\!,$ we have 
\begin{equation} \label{eq: 1 D_{N}^{*}}
^{\scriptscriptstyle \mathcal{D}_{N}^{*}}\!F(T) \,=\, 
\Bigg(\sum_{\substack{l m = N \\ m-\mathrm{even}}} l \ga_{l} 
\sum_{\substack{h \mid m \\ (m\slash h)-\mathrm{even}}} \mu\Big(\frac{m}{h}\Big) 
q^{l h}\Bigg)F(T)
\end{equation} 
and 
\begin{equation} \label{eq: 2 D_{N}^{*}}
^{\scriptscriptstyle \mathcal{D}_{N}^{*}}G(T) = \eta_{N}G(T)
\end{equation} 
with the understanding that the double sum in the right-hand side of 
\eqref{eq: 1 D_{N}^{*}} vanishes if $N$ is odd.
\end{lemma}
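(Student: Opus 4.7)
The plan is to reduce both identities to the general action formula
\[
^{\scriptscriptstyle \mathcal{D}_{N}^{*}}F_{\alpha}(T) \,=\, F_{\alpha}(T)\cdot \!\sum_{\substack{k\mid N\\ (N/k)\text{-odd}}} \mu(N/k)\,a_{\alpha}(k),
\]
asserted in the paragraph preceding the lemma for any $F_{\alpha}(T)=\prod_{k=1}^{r}f_{\alpha}(t_{k})$ with $f_{\alpha}(t)=\exp\bigl(\sum_{j\geq 1}\tfrac{a_{\alpha}(j)}{j}t^{j}\bigr)$. So the first step is to rewrite $f(t)$ and $g(t)$ as such exponentials. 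Using $\log(1-x)=-\sum_{m\geq 1}x^{m}/m$, one finds
\[
f(t) \,=\, \exp\Bigl(-\tfrac{1}{2}\!\sum_{l,m\geq 1}\!\gamma_{l}\tfrac{q^{lm}}{m}t^{2lm}\Bigr),
\]
which (after setting $j=2lm$, so $j$ is forced even and $l\mid j/2$) yields $a(j)=0$ for $j$ odd and $a(2n)=-q^{n}\sum_{l\mid n}l\gamma_{l}$. Using $\log\frac{1+x}{1-x}=2\sum_{k\text{ odd}}x^{k}/k$, one has
\[
g(t) \,=\, \exp\Bigl(\sum_{l\geq 1}\,\sum_{k\text{ odd}}\eta_{l}\,\tfrac{t^{lk}}{lk}\Bigr),
\]
so $a(j)=\sum_{l\mid j,\,(j/l)\text{-odd}}\eta_{l}$.

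For \eqref{eq: 2 D_{N}^{*}}, I would substitute this $a(j)$ into the baseline identity and swap summations:
\[
\sum_{\substack{k\mid N\\(N/k)\text{-odd}}}\!\mu(N/k)\!\sum_{\substack{l\mid k\\(k/l)\text{-odd}}}\!\eta_{l}
\;=\;\sum_{l}\eta_{l}\!\!\sum_{\substack{k:\,l\mid k\mid N\\ (N/k),(k/l)\text{-odd}}}\!\!\mu(N/k).
\]
Since $(N/k)(k/l)=N/l$, the two oddness conditions combined are equivalent to $N/l$ being odd; and when $N/l$ is odd, both conditions are then automatic for any $k$ with $l\mid k\mid N$ (all three share the same 2-adic valuation). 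Writing $k=l\,m'$ with $m'\mid N/l$, the inner sum collapses to $\sum_{m'\mid N/l}\mu((N/l)/m')=[N/l=1]$ by M\"obius inversion. Only $l=N$ contributes, giving $\eta_{N}G(T)$.

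For \eqref{eq: 1 D_{N}^{*}}, only even $k$ contribute because $a(\text{odd})=0$. Writing $k=2n$ and parametrizing by $l\mid n$ with $n=lh$, and putting $m=N/l$, the conditions $2lh\mid N$ and $N/(2lh)$ odd translate to $m$ even, $h\mid m/2$ and $m/(2h)$ odd; the $(-q^{n})$ from $a(2n)$ produces an overall minus sign, so
\[
\frac{^{\scriptscriptstyle \mathcal{D}_{N}^{*}}F(T)}{F(T)} \,=\, -\!\!\sum_{\substack{lm=N\\m\text{-even}}}\!l\gamma_{l}\!\sum_{\substack{h\mid m/2\\(m/(2h))\text{-odd}}}\!\mu(m/(2h))\,q^{lh}.
\]
To match the stated form, I would perform the bijections $d:=m/(2h)$ on my inner sum and $d:=m/h$ on the target's inner sum; in both cases the nonvanishing of $\mu$ forces $d$ squarefree, and in the target case the condition ``$d$ even'' together with squarefreeness forces $d=2d_{0}$ with $d_{0}$ odd squarefree, contributing the sign $\mu(2d_{0})=-\mu(d_{0})$ that exactly cancels my overall minus. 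Both expressions then reduce to a common sum of the form $\sum_{d_{0}\mid m_{0}\text{-sqfree}}\mu(d_{0})\,q^{lm/(2d_{0})}$ (with $m_{0}$ the odd part of $m$).

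The main bookkeeping obstacle will be tracking 2-adic valuations and the sign from $\mu(2d_{0})=-\mu(d_{0})$ consistently across the two M\"obius manipulations; decomposing every summation according to $v_{2}$ from the start, and recording the natural bijection $h\leftrightarrow d$ in each summation, should make the identification transparent.
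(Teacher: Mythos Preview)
Your proposal is correct and follows essentially the same approach as the paper: both proofs pass to the exponential form of $f$ and $g$, invoke the general action formula for $\mathcal{D}_{N}^{*}$ on products $\prod_{k} f_{\alpha}(t_{k})$, and then reduce to a M\"obius-inversion computation. You supply considerably more detail than the paper does---in particular, the paper dismisses \eqref{eq: 1 D_{N}^{*}} as ``a simple calculation,'' whereas your $d=2d_{0}$ bijection and the sign identification $\mu(2d_{0})=-\mu(d_{0})$ make explicit exactly the step the paper suppresses.
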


\begin{proof} Writing 
\begin{equation*}
f(t)  = 
\exp \bigg(\sum_{l = 1}^{\infty} \frac{\ga_{l}}{2} \log\big(1 - q^{l}t^{2 l}\big)\bigg)
\end{equation*} 
and expanding 
$
\log \big(1 - q^{l}t^{2 l}\big) = - \sum_{m = 1}^{\infty} (q^{l m} t^{2 l m}\slash m),
$ 
the identity \eqref{eq: 1 D_{N}^{*}} follows after a simple calculation. 

Similarly, 
\begin{equation*}
^{\scriptscriptstyle \mathcal{D}_{N}^{*}}G(T) \,=\, 
\Bigg(\sum_{\substack{J M = N \\ M-\text{odd}}} \mu(M) 
\sum_{\substack{l k = J \\ k-\text{odd}}} \eta_{l} \Bigg)\, G(T)
\end{equation*}
and \eqref{eq: 2 D_{N}^{*}} follows by applying the M\"obius inversion. This completes the proof. \end{proof}

The remaining of this section is devoted to the proof of our main theorem in the general case, subject to a functional relation, whose proof will be given in Section \ref{section: The series BarC(X, T, q)}.

\vskip10pt
\begin{thm}\label{princ-theorem} --- With the above notation, we have 
\begin{equation*}
C(C(\underline{X}, T, q), q  \, T, 1\slash q) = \underline{X}.
\end{equation*}
In other words, $C(\underline{X}, T, q)$ is the formal compositional inverse of 
$C(\underline{X}, q\, T, 1\slash q).$
\end{thm}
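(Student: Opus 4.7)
The plan is to mimic the two-step induction used to establish Theorem \ref{split case} (the split case), with the series $\bar{c}(x, T, q)$ replaced by a generating series $\bar{C}(X, T, q)$ to be introduced in Section \ref{section: The series BarC(X, T, q)}. The key external input is the functional relation \eqref{eq: func-rel-section8} satisfied by $\bar{C}$, which is the analog of \eqref{eq: functional-rel-special-split-case} and whose proof rests on a careful analysis of the combinatorial structure of certain moduli spaces of admissible double covers; the present theorem is thus proved subject to that analysis.

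First I would expand
\[
C(C(\underline{X}, T, q), q\, T, 1\slash q) \; = \sum_{\mathfrak{n}, \mathfrak{i}, \mathfrak{j}} \mathcal{C}_{\mathfrak{n}, \mathfrak{i}, \mathfrak{j}}(T, q) \, X^{\mathfrak{n}} Y^{\mathfrak{i}} Z^{\mathfrak{j}}
\]
(separately for each of the three components), and verify the base cases $|\mathfrak{n}| + |\mathfrak{i}| + |\mathfrak{j}| \le 1$ directly from the definitions of $A_{\mathfrak{n}, \mathfrak{i}, \mathfrak{j}}$ together with \eqref{eq: f0}, \eqref{eq: iotaf0} and \eqref{eq: f1}: these produce precisely the linear part $\underline{X}$. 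The induction proceeds on $N := |\mathfrak{n}|$, and comes in two steps, exactly as in Theorem \ref{split case}.

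Step one: I would prove that the vanishing of $\mathcal{C}_{\mathfrak{n}, \mathbf{0}, \mathbf{0}}(T, q)$ for some fixed $\mathfrak{n}$ with $|\mathfrak{n}| \ge 2$ forces $\mathcal{C}_{\mathfrak{n}, \mathfrak{i}, \mathfrak{j}}(T, q) = 0$ for all $\mathfrak{i}, \mathfrak{j}$. As in the split case, each contribution to $\mathcal{C}_{\mathfrak{n}, \mathfrak{i}, \mathfrak{j}}$ is indexed by a tuple $\underline{\tilde{\alpha}}$ derived from a tuple $\underline{\alpha}$ contributing to $\mathcal{C}_{\mathfrak{n}, \mathbf{0}, \mathbf{0}}$, and the sum over all $\underline{\tilde{\alpha}}\slash \underline{\alpha}$ may be recognized as the image of the corresponding $\underline{\alpha}$-term under a binomial differential symbol built from the operators $\mathcal{D}_{N}^{*}$ of \eqref{eq: D_{N}^{*}}; Lemma \ref{Identity1} ensures that these operators act with the correct exponents on the factors $\tilde{E}(\pm T^{k}\!, q^{k})^{\mathfrak{i}}, \tilde{E}(\pm T^{k}\!, q^{k})^{\mathfrak{j}}$ appearing in the denominators of $C_{k,\text{odd}}$ and $C_{k,\text{even}}^{\pm}$, and the Leibniz rule lets one distribute them across the product structure. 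Summing over $\underline{\alpha}$ then expresses $\mathcal{C}_{\mathfrak{n}, \mathfrak{i}, \mathfrak{j}}$ as a differential-operator image of $\mathcal{C}_{\mathfrak{n}, \mathbf{0}, \mathbf{0}}$, divided by an explicit product of $\tilde{E}$'s.

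Step two: assuming inductively that $\mathcal{C}_{\mathfrak{m}, \mathfrak{p}, \mathfrak{l}}(T, q) = 0$ for all $0 < |\mathfrak{m}| < N$ (which by Step 1 may be reduced to $\mathfrak{p} = \mathfrak{l} = \mathbf{0}$), together with the base case handled by the choice \eqref{eq: f0}--\eqref{eq: iotaf0} of $A_{\mathbf{0}, \mathfrak{i}, \mathfrak{j}}$, one shows $\mathcal{C}_{\mathfrak{n}, \mathbf{0}, \mathbf{0}}(T, q) = 0$ whenever $|\mathfrak{n}| = N$. The argument is parallel to the split case: one composes the residual identity
\[
C(C(\underline{X}, T, q), q\, T, 1\slash q) \, - \, \underline{X} \;\, = \sum_{|\mathfrak{n}| \, \ge \, N} \mathcal{C}_{\mathfrak{n}, \mathbf{0}, \mathbf{0}}(T, q) \, X^{\mathfrak{n}} \, + \, (\text{terms with $\mathfrak{i}$ or $\mathfrak{j}$ nonzero})
\]
on the outside with $\bar{C}(X, T, q)$ and invokes \eqref{eq: func-rel-section8} to conclude that the composition vanishes identically; since the leading (linear) part of $\bar{C}$ is the identity and the induction hypothesis kills every contribution of weight $< N$, extracting the coefficient of $X^{\mathfrak{n}}$ with $|\mathfrak{n}| = N$ forces $\mathcal{C}_{\mathfrak{n}, \mathbf{0}, \mathbf{0}}(T, q) = 0$.

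The main obstacle is the bookkeeping in Step 1: one must identify precisely which differential polynomial in $\mathcal{D}_{N}^{*}$ governs the passage from the $(\mathfrak{n}, \mathbf{0}, \mathbf{0})$-contribution to the $(\mathfrak{n}, \mathfrak{i}, \mathfrak{j})$-contribution, and in particular verify that the parameters $(\mathrm{Irr}_{q}(m) - n_{m})$ and $\mathscr{D}_{m}^{*}(\mathfrak{n}, q)$ appearing implicitly in $f_{\mathfrak{n}}(Y, Z, T, q)$ transform in exactly the predicted way under the substitution $T \mapsto q\, T$, $q \mapsto 1\slash q$. Beyond this, the deeper difficulty — the functional relation \eqref{eq: func-rel-section8} for $\bar{C}$ — is the content of Section \ref{section: The series BarC(X, T, q)}, and this is where the moduli-theoretic combinatorics of admissible double covers enters; once that relation is in hand, the structural parallel with the split case carries the argument through.
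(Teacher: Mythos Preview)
Your proposal is correct and follows essentially the same approach as the paper: a two-step induction on $|\mathfrak{n}|$, where Step~1 expresses $\mathcal{C}_{\mathfrak{n}, \mathfrak{i}, \mathfrak{j}}$ as a differential operator (built from the $\mathcal{D}_{N}^{*}$, cf.~Lemma~\ref{Diff-op} in the paper) applied to $\mathcal{C}_{\mathfrak{n}, \mathbf{0}, \mathbf{0}}$, and Step~2 invokes the functional relation~\eqref{eq: func-rel-section8} for $\bar{C}(X, T, q)$ from Section~\ref{section: The series BarC(X, T, q)}. The paper adds two refinements you gloss over: a reduction to the $k=1$ components $C_{1,\text{odd}}, C_{1,\text{even}}^{\pm}$ (the higher $k$ being base changes), and explicit base-case verifications for $\mathfrak{n}=\mathbf{0}$ and $\mathfrak{n}=\mathbf{1}$ via Lemma~\ref{Identity Irr}; the most substantial work, which you correctly flag as the ``main obstacle,'' is the precise identification of the differential operator $\mathscr{D}_{\mathfrak{N}, \mathfrak{I}, \mathfrak{J}}$ in~\eqref{eq: D_{N, I, J}}.
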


\begin{proof} As in the previous subsection, we shall reduce the assertion of the theorem to the existence of a generating function satisfying a functional relation, see Section 
\ref{section: The series BarC(X, T, q)}.

First, notice that it is enough to show that 
\begin{equation*}
C_{1, \text{odd}}(C(\underline{X}, T, q), q  \, T, 1\slash q) = X_{1} 
\end{equation*}
$
C_{1, \text{even}}^{-}(C(\underline{X}, T, q), q  \, T, 1\slash q) = Y_{1}
$ 
and 
$
C_{1, \text{even}}^{+}(C(\underline{X}, T, q), q  \, T, 1\slash q) = Z_{1}.
$ 
By Lemma \ref{evenness-A}, the expansion of $A_{\mathfrak{n}}(T, q)$ in terms of 
the symplectic Schur functions 
$
s_{\scriptscriptstyle \langle \lambda^{\dag} \rangle}(q^{\pm \frac{1}{2}} T^{\pm 1})
$ 
is only over partitions of even weight whenever $|\mathfrak{n}|$ is odd; therefore, 
$
A_{\mathfrak{n}}( - q\, T, 1\slash q) = A_{\mathfrak{n}}(q\, T, 1\slash q),
$ 
and from the definition of $A_{\mathfrak{n}, \mathfrak{i}, \mathfrak{j}}(q\, T, 1\slash q),$  
\begin{equation*}
{^\iota}\!\!A_{\mathfrak{n}, \mathfrak{i}, \mathfrak{j}}(q \, T, 1\slash q) 
= A_{\mathfrak{n}, \mathfrak{i}, \mathfrak{j}}(q\, T, 1\slash q) 
\end{equation*} 
whenever $|\mathfrak{n}|$ is odd (see also the above remark). Replacing $T$ by $-T$ (i.e., $t_{k}$ by $-t_{k}$ for $1\le k \le r$), and interchanging $Y_{1}$ and $Z_{1},$ $Y_{3}$ and $Z_{3}, \ldots,$ it follows that we have the implication
\begin{equation*}
C_{1, \text{even}}^{+}(C(\underline{X}, T, q), q  \, T, 1\slash q) = Z_{1}
\implies
C_{1, \text{even}}^{-}(C(\underline{X}, T, q), q  \, T, 1\slash  q) = Y_{1}.
\end{equation*}
Now expand 
\begin{equation*}
C_{1, \text{odd}}(C(\underline{X}, T, q), q  \, T, 1\slash q) 
\;\, = \sum_{\substack{\mathfrak{n}, \mathfrak{i}, \mathfrak{j} \\  |\mathfrak{n}|-\text{odd}}} 
C_{\mathfrak{n}, \mathfrak{i}, \mathfrak{j}}(T, q) X^{\mathfrak{n}} Y^{\mathfrak{i}}  
Z^{\mathfrak{j}}
\end{equation*}
and 
\begin{equation*}
C_{1, \text{even}}^{+}(C(\underline{X}, T, q), q  \, T, 1\slash q) 
\;\, = \sum_{\substack{\mathfrak{n}, \mathfrak{i}, \mathfrak{j} \\  |\mathfrak{n}|-\text{even}}} 
C_{\mathfrak{n}, \mathfrak{i}, \mathfrak{j}}(T, q) X^{\mathfrak{n}} Y^{\mathfrak{i}}  
Z^{\mathfrak{j}}.
\end{equation*}
By \eqref{eq: f1}, the subseries of 
$
C_{1, \text{odd}}(C(\underline{X}, T, q), q  \, T, 1\slash q)
$ 
corresponding to $\mathfrak{n} = {\bf 1}$ can be expressed by 
\begin{equation*}
X_{1}\sum_{\mathfrak{i}, \mathfrak{j}} 
C_{{\bf 1}, \mathfrak{i}, \mathfrak{j}}(T, q) Y^{\mathfrak{i}}  
Z^{\mathfrak{j}}  = 
X_{1}\! \prod_{m = 1}^{\infty} 
\big(1  + Y_{m}\slash \tilde{E}( - T^{m}\!, q^{m}) \big)^{\omega_{m}^{-}(q)}
\big(1 + Z_{m}\slash \tilde{E}(T^{m}\!, q^{m}) \big)^{\omega_{m}^{+}(q)}
\end{equation*}
where for $m\ge 1$ with $2^{b} \parallel m,$  
\begin{equation*}
\omega_{m}^{-}(q) = 
\frac{c_{m \slash 2^{b}}(q^{2^b})}{2m}\;\; + 
\sum_{\substack{l \mid m \\ m\slash l - \text{odd}}} 
\frac{\mathrm{Irr}_{q^{l}}(m\slash l) \, c_{l \slash 2^{b}}(q^{- 2^b})}{2 l} 
\end{equation*}
and 
\begin{equation*}
\omega_{m}^{+}(q) = 
\frac{2c_{m}(q) - c_{m \slash 2^{b}}(q^{2^b})}{2m} \;\; + 
\sum_{\substack{k \mid m \\ m\slash k - \text{even}}} \frac{c_{k}(1\slash q)\, \mathrm{Irr}_{q^{k}}(m\slash k)}{k} \;\; + 
\sum_{\substack{l \mid m \\ m\slash l - \text{odd}}} \frac{\mathrm{Irr}_{q^{l}}(m\slash l)
(2c_{l}(1\slash q) - c_{l \slash 2^{b}}(q^{- 2^b}))}{2 l}.
\end{equation*}
Put $m = 2^{b} m'.$ If $m' = 1,$ then, by \eqref{eq: c_{m}}, 
$$
\omega_{m}^{-}(q)  = 2^{- b - 1}\cdot \big(c_{1}(q^{2^b}) \, +\, 
q^{2^{b}}c_{1}(q^{-2^{b}}) \big) \, =\, 0
$$ 
and if $m' > 1,$ we can write 
\begin{equation*}  
\omega_{m}^{-}(q)  = \frac{c_{m'}(q^{2^b})}{2^{b + 1} m'} \, + \,  
\sum_{l' \mid m'} \frac{c_{l'}(q^{-2^b})\, \mathrm{Irr}_{\! q^{2^{b} l'}}(m'\slash l')}{2^{b + 1} l'} \,=\, 2^{- b - 1}\! \sum_{l' \mid m'} \mathrm{Irr}_{q^{-2^{b}}}(l')\, \mathrm{Irr}_{\! q^{2^{b} l'}}(m'\slash l').
\end{equation*} 
The last sum vanishes by Lemma \ref{Identity Irr}. Similarly, $\omega_{1}^{+}(q) = 0,$ and if 
$m  > 1,$ one can reduce $\omega_{m}^{+}(q)$ to 
\begin{equation*}  
\omega_{m}^{+}(q) = -\, \frac{c_{m'}(q^{2^{b}})}{2^{b + 1}m'} \, + \, 
\frac{\mathrm{Irr}_{\! q^{2^{b}}}(m')  - \mathrm{Irr}_{1}(m')}{2^{b + 1}}.
\end{equation*} 
Thus $\omega_{m}^{+}(q) = 0.$

Next, notice that
\begin{equation*}
\sum_{\mathfrak{i}, \mathfrak{j}} C_{{\bf 0}, \mathfrak{i}, \mathfrak{j}}(T, q)  Y^{\mathfrak{i}} Z^{\mathfrak{j}} = \,  E(T) E(q T) \left[\,\prod_{k, l = 1}^{\infty}
\bigg(1 + \frac{Z_{k l}}{E(T^{k l}) E(q^{k l} T^{k l})} \bigg)^{\mathrm{Irr}_{\! q^{-1}}\!(k)\mathrm{Irr}_{\! q^{k}}\!(l)}   - \,  1 \,\right]. 
\end{equation*}
By Lemma \ref{Identity Irr}
\begin{equation*}
\sum_{k l = m} \mathrm{Irr}_{q^{-1}}(k) \mathrm{Irr}_{q^{k}}(l) = 0    \qquad \;\;\;  
(\text{if $m > 1$})
\end{equation*} 
and so 
\begin{equation*} 
\sum_{\mathfrak{i}, \mathfrak{j}} C_{{\bf 0}, \mathfrak{i}, \mathfrak{j}}(T, q)  
Y^{\mathfrak{i}}  Z^{\mathfrak{j}} = \, 
E(T) E(q T) \left[\bigg(1 + \frac{Z_{1}}{E(T) E(q T)} \bigg)^{ \mathrm{Irr}_{\! q^{-1}}(1)\mathrm{Irr}_{q}(1)}   - \,  1 \,\right] 
= Z_{1}.
\end{equation*} 
Our next step is to show that if $C_{\mathfrak{N}, {\bf 0}, {\bf 0}}(T, q) = 0$ for a partition $\mathfrak{N}$ of weight $|\mathfrak{N}|\ge 2,$ then $C_{\mathfrak{N}, \mathfrak{I}, \mathfrak{J}}(T, q) = 0$ for arbitrary partitions $\mathfrak{I}$ 
and $\mathfrak{J}.$ As in the proof of Theorem \ref{split case}, we shall deduce this from the fact that $C_{\mathfrak{N}, \mathfrak{I}, \mathfrak{J}}(T, q) \,= \, ^{*} C_{\mathfrak{N}, {\bf 0}, {\bf 0}}(T, q)$ for a certain differential operator $*;$ the precise definition of this differential operator is given in \eqref{eq: D_{N, I, J}}.

Indeed, as in the proof of Theorem \ref{split case}, we first write 
\begin{equation*}
C_{\mathfrak{N}, {\bf 0}, {\bf 0}}(T, q) = \sum_{S} c_{\textsc{\tiny$S$}} 
M_{\textsc{\tiny$S$}}(T, q) 
\end{equation*} 
summed over tuples 
$
S = (S_{1}^{}, \ldots, S_{1}^{-}, \ldots, S_{1}^{+}, \ldots)
$ 
with each $S_{k}^{},$ or $S_{l}^{\pm},$ itself a tuple of partitions; for $k \ge 1,$ 
$S_{k} = (\mathfrak{n}_{k1}, \mathfrak{n}_{k2}, \ldots, \mathfrak{n}_{k n_{k}})$ with $|\mathfrak{n}_{kh}|$ odd for all $1\le h \le n_{k},$ and for $l\ge 1,$ 
$S_{l}^{\pm} = (\mathfrak{n}_{l1}^{\pm}, \mathfrak{n}_{l2}^{\pm}, \ldots, \mathfrak{n}_{l n_{l}^{\pm}}^{\pm})$ with $|\mathfrak{n}_{l m}^{\pm}|$ even for all $1\le m \le n_{l}^{\pm};$ we also require that
\begin{equation*}
\mathfrak{N} = \mathfrak{n}_{11} \cup \cdots \cup \mathfrak{n}_{1 n_{1}} \cup 2\mathfrak{n}_{21}\cup \cdots \cup 2\mathfrak{n}_{2 n_{2}} \cup \cdots \cup  \mathfrak{n}_{11}^{-} \cup \cdots \cup {\mathfrak{n}^{-}}_{\!\!\!\! 1 n_{1}^{-}} \cup \cdots \cup l\mathfrak{n}_{l1}^{+} \cup \cdots \cup 
{l\mathfrak{n}^{+}}_{\!\!\!\! l n_{l}^{+}} \cup \cdots.
\end{equation*}
If we define $M_{\textsc{\tiny$S_{k}$}}\!(T, q)$ and $M_{\textsc{\tiny$S_{l}^{\pm}$}}(T, q)$ by
\begin{equation*}
M_{\textsc{\tiny$S_{k}$}}\!(T, q) = 
A_{\mathfrak{n}_{k1}}\!(T, q) 
\, \cdots \,
A_{\mathfrak{n}_{k n_{k}}}\!(T, q) \;\;\; \text{and} \;\;\; 
M_{\textsc{\tiny$S_{l}^{\pm}$}}(T, q) = 
A_{\mathfrak{n}_{l1}^{\pm}}\!(\pm T, q) 
\, \cdots \,  
A_{\mathfrak{n}_{l n_{l}^{\pm}}^{\pm}}\!(\pm T, q)
\end{equation*}
then $M_{\textsc{\tiny$S$}}(T, q)$ above is given by 
\begin{equation*}
M_{\textsc{\tiny$S$}}(T, q) = 
\frac{A_{\mathfrak{n}, \mathfrak{i}, \mathfrak{j}}(q\, T, 1\slash q)}{E^{^{-}}\!(T, q)^{ \mathfrak{i}} E^{^{+}}\!(T, q)^{\mathfrak{j}}}
\cdot \prod_{k, l, l'}
M_{\textsc{\tiny$S_{k}^{}$}}\!(T^{k}\!, q^{k}) M_{\textsc{\tiny$S_{l}^{-}$}}(T^{l}\!, q^{l}) 
M_{\textsc{\tiny$S_{l'}^{+}$}}(T^{l'}\!, q^{l'})
\end{equation*}
with $\mathfrak{n} = (1^{n_{1}}\!, 2^{n_{2}}\!, \ldots),$ 
$\mathfrak{i}  = (1^{n_{1}^{-}}\!, 2^{n_{2}^{-}}\!, \ldots)$ 
and $\mathfrak{j}  = (1^{n_{1}^{+}}\!, 2^{n_{2}^{+}}\!, \ldots).$ The 
constant $c_{\textsc{\tiny$S$}}$ is given in \eqref{eq: $c_{S}$}.

Define 
\begin{equation*}
\mathcal{D}^{*}\!(Y, Z) = \prod_{N = 1}^{\infty} 
\bigg(\frac{1 + \tilde{Z}_{N}}{1  + \tilde{Y}_{N}} \bigg)^{\!\!\frac{\mathcal{D}_{N}^{*}}{2N}}
\end{equation*}
where, for notational convenience, we set
$\tilde{Y}_{N} : = Y_{N}\slash \tilde{E}(- T^{N}\!, q^{N})$ 
and $\tilde{Z}_{N} := Z_{N}\slash \tilde{E}(T^{N}\!, q^{N}),$ 
and where $\mathcal{D}_{N}^{*}$ for $N\ge 1$ is the differential 
operator defined by \eqref{eq: D_{N}^{*}}. The idea is to compare 
$^{\scriptscriptstyle \mathcal{D}^{*}\!(Y, Z)}\!M_{\textsc{\tiny$S$}}(T, q)$ with the 
contribution of all the terms {\it derived} from, or corresponding to 
$M_{\textsc{\tiny$S$}}(T, q),$ for fixed $S,$ in the composition 
$
C_{1, \text{odd}}(C(\underline{X}, T, q), q  \, T, 1\slash q).
$ 
We will show that these two agree up to a normalizing factor, 
depending only upon the fixed partition $\mathfrak{N}.$
 
First, the contribution of all the terms derived from 
$M_{\textsc{\tiny$S$}}(T, q)$ in
$
C_{1, \text{odd}}(C(\underline{X}, T, q), q  \, T, 1\slash q)  
$ 
can be easily described as follows. Consider the subseries  
$ 
X^{\mathfrak{n}}\!f_{\mathfrak{n}}((\tilde{Y}_{m})_{m\ge 1}, (\tilde{Z}_{m})_{m\ge 1}, q\, T, 1\slash q)
$ 
of $C_{1, \text{odd}}(\underline{X}, q\, T, 1\slash q),$ where we recall that 
\begin{equation*}
f_{\mathfrak{n}}(Y, Z, q\, T, 1\slash q) \, = \, ^{\tiny{\tiny{\prod_{m = 1}^{\infty} 
(1  +   Y_{m})^{(\mathrm{Irr}_{1\slash q}(m) - n_{m})\slash 2 - \mathscr{D}_{m}^{*}(\mathfrak{n}, 1\slash q) \slash 2m}
(1 + Z_{m})^{(\mathrm{Irr}_{1\slash q}(m) - n_{m})\slash 2 + \mathscr{D}_{m}^{*}(\mathfrak{n}, 1\slash q) \slash 2m}}}}
\!\!\!A_{\mathfrak{n}}(q \,T, 1\slash q); 
\end{equation*} 
for simplicity, let us assume that $|\mathfrak{n}|$ is odd. In 
$ 
X^{\mathfrak{n}}\!f_{\mathfrak{n}}((\tilde{Y}_{m})_{m\ge 1}, (\tilde{Z}_{m})_{m\ge 1}, q\, T, 1\slash q),
$ 
replace $X_{k}, Y_{l}$ and $Z_{l'}$ for all $k, l, l' \ge 1$ by the subseries 
of $C_{k, \text{odd}}(\underline{X}, T, q), C_{l, \text{even}}^{-}(\underline{X}, T, q)$ and 
$C_{l', \text{even}}^{+}(\underline{X}, T, q)$ given by
\begin{equation*}
\sum X^{k \mathfrak{n}_{k h}}\!f_{\mathfrak{n}_{k h}}((\tilde{Y}_{k m})_{m\ge 1}, 
(\tilde{Z}_{k m})_{m\ge 1}, T^{k}\!, q^{k})
\end{equation*}
\begin{equation*}
f_{{\bf 0}}^{-}((\tilde{Y}_{l m})_{m\ge 1}, (\tilde{Z}_{l m})_{m\ge 1}, T^{l}\!, q^{l})  \,+\, 
\sum X^{l \mathfrak{n}_{l h}^{-}}\!f_{\mathfrak{n}_{l h}^{-}}^{-}\!((\tilde{Y}_{l m})_{m\ge 1}, (\tilde{Z}_{l m})_{m\ge 1}, T^{l}\!, q^{l})
\end{equation*} 
and 
\begin{equation*}
f_{{\bf 0}}^{+}((\tilde{Y}_{l' m})_{m\ge 1}, (\tilde{Z}_{l' m})_{m\ge 1}, T^{l'}\!, q^{l'})\,
+\, \sum X^{l' \mathfrak{n}_{l' h}^{+}} 
\!f_{\mathfrak{n}_{l' h}^{+}}^{+}\!((\tilde{Y}_{l' m})_{m\ge 1}, (\tilde{Z}_{l' m})_{m\ge 1}, T^{l'}\!, q^{l'})
\end{equation*} 
respectively, the sums being over all the {\it distinct} components of $S_{k}^{}, S_{l}^{-}$ and $S_{l'}^{+}.$ Here, for a partition $\mathfrak{n}$ of {\it even} weight, we put: 
$f_{\mathfrak{n}}^{+}(Y, Z, T, q) = f_{\mathfrak{n}}(Y, Z, T, q),$ and 
\begin{equation*}
f_{\mathfrak{n}}^{-}(Y, Z, T, q) = 
\sum_{\mathfrak{i}, \mathfrak{j}} 
{^\iota}\!\!A_{\mathfrak{n}, \mathfrak{i}, \mathfrak{j}}(T, q)
\frac{Y^{\mathfrak{i}} Z^{\mathfrak{j}}}{\mathfrak{i}! \mathfrak{j}!}; 
\end{equation*}
the generating series $f_{{\bf 0}}^{+}(Y, Z, T, q) = f_{{\bf 0}}^{}(Y, Z, T, q)$ (resp. 
$f_{{\bf 0}}^{-}(Y, Z, T, q)$) is defined by \eqref{eq: f0} (resp. \eqref{eq: iotaf0}). 
This gives an expression, in which the coefficient of 
$
X^{\mathfrak{N}}: = X^{\mathfrak{n}_{11}} \cdots X^{\mathfrak{n}_{1 n_{1}}} X^{2\mathfrak{n}_{21}} \cdots X^{l\mathfrak{n}_{l1}^{+}} \cdots X^{{l\mathfrak{n}^{+}}_{\!\!\!\!\!^{l n_{l}^{+}}}}  \cdots
$ 
is precisely the contribution of all the terms corresponding to the monomial $M_{\textsc{\tiny$S$}}(T, q).$

To write this contribution explicitly, define 
\begin{equation*}
f_{\! \textsc{\tiny$S_{k}$}}^{}\!(Y, Z, T, q) = 
\binom{n_{k}}{m(\mathfrak{n}_{k1}), \ldots, m(\mathfrak{n}_{k p_{k}})} 
f_{\mathfrak{n}_{k1}}\!(Y, Z, T^{k}\!, q^{k}) 
\, \cdots \, f_{\mathfrak{n}_{k n_{k}}}\!(Y, Z, T^{k}\!, q^{k})
\end{equation*} 
where $\mathfrak{n}_{k1}, \ldots, \mathfrak{n}_{k p_{k}}$ are the distinct components 
of $S_{k},$ and $m(\mathfrak{n}_{k h})$ denotes the multiplicity (i.e., the number of 
occurrences) of $\mathfrak{n}_{k h}$ in $S_{k},$ for all $1 \le h \le  p_{k}.$ Similarly, define
\begin{equation*}
f_{\! \textsc{\tiny$S_{l}^{\pm}$}}(Y, Z, T, q) = 
\binom{\mathscr{E}_{l}^{\pm}(\mathfrak{n}, 1\slash q)}
{m(\mathfrak{n}_{l1}^{\pm}), \ldots, m({\mathfrak{n}^{\pm}}_{\!\!\!\! \scriptscriptstyle l p_{l}^{\pm}}), \mathscr{E}_{l}^{\pm}(\mathfrak{n}, 1\slash q) - \cdots - m({\mathfrak{n}^{\pm}}_{\!\!\!\! \scriptscriptstyle l p_{l}^{\pm}})}
\frac{f_{\! \mathfrak{n}_{l1}^{\pm}}^{\pm}\!(Y, Z, T^{l}\!, q^{l})}
{g_{{\bf 0}}^{\pm}(Y, Z, T^{l}\!, q^{l})} \cdots 
\frac{f_{\!{\mathfrak{n}^{\pm}}_{\!\!\!\! \scriptscriptstyle l n_{l}^{\pm}}}^{\pm}\!(Y, Z, T^{l}\!, q^{l})}{g_{{\bf 0}}^{\pm}(Y, Z, T^{l}\!, q^{l})}
\end{equation*} 
where we put 
$g_{{\bf 0}}^{\pm}(Y, Z, T, q) = 1 + (f_{{\bf 0}}^{\pm}(Y, Z, T, q)
\slash \tilde{E}(\pm T, q)),$ and 
\begin{equation*}
\mathscr{E}_{l}^{\pm}(\mathfrak{n}, 1\slash q) = (\mathrm{Irr}_{1\slash q}(l) - n_{l})\slash 2 \pm \mathscr{D}_{l}^{*}\!(\mathfrak{n}, 1\slash q) \slash 2l
\end{equation*}
are precisely the exponents in the product appearing in the above expression of 
$f_{\mathfrak{n}}(Y, Z, q\, T, 1\slash q).$ If we set
\begin{equation*}
f_{\! \textsc{\tiny$S$}}^{}(Y, Z, T, q) : =
\prod_{k, l, l'} 
f_{\! \textsc{\tiny$S_{k}$}}^{}\!((\tilde{Y}_{k \alpha})_{\alpha}, (\tilde{Z}_{k \alpha})_{\alpha}, T, q) 
f_{\! \textsc{\tiny$S_{l}^{-}$}}((\tilde{Y}_{l \alpha})_{\alpha}, (\tilde{Z}_{l \alpha})_{\alpha}, T, q) 
f_{\! \textsc{\tiny$S_{l'}^{+}$}}((\tilde{Y}_{l' \alpha})_{\alpha}, (\tilde{Z}_{l' \alpha})_{\alpha}, T, q)
\end{equation*}
then the contribution we are interested in is expressed by
\begin{equation} \label{eq: contribution over $M_{S}(T, q)$}
\frac{^{^{f_{\! \scriptscriptstyle \textsc{$S$}}(Y, Z, T, q)
\prod_{l = 1}^{\infty} \prod_{m-\text{odd}} \, (1 + \tilde{Y}_{l m})
^{\mathrm{Irr}_{\! q^{l}}(m)\mathscr{E}_{l}^{-}(\mathfrak{n}, 1\slash q)} 
(1 + \tilde{Z}_{l m})^{\mathrm{Irr}_{\! q^{l}}(m)
\mathscr{E}_{l}^{+}(\mathfrak{n}, 1\slash q)}  
\prod_{m-\text{even}}\, (1 + \tilde{Z}_{l m})^{\mathrm{Irr}_{\! q^{l}}(m) 
(\mathrm{Irr}_{q^{-1}}(l) \,-\, n_{l})}}}
\!\!\!A_{\mathfrak{n}}(q \,T, 1\slash q)}{E^{^{-}}\!(T, q)^{\mathfrak{i}} E^{^{+}}\!(T, q)^{\mathfrak{j}}}.
\end{equation} 
In this expression, the differential operators commute with each other, and we have 
\begin{equation*}
A_{\mathfrak{n}, \mathfrak{i}, \mathfrak{j}}(q\, T, 1\slash q) \, = \;
^{\prod_{l} \mathscr{E}_{l}^{\pm}(\mathfrak{n}, 1\slash q)
(\mathscr{E}_{l}^{\pm}(\mathfrak{n}, 1\slash q) \,-\, 1) \, \cdots \, 
(\mathscr{E}_{l}^{\pm}(\mathfrak{n}, 1\slash q) \,+\, 1 \,-\, m(\mathfrak{n}_{l1}^{\pm}) \,-\, \cdots \,-\, m({\mathfrak{n}^{\pm}}_{\!\!\!\!\! l p_{l}^{\pm}}))}
\!\!\!A_{\mathfrak{n}}(q \,T, 1\slash q).
\end{equation*} 
The remaining parts of the multinomial coefficients make up the constant $c_{\textsc{\tiny$S$}},$ that is, 
\begin{equation} \label{eq: $c_{S}$}
c_{\textsc{\tiny$S$}}  = \prod_{k}\binom{n_{k}}{m(\mathfrak{n}_{k1}), \ldots, m(\mathfrak{n}_{k p_{k}})} \prod_{l} \big(m(\mathfrak{n}_{l1}^{-})! \, \cdots \, m({\mathfrak{n}^{-}}_{\!\!\!\! l p_{l}^{-}})!\, m(\mathfrak{n}_{l1}^{+})! \, \cdots \, m({\mathfrak{n}^{+}}_{\!\!\!\! l p_{l}^{+}})! \big)^{-1}.
\end{equation} 
To simplify things, using Lemma \ref{Identity Irr}, one can write the products in 
\eqref{eq: contribution over $M_{S}(T, q)$} as
\begin{equation} \label{eq: product in $M_{S}(T, q)$}
\small{\prod_{l = 1}^{\infty} \prod_{m-\text{odd}}  
\bigg(\frac{1 + \tilde{Z}_{l m}}{1 + \tilde{Y}_{l m}}\bigg)
^{\!\! \frac{\mathrm{Irr}_{\! q^{l}}(m)\mathscr{D}_{l}^{*}\!(\mathfrak{n}, 1\slash q)}{2l}}
\prod_{m-\text{even}} \bigg(\frac{1 + \tilde{Z}_{l m}}{1 + \tilde{Y}_{l m}}\bigg)
^{\!\! \frac{\mathrm{Irr}_{\! q^{l}}(m)(\mathrm{Irr}_{q^{-1}}(l) \,-\, n_{l})}{2}} 
\prod_{m} [(1 + \tilde{Y}_{l m}) ( 1 + \tilde{Z}_{l m})]
^{\frac{\epsilon_{l m} - n_{l}\mathrm{Irr}_{\! q^{l}}(m)}{2}}}    
\end{equation} 
with $\epsilon_{1} = 1$ and $\epsilon_{N} = 0$ if $N\ge 2.$ 
We combine the factor
$
\prod_{l, m = 1}^{\infty} 
[(1 + \tilde{Y}_{l m}) ( 1 + \tilde{Z}_{l m})]
^{-n_{l}\mathrm{Irr}_{q^{l}}(m)\slash 2} 
$ 
in \eqref{eq: product in $M_{S}(T, q)$} with 
$f_{\textsc{$S$}}(Y, Z, T, q).$ Note that the products in 
\eqref{eq: product in $M_{S}(T, q)$} {\it not} 
involving any of the $\mathscr{D}_{l}^{*}\!(\mathfrak{n}, 1\slash q)$ 
($l = 1, 2, \ldots$) act on $A_{\mathfrak{n}}(q \,T, 1\slash q)$ by 
scalar multiplication. Let 
\begin{equation*}
\mathcal{G} = \small{\prod_{l = 1}^{\infty}\,  \prod_{m-\text{odd}}  
\bigg(\frac{1 + \tilde{Z}_{l m}}{1 + \tilde{Y}_{l m}}\bigg)
^{\!\! \frac{\mathrm{Irr}_{\! q^{l}}(m)\mathscr{D}_{l}^{*}\!(\mathfrak{n}, 1\slash q)}{2l}}
\prod_{m-\text{even}} \bigg(\frac{1 + \tilde{Z}_{l m}}{1 + \tilde{Y}_{l m}}\bigg)
^{\!\! \frac{\mathrm{Irr}_{\! q^{l}}(m)(\mathrm{Irr}_{q^{-1}}(l) \,-\, n_{l})}{2}}}.   
\end{equation*}

\vskip10pt
\begin{lemma} 
\label{Diff-op} --- With notations as above, we have
\begin{equation*}
\mathcal{G} = \prod_{N = 1}^{\infty} 
\bigg(\frac{1 + \tilde{Z}_{N}}{1  + \tilde{Y}_{N}} \bigg)
^{\! (\widetilde{\mathcal{D}}_{N}^{*} \,+\, \mathcal{R}_{N})\slash 2N}
\end{equation*}
with 
\begin{equation*}
\widetilde{\mathcal{D}}_{N}^{*} \;\; = 
\sum_{\substack{k \mid N \\ (N\slash k)-\text{odd}}} 
q^{k}\mu(N\slash k)\, k\sum \frac{(-1)^{l_{1} + \cdots + l_{k} + 1} 
(l_{1} + \cdots + l_{k} - 1)!}{l_{1}! \, \cdots \, l_{k}!} 
\, \partial_{l_{1}\!, \ldots, l_{k}}
\end{equation*} 
the inner sum being over all $l_{1}, \ldots, l_{k}\in \mathbb{N}$ satisfying 
$l_{1} + 2 l_{2} + \cdots + k l_{k} = k,$ and 
\begin{equation*}
\mathcal{R}_{N} \;\, = 
\sum_{\substack{l m = N\\  m-\text{even}}} l (\mathrm{Irr}_{q^{-1}}(l) - n_{l})
\;\cdot  \!\sum_{\substack{h \mid m \\ (m \slash h)-\text{even}}}
\mu\Big(\frac{m}{h}\Big) q^{l h}. 
\end{equation*}
\end{lemma}

\begin{proof} In  
\begin{equation*} 
(2 N)^{-1} \!\sum_{\substack{l m = N \\ m-\text{odd}}} 
m\, \mathrm{Irr}_{q^{l}}(m)\mathscr{D}_{l}^{*}\!(\mathfrak{n}, 1\slash q) 
\end{equation*}
replace $\mathscr{D}_{l}^{*}\!(\mathfrak{n}, 1\slash q)$ by 
\begin{equation*}
\mathscr{D}_{l}^{*}\!(\mathfrak{n}, 1\slash q) = 
\mathcal{D}_{l}^{*} 
\, -\,  l \!\sum_{u = 1}^{\nu_{2}(l)} 
(\mathrm{Irr}_{q^{-1}}(l \slash 2^{u}) - n_{l \slash 2^{u}})\slash 2^{u}
\end{equation*}
(recall that we are assuming that $|\mathfrak{n}|$ is odd) to get
\begin{equation} \label{eq: 1}
(2 N)^{-1} \!\sum_{\substack{l m = N \\ m-\text{odd}}} 
m\, \mathrm{Irr}_{q^{l}}(m)\mathcal{D}_{l}^{*}  
\,-\,  (2 N)^{-1} \!\sum_{\substack{l m = N \\ m-\text{odd}}} 
m \,\mathrm{Irr}_{q^{l}}(m)\, l \!\sum_{u = 1}^{\nu_{2}(l)} 
(\mathrm{Irr}_{q^{-1}}(l \slash 2^{u}) - n_{l \slash 2^{u}})\slash 2^{u}.
\end{equation}
Since 
\begin{equation*}
\mathcal{D}_{l}^{*} \;\; = 
\sum_{\substack{k \mid l \\ (l\slash k)-\text{odd}}} 
\mu(l\slash k) \, k \sum \frac{(-1)^{l_{1} + \cdots + l_{k} + 1} 
(l_{1} + \cdots + l_{k} - 1)!}{l_{1}! \, \cdots \, l_{k}!} 
\, \partial_{l_{1}\!, \ldots, l_{k}}
\end{equation*} 
we can use the M\"obius inversion, in the form 
\begin{equation*}
x(l) \;\, = \sum_{\substack{k \mid l \\ (l\slash k)-\text{odd}}} 
\mu(l\slash k) y(k) \;\;\; \iff  \;\;\;
y(m) \;\, = \sum_{\substack{h \mid m \\ (m\slash h)-\text{odd}}} x(h)
\end{equation*} 
to express the first sum in \eqref{eq: 1} as  
\begin{equation*}
(2 N)^{-1} \!\sum_{\substack{k \mid N \\ (N\slash k)-\text{odd}}} 
q^{k}\mu(N\slash k) \, k\sum \frac{(-1)^{l_{1} + \cdots + l_{k} + 1}
(l_{1} + \cdots + l_{k} - 1)!}{l_{1}! \, \cdots \, l_{k}!} 
\, \partial_{l_{1}\!, \ldots, l_{k}}.
\end{equation*} 
Similarly, if we put 
$
x(l) = l \sum_{u = 1}^{\nu_{2}(l)} 
(\mathrm{Irr}_{q^{-1}}(l \slash 2^{u}) - n_{l \slash 2^{u}})\slash 2^{u},
$ 
then 
\begin{equation*}
y(m) \;\, = \sum_{\substack{h \mid m \\ (m\slash h)-\text{odd}}} x(h) \; = 
\sum_{l \mid (m\slash 2)} l (\mathrm{Irr}_{q^{-1}}(l) - n_{l})
\end{equation*}
and 
\begin{equation} \label{eq: 2}
(2 N)^{-1} \!\sum_{\substack{l m = N \\ m-\text{odd}}} 
m \,\mathrm{Irr}_{q^{l}}(m) x(l) \, = \, (2 N)^{-1} \!\sum_{\substack{l m = N \\ m-\text{odd}}} m \,\mathrm{Irr}_{q^{l}}(m) \!\sum_{\substack{k \mid l \\ (l\slash k)-\text{odd}}} 
\mu(l\slash k) y(k).
\end{equation} 
Summing first over $k$ and using the definition of $\mathrm{Irr}_{q^{l}}(m),$ we can express 
the right-hand side of \eqref{eq: 2} as 
\begin{equation*}
(2 N)^{-1} \!\sum_{\substack{k \mid N \\ (N\slash k)-\text{odd}}} 
y(k) \sum_{m s = (N\slash k)} \mu(s) 
\sum_{h \mid m} \mu(h) q^{\frac{N}{h}}
\;=\;
(2 N)^{-1} \!\sum_{\substack{k \mid N \\ (N\slash k)-\text{odd}}} 
\mu(N\slash k) y(k) q^{k}.
\end{equation*}
This further equals to 
\begin{equation*}
(2 N)^{-1} \!\sum_{\substack{l m = N\\  m-\text{even}}} l (\mathrm{Irr}_{q^{-1}}(l) - n_{l})
\; \cdot  \!\sum_{\substack{h \mid m \\ (m \slash h)-\text{odd}}}
\mu(m \slash h) q^{l h}. 
\end{equation*}
Our assertion follows at once from the fact that  
\begin{equation*}
\small{\prod_{l = 1}^{\infty}\, \prod_{m-\text{even}} 
\bigg(\frac{1 + \tilde{Z}_{l m}}{1 + \tilde{Y}_{l m}}\bigg)
^{\!\! \frac{\mathrm{Irr}_{\! q^{l}}(m)(\mathrm{Irr}_{q^{-1}}(l) \,-\, n_{l})}{2}} 
= \;\, \prod_{N = 1}^{\infty} 
\bigg(\frac{1 + \tilde{Z}_{N}}{1 + \tilde{Y}_{N}}\bigg)
^{\! \mathcal{Q}_{N}\slash 2N}}
\end{equation*}
with 
\begin{equation*} 
\mathcal{Q}_{N} \;\, = 
\sum_{\substack{l m = N\\  m-\text{even}}} l (\mathrm{Irr}_{q^{-1}}(l) - n_{l})
\sum_{h \mid m} \mu\Big(\frac{m}{h}\Big) q^{l h}. 
\end{equation*}
\end{proof} 
Now for any partition $\mathfrak{n} = (1^{n_{1}}\!, 2^{n_{2}}\!, \ldots)$ of odd weight, express  
\begin{equation*}
f_{\mathfrak{n}}(Y, Z, T, q) 
\,= \,^{\tiny{\tiny{\prod_{m = 1}^{\infty} 
(1 + Y_{m})^{(\mathrm{Irr}_{q}(m) - n_{m})\slash 2 - \mathscr{D}_{m}^{*}(\mathfrak{n}, q) \slash 2m}
(1 + Z_{m})^{(\mathrm{Irr}_{q}(m) - n_{m})\slash 2 + \mathscr{D}_{m}^{*}(\mathfrak{n}, q) \slash 2m}}}}
\!\!\!A_{\mathfrak{n}}(T, q).
\end{equation*}
Note that by \eqref{eq: f0} and \eqref{eq: iotaf0}, for any partition $\mathfrak{n} \ne {\bf 0}$ of even 
weight, we can also write 
\begin{equation*}
\frac{f_{\mathfrak{n}}^{+}(Y, Z, T, q)}{g_{{\bf 0}}^{+}(Y, Z, T, q)} 
\,= \,^{\tiny{\tiny{\prod_{m}
(1 + Y_{m})^{(\mathrm{Irr}_{q}(m) - n_{m})\slash 2 - \mathscr{D}_{m}^{*}(\mathfrak{n}, q) \slash 2m}
(1 + Z_{m})^{-(\mathrm{Irr}_{q}(m) + n_{m})\slash 2 +\mathscr{D}_{m}^{*}(\mathfrak{n}, q) \slash 2m}}}}
\!\!\!A_{\mathfrak{n}}(T, q)
\end{equation*}
and 
\begin{equation*}
\frac{f_{\mathfrak{n}}^{-}(Y, Z, T, q)}{g_{{\bf 0}}^{-}(Y, Z, T, q)} 
\,= \,^{\tiny{\tiny{\prod_{m} (1 + Y_{m})^{- \frac{n_{m} \,+\, (- 1)^{m + 1}\mathrm{Irr}_{q}(m)}{2}  \,+\, (-1)^{m + 1}\frac{\mathscr{D}_{m}^{*}(\mathfrak{n}, q)}{2m}}
(1 + Z_{m})^{ - \frac{n_{m} \,+\, (-1)^{m}\mathrm{Irr}_{q}(m)}{2} \,+\,  (-1)^{m}\frac{\mathscr{D}_{m}^{*}(\mathfrak{n}, q)}{2m}}}}}
\!\!({^\iota}\!\!A_{\mathfrak{n}}(T, q)).
\end{equation*} 
One can combine the definition of $f_{\! \textsc{\tiny$S$}}^{}(Y, Z, T, q)$ with the above identities 
to express 
\begin{equation} \label{eq: 1-fS}
f_{\! \textsc{\tiny$S$}}^{}(Y, Z, T, q) \!\prod_{l, m = 1}^{\infty} 
[(1 + \tilde{Y}_{l m}) ( 1 + \tilde{Z}_{l m})]
^{-n_{l}\mathrm{Irr}_{\! q^{l}}(m)\slash 2}. 
\end{equation} 
Indeed, by applying the first identity to  
$
\prod_{k \ge 1} 
f_{\! \textsc{\tiny$S_{k}$}}^{}\!((\tilde{Y}_{k m})_{m}, (\tilde{Z}_{k m})_{m}, T, q),
$ 
notice that the product in \eqref{eq: 1-fS} cancels out. Furthermore, by putting 
$
X^{\mathfrak{N}_{l}}= X^{l\mathfrak{n}_{l1}} \cdots X^{l\mathfrak{n}_{l n_{l}}} 
\cdots X^{l\mathfrak{n}_{l1}^{+}}  \cdots X^{{l\mathfrak{n}^{+}}_{\!\!\!\!\! ^{l n_{l}^{+}}}}\!,
$ 
for $l \ge 1,$ we can also separate out the product
\begin{equation*}
\prod_{l = 1}^{\infty} \prod_{m = 1}^{\infty} 
[(1 + \tilde{Y}_{l m}) (1 + \tilde{Z}_{l m})]
^{-(\text{exponent of $X_{l m}$ in $X^{\mathfrak{N}_{l}}$})\slash 2} 
\end{equation*}
which clearly depends only upon our original fixed partition $\mathfrak{N}.$ 
The remaining part of $f_{\! \textsc{\tiny$S$}}^{}(Y, Z, T, q),$ which acts on 
$A_{\mathfrak{n}}(q \,T, 1\slash q)$ by scalar multiplication, can be expressed using 
\eqref{eq: A_{n}(T, q)}. (Recall that the differential operator $\mathscr{D}_{m}^{*}(\mathfrak{n}, q),$ 
for $m\ge 1,$ acts on a product 
$
\prod_{k = 1}^{r} P_{\scriptscriptstyle C_{d}}(t_{k}), 
$ 
with $d \in \mathscr{P}(\mathfrak{n}, q),$ as scalar multiplication by $a_{m}^{*}(d, q).$)

Consider now $^{\scriptscriptstyle \mathcal{D}^{*}\!(Y, Z)}\!M_{\textsc{\tiny$S$}}(T, q).$ 
By the properties of the differential operator $\mathcal{D}_{N}^{*},$ 
we can write 
\begin{equation*}
^{\scriptscriptstyle \mathcal{D}^{*}\!(Y, Z)}\!M_{\textsc{\tiny$S$}}(T, q) 
\,= \, ^{\scriptscriptstyle \mathcal{D}^{*}\!(Y, Z)}\!A_{\mathfrak{n}, \mathfrak{i}, \mathfrak{j}}(q\, T, 1\slash q) \; \cdot ^{^{^{^{\scriptscriptstyle \mathcal{D}^{*}\!(Y, Z)}}}}\!\!\!{\bigg(\frac{\prod_{k, l, l'}M_{\textsc{\tiny$S_{k}^{}$}}\!(T^{k}\!, q^{k}) M_{\textsc{\tiny$S_{l}^{-}$}}(T^{l}\!, q^{l}) 
M_{\textsc{\tiny$S_{l'}^{+}$}}(T^{l'}\!, q^{l'})}{E^{^{-}}\!(T, q)^{\mathfrak{i}}E^{^{+}}\!(T, q)^{\mathfrak{j}}}\bigg)}
\end{equation*} 
where, for all $N\ge 1,$ $\mathcal{D}_{N}^{*}$ acts on $A_{\mathfrak{n}, \mathfrak{i}, \mathfrak{j}}(q\, T, 1\slash q)$ by 
$\widetilde{\mathcal{D}}_{N}^{*}$ (see Lemma \ref{Diff-op}). The second part 
\begin{equation*}
^{^{^{^{\scriptscriptstyle \mathcal{D}^{*}\!(Y, Z)}}}}\!\!{\bigg(\prod_{k}
M_{\textsc{\tiny$S_{k}^{}$}}\!(T^{k}\!, q^{k})\cdot \frac{\prod_{l, l'} 
M_{\textsc{\tiny$S_{l}^{-}$}}(T^{l}\!, q^{l}) 
M_{\textsc{\tiny$S_{l'}^{+}$}}(T^{l'}\!, q^{l'})}{E^{^{-}}\!(T, q)^{\mathfrak{i}}E^{^{+}}\!(T, q)^{\mathfrak{j}}}\bigg)}
\end{equation*} 
can be easily handled as follows. \!Express first the factors 
$A_{\mathfrak{n}_{k h}}\!(T^{k}\!, q^{k})$ (resp. 
\!$A_{\mathfrak{n}_{l h}^{\pm}}\!(\pm T^{l}\!, q^{l})$)
of $M_{\textsc{\tiny$S_{k}^{}$}}\!(T^{k}\!, q^{k})$ 
(resp. $M_{\textsc{\tiny$S_{l}^{\pm}$}}(T^{l}\!, q^{l})$) by their definition 
\eqref{eq: A_{n}(T, q)}; also, write explicitly 
$
E^{-}\!(T, q)^{\mathfrak{i}}E^{+}\!(T, q)^{\mathfrak{j}}.
$ 
For $\mathfrak{n}_{k h} = (1^{n_{1}(\mathfrak{n}_{k h})}\!, 2^{n_{2}(\mathfrak{n}_{k h})}\!, \ldots),$ 
express each characteristic polynomial $P_{\scriptscriptstyle C_{d}}(t_{\alpha}^{k}),$ for 
$d\in \mathscr{P}(\mathfrak{n}_{k h},  q^{k})$ and $\alpha = 1, \ldots, r,$ by
\begin{equation*}
P_{\scriptscriptstyle C_{d}}(t_{\alpha}^{k}) \,=\, 
\big(1 - q^{k}t_{\alpha}^{2k} \big)^{- \frac{1}{2}} 
\prod_{i = 1}^{\infty} (1 - t_{\alpha}^{2 k i})^{\! \frac{n_{i}(\mathfrak{n}_{k h})}{2}}   
\cdot \, \prod_{j = 1}^{\infty} \bigg(\frac{1 + t_{\alpha}^{k j}}{1 - t_{\alpha}^{k j}} \bigg)^{\!\! \frac{a_{\! j}^{*}\!(d, q^{k})}{2 j}}.
\end{equation*}
For $\mathfrak{n}_{l h}^{\pm} = (1^{n_{1}(\mathfrak{n}_{l h}^{\pm})}\!, 2^{n_{2}(\mathfrak{n}_{l h}^{\pm})}\!, \ldots),$ we express 
\begin{equation*}
\big(1 - \epsilon t_{\alpha}^{l} \big)
\big(1 - \epsilon q^{l}t_{\alpha}^{l} \big) 
P_{\scriptscriptstyle C_{d}}(\epsilon t_{\alpha}^{l}) \,=\, 
\prod_{i = 1}^{\infty} (1 - t_{\alpha}^{2 l i})^{\! \frac{n_{i}(\mathfrak{n}_{l h}^{\pm})}{2}}   
\cdot \, \prod_{j = 1}^{\infty} 
\bigg(\frac{1 + t_{\alpha}^{l j}}{1 - t_{\alpha}^{l j}} \bigg)^{\! \epsilon^{j} \cdot
\big(\!\frac{a_{\! j}^{ *}\!(d, q^{l})}{2 j}  - \frac{\mathrm{Irr}_{\! q^{l}}(j)}{2} \!\big)}
\qquad \text{(for $d\in \mathscr{P}(\mathfrak{n}_{l h}^{\pm}, q^{l})$)}.
\end{equation*}
Here $\epsilon = \pm 1,$ the sign agreeing with that on $\mathfrak{n}_{l h}^{\pm}.$ Notice 
that the product 
\begin{equation*}
^{^{^{^{\mathcal{D}^{*}\!(Y, Z)}}}}\!\!{\bigg(
\prod_{\alpha = 1}^{r} \prod_{k = 1}^{\infty} 
\big(1 - q^{k}t_{\alpha}^{2k} \big)^{ - n_{k} \slash 2} \bigg)} 
\cdot ^{^{^{^{\mathcal{D}^{*}\!(Y, Z)}}}}\!\!\!{\bigg(
\prod_{\alpha = 1}^{r} \prod_{l = 1}^{\infty} \prod_{m = 1}^{\infty} \big(1 - t_{\alpha}^{2 l m} \big)^{(\text{exponent of $X_{l m}$ in $X^{\mathfrak{N}_{l}}$})\slash 2} \bigg)}
\end{equation*}
factors out. Since 
\begin{equation*}
\big(1 - t^{2}\big)^{1\slash 2}  =\, \prod_{l = 1}^{\infty} \big(1 - q^{l}t^{2 l} \big)^{\mathrm{Irr}_{q^{-1}}(l) \slash 2}
\end{equation*}
it follows easily, using Lemma \ref{Identity1} \eqref{eq: 1 D_{N}^{*}}, that 
\begin{equation*}
^{^{^{^{\mathcal{D}^{*}\!(Y, Z)}}}}\!\!{\bigg(
\prod_{\alpha = 1}^{r} \big(1 - t_{\alpha}^{2} \big)^{1\slash 2} \prod_{k = 1}^{\infty} 
\big(1 - q^{k}t_{\alpha}^{2k}\big)^{ - n_{k} \slash 2} \bigg)}
E\big(T^{2}\big)^{1\slash 2}
\end{equation*} 
is precisely the contribution of the remaining part of the product $\mathcal{G}$ in Lemma \ref{Diff-op}, 
i.e., 
$
\prod_{N = 1}^{\infty} 
\Big(\frac{1 + \tilde{Z}_{N}}{1 + \tilde{Y}_{N}} \Big)
^{\!\! \frac{\mathcal{R}_{\! N}}{2N}}
$ 
to \eqref{eq: contribution over $M_{S}(T, q)$}. Then to each of the 
remaining terms, one applies Lemma \ref{Identity1} \eqref{eq: 2 D_{N}^{*}} 
to match each of them with the remaining contributions in \eqref{eq: 1-fS}.

Putting everything together, it follows that we can express 
\eqref{eq: contribution over $M_{S}(T, q)$} as 
\begin{equation*}
c_{\textsc{\tiny$S$}} \mathcal{K}_{\mathfrak{N}}(Y, Z) \, \cdot \, 
^{\scriptstyle \mathcal{D}^{*}\!(Y, Z)}{\big[E\big(T^{2}\big)^{- 1\slash 2} M_{\textsc{\tiny$S$}}(T, q) \big]} E\big(T^{2}\big)^{1\slash 2}
\end{equation*}
where we set 
\begin{equation*}
\mathcal{K}_{\mathfrak{N}}(Y, Z) : =  \frac{\prod_{l = 1}^{\infty} \prod_{m = 1}^{\infty} 
[(1 + \tilde{Y}_{l m}) (1 + \tilde{Z}_{l m})]
^{(\epsilon_{l m} - \, \text{exponent of $X_{l m}$ in $X^{\mathfrak{N}_{l}}$})\slash 2}}
{\prod_{l = 1}^{\infty} \prod_{m = 1}^{\infty} \prod_{\nu = 1}^{\infty} 
\Big(\!\frac{1 + \tilde{Z}_{2^{\nu} \!l m}}{1 + \tilde{Y}_{2^{\nu} \!l m}}\!\Big)
^{\! -(\text{exponent of $X_{l m}$ in $X^{\mathfrak{N}_{l}}$})\slash 2^{\nu + 1}}}. 
\end{equation*} 
Accordingly, if we define 
\begin{equation} \label{eq: D_{N, I, J}}
\mathscr{D}_{\mathfrak{N}, \mathfrak{I}, \mathfrak{J}}(T, q) =
E\big(T^{2}\big)^{1\slash 2}\, \text{Coefficient}_{Y^{\mathfrak{I}} Z^{\mathfrak{J}}}[\mathcal{K}_{\mathfrak{N}}(Y, Z)  \, \cdot \, 
^{\scriptstyle \mathcal{D}^{*}\!(Y, Z)}\!{E\big(T^{2}\big)^{- 1\slash 2}}\mathcal{D}^{*}\!(Y, Z)]
\end{equation}
then it is clear that $C_{\mathfrak{N}, \mathfrak{I}, \mathfrak{J}}(T, q) \,=\, 
^{\scriptscriptstyle \mathscr{D}_{\mathfrak{N}, \mathfrak{I}, \mathfrak{J}}(T, q)} C_{\mathfrak{N}, {\bf 0}, {\bf 0}}(T, q).$ Hence $C_{\mathfrak{N}, \mathfrak{I}, \mathfrak{J}}(T, q) = 0$ if $C_{\mathfrak{N}, {\bf 0}, {\bf 0}}(T, q) = 0.$ Notice that by Lemma \ref{Identity1} \eqref{eq: 1 D_{N}^{*}}, applied 
with $\ga_{1} = 1,$ $\ga_{l} = 0$ for $l\ge 2,$ and $q = 1,$ one could express
\begin{equation*}
^{\scriptstyle \mathcal{D}^{*}\!(Y, Z)}
\!{E\big(T^{2}\big)^{- 1\slash 2}} \cdot E\big(T^{2}\big)^{1\slash 2} =  \, 
\prod_{\nu = 1}^{\infty} 
\bigg(\frac{1 + \tilde{Z}_{2^{\nu}}}{1 + \tilde{Y}_{2^{\nu}}}\bigg)
^{\! - 2^{- \nu - 1}}
\end{equation*} 
which could be absorbed into $\mathcal{K}_{\mathfrak{N}}(Y, Z).$

The argument is completely analogous if $|\mathfrak{n}|$ is even.

In the next section, we shall construct $\bar{C}(X, T, q)$ such that 
\begin{equation} \label{eq: func-rel-section8}
C(C(\bar{C}(X, T, q), T, q), q  \, T, 1\slash q) = \bar{C}(X, T, q).
\end{equation}
As in the proof of Theorem \ref{split case}, one deduces easily that 
$C_{\mathfrak{N}, {\bf 0}, {\bf 0}}(T, q) = 0$ if one assumes that 
$C_{\mathfrak{N}', {\bf 0}, {\bf 0}}(T, q) = 0,$ for {\it all} partitions 
$\mathfrak{N}'$ of weights $2 \le |\mathfrak{N}'| < |\mathfrak{N}|.$ (One checks directly that 
$C_{\mathfrak{N}, {\bf 0}, {\bf 0}}(T, q) = 0$ when $\mathfrak{N} = (1^{2}),$ or $\mathfrak{N} = (2^{1}).$)

This completes the proof of the theorem subject to the construction of $\bar{C}(X, T, q)$ satisfying the required properties. \end{proof}

\section{The series $\bar{C}(X, T, q)$}\label{section: The series BarC(X, T, q)} 
In this section, we shall construct the general series $\bar{C}(X, T, q)$ satisfying the relation 
\eqref{eq: func-rel-section8}, and thus concluding the proof of Theorem \ref{princ-theorem}; the 
series $\bar{c}(x, T, q)$ used in the final step of the proof of Theorem \ref{split case} is just 
a specialization of $\bar{C}(X, T, q).$ Most of what we will say here is independent from the rest of the paper\footnote{We hope that some of the readers will find the material included in this section of independent interest.}\!.

To put things in perspective, we begin by discussing the occurrence of a special case of the series 
$\bar{C}(X, T, q)$\footnote{This series is obtained by specializing $X_{1} = x,$ $X_{n} = 0,$ for $n\ge 2,$ and taking $t_{1} = \cdots = t_{r} = 0.$} in the context of a well-known result of Ezra Getzler.

Let $\mathcal{M}_{0, n}$ ($n \ge 3$) denote the moduli space of irreducible, non-singular, 
projective curves of genus $0$ with $n$ distinct marked points. 
Let $\overline{\mathcal{M}}_{0, n}$ denote its Deligne-Mumford compactification, 
consisting of the stable curves of genus $0$ with $n$ distinct marked points. It is 
well-known (see \cite{DM} and \cite{Knud}) that the corresponding Deligne-Mumford stacks 
$\mathscr{M}_{0, n}$ and $\bar{\mathscr{M}}_{0, n}$ are smooth and defined over 
$\mathbb{Z}.$

We recall that for a Deligne-Mumford stack $\mathscr{X}$ of finite type over $\mathbb{Z},$ 
one defines its number of points over a finite field $\mathbb{F}_{\! q}$ to be 
\begin{equation*}
\selectfont\# \mathscr{X}(\mathbb{F}_{\! q}) \;\,= \sum_{\xi \, \in \, [\mathscr{X}(\mathbb{F}_{\! q})]}\, \frac{1}{\selectfont\# \text{Aut}_{\scriptscriptstyle \mathbb{F}_{q}}\!(\xi)}
\end{equation*}
the sum being taken over the isomorphism classes of the category $\mathscr{X}(\mathbb{F}_{\! q}).$ 
Here, for an isomorphism class $\xi,$ 
$\text{Aut}_{\scriptscriptstyle \mathbb{F}_{q}}\!(\xi)$ denotes the finite group of automorphisms 
of any object representing $\xi.$ In particular, if $\mathscr{X} = \mathscr{M}_{0, n},$ then, 
by counting points, one sees that  
\begin{equation*}
\selectfont\#  \mathscr{M}_{0, n}(\mathbb{F}_{\! q}) 
= \selectfont\#  \mathcal{M}_{0, n}(\mathbb{F}_{\! q}) = 
\prod_{k = 2}^{n - 2} (q - k) \qquad \text{($n \ge 3$)}.
\end{equation*} 
The number of points $\selectfont\#  \bar{\mathscr{M}}_{0, n}(\mathbb{F}_{\! q})$ ($n\ge 3$) can be obtained from the following theorem of Getzler \cite{Getz1}; see also McMullen \cite{McM}.

\vskip10pt
\theoremstyle{Getz}
\newtheorem*{Getzler}{Theorem (Getzler)}
\begin{Getzler} --- The generating functions
\begin{equation*}
f(x) = x - \sum_{n = 2}^{\infty} \selectfont\# \mathscr{M}_{0, n + 1}(\mathbb{F}_{\! q})\frac{x^{n}}{n!} 
= \frac{1 + q^{2} x  - (1 + x)^{q}}{q (q - 1)} \;\;\,  \text{and} \;\;\;
g(x) = x + \sum_{n = 2}^{\infty} \selectfont\# \bar{\mathscr{M}}_{0, n + 1}(\mathbb{F}_{\! q})
\frac{x^{n}}{n!}
\end{equation*}
are formal compositional inverses of one another.
\end{Getzler}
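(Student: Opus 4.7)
My plan is to deduce the identity from the boundary stratification of $\bar{\mathscr{M}}_{0,n+1}$ by dual graph, combined with the standard exponential generating function calculus for labeled rooted trees. The boundary strata of $\bar{\mathscr{M}}_{0,n+1}$ are indexed by stable trees $T$ with $n+1$ distinctly labeled leaves; since the distinct leaf labels rigidify $T$ (its label-preserving automorphism group is trivial), each stratum is canonically $\prod_{v \in V(T)} \mathscr{M}_{0, E(v)}$, giving
\begin{equation*}
\# \bar{\mathscr{M}}_{0, n+1}(\mathbb{F}_q) \;=\; \sum_{T}\, \prod_{v \in V(T)} \# \mathscr{M}_{0, \mathrm{val}(v)}(\mathbb{F}_q),
\end{equation*}
the outer sum ranging over isomorphism classes of such labeled stable trees. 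Designating the leaf labeled $0$ as a root, each $T$ becomes a rooted labeled tree whose root vertex $v_0$ has valence $k+1 \geq 3$ and carries $k \geq 2$ unordered child subtrees, each of which is itself a rooted labeled tree whose non-root leaves form a block of a partition of $\{1, \ldots, n\}$.

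Translating this recursive decomposition into exponential generating functions produces the functional equation
\begin{equation*}
g(x) \;=\; x + \sum_{k \geq 2}\# \mathscr{M}_{0, k+1}(\mathbb{F}_q)\cdot \frac{g(x)^k}{k!} \;=\; x + \mathcal{F}(g(x)),
\end{equation*}
where $\mathcal{F}(x) := x - f(x) = \sum_{k \geq 2}\# \mathscr{M}_{0, k+1}(\mathbb{F}_q)\, x^k/k!$, and the leading $x$ of $g$ encodes the base case of a single-edge degenerate ``tree''. Rearranged, this reads $f(g(x)) = x$, and since $f'(0) = 1$ makes $f$ formally invertible with a unique two-sided compositional inverse, $g$ must be that inverse. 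The closed form $f(x) = (1 + q^2 x - (1+x)^q)/(q(q-1))$ I would verify independently from the free-quotient description $\mathscr{M}_{0, n+1} = (\mathbb{P}^1)^{n+1}_{\mathrm{distinct}}/\mathrm{PGL}_2$, which yields $\# \mathscr{M}_{0, n+1}(\mathbb{F}_q) = \prod_{k=2}^{n-1}(q-k)$, followed by a direct binomial resummation recognizing the generating series as a shift of $(1+x)^q$.

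The main technical obstacle is the careful bookkeeping of the substitution identity: one must check that the factor $1/k!$ correctly symmetrizes over the unordered collection of $k$ children of $v_0$ (whose incident-edge labels at $v_0$ are intrinsic tree data rather than free combinatorial parameters), and that the multinomial distribution of leaf labels among the $k$ subtrees matches the stratum count exactly. Equivalently, one verifies that unwinding $\mathcal{F}(g(x))$ coefficient-by-coefficient reproduces the stratum sum with the correct multiplicities; the stability constraint $\mathrm{val}(v) \geq 3$ at every vertex enters precisely to force the sum in $\mathcal{F}$ to start at $k = 2$, which is exactly what makes the formal inversion of $f$ well-posed and rules out a degenerate recursion.
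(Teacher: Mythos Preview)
Your argument is correct and is essentially the standard proof of this result: stratify $\bar{\mathscr{M}}_{0,n+1}$ by dual graph, root at a distinguished leaf, and read off the functional equation $g(x) = x + (x - f(x))\!\circ g(x)$ from the recursive tree structure. The bookkeeping you flag (that the $1/k!$ exactly symmetrizes the unordered children, and that stability forces $k\ge 2$) is handled correctly.

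Note, however, that the paper does not itself prove this theorem. It is stated as a result of Getzler \cite{Getz1} (see also McMullen \cite{McM}) and used as input. The paper's own contribution in this direction is the far more general functional relation for $\bar{C}(X,T,q)$ established in Section~\ref{section: The series BarC(X, T, q)}; there the analogue of your tree decomposition is the ``essential part'' construction for admissible double covers (Theorem~\ref{ess-main-thm}), and the genus-zero unramified piece---which is exactly Getzler's statement---enters only through the identity $S(Z;q)=\dz\bar{\mathscr{D}}_{0}(Z;q)$, for which the paper again cites \cite{GK} and \cite{Di-ch} rather than reproving it. So your proof supplies something the paper deliberately outsources, by the same mechanism that Getzler used.
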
 
	
Note that $g(x) = g(x, q)$ satisfies the functional equation 
\begin{equation} \label{eq: feq-g}
q^{2} g(x, q) = q (q - 1) x + g(q x, 1\slash q)
\end{equation} 
which can be verified directly.

	The connection between the function $g(x)$ in Getzler's theorem and 
	$
	\bar{c}(x, T, q)
	$ 
	used in the proof of Theorem \ref{split case} can be easily seen as follows. First, set 
	$t_{1} = \cdots = t_{r} = 0$ and $y = z$ in $c_{\text{odd}}(x, y, z; T, q)$ and 
	$c_{\text{even}}(x, y, z; T, q)$ introduced in \ref{sub-section-7.1-split-case}. Denoting $c_{\text{odd}}(x, z, z; 0, \ldots, 0, q)$ (resp. $c_{\text{even}}(x, z, z; 0, \ldots, 0, q)$) 
	by $c_{\text{odd}}(x, z; q)$ (resp. $c_{\text{even}}(x, z; q)$), one can write, for example,  
	\begin{equation*}
	c_{\text{odd}}(x, z; q) \;\, =
	\sum_{n, k \, \ge \, 0} x^{2n + 1} z^{k}
	\sum_{i + j = k} \frac{A_{2n + 1, i, j}(0, \ldots, 0, q)}{i! j!}.
	\end{equation*}
	For fixed $n\ge 1$ and $k \ge 0,$ we have 
	\begin{equation*}
	\sum_{i + j = k} \frac{A_{n, i, j}(0, \ldots, 0, q)}{i! j!} \; = 
	\sum_{d \, \in \, \mathscr{P}_{\! n}} \sum_{i + j = k} 
	\frac{N_{i, j}(d, q)}{i! j!}\; = \sum_{d \, \in \, \mathscr{P}_{\! n}} \sum_{i + j = k} 
	\binom{\frac{q - n + \epsilon + a_{1}(C_{d})}{2}}{i}
	\binom{\frac{q - n -\epsilon - a_{1}(C_{d})}{2}}{j}.
	\end{equation*} 
	The last double sum is just 
	$
	\binom{q}{n} \binom{q - n}{k}.
	$ 
	Hence 
	\begin{equation*}
	c_{\text{odd}}(x, z; q) =\frac{(1 + x + z)^{q} - (1 - x + z)^{q}}{2}.
	\end{equation*} 
	Similarly,
	\begin{equation*}
	c_{\text{even}}(x, z; q) \, =  - \, 1 \, +\, \frac{(1 + x + z)^{q} + (1 - x + z)^{q}}{2}.
	\end{equation*}
	Next, consider 
	$$
	\bar{c}(x, q) : = \bar{c}(x, T, q)\vert_{\scriptscriptstyle t_{1} = \cdots = t_{r} = 0} = 
	(\bar{A}_{\text{odd}}(x, q), \bar{A}_{\text{even}}(x, q), \bar{A}_{\text{even}}(x, q))
	$$ 
	where $\bar{c}(x, T, q)$ will be given in \ref{princ-theorem}. The components 
	$\bar{A}_{\text{odd}}(x, q)$ and $\bar{A}_{\text{even}}(x, q)$ are formal power series of the form 
	$$
	\bar{A}_{\text{odd}}(x, q) = x \; + \sum_{\substack{m \, \ge \, 3\\ m-\text{odd}}} a_{m}(q) x^{m} \;\;\, 
	\text{and} \;\;\, 
	\bar{A}_{\text{even}}(x, q) \; = \sum_{\substack{m \, \ge \, 2\\ m-\text{even}}} a_{m}(q) x^{m}
	$$ 
	and (as it turns out in this special case) these series can also be obtained as the unique solution of the system 
	\begin{equation*} \label{eq: def-eq1}
	c_{\text{odd}}(\bar{A}_{\text{odd}}(x, q), \bar{A}_{\text{even}}(x, q); q) \, = - \, q (q - 1)x + q^{2} \bar{A}_{\text{odd}}(x, q)
	\end{equation*} 
	and 
	\begin{equation*} \label{eq: def-eq2}
	c_{\text{even}}(\bar{A}_{\text{odd}}(x, q), \bar{A}_{\text{even}}(x, q); q) \,  =\, q^{2}\bar{A}_{\text{even}}(x, q).
	\end{equation*} 
	Setting $A : = \bar{A}_{\text{odd}} + \bar{A}_{\text{even}},$ it follows that 
	\begin{equation*}
	(c_{\text{odd}} \, + \, c_{\text{even}})(\bar{A}_{\text{odd}}(x, q), \bar{A}_{\text{even}}(x, q); q) 
	= -  1  +  (1 + A(x, q))^{q} = 
	- q(q - 1)x + q^{2} A(x, q)
	\end{equation*} 
	and thus 
	\begin{equation*}
	A(x, q) = g(x, q) 
	= x + \sum_{n = 2}^{\infty} \selectfont\# \bar{\mathscr{M}}_{0, n + 1}(\mathbb{F}_{\! q})\frac{x^{n}}{n!}.
	\end{equation*} 
	Note that the functional equation \eqref{eq: feq-g} implies the relation 
	\eqref{eq: functional-rel-special-split-case}.

	As we shall shortly see, Getzler's theorem will be used, in fact, in the initial step of the construction of the more general series $\bar{C}(X, T, q).$

	\subsection{Moduli spaces of admissible double covers} 
	We first need to introduce certain moduli spaces of admissible double covers, 
	and some generating series attached to them.

	For an even positive integer $n$ and a nonnegative integer $m$ such that $m + n\ge 3,$ let 
	$\mathcal Adm_{n, m}$ denote the moduli space of admissible double covers over stable curves of genus $0$ with $n$ branched points and additional $m$ marked points. These moduli spaces are smooth, proper Deligne-Mumford stacks defined over $\mathrm{Spec}\, \mathbb{Z}[\frac{1}{2}];$ the standard references for stacks of admissible covers are \cite{Ha-Mum} and \cite{ACV}.

	Let $\mathbb{F}_{\! q}$ be, as before, a finite field of odd characteristic, and let $\overline{\mathbb{F}}_{\!q}$ be a fixed algebraic closure of it. We shall consider the moduli space $\mathcal Adm_{n, m}$ over $\mathbb{F}_{\! q}.$

	The group $G: = G_{n, m} := S_{n} \times (S_{2} \wr S_{m})$ acts on the 
	$\overline{\mathbb{F}}_{\! q}$-points of the elements of $\mathcal Adm_{n, m}$ by permuting the marked points of the target curves and (eventually) by switching the points in the corresponding fibers. It is mentioned in \cite{Car72} (p. \!2), with reference to \cite{Spech} and \cite{Youn}, that, for the hyperoctahedral group $S_{2} \wr S_{m}$ of order $2^{m} \cdot m!,$ the conjugacy classes are parametrized by pairs of partitions $(\mathfrak{i}, \mathfrak{j})$ such that $|\mathfrak{i}| + |\mathfrak{j}| = m.$ Thus the conjugacy classes of the group $G_{n, m}$ are parametrized by triples of partitions $(\mathfrak{n}, \mathfrak{i}, \mathfrak{j})$ with $|\mathfrak{n}| = n$ and $|\mathfrak{i}| + |\mathfrak{j}| = m.$

	To be more precise, an element $\sigma$ of the hyperoctahedral factor $S_{2} \wr S_{m}$ can be viewed as a signed permutation acting on the set $\mathfrak{I} := \{\pm 1, \ldots, \pm m\}$ with the property that $\sigma(a) = - \sigma(-a)$ for all $a\in \mathfrak{I}.$ The orbit of an element $a\in \mathfrak{I}$ is either even (i.e., if $b$ is in the orbit of $a$ then $-b$ is also in the orbit of $a$) or, alternatively, the orbit of $a$ is disjoint from the orbit of $-a.$ The two partitions $(\mathfrak{i}, \mathfrak{j})$ corresponding to the conjugacy class of $\sigma$ are determined by these orbits. Here $\mathfrak{i}$ corresponds to the even orbits and $\mathfrak{j}$ to the remaining ones.

	Let $D\to C$ be an admissible double cover. Let $P\in C(\overline{\mathbb{F}}_{\! q})$ be a marked point which is not branched, and let $(P_{1}, P_{2})$ be the points above $P.$ Let $\sigma \in S_{2} \wr S_{m},$ and assume that $D\to C$ is invariant under $F\sigma.$ If the orbit of $[(P_{1}, P_{2}) \to P]$ is even, there exists a smallest positive integer $k$ such that $\sigma^{k}([(P_{1}, P_{2}) \to P]) = [(P_{2}, P_{1})\to P].$ It follows that $P\in C(\mathbb{F}_{\! q^{k}})$ and $F^{k}(P_{1}) = P_{2};$ thus $P_{1}$ and $P_{2}$ are {\it not} defined over 
	$\mathbb{F}_{\! q^{k}}.$ If the orbit of $[(P_{1}, P_{2}) \to P]$ is disjoint from the orbit of $[(P_{2},P_{1}) \to P],$ then $F^{k}(P_{1}) = P_{1}$ and $F^{k}(P_{2}) = P_{2}$ for all $k$ such that $\sigma^{k}(P) = P,$ and thus the points above $P$ are defined over the same field as $P.$

	For any $\tau \in G,$ consider the set $\mathcal Adm_{n,m}^{F\tau}$ of fixed points of $F\tau$ on $\mathcal Adm_{n, m}.$ Note that there exists a unique moduli $\mathcal Adm_{n,m}^{\tau}$ such that \begin{equation*}
	\mathcal Adm_{n, m}^{\tau} \otimes_{_{\mathbb{F}_{\! q}}} \! \overline{\mathbb{F}}_{\! q} 
	\simeq \mathcal Adm_{n, m} \otimes_{_{\mathbb{F}_{\! q}}} \! \overline{\mathbb{F}}_{\! q}
	\end{equation*}
	and $F\tau$ becomes the relative Frobenius endomorphism induced by this 
	$\mathbb{F}_{\! q}$-structure on $\mathcal Adm_{n, m}.$

	For partitions 
	$
	\mathfrak{n} = (1^{\! n_{1}}\!, 2^{n_{2}}\!, \ldots), 
	\mathfrak{i} = (1^{\! i_{1}}\!, 2^{i_{2}}\!, \ldots)
	$ 
	and 
	$
	\mathfrak{j} = (1^{\! j_{1}}\!, 2^{j_{2}}\!, \ldots)
	$ 
	with $\mathfrak{n}$ of even weight and 
	$
	|\mathfrak{n}| + |\mathfrak{i}| + |\mathfrak{j}| \ge 3,
	$ 
	let $\overline{\mathcal{D}}_{\mathfrak{n}, \mathfrak{i}, \mathfrak{j}}$ denote the category of admissible double covers over $\mathbb{F}_{\! q}$ of genus $0$ stable curves with branching given by $\mathfrak{n},$ i.e., with $n_{k}$ branched $\mathbb{F}_{\! q^{k}}$-Galois orbits of cardinality $k$ (for $k = 1, 2, \ldots$), and additional marked points of the base given by the partitions $\mathfrak{i}$ and $\mathfrak{j}$ such that: 
	\begin{itemize}
		\item[\textasteriskcentered] each point in the set of marked points corresponding to $\mathfrak{j}$ and the points in its fiber are defined over the same field, 
		
		\item[\textasteriskcentered] the points in the fiber of any marked point corresponding to $\mathfrak{i}$ are defined over an (necessarily quadratic) extension of the field of definition of the base point.
	\end{itemize} 
	By convention, we extend this notation to the case $|\mathfrak{n}| = n = 0,$ where trivial (unramified) admissible double covers can be identified with elements of the moduli space $\overline{M}_{0, |\mathfrak{j}|}.$ Moreover, let $\mathcal{D}_{\mathfrak{n}, \mathfrak{i}, \mathfrak{j}}$ denote the subcategory of $\overline{\mathcal{D}}_{\mathfrak{n}, \mathfrak{i}, \mathfrak{j}}$ corresponding to the smooth locus.

	For independent variables $X = (X_{1}, X_{2}, \ldots),$ $Y = (Y_{1}, Y_{2}, \ldots)$ and 
	$Z=(Z_{1}, Z_{2}, \ldots),$ we introduce the generating series $\mathscr{D}(X, Y, Z; q)$ and $\bar{\mathscr{D}}(X, Y, Z; q)$ defined by 
	\begin{equation} \label{eq: gen-seriesD}
	\mathscr{D}(X, Y, Z; q) : = 
	\sum_{\mathfrak{n},\mathfrak{i},\mathfrak{j}} 
	\selectfont\# \mathcal{D}_{\mathfrak{n}, \mathfrak{i}, \mathfrak{j}}(\mathbb{F}_{\! q}) 
	X^{\mathfrak{n}} Y^{\mathfrak{i}} Z^{\mathfrak{j}}
	\end{equation}
	and 
	\begin{equation} \label{eq: gen-series-barD}
	\bar{\mathscr{D}}(X, Y, Z; q) : = \sum_{\mathfrak{n},\mathfrak{i},\mathfrak{j}} 
	\selectfont\# \overline{\mathcal{D}}_{\mathfrak{n}, \mathfrak{i}, \mathfrak{j}}(\mathbb{F}_{\! q}) 
	X^{\mathfrak{n}} Y^{\mathfrak{i}} Z^{\mathfrak{j}}.
	\end{equation} 
	In what follows, we shall identify $\mathcal Adm_{n, m}^{F\tau}$ with $[\overline{\mathcal{D}}_{\mathfrak{n},\mathfrak{i},\mathfrak{j}}(\mathbb{F}_{\! q})],$ where $\tau$ is any element in the conjugacy class represented by $(\mathfrak{n}, \mathfrak{i}, \mathfrak{j}).$

	By the Grothendieck-Lefschetz trace formula \cite{Beh1, Beh}, 
	\begin{equation*} 
	\selectfont\# \overline{\mathcal{D}}_{\mathfrak{n}, \mathfrak{i}, \mathfrak{j}}(\mathbb{F}_{\! q}) \, = \, q^{\dim \, \mathcal Adm_{n,m}}
	\sum_{k}\,  (-1)^{k}\,\mathrm{Tr}(\Phi_{q}\tau^{-1}\, \vert \, H^{k}(\mathcal Adm_{n,m} \otimes_{_{\mathbb{F}_{\! q}}}\! \overline{\mathbb{F}}_{\! q}, \mathbb{Q}_{\ell}))
	\end{equation*} 
	where $\ell$ is a prime different from the characteristic of $\mathbb{F}_{\! q},$ $\Phi_{q}$ is the arithmetic Frobenius endomorphism (relative to $\mathbb{F}_{\! q}$), and $\Phi_{q}\tau^{-1}$ acts, 
	by {\it transport of structures}, on the $\ell$-adic cohomology. We also define 
	\begin{equation} \label{eq: gen-series-barD-1/q}
	\bar{\mathscr{D}}(X, Y, Z; 1\slash q) : = \sum_{\mathfrak{n},\mathfrak{i},\mathfrak{j}} 
	\Big(\sum_{k}\,  (-1)^{k}\, \mathrm{Tr}(\Phi_{q}\tau^{-1}\, \vert \, H^{k}(\mathcal Adm_{n,m} \otimes_{_{\mathbb{F}_{\! q}}} \! \overline{\mathbb{F}}_{\! q}, \mathbb{Q}_{\ell}))\Big)\,  X^{\mathfrak{n}} Y^{\mathfrak{i}} Z^{\mathfrak{j}}.
	\end{equation}

	\subsection{A homomorphism} 
	To make the transition from 
	$
	\mathscr{D}(X, Y, Z; q)
	$ 
	and 
	$
	\bar{\mathscr{D}}(X, Y, Z; q)
	$ 
	to generating functions of products of characteristic polynomials, closely related to 
	$
	C(\underline{X}, T\!, q),
	$ 
	and to construct the function 
	$
	\bar{C}(X, T\!, q),
	$ 
	which is the main goal of this section, it is convenient to introduce two rings of formal power series equipped with additional structure, and a homomorphism between them compatible with the (additional) structure of the rings. As we shall shortly see, this homomorphism enjoys additional properties. 
	
	Let $\mathscr{F} = \mathbb{Q}(q)$ be the field of rational functions in a variable $q;$ we assume that $q$ takes values in the set of prime powers. Let $\mathscr{R}:=\mathscr{R}_{\mathscr{F}}$ denote the ring of formal power series in infinitely many variables $X: = (X_{n})_{n\ge 1}, Y: = (Y_{n})_{n\ge 1}$ and $Z: = (Z_{n})_{n\ge 1}$ with coefficients in $\mathscr{F}.$ We also consider 
	$\mathscr{S}: = \mathscr{S}_{\mathscr{F}},$ the ring of formal power series in infinitely many variables $X: =(X_{n})_{n\ge 1}$ and $p: = (p_{n})_{n\ge 1}$ with coefficients in $\mathscr{F}.$ 
	
	On each $\mathscr{R}$ and $\mathscr{S},$ we define: 
	\begin{enumerate}
		\item A family of ring homomorphisms ({\it Adams operations}), $\{\psi^{n}\}_{n\in\mathbb{Z}_{>0}}$ such that: 
		\begin{enumerate}
			\item $\psi^{n}(X_{m})=X_{mn}$
			\item $\psi^{n}(Y_{m})=Y_{mn}$
			\item $\psi^{n}(Z_{m})=Z_{mn}$
			\item $\psi^{n}(p_{m})=p_{mn}$
			\item $\psi^{n}\!f(q)=f(q^{n})$ for $f \in \mathscr{F}$ 
		\end{enumerate} 
		
		\item The involution $\iota$ characterized by: 
		\begin{enumerate}
			\item $\iota(Y_{n})=Z_{n}$ if $n$ is odd
			\item $\iota(Z_{n})=Y_{n}$ if $n$ is odd
			\item $\iota(p_{n})=-p_{n}$ if $n$ is odd, respectively 
			\item All the other variables (and constants) are fixed by $\iota$
		\end{enumerate} 
		
		\item The involution $\delta$ given by: 
		\begin{enumerate}
			\item $\delta f(X,Y,Z;q):=f(q\star X, q\star Y, q\star Z; 1\slash q)$ 
			\item $\delta h(X,p;q):=h(q\star X, q\star p; 1\slash q),$ respectively 
			
			Here $q\star (U_{n})_{n\ge 1}:=(q^{n} U_{n})_{n\ge 1}$ for any set of variables $(U_{n})_{n\ge 1}.$
		\end{enumerate}
	\end{enumerate} 
	Using \eqref{eq: gen-seriesD}, \eqref{eq: gen-series-barD} and \eqref{eq: gen-series-barD-1/q}, one can extend $\psi^{n}, \iota$ and $\delta$ 
	to the generating series $\mathscr{D}(X, Y, Z; q)$ and 
	$\bar{\mathscr{D}}(X, Y, Z; q)$ (or similar such series) in an obvious way. 
	For example, 
	\begin{equation*}
	\psi^{m}\bar{\mathscr{D}}(X, Y, Z; q) : = \sum_{\mathfrak{n},\mathfrak{i},\mathfrak{j}} 
	\selectfont\# \overline{\mathcal{D}}_{\mathfrak{n}, \mathfrak{i}, \mathfrak{j}}(\mathbb{F}_{\! q^{m}}) 
	\psi^{m}(X^{\mathfrak{n}}) \psi^{m}(Y^{\mathfrak{i}}) \psi^{m}(Z^{\mathfrak{j}}) \qquad \text{(for $m \ge 1$)}
	\end{equation*} 
	and 
	\begin{equation*}
	\delta \bar{\mathscr{D}}(X,Y,Z;q) 
	: = \bar{\mathscr{D}}(q\star X, q\star Y, q\star Z; 1\slash q)
	= \sum_{\mathfrak{n},\mathfrak{i},\mathfrak{j}} 
	q^{|\mathfrak{n}| + |\mathfrak{i}| + |\mathfrak{j}|}
	\Big(\sum_{k}\,  (-1)^{k}\mathrm{Tr}(\Phi_{q}\tau^{-1}\, \vert \, 
	H^{k}({\mathcal Adm}_{n,m}\otimes_{_{\mathbb{F}_{\! q}}} \! \overline{\mathbb{F}}_{\! q}, \mathbb{Q}_{\ell}))\Big)\,  X^{\mathfrak{n}} Y^{\mathfrak{i}} Z^{\mathfrak{j}}.
	\end{equation*} 
	Since 
	$
	\dim \, \mathcal Adm_{n, m} = n + m - 3,
	$ 
	note that we have the relation
	\begin{equation} \label{eq: func-eq-barD-qto1/q}
	\delta \bar{\mathscr{D}}(X, Y, Z; q) 
	= q^{3} \bar{\mathscr{D}}(X, Y, Z; q).
	\end{equation} 
	We remark that one may choose to work more generally with formal power series over the Grothendieck ring of the abelian category 
	$
	\mathbf{Rep}_{\mathbb{Q}_{\scriptscriptstyle \ell}}\!(\mathrm{Gal}(\overline{\mathbb{Q}}\slash \mathbb{Q}))
	$ 
	of $\ell$-adic Galois representations, where one can define natural operations $\{\psi^{n}\}_{n\in\mathbb{Z}},$ cf. \cite[Chapters~5, 6]{Ser}.

	For two elements $f\in\mathscr{R}$ and $g=(g_{\scriptscriptstyle 1},  g_{\scriptscriptstyle 2}, g_{\scriptscriptstyle 3})\in \mathscr{R}^{3}$ 
	such that $g_{\scriptscriptstyle i}$ ($i = 1, 2, 3$) has no constant term, 
	we define the {\it plethystic} substitution $f\circ g$ by replacing $X_{n}\mapsto \psi^{n}\!(g_{\scriptscriptstyle 1}),$ $Y_{n}\mapsto \psi^{n}\!(g_{\scriptscriptstyle 2})$ and $Z_{n}\mapsto \psi^{n}\!(g_{\scriptscriptstyle 3})$ ($n\ge 1$) in the expression of $f.$

	\subsection{Special elements} 
	In each of the rings $\mathscr{R}$ and $\mathscr{S}$ there exists a special element defined as follows. 
	
	On $\mathscr{R}$ consider the element 
	\begin{equation*}
	\dz \!\mathscr{D}_{0}(Z;q) \, = \, \frac{-1-q Z_{1} + \prod_{n\ge 1}(1 + Z_{n})^{\text{Irr}_{q}(n)}}{q(q-1)}\in
	\mathscr{R}
	\end{equation*}
	where 
	\begin{equation*}
	\mathscr{D}_{0}(Z;q) \, = \, \frac{- 1 + (1 + Z_{1}^{})\prod_{n \ge 1}(1 + Z_{n}^{})^{\text{Irr}_{q}(n)}}{q(q^{2} - 1)} 
	- \frac{Z_{1}^{2}}{2(q - 1)} - \frac{Z_{1}^{}}{q(q - 1)} 
	- \frac{Z_{2}^{}}{2(q + 1)}
	\end{equation*} 
	is the constant term in $X = (X_{n})_{n\ge 1}$ of 
	$
	\mathscr{D}(X,Y,Z;q),
	$ 
	and $\dz$ \!denotes the partial derivative with respect to $Z_{1}.$ The expression of 
	$\mathscr{D}_{0}(Z;q)$ can be obtained by direct computation; see also 
	\cite[p. \!7, Remark]{Di-ch}, where $\mathscr{D}_{0}(Z;q)$ is denoted by 
	$
	\mathrm{ch}_{\scriptscriptstyle 0}^{}(\mathcal{V})(p).
	$ 
	Now, the special element we consider is the unique solution $S(Z;q)$ of the equation   
	\begin{equation} \label{eq: defS}
	\dz \!\mathscr{D}_{0}(Z;q)\circ (Z_{1} + S(Z;q))=S(Z;q)
	\end{equation}
	where ``$\circ$" is the plethystic substitution\footnote{This is defined by replacing 
		$
		Z_{n}\mapsto (Z_{n} + \psi^{n}\!S(Z; q))
		$ 
		($n\ge 1$) in the expression of $\dz \!\mathscr{D}_{0}(Z;q).$}. 
	Letting $\bar{\mathscr{D}}_{0}(Z;q)$ denote the constant term in 
	$X = (X_{n})_{n\ge 1}$ of 
	$
	\bar{\mathscr{D}}(X,Y,Z;q),
	$
	we have 
	\begin{equation*} 
	S(Z;q) = \dz \!\bar{\mathscr{D}}_{0}(Z;q).
	\end{equation*} 
	This identity can be easily deduced from \cite[Theorem~7.17]{GK} and \cite[Theorem~7.15 (c)]{GK}, or \cite[Corollary~4.2]{Di-ch}.

	The special element on $\mathscr{S}$ is 
	\begin{equation*}
	E(p): = \exp{\Big(\sum_{n\ge 1}\frac{p_{n}}{n}\Big)} 
	= 1 + \sum_{k \ge 1} \frac{1}{k!}\Big(\sum_{n\ge 1}\frac{p_{n}}{n}\Big)^{k}.
	\end{equation*}

	With the above notation and terminology, we have:
	
	\vskip10pt
	\begin{proposition}\label{unique-hom} --- There exists a unique homomorphism 
		$\pi:\mathscr{R}\rightarrow \mathscr{S}$ with the following properties:
		\begin{equation*}
		\pi \vert_\mathscr{F} = \mathrm{Id}_{\mathscr{F}},\;\;\, \pi(X_{1})=X_{1},\;\;\,
		\pi(1+Z_{1}+S(Z; q))=E(p),
		\end{equation*}
		$$
		\pi\circ\psi^{n}=\psi^{n}\!\circ \pi\;\;\; \forall n\ge 1\;\;\, \text{and} \;\;\, 
		\pi\circ\iota = \iota\circ \pi.
		$$
		Moreover, we have 
		\begin{equation*}
		\pi(Z_{1})=\frac{1-q^{2}+q^{2} E(p)-\delta E(p)}{q(q-1)}\;\;\, \text{and} \;\;\, \pi\circ\delta = \delta\circ\pi.
		\end{equation*}
	\end{proposition}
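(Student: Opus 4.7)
The plan is to prove uniqueness first and then existence. For uniqueness, observe that $\pi\circ\psi^n=\psi^n\circ\pi$ together with $\pi(X_1)=X_1$ immediately forces $\pi(X_n)=X_n$ for all $n\ge 1$. To pin down $\pi(Z_1)$, I would apply $\pi$ to both sides of the fixed-point equation \eqref{eq: defS} defining $S(Z;q)$. Since plethystic substitution $\circ(Z_1+S)$ is the operation $Z_n\mapsto Z_n+\psi^n S$, and applying $\psi^n$ to $\pi(1+Z_1+S)=E(p)$ yields $\pi(1+Z_n+\psi^n S)=\psi^n E(p)$, the image of the right-hand side of \eqref{eq: defS} is
\[
\pi(S)=\dz\mathscr{D}_{0}\bigl((\psi^n(E(p)-1))_{n\ge 1};q\bigr).
\]
Substituting the explicit formula for $\dz\mathscr{D}_{0}$, the calculation reduces to simplifying $\prod_{n\ge 1}(\psi^n E(p))^{\mathrm{Irr}_q(n)}$; collecting terms by the index $k=mn$ and invoking the identity $\sum_{h\mid m}h\,\mathrm{Irr}_q(h)=q^m$ from Lemma~\ref{Identity Irr} shows that this product equals $\delta E(p)$. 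Combining with $\pi(Z_1)=E(p)-1-\pi(S)$ then produces the stated formula for $\pi(Z_1)$.

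Once $\pi(Z_1)$ is fixed, the remaining images on the generators are forced: $\pi(Z_n)=\psi^n\pi(Z_1)$ by $\psi^n$-equivariance, and for $n$ odd $\pi(Y_n)=\iota\pi(Z_n)$ by $\iota$-equivariance. The values $\pi(Y_n)$ for even $n$ are fixed by the remaining consistency requirements between $\iota$- and $\psi^n$-equivariance on the image (using that $\psi^2$ makes even-indexed $Y$'s accessible from odd-indexed $Y$'s via $\psi^n\iota$). For existence, I would simply define $\pi$ on generators by these formulas, extend $\mathscr{F}$-multiplicatively to the completed power-series ring, and verify each axiom on generators; this suffices because all of $\psi^n,\iota,\delta$ are $\mathscr{F}$-algebra endomorphisms. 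The defining relation $\pi(1+Z_1+S)=E(p)$ holds by construction, since $\pi(Z_1)$ was chosen to make it so.

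The compatibility $\pi\circ\delta=\delta\circ\pi$ reduces via generators to checking it on $X_1$ (trivial, since $\delta X_1=qX_1$ and $\pi(X_1)=X_1$) and on $Z_1$. Using $\delta q=1/q$ and $\delta p_n=q^n p_n$, so that $\delta$ is an involution on $\mathscr{S}$ swapping $E(p)$ and $\delta E(p)$, a short direct computation on the displayed formula gives $\delta\pi(Z_1)=q\,\pi(Z_1)=\pi(\delta Z_1)$, as required. Propagating via $\psi^n$ and $\iota$, which also commute with $\delta$ in the relevant sense, then yields the identity on all of $\mathscr{R}$.

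The main obstacle I anticipate is the plethystic calculation reducing the product $\prod_{n\ge 1}(\psi^n E(p))^{\mathrm{Irr}_q(n)}$ to $\delta E(p)$, where Lemma~\ref{Identity Irr} plays the decisive role. A secondary subtlety is the simultaneous verification that the chosen $\pi(Y_n)$ for $n$ even is compatible with both the $\iota$- and $\psi^n$-equivariance axioms (which do not commute freely on either $\mathscr{R}$ or $\mathscr{S}$); once the bookkeeping is organized by parity of the indices, the consistency follows from the analogous compatibility already checked for $\pi(Z_n)$.
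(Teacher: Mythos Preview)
Your proposal is correct and follows essentially the same path as the paper's proof, with the two stages reordered. The paper first dispatches existence and uniqueness abstractly: grading $\mathscr{R}$ and $\mathscr{S}$ by $\deg X_n=\deg Y_n=\deg Z_n=\deg p_n=n$, each ring is freely generated via Adams operations by its degree-one piece; since every monomial of $S(Z;q)$ has weighted degree $\ge 2$ and $E(p)=1+p_1+\cdots$, the constraint $\pi(1+Z_1+S)=E(p)$ pins down $\pi(Z_1)$ degree by degree, and $\pi(Y_1)=\iota\pi(Z_1)$ finishes the determination on generators. Only afterwards does the paper compute $\pi(Z_1)$ explicitly, exactly as you do, by applying $\pi$ to the rewritten fixed-point equation $(Z_1-\dz\mathscr{D}_0)\circ(Z_1+S)=Z_1$ and invoking the identity $\prod_{n\ge 1}(\psi^n E(p))^{\mathrm{Irr}_q(n)}=\delta E(p)$ (your appeal to Lemma~\ref{Identity Irr} is precisely ``matching logarithms''). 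Your treatment of $\pi(Y_n)$ for even $n$ is unnecessarily circuitous: simply $\pi(Y_n)=\psi^n\pi(Y_1)=\psi^n\iota\pi(Z_1)$ for all $n$, and the $\iota$-equivariance check for even $n$ reduces to the observation that $\psi^n\pi(Z_1)$ is $\iota$-fixed when $n$ is even (since every $p_{mn}$ appearing has even index). Your verification of $\pi\circ\delta=\delta\circ\pi$ on $Z_1$ matches the paper's.
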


	\begin{proof} By setting $\deg(X_{n})=\deg(Y_{n})=\deg(Z_{n})=\deg(p_{n})=n,$ we obtain a grading on each of the rings $\mathscr{R}$ and $\mathscr{S}.$ Then, via Adams operations, each of the rings is freely 
		generated by the degree one elements. From the definition of $\dz \!\mathscr{D}_{0}(Z;q)$ and 
		\eqref{eq: defS}, we see that every monomial in $S(Z;q)$ has (weighted) degree $\ge 2.$ Since 
		\begin{equation*}
		E(p) = 1 + p_{1} + \textrm{higher degree terms}
		\end{equation*} 
		and $Y_{1}=\iota(Z_{1}),$ the existence and uniqueness of $\pi$ follows.
				
		To compute $\pi(Z_{1}),$ we first express \eqref{eq: defS} in the equivalent form 
		\begin{equation*}
		(Z_{1} - \dz \!\mathscr{D}_{0}(Z;q))\circ (Z_{1} + S(Z;q))=Z_{1}.
		\end{equation*} 
		Applying $\pi$ to both sides, and using the fact that $\pi\circ\psi^{n}=\psi^{n}\!\circ \pi,$ one gets: 
		\begin{equation*} 
		\pi(Z_{1}) = (Z_{1} - \dz \!\mathscr{D}_{0}(Z;q))\circ \pi(Z_{1} + S(Z;q)) 
		= E(p) - 1 - \dz \!\mathscr{D}_{0}(Z;q)\circ (E(p) - 1). 
		\end{equation*} 
		Thus 
		\begin{equation*} 
		\pi(Z_{1}) = \frac{1-q^{2}+q^{2}E(p)-\prod_{n\ge 1}(\psi^{n}E(p))^{\text{Irr}_{q}(n)}}{q(q-1)}. 
		\end{equation*} 
		We have the formula
		\begin{equation} \label{eq: formalzeta-Euler-decomp}
		\prod_{n\ge 1}(\psi^{n}\!E(p))^{\text{Irr}_{q}(n)} = \delta E(p)
		\end{equation} 
		which can be easily verified by matching the logarithms of the two sides, and the expression 
		of $\pi(Z_{1})$ stated follows. 
		
		Finally, to prove that $\pi$ commutes with $\delta,$ we note that the formula for $\pi(Z_{1})$ 
		implies that $\pi(\delta(Z_{1}))=\delta(\pi(Z_{1})).$ The remaining relations can be verified 
		by simple calculations.\end{proof}

	\subsection{Further Computations}
	The formulas given in the next two lemmas will play a key role in our argument.

	\vskip10pt
	\begin{lemma}\label{pi-elem1} --- Let $\dz \!S(Z;q)$ denote the partial derivative of $S(Z;q)$ 
		with respect to $Z_{1}.$ Then
		\begin{equation*}
		\pi(1+\dz \!S(Z;q))=\frac{(q-1)E(p)}{q E(p) - \delta E(p)}.
		\end{equation*}  
	\end{lemma}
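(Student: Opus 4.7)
The plan is to derive the formula directly from the defining equation for $S(Z;q)$, by first putting it in multiplicative form, then differentiating with respect to $Z_1$, and finally applying $\pi$.

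First I would rewrite the defining equation \eqref{eq: defS}. Substituting the explicit expression of $\partial_{Z_1}\!\mathscr{D}_{0}(Z;q)$ and unravelling the plethystic substitution $Z_n \mapsto Z_n + \psi^n\! S(Z;q)$, equation \eqref{eq: defS} becomes
\begin{equation*}
\prod_{n\ge 1}\bigl(1 + Z_{n} + \psi^{n}\! S(Z;q)\bigr)^{\text{Irr}_{q}(n)} \,=\, 1 + q Z_{1} + q^{2} S(Z;q).
\end{equation*}
This is the ``closed form'' of the recursion and is the clean object to differentiate.

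Next I would differentiate both sides of this identity with respect to $Z_1$. The crucial observation is that for $n\ge 2$, neither $Z_n$ nor $\psi^{n}\! S(Z;q)$ involves the variable $Z_{1}$ (since $\psi^{n}$ sends $Z_m\mapsto Z_{mn}$), so only the $n=1$ factor on the left contributes a derivative. Using $\text{Irr}_{q}(1)=q$, logarithmic differentiation yields
\begin{equation*}
\bigl(1 + q Z_{1} + q^{2} S\bigr)\bigl(1 + \dz\! S\bigr) \,=\, \bigl(1 + Z_{1} + S\bigr)\bigl(1 + q\,\dz\! S\bigr).
\end{equation*}

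Now I would apply the homomorphism $\pi$. By the defining property $\pi(1 + Z_{1} + S(Z;q)) = E(p)$, together with formula \eqref{eq: formalzeta-Euler-decomp} and the fact that $\pi$ commutes with all $\psi^{n}$, the closed form above gives
\begin{equation*}
\pi\bigl(1 + q Z_{1} + q^{2} S(Z;q)\bigr) \,=\, \prod_{n\ge 1}\bigl(\psi^{n} E(p)\bigr)^{\text{Irr}_{q}(n)} \,=\, \delta E(p).
\end{equation*}
Writing $A := \pi(1 + \dz\! S(Z;q))$, so that $\pi(1 + q\,\dz\! S) = 1 - q + qA$, applying $\pi$ to the differentiated identity produces the linear equation
\begin{equation*}
A\cdot \delta E(p) \,=\, E(p)(1 - q + q A),
\end{equation*}
which solves immediately to give $A = (q-1)E(p)\big/\bigl(q E(p) - \delta E(p)\bigr)$, as claimed. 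No serious obstacle is anticipated; the only subtlety is the verification that $\psi^{n}\!S(Z;q)$ is independent of $Z_1$ for $n\ge 2$, which is immediate from the inductive construction of $S(Z;q)$ via \eqref{eq: defS} and the action of the Adams operations on the generators.
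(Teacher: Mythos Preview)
Your argument is correct and follows essentially the same route as the paper's proof. The only cosmetic difference is that you first rearrange the defining relation \eqref{eq: defS} into the multiplicative form $\prod_{n\ge 1}(1+Z_n+\psi^n S)^{\mathrm{Irr}_q(n)} = 1+qZ_1+q^2 S$ before differentiating, whereas the paper differentiates the fractional form of \eqref{eq: defS} directly; the key ingredients---that only the $n=1$ factor contributes to the $Z_1$-derivative, and that $\pi$ sends the product to $\delta E(p)$ via \eqref{eq: formalzeta-Euler-decomp}---are identical in both.
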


	\begin{proof} Writing \eqref{eq: defS} explicitly and differentiating with respect to $Z_{1},$ we find easily that 
		\begin{equation*}
		\dz \!S(Z;q) \, = \, (1 + \dz \!S(Z;q))
		\cdot \left\{\frac{-(1 + Z_{1} + S(Z;q)) + \prod_{n\ge 1}\,(1 + Z_{n} + \psi^{n}S(Z;q))^{\text{Irr}_{q}(n)}}
		{(q-1)(1 + Z_{1} + S(Z;q))}\right\}.
		\end{equation*} 
		To this we apply $\pi.$ Then Proposition \ref{unique-hom} and \eqref{eq: formalzeta-Euler-decomp} yield 
		\begin{equation*}
		\pi(1 + \dz \!S(Z;q)) - 1 \, = \, \pi(1 + \dz \!S(Z;q))
		\cdot \left(\frac{-E(p) + \delta E(p)}{(q-1) E(p)}\right)
		\end{equation*} 
		and the lemma follows.\end{proof}

	\vskip10pt
	\begin{lemma}\label{pi-elem2} --- Let $\dx \!\mathscr{D}_{2}(X,Y,Z;q)$ be defined by
		\begin{equation*}
		\dx \!\mathscr{D}_{2}(X,Y,Z;q) = \frac{2 X_{1}\!\left(\prod_{m}(1+Y_{m})^{c_{m \slash 2^{\nu_{2}(m)}}\big(q^{2^{\nu_{2}(m)}}\big)\slash (2m)}
			(1+Z_{m})^{\big[2c_{m}(q)\, - \, c_{m\slash 2^{\nu_{2}(m)}}\big(q^{2^{\nu_{2}(m)}}\big)\big]\slash (2m)}- 1\right)}{q(q-1)}
		\in\mathscr{R}.
		\end{equation*} 
		Define $K(X, Y, Z; q)$ as the unique solution of the equation 
		\begin{equation*}
		\dx \!\mathscr{D}_{2}(X, Y, Z; q)\circ (X_{1} + K(X, Y, Z; q), Y_{1} + \iota(S(Z; q)), 
		Z_{1} + S(Z; q)) = K(X, Y, Z; q)
		\end{equation*} 
		where $\circ$ is the plethystic substitution. Then
		\begin{equation*}
		\pi(K(X, Y, Z; q))=0.
		\end{equation*} 
		In particular, $\pi(\dx \!K(X, Y, Z; q))=0.$ 
	\end{lemma}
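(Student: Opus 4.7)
The plan is to apply $\pi$ to the defining equation of $K$ and observe that the resulting identity collapses to $\pi(K)=0$. Since $\pi$ is a ring homomorphism commuting with every Adams operation $\psi^n$ and with $\iota$, it intertwines plethystic substitution with the substitution of images. From Proposition \ref{unique-hom} we have $\pi(1+Z_1+S(Z;q))=E(p)$, and applying $\iota$ to both sides gives $\pi(1+Y_1+\iota S(Z;q))=\iota E(p)$. Hence, applying $\pi$ to $\partial_{X_1}\mathscr{D}_2 \circ (X_1+K,\,Y_1+\iota S,\,Z_1+S)=K$ and using the explicit expression for $\partial_{X_1}\mathscr{D}_2$, one finds
$$
\pi(K) \;=\; \frac{2(X_1+\pi K)}{q(q-1)}\bigl[M(p;q)-1\bigr],
\qquad
M(p;q) \;:=\; \prod_{m\ge 1}\bigl(\psi^m\iota E(p)\bigr)^{\alpha_m}\bigl(\psi^m E(p)\bigr)^{\beta_m},
$$
where $\alpha_m=c_{m/2^{\nu_2(m)}}(q^{2^{\nu_2(m)}})/(2m)$ and $\beta_m=[2c_m(q)-c_{m/2^{\nu_2(m)}}(q^{2^{\nu_2(m)}})]/(2m)$. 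Everything then reduces to proving $M(p;q)=1$, which forces $\pi(K)=0$ at once, and $\pi(\partial_{X_1}K)=0$ follows.

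To establish $M(p;q)=1$, I take the formal logarithm. Using $\log E(p)=\sum_{n\ge 1}p_n/n$ and $\iota(p_n)=(-1)^n p_n$ together with $\psi^m p_n=p_{mn}$, one obtains after collecting terms by $N=mn$,
$$
\log M(p;q) \;=\; \sum_{N\ge 1}\frac{p_N}{N}\sum_{m\mid N} m\bigl[(-1)^{N/m}\alpha_m+\beta_m\bigr].
$$
The inner sum is handled by splitting on the parity of $N$. When $N$ is odd, every divisor $m$ is odd, so $\nu_2(m)=0$ gives $\alpha_m=\beta_m=c_m(q)/(2m)$ and $(-1)^{N/m}=-1$, which kills each summand. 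When $N=2^aN'$ with $N'$ odd and $a\ge 1$, write $m=2^b m''$ with $m''\mid N'$ and $0\le b\le a$. The terms with $b<a$ have $(-1)^{N/m}=1$ and contribute $m(\alpha_m+\beta_m)=c_m(q)$, while the terms with $b=a$ have $(-1)^{N/m}=-1$ and contribute $m(\beta_m-\alpha_m)=c_m(q)-c_{m''}(q^{2^a})$. Summing,
$$
\sum_{m\mid N} m\bigl[(-1)^{N/m}\alpha_m+\beta_m\bigr] \;=\; \sum_{m\mid N} c_m(q)\;-\;\sum_{m''\mid N'} c_{m''}(q^{2^a}).
$$
Lemma \ref{Identity Irr} gives $\sum_{h\mid M}h\,\mathrm{Irr}_t(h)=t^M$, and since $c_m(\cdot)=m\,\mathrm{Irr}_{\cdot}(m)$ for $m\ge 2$ while $c_1(q)=q-1$ differs from $1\cdot\mathrm{Irr}_q(1)=q$ by $-1$, both sums above equal $q^N-1$ (the first directly, the second since $(q^{2^a})^{N'}=q^N$). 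They cancel, so $\log M\equiv 0$ and $M(p;q)=1$.

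The main obstacle is this combinatorial identity for $\log M$, which is where the precise shape of the exponents $\alpha_m,\beta_m$ (and in particular the role of the $2$-adic valuation $\nu_2(m)$) must be used decisively: the whole point is that the ``odd part'' of the argument of $c$ in $\alpha_m,\beta_m$ is tuned so that the divisor sum over $N$ factors through the base change $q\mapsto q^{2^a}$ and cancels the full sum by Lemma \ref{Identity Irr}. The arithmetic $-1$ correction at $m=1$ is crucial and appears on both sides of the cancellation; getting this boundary term right is the only subtlety beyond routine manipulation. Once $M=1$ is in hand, the lemma is immediate, and the in-particular statement $\pi(\partial_{X_1}K(X,Y,Z;q))=0$ follows by applying $\partial_{X_1}$ to $\pi(K)=0$ (using that $\pi(X_1)=X_1$, so $\pi$ commutes with $\partial_{X_1}$ on power series in $X_1$).
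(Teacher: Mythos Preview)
Your proof is correct and follows essentially the same approach as the paper: apply $\pi$ to the defining equation (using Proposition~\ref{unique-hom} to evaluate $\pi(1+Z_1+S)$ and $\pi(1+Y_1+\iota S)$), and then reduce to showing the resulting product equals $1$. The only cosmetic difference is that the paper first factors the product as $\prod_m\bigl(\psi^m\iota E(p)/\psi^m E(p)\bigr)^{\alpha_m}\cdot \delta E(p)/E(p)$ via the identity \eqref{eq: formalzeta-Euler-decomp} and then states the remaining identity, whereas you carry out the equivalent logarithmic divisor-sum computation directly using Lemma~\ref{Identity Irr}.
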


	\begin{proof} As before, using Proposition \ref{unique-hom} and \eqref{eq: formalzeta-Euler-decomp}, 
		we write
		\begin{equation*}
		\pi(K(X, Y, Z; q)) =
		\frac{2 (X_{1} + K(X, Y, Z; q))\!\left(\prod_{m}\Big(\frac{\psi^{m}\iota E(p)}{\psi^{m} E(p)}\Big)^{c_{m \slash 2^{\nu_{2}(m)}}\big(q^{2^{\nu_{2}(m)}}\big)\slash (2m)}\cdot \frac{\delta E(p)}{E(p)} - 1\right)}{q(q-1)}.
		\end{equation*} 
		The lemma follows from the identity 
		\begin{equation*}
		\prod_{m \ge 1}\bigg(\frac{\psi^{m}\iota E(p)}{\psi^{m} E(p)}\bigg)^{c_{m \slash 2^{\nu_{2}(m)}}\big(q^{2^{\nu_{2}(m)}}\big)\slash (2m)} = \, \frac{E(p)}{\delta E(p)}.
		\end{equation*} \end{proof}

	\begin{remark} \!It can be verified that $K(X, Y, Z; q)$ in the last lemma is precisely 
		\begin{equation*}
		K(X, Y, Z; q) = \dx \!\bar{\mathscr{D}}_{2}(X,Y,Z;q)
		\end{equation*} 
		where $\bar{\mathscr{D}}_{2}(X, Y, Z; q)$ is the coefficient of $X_{1}^{2}$ in $\bar{\mathscr{D}}(X, Y, Z; q).$	
	\end{remark}

	\subsection{Essential parts of admissible double covers with marked points} 
	For a function $F(X, Y, Z; q),$ define 
	\begin{equation*}
	\partial F(X, Y, Z; q) := (\dx F(X, Y, Z; q), \dy F(X, Y, Z; q), \dz F(X, Y, Z; q))
	\end{equation*} 
	where $\dx,$ $\dy$ \!and $\dz$ denote the partial derivatives of $F$ with respect to $X_{1},$ $Y_{1}$ and $Z_{1},$ respectively. 
	
	The main purpose of this section is to obtain a recursive relation for 
	$
	\pi(\partial \bar{\mathscr{D}}(X, Y, Z; q)). 
	$ 
	To do so, we begin by introducing some useful notation and terminology. 
	
	Let $C$ be an admissible double cover with a distinguished smooth point $P_{0} = P_{0}(C)$ sometimes called the root of $C;$ we shall always assume that the point of the base curve corresponding to $P_{0}$ is defined over the field of definition of $C.$ The irreducible component of $C$ containing $P_{0}$ is called the root component of $C.$ To each point $P$ of $C,$ by which we mean a point of the base curve together with its fiber, we associate a color, or type, $\kappa(P)$ as follows: 
	\begin{enumerate}[label=(\roman*)]
		\item $\kappa(P) = \mathrm{white},$ or $X_{1}$-type, if the fiber of the corresponding point of the base curve consists of one point. 
		
		\item $\kappa(P) = \mathrm{blue},$ or $Z_{1}$-type, if the fiber of the corresponding point of the base curve consists of two points, and both these points are defined over the same field as the corresponding point of the base curve.
		
		\item $\kappa(P) = \mathrm{red},$ or $Y_{1}$-type, if the fiber of the corresponding point of the base curve consists of two points, and these points are {\it not} defined over the same field as the corresponding point of the base curve.
	\end{enumerate} 
	For a type $\heartsuit \in \{X_{1}, Y_{1}, Z_{1} \},$ we shall denote by $E^{\heartsuit}(p)$ the function 
	$$
	E^{\heartsuit}(p) = \begin{cases} 
	1 & \mbox{if } \heartsuit = X_{1} \\
	\iota E(p) & \mbox{if } \heartsuit = Y_{1} \\ 
	E(p) & \mbox{if } \heartsuit = Z_{1}.
	\end{cases} 
	$$ 
	In what follows, we shall consider only {\it rooted} admissible double covers. For 
	$\heartsuit \in \{ X_{1}, Y_{1}, Z_{1} \},$ let 
	$
	\partial_{\heartsuit}\overline{\mathcal{D}}_{\mathfrak{n}, \mathfrak{i}, \mathfrak{j}}
	$ 
	denote the category of rooted admissible double covers $C$ over $\mathbb{F}_{\! q}$ whose distinguished points $P_{0}(C)$ are of type $\heartsuit,$ and the corresponding points of the base curves are marked points, i.e., the point corresponding to $P_{0}(C)$ is one of the $|\mathfrak{n}|$ branch points if $\heartsuit = X_{1},$ or one of the $|\mathfrak{j}|$ (resp. \!$|\mathfrak{i}|$) marked points if $\heartsuit = Z_{1}$ (resp. \!$\heartsuit = Y_{1}$). Note that the point of the base curve corresponding to $P_{0}(C)$ is defined over $\mathbb{F}_{\! q}.$ We also define 
	$$
	\partial_{\heartsuit} \overline{\mathcal{D}}_{\mathfrak{n}} = 
	\begin{cases} 
	\partial_{\scriptscriptstyle X_{1}}
	\!\overline{\mathcal{D}}_{\mathfrak{n}, {\scriptscriptstyle \pmb{0}}, {\scriptscriptstyle \pmb{0}}}
	& \mbox{if } \heartsuit = X_{1} \\
	\partial_{\scriptscriptstyle Y_{1}}
	\!\overline{\mathcal{D}}_{\mathfrak{n}, {\scriptscriptstyle (1)}, {\scriptscriptstyle \pmb{0}}} & \mbox{if } \heartsuit = Y_{1} \\ 
	\partial_{\scriptscriptstyle Z_{1}}
	\!\overline{\mathcal{D}}_{\mathfrak{n}, {\scriptscriptstyle \pmb{0}}, {\scriptscriptstyle (1)}} & \mbox{if } \heartsuit = Z_{1}.
	\end{cases} 
	$$ 
	From now on, a point $P$ of an admissible double cover $C$ will be called {\it ramified} or {\it unramified} according as the corresponding point of the base curve is branched or not. We shall also refer to a point of an admissible double cover as being a node (or marked) if the corresponding point of the base is so. Finally, for each point $P\in C,$ let $n_{P}$ stand for the degree of $P,$ that is, $n_{P} = [k(P) : k],$ where $k$ is the field of definition of $C.$

	Let $C$ ($[C]\in [\overline{\mathcal{D}}_{\mathfrak{n}, \mathfrak{i}, \mathfrak{j}}(\mathbb{F}_{\! q})]$ with $|\mathfrak{n}|\ge 2$ even, and $|\mathfrak{n}| + |\mathfrak{i}| + |\mathfrak{j}| \ge 3$) be an admissible double cover with a distinguished point (i.e., a root) over $\mathbb{F}_{\! q}$ of type $\heartsuit.$ We define its {\it essential} part, denoted by $\mathrm{ess}(C),$ to be the admissible double cover obtained from $C$ by repeating the process of deleting all the unramified markings and stabilizing it. The isomorphism class of the resulting cover is an element of 
	$
	[\partial_{\heartsuit}
	\overline{\mathcal{D}}_{\mathfrak{n}}(\mathbb{F}_{\! q})].
	$ 
	Note that $\partial_{\scriptscriptstyle \heartsuit}\bar{\mathscr{D}}(X, Y, Z; q)$ can be expressed as 
	\begin{equation*}
	\partial_{\scriptscriptstyle \heartsuit}\bar{\mathscr{D}}(X, Y, Z; q) \;\; = \sum_{[C_{0}]\in [\partial_{\heartsuit}
		\overline{\mathcal{D}}_{\mathfrak{n}}(\mathbb{F}_{\! q})]}\;\,
	\sum_{[\mathrm{ess}(C)] \, = \, [C_{0}]} \frac{1}{\selectfont\# \text{Aut}_{\scriptscriptstyle \mathbb{F}_{q}}\!(C)}
	X^{\mathfrak{n}(C)} Y^{\mathfrak{i}(C)} Z^{\mathfrak{j}(C)}
	\end{equation*} 
	where, for $\heartsuit \in \{ X_{1}, Y_{1}, Z_{1} \}$ and $[C]\in [\partial_{\heartsuit} \overline{\mathcal{D}}_{\mathfrak{n}, \mathfrak{i}, \mathfrak{j}}(\mathbb{F}_{\! q})],$ $\mathfrak{n}(C), \mathfrak{i}(C)$ and $\mathfrak{j}(C)$ are such that: 
	$$
	\heartsuit \cdot X^{\mathfrak{n}(C)} Y^{\mathfrak{i}(C)} Z^{\mathfrak{j}(C)} 
	= X^{\mathfrak{n}} Y^{\mathfrak{i}} Z^{\mathfrak{j}}.
	$$ 
	The process of constructing $\mathrm{ess}(C)$ can be reversed, and one has the following explicit reconstruction of all admissible double covers that have the same essential part:
	\begin{enumerate}[label=(\roman*)]
		\item Consider a finite set $S$ of Galois orbits of points that are not nodes, ramified points, or the root. Decompose the set $S = S_{1} \cup S_{2}$ into two disjoint, possibly empty, subsets. We mark the orbits in $S_{1},$ and in every Galois orbit in $S_{2}$ of a point, say, $P,$ we insert a Galois orbit of an unramified admissible double cover defined over $\mathbb{F}_{\! q}(P)$ with marked points and a root of the same type as $P,$ i.e., the point $P$ is identified with the root.

		\item In the Galois orbit of a node $P$ (or the root $P_{0}$), we can either leave it as it is, or insert a Galois orbit of an admissible double cover defined over 
		$\mathbb{F}_{\! q}(P)$ (or $\mathbb{F}_{\! q} = \mathbb{F}_{\! q}(P_{0})$) with marked points, the root $Q_{0},$ and one additional distinguished point\footnote{In the sense that we choose it always to be the last marked point defined over the field of definition of the admissible double cover.} $Q_{1}$ defined over $\mathbb{F}_{\! q}(P)$ (or $\mathbb{F}_{\! q} = \mathbb{F}_{\! q}(P_{0})$), both $Q_{0}$ and $Q_{1}$ of the same type as $P$ (or $P_{0}$), and with no ramified points, other than possibly $Q_{0}$ and $Q_{1}.$ In the case of a node, consider the partial normalization of the base curve at the point $p$ corresponding to $P.$ Since $p$ is a separating node, the corresponding points $\{p_{0}, p_{1}\}$ of the normalization at $p$ (i.e., the normalized double point) can be chosen such that $p_{0}$ and the point corresponding to the root lie in the same connected component of the partial normalization, whereas $p_{1}$ lies in the other connected component. This yields two admissible double covers, with base curves the connected components of the partial normalization at $p,$ each with a point $\tilde{p}_{j}$ ($j = 0,1$) -- namely, that corresponding to $p_{j}.$ We identify $\tilde{p}_{0}$ with $Q_{0}$ and $\tilde{p}_{1}$ with $Q_{1}.$ In the remaining case, we identify the root $P_{0}$ with $Q_{0},$ and $Q_{1}$ becomes the root of the resulting cover.

		\item In the Galois orbit of a ramified point $P,$ different from the root, we can either just leave the orbit marked, or, alternatively, insert a Galois orbit of an admissible double cover defined over $\mathbb{F}_{\! q}(P)$ with marked points and with two ramified points $Q_{0}$ and $Q_{1}$ defined over $\mathbb{F}_{\! q}(P),$ the point $Q_{0}$ 
		being also the root of the cover. In this case, we identify $P$ with $Q_{0}.$
	\end{enumerate} 
	Starting with $C$ ($
	[C]\in [\partial_{\heartsuit} \overline{\mathcal{D}}_{\mathfrak{n}}(\mathbb{F}_{\! q})]
	$), one applies (i), (ii) and (iii) (only once) to obtain an admissible double cover whose essential part is $C.$ 
	
	Finally, for an admissible double cover $C$ defined over $\mathbb{F}_{\! q},$ we define 
	\begin{equation*}
	P_{\!\scriptscriptstyle C}(p) =  
	P_{\!\scriptscriptstyle C}(p, q)\; : = \prod_{P\in C^{+}\slash G_{\scriptscriptstyle \mathbb{F}_{\! q}}} \psi^{n_{P}}\!E(p) 
	\prod_{P\in C^{-}\slash G_{\scriptscriptstyle \mathbb{F}_{\! q}}} 
	\psi^{n_{P}}\!\iota E(p)
	\end{equation*} 
	where $C^{+}$ (resp. $C^{-}$) is the set of $\overline{\mathbb{F}}_{\! q}$-points of $C$ of blue (resp. red) color, and 
	$
	G_{\scriptscriptstyle \mathbb{F}_{\! q}} = 
	\mathrm{Gal}(\overline{\mathbb{F}}_{\! q}\slash \mathbb{F}_{\! q}).
	$ 
	If $C$ is a hyperelliptic curve over $\mathbb{F}_{\! q}$ then, under the specialization 
	$
	p_{n} \mapsto p_{n}(T) = \sum_{i \ge 1} t_{i}^{n}
	$ 
	for all $n \ge 1,$ we have by \eqref{eq: pol-char-in-terms-of-a*} that \begin{equation*}
	P_{\!\scriptscriptstyle C}(p(T)) 
	= E(T)^{\epsilon(C)} \cdot 
	\prod_{m \ge 1} E(T^{m})^{\frac{c_{m}^{}\!(C) \, + \, a_{m}^{*}\!(C)}{2m}} 
	E(-T^{m})^{\frac{c_{m}^{}\!(C) \, - \, a_{m}^{*}\!(C)}{2m}}
	=\, \prod_{i \ge 1} P_{\!\scriptscriptstyle C}(t_{i})
	\end{equation*} 
	where $E(T) = \prod_{i \ge 1} (1 - t_{i})^{-1}.$

	We now prove the following

	\vskip10pt
	\begin{thm} \label{ess-main-thm} --- Consider the series $H(X, Y, Z; p, q)$ defined by 
		\begin{equation*}
		\begin{split}
		H(X, Y, Z; p, q) &= 
		\bigg(X_{1}, \frac{q - 1}{\iota E(p)(q \iota E(p) - \delta \iota E(p))}Y_{1}, 
		\frac{q - 1}{E(p)(q E(p) - \delta E(p))}Z_{1}\bigg)\\
		&\hskip72.6pt \circ \partial\bigg(\sum_{\mathfrak{n}, \mathfrak{i}, \mathfrak{j}}\; \sum_{[C]\in [\mathcal{D}_{\mathfrak{n}, \mathfrak{i}, \mathfrak{j}}(\mathbb{F}_{\! q})]}\, \frac{P_{\scriptscriptstyle C}(p)}{\selectfont\# \text{Aut}_{\scriptscriptstyle \mathbb{F}_{q}}\!(C)}
		\cdot X^{\mathfrak{n}} Y^{\mathfrak{i}} Z^{\mathfrak{j}} \bigg).
		\end{split}
		\end{equation*} 
		Letting 
		\begin{equation*}
		\bar{\mathscr{D}}^{*}(X, Y, Z; q) : = \bar{\mathscr{D}}(X, Y, Z; q) - \bar{\mathscr{D}}_{0}(Z;q) \; = \sum_{\substack{\mathfrak{n}, \mathfrak{i},\mathfrak{j} \\ |\mathfrak{n}| \ge 2 \, \& \, \mathrm{even}}} \selectfont\# \overline{\mathcal{D}}_{\mathfrak{n}, \mathfrak{i}, \mathfrak{j}}(\mathbb{F}_{\! q}) 
		X^{\mathfrak{n}} Y^{\mathfrak{i}} Z^{\mathfrak{j}}
		\end{equation*} 
		we have 
		\begin{equation*}
		\begin{split}
		H(X, Y, Z; p, q) &\circ  \bigg(X_{1} +  \pi(\partial_{\scriptscriptstyle X_{1}} \!\bar{\mathscr{D}}^{*})(X, p; q), \frac{\pi(\partial_{\scriptscriptstyle Y_{1}}\!\bar{\mathscr{D}}^{*})(X, p; q)}{\iota E(p)}, \frac{\pi(\partial_{\scriptscriptstyle Z_{1}}\!\bar{\mathscr{D}}^{*})(X, p; q)}{E(p)}\bigg)\\
		& \hskip21pt = \bigg(\pi(\partial_{\scriptscriptstyle X_{1}} \!\bar{\mathscr{D}}^{*})(X, p; q), \frac{\pi(\partial_{\scriptscriptstyle Y_{1}} \!\bar{\mathscr{D}}^{*})(X, p; q)}{\iota E(p)}, \frac{\pi(\partial_{\scriptscriptstyle Z_{1}} \!\bar{\mathscr{D}}^{*})(X, p; q)}{E(p)}\bigg).
		\end{split}
		\end{equation*}
	\end{thm}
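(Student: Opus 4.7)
The plan is to derive the identity from the essential-part stratification of admissible double covers. Given any rooted $[C] \in [\partial_\heartsuit \overline{\mathcal{D}}_{\mathfrak{n}}(\mathbb{F}_q)]$ with $\heartsuit \in \{X_1, Y_1, Z_1\}$, the essential-part construction---iteratively stripping unramified markings and contracting the resulting unstable genus-zero components---produces a rooted \emph{smooth} admissible cover $C_0 \in \partial_\heartsuit \mathcal{D}$ together with Galois-equivariant ``attachment'' data at each orbit of $C_0$. The inverse reconstruction is tree-plethystic and expresses $\partial_\heartsuit \bar{\mathscr{D}}^{*}$ as a fixed-point equation involving itself and the smooth stratum $\mathcal{D}$.

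First I would classify the attachments orbit-by-orbit. At an orbit of unramified smooth points of blue ($Z_1$) type and degree $n$, the attachment is one of: do nothing, mark the orbit (contributing $\psi^n Z_1$), or splice in a Galois orbit of an unramified $Z_1$-rooted admissible cover; the three summed choices produce the plethystic factor $\psi^n(1 + Z_1 + S(Z;q))$ by the very definition \eqref{eq: defS} of $S$. The analogous factor at red ($Y_1$) orbits is $\psi^n(1 + Y_1 + \iota S(Z;q))$, while at ramified ($X_1$) orbits the analog is $\psi^n(X_1 + K(X,Y,Z;q))$ with $K$ the series of Lemma \ref{pi-elem2}. Applying $\pi$ and invoking Proposition \ref{unique-hom} and Lemma \ref{pi-elem2} collapses these local factors into $\psi^n E(p)$, $\psi^n \iota E(p)$, and $\psi^n X_1$ respectively---precisely the three local factors of $P_{C_0}(p)$ under the specialization recovering the characteristic polynomial weight. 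Summing over smooth essential covers then reassembles the series $\sum_{[C_0]} P_{C_0}(p) X^{\mathfrak{n}} Y^{\mathfrak{i}} Z^{\mathfrak{j}}/ \#\mathrm{Aut}(C_0)$ appearing inside $\partial(\cdot)$ in the definition of $H$.

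Next I would treat the node and root bookkeeping. At a $Y_1$- or $Z_1$-colored node (or the root of $C_0$ when of such color), one additionally chooses whether to splice in a rooted admissible cover with a distinguished second point of the same color; this encodes the recursive dependence on $\partial_\heartsuit \bar{\mathscr{D}}^{*}$ itself, while the ``second-distinguished-point'' convention contributes a Jacobian factor. Under $\pi$ this Jacobian is computed by Lemma \ref{pi-elem1} and---combined with the substitution normalization $Z_1 \mapsto \pi(\partial_{Z_1}\bar{\mathscr{D}}^*)/E(p)$ (respectively $Y_1 \mapsto \pi(\partial_{Y_1}\bar{\mathscr{D}}^*)/\iota E(p)$) that strips the outermost $E(p)$ from the $P_{C_0}(p)$-weight at the root---it produces precisely the scalar $(q-1)/(E(p)(qE(p) - \delta E(p)))$ sitting in front of $Z_1$ in the definition of $H$ (and its $\iota$-twist in front of $Y_1$). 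At a ramified ($X_1$) root no analogous correction arises because Lemma \ref{pi-elem2} forces $\pi(K) = 0$; this matches the unit coefficient in front of $X_1$ in $H$. Assembling the three steps yields exactly the claimed plethystic fixed-point relation.

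The main obstacle will be making the combinatorial bookkeeping fully rigorous: one must verify that the geometric splicing at nodes---including the symmetry factor $1/2$ arising when the two branches meeting at a node are of the same color and interchangeable---agrees precisely with the plethystic composition $\circ$, and that the ``last marked point over $\mathbb{F}_q(P)$'' convention used to single out the auxiliary distinguished point of the spliced sub-cover reproduces $\partial_\heartsuit \bar{\mathscr{D}}^{*}$ without residual symmetry corrections. A related delicate point is the exclusion $|\mathfrak{n}| \ge 2$ enforced by the passage from $\bar{\mathscr{D}}$ to $\bar{\mathscr{D}}^{*}$: the recursion is well-founded only if the unramified ``empty'' stratum that produces $\bar{\mathscr{D}}_0$ is absorbed entirely into $S(Z;q)$ on the splicing side rather than into $\partial_\heartsuit \bar{\mathscr{D}}^{*}$, and this separation is guaranteed precisely by \eqref{eq: defS}.
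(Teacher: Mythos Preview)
Your overall strategy matches the paper's: stratify by the essential part, invert the reconstruction recipe, and apply Proposition \ref{unique-hom}, Lemma \ref{pi-elem1}, and Lemma \ref{pi-elem2} to compute $\pi$ of each local factor. But there is a real error that unravels the logical structure. The essential part of a rooted cover is \emph{not} smooth in general: stripping the unramified markings and stabilizing lands you in $\partial_\heartsuit \overline{\mathcal{D}}_{\mathfrak{n}}$, which consists of nodal stable covers whenever the branch locus is spread over several components. Worse, you begin with $C \in \partial_\heartsuit \overline{\mathcal{D}}_{\mathfrak{n}}$, which already carries no unramified markings, so your ``essential-part construction'' applied to this $C$ is the identity and does not produce a smooth $C_0$.

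This causes you to conflate two separate layers of the argument. The paper's first layer establishes the intermediate identity \eqref{eq: transition-identity}, a sum over (generally nodal) essential parts $C$; here the reconstruction recipes (i)--(iii) are inverted and $\pi$ is applied. In particular, recipe (ii) at a $Z_1$-colored node or root splices in a cover with \emph{no} branch points away from its two distinguished points, contributing the factor $1 + \partial_{Z_1} S$; under $\pi$, Lemma \ref{pi-elem1} evaluates this to $(q-1)E(p)/(qE(p) - \delta E(p))$, which is the origin of the node weight $\alpha^{Z_1}$. This step does \emph{not} encode the recursive dependence on $\partial_\heartsuit \bar{\mathscr{D}}^{*}$, contrary to your account. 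That recursion appears only in the second layer, where one further decomposes each essential part $C$ by its smooth root component $C_0 \in \mathcal{D}_{\mathfrak{n}_0, \mathfrak{i}_0, \mathfrak{j}_0}$. The marked unramified points of $C_0$ are precisely the nodes of $C$ lying on $C_0$, and the subtrees of $C$ hanging there are themselves rooted essential parts; summing over them reproduces the right-hand side of \eqref{eq: transition-identity} for the subtree type, hence $\psi^{n_P}\big(\pi(\partial_{\kappa(P)}\bar{\mathscr{D}}^{*})/E^{\kappa(P)}(p)\big)$. Separating these two layers cleanly is what makes the assembly into the plethystic fixed-point identity go through.
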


	\begin{proof} For any $\heartsuit \in \{ X_{1}, Y_{1}, Z_{1} \},$ we first write 
		$
		\pi(\partial_{\scriptscriptstyle \heartsuit} \bar{\mathscr{D}}^{*})(X, p; q)
		$ 
		as 
		\begin{equation} \label{eq: transition-identity}
		\begin{split}
		\pi(\partial_{\scriptscriptstyle \heartsuit} \bar{\mathscr{D}}^{*})(X, p; q) \; =  
		\sum_{\substack{|\mathfrak{n}| \, \ge \, 2\\ |\mathfrak{n}|-\mathrm{even}}} \, \sum_{[C]\in [\partial_{\heartsuit}\overline{\mathcal{D}}_{\mathfrak{n}}(\mathbb{F}_{\! q})]} \frac{\alpha^{\kappa(P_{0}(C))}(p) X^{\mathfrak{n}(C)}}{\selectfont\# \text{Aut}_{\scriptscriptstyle \mathbb{F}_{q}}\!(C)}
		\prod_{P\in C^{+}\slash G_{\scriptscriptstyle \mathbb{F}_{\! q}}} \psi^{n_{P}}\!E(p) 
		&\prod_{P\in C^{-}\slash G_{\scriptscriptstyle \mathbb{F}_{\! q}}} \psi^{n_{P}}\!\iota E(p)\\
		&\cdot \prod_{P\in N(C)\slash G_{\scriptscriptstyle \mathbb{F}_{\! q}}} \psi^{n_{P}}\!\alpha^{\kappa(P)}(p)
		\end{split}
		\end{equation} 
		where $N(C)$ denotes the set of nodes of $C,$ 
		$$
		\alpha^{\heartsuit}(p) = \alpha^{\heartsuit}(p, q) 
		= \begin{cases} 
		1 & \mbox{if } \heartsuit = X_{1} \\
		\frac{q \, - \, 1}{q \iota E(p) \, - \, \delta \iota E(p)} & \mbox{if } \heartsuit = Y_{1} \\ 
		\frac{q \, - \, 1}{q E(p) \, - \, \delta E(p)} & \mbox{if } \heartsuit = Z_{1}
		\end{cases} 
		$$ 
		and 
		$
		G_{\scriptscriptstyle \mathbb{F}_{\! q}} 
		= \mathrm{Gal}(\overline{\mathbb{F}}_{\! q}\slash \mathbb{F}_{\! q}).
		$
		
		Indeed, one follows the recipe to reconstruct all the admissible double covers from a fixed $C$ with 
		$
		[C]\in [\partial_{\heartsuit}\overline{\mathcal{D}}_{\mathfrak{n}}(\mathbb{F}_{\! q})].
		$ 
		Counting the number of curves that can be inserted by the process, it follows from Proposition \ref{unique-hom}, Lemma \ref{pi-elem1} and Lemma \ref{pi-elem2} that, after applying $\pi,$ the contribution corresponding to the Galois orbit of a node (or the root) $P$ is $\psi^{n_{P}}\!(\alpha^{\kappa(P)}(p)\cdot E^{\kappa(P)}(p)),$ while the contribution corresponding to the Galois orbit of a point $P$ which is neither a node nor the root is $\psi^{n_{P}}\!E^{\kappa(P)}(p).$

		Next, one considers all the curves $C$ with the same root component $C_{0}.$ Note that $C_{0}$ can be represented by a hyperelliptic curve with marked points given by the nodes of $C$ belonging to $C_{0},$ and with a distinguished point given by the root. In particular, $[C_{0}]$ is an element of 
		$
		[\mathcal{D}_{\mathfrak{n}, \mathfrak{i}, \mathfrak{j}}(\mathbb{F}_{\! q})]
		$ 
		for some partitions $\mathfrak{n}, \mathfrak{i}$ and $\mathfrak{j}.$ 
		Now the contribution to the first two products in \eqref{eq: transition-identity} coming from the points of $C_{0}$ is 
		$
		P_{\scriptscriptstyle C_{0}}(p).
		$ 
		Finally, notice that the subcurves of $C$ emerging from $C_{0}$ in a Galois orbit 
		of a node $P$ contributes the same as in 
		$
		\psi^{n_{P}}\!\Big(\frac{\pi(\partial_{\kappa(P)} \bar{\mathscr{D}}^{*})(X, p; q)}{E^{\kappa(P)}(p)}\Big)
		$ 
		if $\kappa(P) = Y_{1}$ or $Z_{1},$ and as in 
		$
		\psi^{n_{P}}\!(X_{1} + \pi(\partial_{\scriptscriptstyle X_{1}} \!\bar{\mathscr{D}}^{*})(X, p; q))
		$ 
		if $\kappa(P) = X_{1},$ and the theorem follows.\end{proof}

	\vskip10pt
	\begin{corollary} \label{cor-rec-relation-B} --- Define $B(X, Y, Z; p, q)$ by 
		\begin{equation*}
		B(X, Y, Z; p, q) = (q X_{1}, q Y_{1}, q Z_{1}) + 
		q(q - 1)\cdot  \bigg(X_{1}, \frac{q \iota E(p) - \delta \iota E(p)}{(q - 1)\cdot \delta \iota E(p)}Y_{1}, \frac{q E(p) - \delta E(p)}
		{(q - 1)\cdot \delta E(p)}Z_{1}\bigg) \circ H(X, Y, Z; p, q).
		\end{equation*} 
		Then  
		\begin{equation*}
		\begin{split}
		B(X, Y, Z; p, q) &\circ \bigg(X_{1} + \pi(\partial_{\scriptscriptstyle X_{1}}\!\bar{\mathscr{D}}^{*})(X, p; q), \frac{\pi(\partial_{\scriptscriptstyle Y_{1}}\!\bar{\mathscr{D}}^{*})(X, p; q)}{\iota E(p)}, \frac{\pi(\partial_{\scriptscriptstyle Z_{1}}\!\bar{\mathscr{D}}^{*})(X, p; q)}{E(p)}\bigg)\\
		& \hskip-7pt = \delta \bigg(X_{1} + \pi(\partial_{\scriptscriptstyle X_{1}} \!\bar{\mathscr{D}}^{*})(X, p; q), \frac{\pi(\partial_{\scriptscriptstyle Y_{1}}\!\bar{\mathscr{D}}^{*})(X, p; q)}{\iota E(p)}, \frac{\pi(\partial_{\scriptscriptstyle Z_{1}}\!\bar{\mathscr{D}}^{*})(X, p; q)}{E(p)} \bigg).
		\end{split}
		\end{equation*} 
	\end{corollary}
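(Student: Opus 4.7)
The plan is to derive the corollary by a direct algebraic manipulation starting from Theorem \ref{ess-main-thm}, combined with the functional equation $\delta \bar{\mathscr{D}} = q^{3} \bar{\mathscr{D}}$ coming from \eqref{eq: func-eq-barD-qto1/q}. Throughout, abbreviate
\begin{equation*}
W := \bigl(X_{1} + \pi(\partial_{X_{1}}\bar{\mathscr{D}}^{*}),\; \pi(\partial_{Y_{1}}\bar{\mathscr{D}}^{*})/\iota E(p),\; \pi(\partial_{Z_{1}}\bar{\mathscr{D}}^{*})/E(p)\bigr),
\end{equation*}
so that Theorem \ref{ess-main-thm} reads compactly as $H \circ W = W - (X_{1}, 0, 0)$, and the assertion of the corollary becomes $B \circ W = \delta W$.

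I would first compute $B \circ W$ coordinate by coordinate by inserting the identity $H \circ W = W - (X_{1}, 0, 0)$ into the definition of $B$. Since the scalar coefficients $\alpha(p) := (q \iota E(p) - \delta \iota E(p))/((q-1)\delta \iota E(p))$ and $\beta(p) := (q E(p) - \delta E(p))/((q-1)\delta E(p))$ depend only on $p$ and $q$, they are inert under the plethystic substitution in $X, Y, Z$. The $X$-component then gives $q X_{1} + q^{2}\, \pi(\partial_{X_{1}}\bar{\mathscr{D}}^{*})$ after a short simplification. For the $Y$-component the key observation is the elementary identity
\begin{equation*}
1 + (q-1)\alpha(p) = \frac{q\, \iota E(p)}{\delta \iota E(p)},
\end{equation*}
which, combined with $H_{Y} \circ W = W_{Y}$, reduces $(B \circ W)_{Y}$ to $q^{2}\,\pi(\partial_{Y_{1}}\bar{\mathscr{D}}^{*})/\delta \iota E(p)$. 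The $Z$-component is analogous.

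To match these values with the components of $\delta W$, I would reduce to the single identity
\begin{equation*}
\delta\bigl(\partial_{\heartsuit}\, \bar{\mathscr{D}}^{*}\bigr) = q^{2}\, \partial_{\heartsuit}\, \bar{\mathscr{D}}^{*} \qquad \text{for } \heartsuit \in \{X_{1}, Y_{1}, Z_{1}\},
\end{equation*}
then push it across $\pi$ using $\pi \circ \delta = \delta \circ \pi$ from Proposition \ref{unique-hom}. The displayed identity follows from a chain-rule computation on the involution $\delta f(X, Y, Z; q) = f(q \star X, q \star Y, q \star Z; 1/q)$, yielding $\delta \circ \partial_{\heartsuit} = q^{-1}\, \partial_{\heartsuit} \circ \delta$, combined with the functional equation $\delta \bar{\mathscr{D}}^{*} = q^{3}\, \bar{\mathscr{D}}^{*}$.

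The only step requiring genuine care is the truncated functional equation $\delta \bar{\mathscr{D}}^{*} = q^{3}\, \bar{\mathscr{D}}^{*}$: \eqref{eq: func-eq-barD-qto1/q} gives the corresponding statement for $\bar{\mathscr{D}}$, so the task reduces to verifying $\delta \bar{\mathscr{D}}_{0} = q^{3}\, \bar{\mathscr{D}}_{0}$ for the removed constant term. Using the explicit formula for $\bar{\mathscr{D}}_{0}(Z; q)$ displayed at the start of the section --- which itself reflects Poincar\'e--Verdier duality on $\overline{\mathcal{M}}_{0, m}$ --- this is a direct algebraic check. Given this, the remainder of the argument is mechanical bookkeeping, and I do not anticipate any serious obstacle.
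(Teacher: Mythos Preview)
Your proposal is correct and follows essentially the same route as the paper's proof: compute $B\circ W$ componentwise using Theorem~\ref{ess-main-thm}, simplify via $1+(q-1)\alpha(p)=q\,\iota E(p)/\delta\iota E(p)$ (and its $Z$-analogue), and then match with $\delta W$ using $\delta(\partial_{\heartsuit}\bar{\mathscr{D}}^{*})=q^{2}\partial_{\heartsuit}\bar{\mathscr{D}}^{*}$ together with $\pi\circ\delta=\delta\circ\pi$. One small slip: the explicit series displayed in the paper is $\mathscr{D}_{0}$, not $\bar{\mathscr{D}}_{0}$; the needed functional equation $\delta\bar{\mathscr{D}}_{0}=q^{3}\bar{\mathscr{D}}_{0}$ follows instead either from Poincar\'e duality on $\overline{\mathcal{M}}_{0,m}$ or, more simply, by projecting \eqref{eq: func-eq-barD-qto1/q} onto the $X$-constant part (since $\delta$ preserves the $X$-grading).
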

	
	\begin{proof} A simple calculation using the relation in Theorem \ref{ess-main-thm} implies that the left-hand side of the identity in the corollary is 
		\begin{equation*}
		\bigg(q X_{1} + q^{2}\pi(\partial_{\scriptscriptstyle X_{1}}\!\bar{\mathscr{D}}^{*})(X, p; q), 
		\frac{q^{2} \pi(\partial_{\scriptscriptstyle Y_{1}}\!\bar{\mathscr{D}}^{*})(X, p; q)}
		{\delta\iota E(p)}, \frac{q^{2} \pi(\partial_{\scriptscriptstyle Z_{1}}\!\bar{\mathscr{D}}^{*})(X, p; q)}{\delta E(p)}\bigg).
		\end{equation*} 
		From the definition of $\bar{\mathscr{D}}_{0}(Z;q),$ we have that  
		\begin{equation} \label{eq: func-eq-barD0-qto1/q} 
		\delta \bar{\mathscr{D}}_{0}(Z; q)
		= q^{3} \bar{\mathscr{D}}_{0}(Z;q).
		\end{equation} 
		Differentiating \eqref{eq: func-eq-barD-qto1/q} and \eqref{eq: func-eq-barD0-qto1/q}, one finds the relations 
		\begin{equation*}
		\begin{split}
		&\delta(\partial_{\scriptscriptstyle X_{1}}\!\bar{\mathscr{D}}^{*})(X, Y, Z; q) 
		= q^{2} \partial_{\scriptscriptstyle X_{1}}\!\bar{\mathscr{D}}^{*}(X, Y, Z; q)\\
		&\delta(\partial_{\scriptscriptstyle Y_{1}}\!\bar{\mathscr{D}}^{*})(X, Y, Z; q) 
		= q^{2} \partial_{\scriptscriptstyle Y_{1}}\!\bar{\mathscr{D}}^{*}(X, Y, Z; q)\\
		&\delta(\partial_{\scriptscriptstyle Z_{1}}\!\bar{\mathscr{D}}^{*})(X, Y, Z; q) 
		= q^{2} \partial_{\scriptscriptstyle Z_{1}}\!\bar{\mathscr{D}}^{*}(X, Y, Z; q)
		\end{split}
		\end{equation*} 
		and thus we can write 
		\begin{equation*}
		q^{2}\pi(\partial_{\scriptscriptstyle X_{1}}\!\bar{\mathscr{D}}^{*})(X, p; q) 
		= \pi(q^{2}\partial_{\scriptscriptstyle X_{1}}\!\bar{\mathscr{D}}^{*}(X, Y, Z; q))
		= \pi(\delta(\partial_{\scriptscriptstyle X_{1}}\!\bar{\mathscr{D}}^{*})(X, Y, Z; q))
		\end{equation*} 
		(and similarly for the other partial derivatives of $\bar{\mathscr{D}}^{*}(X, Y, Z; q)$).

		The corollary follows now from the commutativity of $\pi$ and $\delta,$ see Proposition \ref{unique-hom}.\end{proof}

	\subsection{Finishing the proof of Theorem \ref{princ-theorem}} 
	From now on we specialize $p = (p_{n})_{n\ge 1}$ to be the power sums, 
	\begin{equation*}
	p_{n} = p_{n}(T) = \sum_{i \ge 1}\, t_{i}^{n}
	\end{equation*}
	for independent variables $T = (t_{i})_{i\ge 1}.$ For compatibility, put $\psi^{n}(t_{i}) = t_{i}^{n},$ $\iota(t_{i}) = - t_{i}$ and $\delta(t_{i}) = q t_{i},$ for all $i \ge 1.$ We define the plethystic substitution for functions in the variables $X, Y, Z, T$ and $q$ in the obvious way. 
	
	We first establish the connection between the function 
	\begin{equation*}
	C_{1}(\underline{X}, T\!, q) : = (C_{1, \text{odd}}(\underline{X}, T\!, q), C_{1, \text{even}}^{-}(\underline{X}, T\!, q), 
	C_{1, \text{even}}^{+}(\underline{X}, T\!, q))
	\end{equation*} 
	(see \ref{Gen-Case}) and the function 
	$
	B(\underline{X}, T\!, q) = B(X, Y, Z; p, q)\vert_{(p_{n}\rightarrow p_{n}(T),\, 
		n\ge 1)}.
	$ 
	To do so, consider temporarily the function $C_{1}^{*}(\underline{X}, T\!, q)$ 
	defined by 
	\begin{equation*}
	C_{1}^{*}(\underline{X}, T\!, q) = - \,(q X_{1}, q \tilde{E}(- T\!, q)Y_{1}, q \tilde{E}(T\!, q)Z_{1})
	+ C_{1}(\underline{X}, T\!, q) \circ 
	(X_{1}, \tilde{E}(- T\!, q)Y_{1}, \tilde{E}(T\!, q)Z_{1})
	\end{equation*} 
	where, as before, $\tilde{E}(T, q) = E(T) E(q\, T).$ Note that we have removed the 
	linear parts of the components of $C_{1}(\underline{X}, T\!, q).$ Let $M(\underline{X}, T\!, q)$ denote the part of the function $H$ introduced in Theorem \ref{ess-main-thm} (normalized by $q(q - 1)$) given by 
	\begin{equation*}
	M(\underline{X}, T\!, q) = q (q - 1)\cdot \partial\bigg(\sum_{\mathfrak{n}, \mathfrak{i}, \mathfrak{j}}\; \sum_{[C]\in [\mathcal{D}_{\mathfrak{n}, \mathfrak{i}, \mathfrak{j}}(\mathbb{F}_{\! q})]}\, \frac{\prod_{i \ge 1}P_{\scriptscriptstyle C}(t_{i})}{\selectfont\# \text{Aut}_{\scriptscriptstyle \mathbb{F}_{q}}\!(C)}
	\cdot X^{\mathfrak{n}} Y^{\mathfrak{i}} Z^{\mathfrak{j}}\bigg).
	\end{equation*} 
	Then, by a simple counting argument, we see that 
	\begin{equation*}
	C_{1}^{*}(\underline{X}, T\!, q) = M(\underline{X}, T\!, q).
	\end{equation*} 
	From the definition of $B(\underline{X}, T\!, q),$ it follows easily that 
	\begin{equation*}
	C_{1}(\underline{X}, T\!, q) = 
	(X_{1}, \tilde{E}(- T\!, q)Y_{1}, \tilde{E}(T\!, q) Z_{1}) \circ 
	B(\underline{X}, T\!, q) \circ \bigg(X_{1}, \frac{Y_{1}}{\tilde{E}(- T\!, q)}, \frac{Z_{1}}{\tilde{E}(T\!, q)}\bigg)
	\end{equation*} 
	that is, $C_{1}(\underline{X}, T\!, q)$ and $B(\underline{X}, T\!, q)$ 
	are conjugated under a linear plethystic transformation. Thus we can rewrite the relation in Corollary \ref{cor-rec-relation-B} as 
	\begin{equation} \label{eq: funct-equation-needed-proof-main-thm}
	C_{1}(\underline{X}, T\!, q) \circ \bar{C}_{1}(X, T\!, q)
	= \delta(\bar{C}_{1}(X, T\!, q)) 
	= \bar{C}_{1}(q\star X, q\, T\!, 1\slash q)
	\end{equation} 
	where 
	\begin{equation*} 
	\bar{C}_{1}(X, T\!, q) : = \bigg(X_{1} + \pi(\partial_{\scriptscriptstyle X_{1}}\!\bar{\mathscr{D}}^{*})(X, T\!, q), E(- q\, T)\cdot \pi(\partial_{\scriptscriptstyle Y_{1}}\!\bar{\mathscr{D}}^{*})(X, T\!, q), E(q\, T)\cdot \pi(\partial_{\scriptscriptstyle Z_{1}}\!\bar{\mathscr{D}}^{*})(X, T\!, q)\bigg).
	\end{equation*} 
	Notice that 
	\begin{equation*}
	X_{1} + \pi(\partial_{\scriptscriptstyle X_{1}}\!\bar{\mathscr{D}}^{*})(X, T\!, q) = 
	X_{1} + \textrm{higher degree terms}
	\end{equation*} 
	while  
	\begin{equation*}
	\pi(\partial_{\scriptscriptstyle Z_{1}}\!\bar{\mathscr{D}}^{*})(X, T\!, q)
	\;\; = \sum_{\substack{\mathfrak{n},\mathfrak{i},\mathfrak{j}\\
			|\mathfrak{n}| \ge 2 \, \& \, \mathrm{even} \\ |\mathfrak{n}|+|\mathfrak{i}|+|\mathfrak{j}|
			\,\ge \, 3}} \selectfont\# \overline{\mathcal{D}}_{\mathfrak{n}, \mathfrak{i}, \mathfrak{j}}(\mathbb{F}_{\! q}) X^{\mathfrak{n}}\cdot \pi(Y^{\mathfrak{i}} \partial_{\scriptscriptstyle Z_{1}}\!Z^{\mathfrak{j}})(T\!, q).
	\end{equation*} 
	Applying $C_{1}(\underline{X}, q\, T\!, 1\slash q) \circ - $ to both sides, 
	we obtain 
	\begin{equation*}
	C_{1}(\underline{X}, q\, T\!, 1\slash q) \circ
	C_{1}(\underline{X}, T\!, q) \circ \bar{C}_{1}(X, T\!, q) 
	= C_{1}(\underline{X}, q\, T\!, 1\slash q) \circ 
	\bar{C}_{1}(q\star X, q\, T\!, 1\slash q) 
	= \bar{C}_{1}(X, T\!, q)
	\end{equation*} 
	which implies the functional relation \eqref{eq: func-rel-section8} used to finish the proof of Theorem \ref{princ-theorem}.

	Finally, the function $\bar{c}(x, T, q)$ used to complete the induction process in 
	the proof of Theorem \ref{split case} is obtained by specializing $X_{1} = x$ and $X_{n} = 0,$ for $n\ge 2,$ in $\bar{C}_{1}(X, T\!, q).$

\section{Constructing the generating series $\Lambda_{l}(T, q)$}\label{section 9} 
To determine the coefficients $\lambda(\kappa, l; q)$ of $\Lambda_{l}(T, q),$ for $l \ge 5$ and 
$\kappa = (k_{1}, \ldots, k_{r}) \in \mathbb{N}^{r},$ one proceeds as in the case $l = 4.$ We 
show that every coefficient $\lambda(\kappa, l; q)$ is a finite sum 
\begin{equation} \label{eq: final-formula} 
\lambda(\kappa, l; q) = \sum_{j}  P_{j}^{(\kappa,  l)}(q) \alpha_{j}
\end{equation} 
where $P_{j}^{(\kappa,  l)}(x) \in  \mathbb{Q}[x]$ is independent of $q$ for all $j,$ and 
$\alpha_{j}$ are distinct $q$-Weil algebraic integers of weights $m_{j} \in \mathbb{N};$ every $\alpha_{j}$ occurs in the sum together with all its complex conjugates, and 
$
P_{j}^{(\kappa,  l)}(x) = P_{j'}^{(\kappa,  l)}(x)
$ 
if the algebraic numbers $\alpha_{j}$ and $\alpha_{j'}$ are conjugates over $\mathbb{Q}.$ Moreover, 
for each $j,$ we have 
\begin{equation} \label{eq: degree P + mj} 
\deg P_{j}^{(\kappa,  l)}  +  m_{j} \le |\kappa| + l 
\;\;\; \text{and} \;\;\; 
P_{j}^{(\kappa,  l)}(x)^{2} \equiv \,  0 \pmod{x^{|\kappa| + l - m_{j} + 2}} 
\end{equation} 
with $|\kappa| = k_{1} + \cdots + k_{r}.$ As we shall see, the numbers $\alpha_{j}$ 
are the (suitably) normalized eigenvalues of Frobenius acting on the components 
$H_{c,  \mu}^{\bullet}(\mathscr{H}_{g}[2] \otimes \overline{\mathbb{F}}_{\! q}, \mathbb{V}'(\lambda))$ of the $\mathbb{S}_{2g + 2}$-isotypic decomposition of 
$H_{c}^{\bullet}(\mathscr{H}_{g}[2] \otimes \overline{\mathbb{F}}_{\! q}, \mathbb{V}'(\lambda)).$

Note that 
\begin{equation} \label{eq: final-formula q->1/q} 
\lambda(\kappa, l; 1\slash q) = \sum_{j} P_{j}^{(\kappa,  l)}(1\slash q) q^{- m_{j}} \alpha_{j};
\end{equation}
compare also with \cite[($*$), p. 6]{Ser}. Accordingly, we must have that 
\begin{equation} \label{eq: l-Recurrence1} 
\lambda(\kappa, l; q)  -  q^{|\kappa| +  l + 1}\lambda(\kappa, l; 1\slash q)   
\,=\, \sum_{j}  \left(P_{j}^{(\kappa,  l)}(q) 
-  q^{|\kappa| +  l - m_{j}  +  1}P_{j}^{(\kappa,  l)}(1\slash q) \right)\alpha_{j}; 
\end{equation} 
note that 
$
Q_{j}^{(\kappa,  l)}(x) : = x^{|\kappa| +  l - m_{j}}P_{j}^{(\kappa,  l)}(1\slash x) \in \mathbb{Q}[x],
$ 
and 
$
x^{\deg Q_{j}^{(\kappa,  l)} + 2} \mid P_{j}^{(\kappa,  l)}(x) 
$  
for all $j,$ which says that (iii) in Section \ref{section 3} is satisfied. As $\mathbb{F}_{\! q}$ is an arbitrary finite field of odd characteristic, the condition (ii) is also satisfied.

Now let us justify our assertions. From the initial conditions (i) in Section \ref{section 3}, we have 
$\lambda(0, \ldots, 0, l; q) = q^{l}.$ We argue by induction on $\kappa$ and $l.$ Assume that 
\eqref{eq: final-formula} (together with the corresponding conditions on the polynomials and $q$-Weil integers involved) holds for all coefficients $\lambda(\ast, l'; \ast)$ with $l' < l,$ and all coefficients $\lambda(\kappa', l; q)$ with 
$\kappa' < \kappa.$ Consider \eqref{eq: tag 4.1} in the symmetric form 
\begin{equation*}
E(T)^{\epsilon_{l}}\Lambda_{l}(T, q)  =  E(T)^{\epsilon_{l}}A_{l}(T, q)  
+  q^{l + 1} E(q \, T)^{\epsilon_{l}}\Lambda_{l}(q \, T, 1\slash q). 
\end{equation*} 
By equating the coefficients of $T^{\kappa} = t_{1}^{k_{1}} \cdots \, t_{r}^{k_{r}}$ on both sides, 
we find that the left-hand side of \eqref{eq: l-Recurrence1} can be expressed as
\begin{equation} \label{eq: prel-final-formula}
\sum_{\kappa' \, \le \, \kappa} a(\kappa', l; q) \;
- \sum_{\kappa' \, < \, \kappa} \big(\lambda(\kappa', l; q) \, - \,  
q^{|\kappa| +  l + 1}\lambda(\kappa', l; 1\slash q) \big)
\end{equation} 
if $l$ is even, and $a(\kappa, l; q)$ if $l$ is odd. 

We recall that the generating series $A_{l}(T, q)$ satisfies the functional equation \eqref{eq: func-eq A_{l}(T, q)}; as explained at the beginning of Section \ref{section 7}, \eqref{eq: func-eq A_{l}(T, q)} is implied by the identities \eqref{eq: MDS-relations}, or equivalently \eqref{eq: MDS-relations-gen-series-equiv} (proved in Theorem \ref{princ-theorem}). Accordingly, if $l$ is odd, $a(\kappa, l; q)$ satisfies the functional equation 
\begin{equation} \label{eq: functional-equation-a(k, l)} 
a(\kappa, l; q) = -\, q^{|\kappa| + l + 1} a(\kappa, l; 1\slash q)
\end{equation} 
for each $\kappa = (k_{1}, \ldots, k_{r}) \in \mathbb{N}^{r}.$ It is clear that both sums in \eqref{eq: prel-final-formula} also satisfy the functional equation \eqref{eq: functional-equation-a(k, l)}; the first sum in \eqref{eq: prel-final-formula} is the coefficient of $T^{\kappa}$ in $E(T)A_{l}(T, q).$

Next, we shall apply Deligne's theory of weights to express the first sum in 
\eqref{eq: prel-final-formula} (respectively $a(\kappa, l; q)$ when $l$ is odd) 
in terms of $q$-Weil numbers. The coefficient $\lambda(\kappa, l; q)$ 
will then be easily determined from the corresponding expression of 
the sums \eqref{eq: prel-final-formula}. To this end, let us first recall 
some facts from Deligne's theory that we shall need. We follow the standard reference \cite{Del2}.

Let $\mathbb{F}_{\! q}$ be a finite field with $q$ elements. Fix $\overline{\mathbb{F}}_{\! q}$ an algebraic closure of $\mathbb{F}_{\! q}.$ Let $\mathscr{F}$ be a constructible $\overline{\mathbb{Q}}_{\ell}$-sheaf 
($\ell \nmid q$) on a scheme $X$ of finite type over $\mathbb{F}_{\! q}.$ Set 
$\overline{X} : = X \otimes_{_{\mathbb{F}_{\! q}}}  \! \overline{\mathbb{F}}_{\! q},$ and denote 
the pullback of $\mathscr{F}$ to $\overline{X}$ by $\overline{\mathscr{F}}.$ The Frobenius morphism 
$F: \overline{X} \to \overline{X}$ induces a natural isomorphism 
$F^{*} : F^{*} \overline{\mathscr{F}} \stackrel{\sim}{\rightarrow} \overline{\mathscr{F}}.$ The finite 
morphisms $F$ and $F^{*}$ define an endomorphism 
\begin{equation*}
F^{*} : H_{c}^{i}(\overline{X}, \overline{\mathscr{F}}) \to H_{c}^{i}(\overline{X}, F^{*} \overline{\mathscr{F}}) \to 
H_{c}^{i}(\overline{X}, \overline{\mathscr{F}})
\end{equation*}
of cohomology groups with compact support. For a closed point 
$y \in  \left|\overline{X} \right|,$ $F^{*}$ defines a morphism 
\begin{equation*}
F_{y}^{*}: \overline{\mathscr{F}}_{\! F(y)} \to \overline{\mathscr{F}}_{\! y}.
\end{equation*} 
Let $x \in |X|$ be a closed point, and let $\bar{x}$ be a geometric point of $X$ over $x.$ (The closed 
point of $\overline{X}$ corresponding to $\bar{x}$ is also denoted by $\bar{x}.$) Let $F_{\! x}^{*}$ denote the endomorphism $F^{* \deg x}_{\! \bar{x}}$ of 
$\overline{\mathscr{F}}_{\! \bar{x}},$ where 
$\deg x := [\mathbb{F}_{\! q}(x) : \mathbb{F}_{\! q}].$ Up to isomorphism, 
$\left(\overline{\mathscr{F}}_{\! \bar{x}}, F_{\! x}^{*} \right)$ does not depend on the choice of $\bar{x}.$ If we put 
\begin{equation*}
\det(I - F_{\! x}^{*} t \,\vert \, \mathscr{F}) = \det(I - F_{\! x}^{*} t \, \vert \, \overline{\mathscr{F}}_{\! \bar{x}})
\end{equation*}
then the Grothendieck-Lefschetz trace formula (see \cite{Groth1}, \cite{Groth2} and \cite{Del3}) 
gives the following identity\footnote{The Grothendieck-Lefschetz trace formula is commonly given as 
the identity obtained by equating coefficients on both sides of the logarithmic derivative of $L(X, \mathscr{F}, t).$} of formal power series:
\begin{equation*}
L(X, \mathscr{F}, t) \; : = \prod_{x \, \in  \, |X|} \det(I - F_{\! x}^{*} t^{\deg x} \,\vert \, \mathscr{F})^{-1} = \, \prod_{i} \det \left(I - F^{*} t \,\vert \, 
H_{c}^{i}(\overline{X}, \overline{\mathscr{F}}) \right)^{(-1)^{i + 1}}.
\end{equation*}

\subsection{Deligne's purity theorem} 
Let notations be as above.

\vskip10pt
\begin{defn} Let $K$ be a field of characteristic $0,$ and let $n \in \mathbb{Z}.$ 
An element $\lambda \in K$ is called a $q$-Weil number of weight $n$ if $\lambda$ 
is algebraic over $\mathbb{Q},$ and all its complex conjugates have absolute value 
$q^{n\slash 2}.$ 
\end{defn} 

\vskip10pt
\begin{defn} Let $\mathscr{F}$ be a constructible $\overline{\mathbb{Q}}_{\ell}$-sheaf 
on the $\mathbb{F}_{\! q}$-scheme $X.$ The sheaf $\mathscr{F}$ is said to be 
{\it pure} of weight $n$ if, for every closed point $x \in |X|,$ the 
$\overline{\mathbb{Q}}_{\ell}$-eigenvalues of the endomorphism $F_{\! x}^{*}$ 
are $N(x) :=  |\mathbb{F}_{\! q}(x)|$-Weil numbers of weight $n.$ 
The sheaf $\mathscr{F}$ is said to be {\it mixed} if it admits a finite increasing filtration 
by constructible $\overline{\mathbb{Q}}_{\ell}$-subsheaves with successive quotients that 
are pure. The weights of the successive quotients are the {\it weights} of $\mathscr{F}.$  
\end{defn}

\vskip10pt
\theoremstyle{Del}
\newtheorem*{Deligne}{Theorem (Deligne)}
\begin{Deligne} --- Let $f : X \to S$ be a separated map between finite-type $\mathbb{F}_{\! q}$-schemes, 
and let $\mathscr{F}$ be a constructible $\overline{\mathbb{Q}}_{\ell}$-sheaf mixed of weights $\le n$ 
on $X.$ Then the sheaf $R^{i}f_{!}\mathscr{F}$ on $S$ is mixed of weights $\le n + i,$ for all $i.$ 
In particular, every eigenvalue $\alpha$ of Frobenius on $H_{c}^{i}(\overline{X}, \overline{\mathscr{F}})$ 
is a $q$-Weil number of weight $m,$ for some integer $m\le n + i.$ 
\end{Deligne}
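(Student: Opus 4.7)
\medskip
\noindent\textbf{Proof proposal.} The plan is to follow Deligne's argument from \emph{La conjecture de Weil II}, reducing the general global statement to a sharp bound on Frobenius eigenvalues for $H_c^1$ of an open curve. First, by proper base change, for every closed point $s \in |S|$ the stalk $(R^i f_! \mathscr{F})_{\bar s}$ is canonically $H_c^i(\overline{X_s}, \overline{\mathscr{F}|_{X_s}})$, so it suffices to prove the pointwise statement: every $\overline{\mathbb{Q}}_\ell$-eigenvalue of $F^*$ on $H_c^i(\overline{X}, \overline{\mathscr{F}})$ is a $q$-Weil number of weight $\le n+i$. Using the increasing filtration of $\mathscr{F}$ whose graded pieces are pure, together with the long exact sequences attached to a closed/open decomposition $X = Z \sqcup U$, one reduces to the case where $X$ is smooth, connected, geometrically irreducible, and $\mathscr{F}$ is lisse and pure of weight $n$.

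Next comes a dimension reduction. Choose a proper map $\pi : X \to C$ onto a smooth curve (after shrinking $X$, and by a Lefschetz-pencil construction in higher dimension), so that the fibers have dimension $\dim X - 1$. The Leray spectral sequence
\begin{equation*}
E_2^{i,j} = H_c^i(\overline{C}, R^j \pi_! \overline{\mathscr{F}}) \;\Longrightarrow\; H_c^{i+j}(\overline{X}, \overline{\mathscr{F}})
\end{equation*}
reduces the bound for $\mathscr{F}$ on $X$ to bounds for $R^j \pi_! \mathscr{F}$ on $C$, and by induction on $\dim X$ the sheaves $R^j \pi_! \mathscr{F}$ are mixed of weights $\le n+j$ (since the fibers have smaller dimension). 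Thus the entire theorem reduces to the following curve statement: for a smooth geometrically connected affine curve $C/\mathbb{F}_q$ and a lisse $\overline{\mathbb{Q}}_\ell$-sheaf $\mathscr{G}$ on $C$ pure of weight $n$, every reciprocal root of $\det(1 - F^* t \mid H_c^1(\overline{C}, \overline{\mathscr{G}}))$ is a $q$-Weil number of weight $\le n+1$.

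The heart of the proof is this curve case, to be attacked by a Rankin--Selberg/Hadamard argument. Consider the tensor powers $\mathscr{G}^{\otimes 2k}$ and the $L$-function
\begin{equation*}
L(C, \mathscr{G}^{\otimes 2k}, t) = \prod_i \det(1 - F^* t \mid H_c^i(\overline{C}, \overline{\mathscr{G}}^{\otimes 2k}))^{(-1)^{i+1}},
\end{equation*}
which by Grothendieck's rationality theorem is a rational function satisfying a functional equation. For each closed $x \in |C|$ the local factor $\det(1 - F_x^* t^{\deg x} \mid \mathscr{G}^{\otimes 2k})^{-1}$, after a suitable normalization using the purity of $\mathscr{G}$ away from the generic point, has an expansion whose coefficients are bounded in absolute value by those of a geometric series in $q^{(kn)\deg x}t^{\deg x}$; this gives absolute convergence of the Euler product for $|t| < q^{-kn - 1}$. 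Combined with the fact that $H_c^0$ vanishes on an affine curve and $H_c^2$ contributes factors of controlled weight (Poincaré duality with $\mathscr{G}^\vee(1)$), one deduces that the reciprocal roots of $\det(1 - F^* t \mid H_c^1(\overline{C}, \mathscr{G}^{\otimes 2k}))$ have absolute value $\le q^{(2kn+1)/2}$. The reciprocal roots for $\mathscr{G}^{\otimes 2k}$ include all $2k$-fold products of the reciprocal roots for $\mathscr{G}$; extracting $2k$-th roots and letting $k \to \infty$ yields the desired bound $q^{(n+1)/2}$ on each individual eigenvalue.

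The principal obstacle is the curve case, where the above positivity/Hadamard argument is not immediately rigorous when $\mathscr{G}$ is not geometrically semisimple or when the monodromy representation fails to be self-dual in an appropriate sense. Deligne's resolution introduces the notion of a \emph{real} (and \emph{mixed}) lisse sheaf, shows that the global monodromy of $\mathscr{G}^{\otimes 2k}$ factors through a classical group preserving a form of the right signature, and exploits this to replace the naive positivity argument by a genuine application of the Rankin--Selberg method combined with the theorem that entire functions of order $\le 1$ with nonnegative Dirichlet coefficients have specific convergence properties. Handling the non-semisimple case requires passing to the graded pieces of the monodromy filtration and is the step that ultimately forces the full development of the theory of weights rather than a single clean estimate.
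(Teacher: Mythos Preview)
The paper does not prove this theorem at all: it is stated as Deligne's result and simply cited to \cite{Del2} (\emph{La conjecture de Weil.~II}), then used as a black box in the subsequent weight analysis. There is therefore no ``paper's own proof'' to compare your proposal against.

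Your sketch is a reasonable high-level outline of Deligne's actual argument in \cite{Del2}: the d\'evissage to smooth affine curves with lisse pure coefficients, the fibration/Leray reduction on dimension, and the Rankin--Selberg positivity argument on tensor powers to bound the weights in $H_c^1$. A few cautions if you intend to flesh it out. First, the tensor-power step as you phrase it (``reciprocal roots for $\mathscr{G}^{\otimes 2k}$ include all $2k$-fold products of the reciprocal roots for $\mathscr{G}$'') is not literally true for $H_c^1$ and is not how Deligne proceeds; the actual mechanism goes through the monodromy filtration, the determinantal trick, and a careful analysis of the weight filtration on nearby cycles. Second, the Lefschetz-pencil reduction requires more care than ``choose a proper map to a curve''; one typically embeds into projective space and uses the theory of Lefschetz pencils, together with Deligne's hard Lefschetz arguments. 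Your closing paragraph correctly identifies that the non-semisimple case and the passage to the monodromy filtration are where the real work lies, but the sketch before that paragraph oversimplifies in ways that would not survive being made rigorous. For the purposes of the present paper, none of this matters: the theorem is quoted, not reproved.
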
 

\vskip5pt
Now let $H_{g}[2]$ ($g \ge 2$) denote the coarse moduli space of 
$\mathscr{H}_{g}[2] \otimes_{_{\mathbb{F}_{\! q}}} \! \overline{\mathbb{F}}_{\! q},$ and let 
$p : \mathscr{H}_{g}[2] \otimes_{_{\mathbb{F}_{\! q}}} \! \overline{\mathbb{F}}_{\! q} 
\to H_{g}[2]$ be the natural map. The direct image 
$\mathbb{V}_{\lambda}' : = p_{*}\mathbb{V}'(\lambda)$ on $H_{g}[2]$ is a constructible sheaf, pure of weight $|\lambda|.$ We recall from \ref{Loc-sys Ag} that $\mathbb{V}'(\lambda)$ is the $\ell$-adic 
local system (for a prime $\ell \nmid q$) on $\mathscr{H}_{g}[2]$ corresponding to the irreducible 
symplectic representation $V_{\lambda};$ 
as in loc. cit., the pullback of this local system to 
$\mathscr{H}_{g}[2] \otimes_{_{\mathbb{F}_{\! q}}} \! \overline{\mathbb{F}}_{\! q}$ 
will still be denoted by $\mathbb{V}'(\lambda).$ If we put 
\begin{equation*}
e_{c}(H_{g}[2], \mathbb{V}_{\lambda}')  \, =
\sum_{i \, \ge \, 0} \,  (-1)^{i}  [H_{c}^{i}(H_{g}[2], \mathbb{V}_{\lambda}')]
\end{equation*} 
then the trace of Frobenius on 
$
e_{c}(\mathscr{H}_{g}[2] \otimes_{_{\mathbb{F}_{\! q}}} \! \overline{\mathbb{F}}_{\! q}, \mathbb{V}'(\lambda))
$ 
is the same as the trace of the corresponding Frobenius on 
$
e_{c}(H_{g}[2], \mathbb{V}_{\lambda}')
$ 
(cf. Section 2 in \cite{BvdG}). By Deligne's theorem, every eigenvalue of Frobenius on 
$
H_{c}^{i}(H_{g}[2], \mathbb{V}_{\lambda}')
$ 
is a $q$-Weil number of weight at most $|\lambda| + i.$ Since $\mathbb{V}_{\lambda}'$ is in particular 
integral, every eigenvalue is in fact a $q$-Weil algebraic integer (cf. Corollaire {\bf (3.3.4)} in \cite{Del2}). Of course, the cohomology group 
$
H_{c}^{i}(H_{g}[2], \mathbb{V}_{\lambda}')
$ 
vanishes for $i >  2 \dim (H_{g}[2]) = 4 g - 2.$ The natural action of the symmetric group 
$\mathbb{S}_{2g + 2}$ on $\mathscr{H}_{g}[2]$ induces an isotypic decomposition of the Galois 
representation 
$
H_{c}^{\bullet}(\mathscr{H}_{g}[2] \otimes_{_{\mathbb{F}_{\! q}}} \! \overline{\mathbb{F}}_{\! q}, \mathbb{V}'(\lambda))
$ 
as 
\begin{equation*}
H_{c}^{\bullet}(\mathscr{H}_{g}[2] \otimes_{_{\mathbb{F}_{\! q}}} \! \overline{\mathbb{F}}_{\! q}, \mathbb{V}'(\lambda))
\;\, =  \bigoplus_{\mu \,\vdash\, 2g + 2} 
H_{c,  \mu}^{\bullet}(\mathscr{H}_{g}[2] \otimes_{_{\mathbb{F}_{\! q}}} \! \overline{\mathbb{F}}_{\! q}, \mathbb{V}'(\lambda)) 
\end{equation*} 
where for an irreducible representation $R_{\mu}$ of $\mathbb{S}_{2g + 2}$ indexed by the partition 
$\mu$ of $2g + 2,$  
\begin{equation*} 
H_{c,  \mu}^{\bullet}(\mathscr{H}_{g}[2] \otimes_{_{\mathbb{F}_{\! q}}} \! \overline{\mathbb{F}}_{\! q}, \mathbb{V}'(\lambda))
= R_{\mu} \otimes \text{Hom}_{_{\mathbb{S}_{2g + 2}}}\!(R_{\mu}, H_{c}^{\bullet}(\mathscr{H}_{g}[2] \otimes_{_{\mathbb{F}_{\! q}}} \! \overline{\mathbb{F}}_{\! q}, \mathbb{V}'(\lambda))).
\end{equation*} 
By Theorem \ref{q->1/q} and \eqref{eq: basic-moment-id}, for each 
$\nu = (1^{\nu_{1}}\!, 2^{\nu_{2}}\!, \ldots, (2g + 2)^{\nu_{2g + 2}}),$ the sum  
\begin{equation} \label{eq: LHS-moment identity} 
\frac{1}{|\mathrm{GL}_{2}(\mathbb{F}_{\! q})|}
\sum_{d \, \in \, \mathscr{P}_{\! g}(\nu, \mathbb{F}_{\! q})}
\left(\prod_{k = 1}^{r}  P_{\scriptscriptstyle C_{d}}(t_{k})  \right)
\end{equation} 
can be expressed in terms of traces of $F^{*}$ on 
$
H_{c,  \mu}^{\bullet}(\mathscr{H}_{g}[2] \otimes_{_{\mathbb{F}_{\! q}}} 
\! \overline{\mathbb{F}}_{\! q}, \mathbb{V}'(\lambda))
$ 
with $\lambda \subseteq (r^{g})$ and $|\lambda|$ even; the $q$-Weil 
algebraic integers appearing in 
$ 
\Tr(F^{*} \vert \, \tilde{e}_{c,  \nu}(\mathscr{H}_{g}[2] \otimes_{_{\mathbb{F}_{\! q}}} 
\! \overline{\mathbb{F}}_{\! q}, \mathbb{V}'(\lambda)))
$ 
are of weights at most $|\lambda| + 4 g  - 2.$ Moreover, for every $\lambda,$ 
\begin{equation} \label{eq: Monomials-Schur-9.1}
q^{- \frac{|\lambda|}{2}}\Big(q^{\frac{1}{2}}t_{1} \, \cdots \, q^{\frac{1}{2}}t_{r}\Big)^{g}
\!s_{\scriptscriptstyle \langle \lambda^{\dag} \rangle}\big(q^{\pm \frac{1}{2}} t_{1}^{\pm 1}\!, \ldots, 
q^{\pm \frac{1}{2}}t_{r}^{\pm 1} \big)
\end{equation} 
in \eqref{eq: basic-moment-id} is easily seen to be a polynomial in $t_{1}, \ldots, t_{r};$ the weight $|\kappa|$ of any monomial 
$
t_{1}^{k_{1}} \cdots \, t_{r}^{k_{r}}
$
occurring in this polynomial is an even integer in the set  
$
\{|\lambda|, \ldots, 2 r g - |\lambda| \}.
$ 
Hence the powers of $q$ occurring in this polynomial are all nonnegative and $\le r g - |\lambda|.$

We express \eqref{eq: LHS-moment identity} in terms of $q$-Weil numbers as follows. Fix 
$
\lambda  \subseteq  (r^{g})
$ 
a partition with $|\lambda|$ even. Let $\beta$ be an eigenvalue of $F^{*}$ on 
$
H_{c,  \mu}^{\bullet}(\mathscr{H}_{g}[2] \otimes_{_{\mathbb{F}_{\! q}}} 
\! \overline{\mathbb{F}}_{\! q}, \mathbb{V}'(\lambda))
$ 
of weight $m_{\beta}  \le  |\lambda| + 2d,$ where 
$d = 2 g - 1 = \dim \, \mathscr{H}_{g}[2].$ By duality, 
$q^{|\lambda| + d}\beta^{ - 1} = q^{|\lambda| + d - m_{\beta}}\bar{\beta}$ 
is an eigenvalue of Frobenius on the ordinary cohomology. We normalize 
$\beta$ by setting 
\begin{equation*}
\alpha : =
 \begin{cases} 
 \beta 
 & \text{if $m_{\beta}  \le  |\lambda| + d$}\\ 
q^{|\lambda| + d - m_{\beta}}\beta
 & \text{if $m_{\beta}  >  |\lambda| + d$}  
\end{cases}
\end{equation*} 
and denote by $m_{\alpha}$ the weight of $\alpha.$ Notice that 
$
m_{\alpha}  + m_{\beta}  \le  2(|\lambda| + d),
$ 
with equality if $m_{\beta}  \ge  |\lambda| + d.$

Let $\kappa = (k_{1}, \ldots, k_{r}) \in \mathbb{N}^{r}$ be such that $|\kappa|$ is even and 
$
|\lambda| \le |\kappa| \le 2 r g - |\lambda|, 
$
and consider the coefficient of the monomial
$
t_{1}^{k_{1}} \cdots \, t_{r}^{k_{r}}
$ 
in the sum \eqref{eq: LHS-moment identity} corresponding to $\lambda.$ Note that 
$
t_{1}^{k_{1}} \cdots \, t_{r}^{k_{r}}
$ 
comes together with a $q^{(|\kappa| - |\lambda|)\slash 2}$ in the polynomial \eqref{eq: Monomials-Schur-9.1}. By taking this into account, we see that the relevant part of the contribution of an eigenvalue $\beta$ to the coefficient we are interested in is $q^{(|\kappa| - |\lambda|)\slash 2} \beta.$ Notice that 
\begin{equation*} 
q^{(|\kappa| - |\lambda|)\slash 2} \beta = 
q^{m_{\beta} - d + (|\kappa| - 3 |\lambda|)\slash 2}\alpha \;\; \qquad\;\;  \text{(if $m_{\beta}  >  |\lambda| + d$)}.
\end{equation*} 
Referring to the right-hand side of this identity, consider the sum between the degree of the coefficient of $\alpha$ and the weight $m_{\alpha}$ of $\alpha.$ We shall need the following simple estimate of this quantity: 
\begin{equation} \label{eq: basic-estimate_k+d}
m_{\alpha}  + m_{\beta} - d + (|\kappa| - 3 |\lambda|)\slash 2 = 
2(|\lambda| + d) - d + (|\kappa| - 3 |\lambda|)\slash 2 = 
d + (|\kappa| +  |\lambda|)\slash 2 \le |\kappa| + d
\end{equation} 
as $|\kappa|  \ge  |\lambda|.$ 

The fact that the normalized eigenvalues $\alpha$ of $F^{*}$ on the compactly supported 
cohomology are algebraic integers follows from \cite[\S 5, Appendice, Th\'eor\`eme 5.2.2]{Del4}; see also \cite[Appendix, Theorem 0.2]{E}.

\subsection{Decomposing $A_{l}(T, q)$ in terms of $q$-Weil numbers} 
\label{Decomp Al(T, q) in q-Weil numbers} 
We shall now combine our induction assumptions and the above considerations 
with \eqref{eq: tag 4.2}, Proposition \ref{Proposition 4.3}, \eqref{eq: tag 4.4} and 
\eqref{eq: tag 4.5} to express the coefficient of $T^{\kappa} = t_{1}^{k_{1}} \cdots \, t_{r}^{k_{r}}$ 
in $E(T)^{\epsilon_{l}}A_{l}(T, q)$ in terms of traces of Frobenius on the cohomology of the local systems 
$\mathbb{V}'(\lambda)$ on $\mathscr{H}_{g}[2] \otimes_{_{\mathbb{F}_{\! q}}} \! \overline{\mathbb{F}}_{\! q}.$

Consider first the sum $A_{\mathfrak{n}}(T, q)$ ($\mathfrak{n} = (1^{n_{1}}\!, 2^{n_{2}}\!, \ldots)$) introduced at the beginning of \ref{Gen-Case}. We can express it in terms of the sum \eqref{eq: LHS-moment identity} as follows. For $d \in \mathscr{P}(\mathfrak{n}, q),$ we write 
\begin{equation*} 
\prod_{k = 1}^{r} P_{\scriptscriptstyle C_{d}}(t_{k}) \,=\, 
\exp \bigg( - \sum_{j = 1}^{\infty} \frac{a_{j}(C_{d})}{j}
\cdot\sum_{k = 1}^{r}t_{k}^{j} \bigg)
\end{equation*} 
with 
\begin{equation*}
a_{j}(C_{d}) = \Tr(F^{*j} \, \vert \, H_{\text{\'et}}^{1}(\bar{C}_{d}, \mathbb{Q}_{\ell}))
= - \sum_{\theta \, \in \,  {\bf{P^{1}}}(\mathbb{F}_{\! q^{j}})} 
\chi_{j}(d(\theta)).
\end{equation*} 
Using this identity, express $A_{\mathfrak{n}}(T, q)$ as    
\begin{equation*}
A_{\mathfrak{n}}(T, q) \; = \sum_{d \, \in \, \mathscr{P}(\mathfrak{n}, q)}
\exp \bigg( - \sum_{j = 1}^{\infty} \frac{a_{j}(C_{d})}{j}
\cdot\sum_{k = 1}^{r} t_{k}^{j} \bigg).
\end{equation*} 
We distinguish two cases according as $|\mathfrak{n}|$ is even or odd. If 
$|\mathfrak{n}|$ is odd, then, by Proposition \ref{Prop1-appendixB}, Appendix \ref{B}, 
we can further express 
\begin{equation*} 
A_{\mathfrak{n}}(T, q) = 
\frac{(n_{1} + 1)|\mathrm{GL}_{2}(\mathbb{F}_{\! q})|}{q^{2} - 1}\Bigg[\frac{1}{|\mathrm{GL}_{2}(\mathbb{F}_{\! q})|}
\sum_{d \, \in \, \mathscr{P}_{(|\mathfrak{n}'|\slash 2) - 1}(\mathfrak{n}',\, \mathbb{F}_{\! q})}
\left(\prod_{k = 1}^{r} P_{\scriptscriptstyle C_{d}}(t_{k}) \right)\Bigg]
\end{equation*} 
where $\mathfrak{n}' : = (1^{n_{1} + 1}\!, 2^{n_{2}}\!, \ldots).$ If $|\mathfrak{n}|$ is even, we write 
\begin{equation*}
A_{\mathfrak{n}}(T, q) = \frac{1}{2}\sum_{\varepsilon = 0, 1} \, \sum_{d \, \in \, \mathscr{P}(\mathfrak{n}, q)}\, \left[\exp\bigg( - \sum_{j = 1}^{\infty}\frac{a_{j}(C_{d})}{j}
\cdot\sum_{k = 1}^{r}t_{k}^{j}\bigg) \, + \, (- 1)^{\varepsilon} 
\exp\bigg( - \sum_{j = 1}^{\infty}\frac{a_{j}(C_{d})}{j}
\cdot\sum_{k = 1}^{r} ( - t_{k})^{j}\bigg)\right].
\end{equation*} 
Again, by Proposition \ref{Prop1-appendixB}, the even ($\varepsilon = 0$) part is just  
\begin{equation*} 
\frac{(q + 1 - n_{1})|\mathrm{GL}_{2}(\mathbb{F}_{\! q})|}{q^{2} - 1}\Bigg[\frac{1}{|\mathrm{GL}_{2}(\mathbb{F}_{\! q})|}
\sum_{d \, \in \, \mathscr{P}_{(|\mathfrak{n}|\slash 2) - 1}(\mathfrak{n},\, \mathbb{F}_{\! q})}
\left(\prod_{k = 1}^{r} P_{\scriptscriptstyle C_{d}}(t_{k}) \right)\Bigg].
\end{equation*} 
Note that this holds even when $n_{1} = 0.$ The remaining part can be expressed using Proposition 
\ref{Prop2-appendixB}, Appendix \ref{B} as 
\begin{equation*} 
\frac{|\mathrm{GL}_{2}(\mathbb{F}_{\! q})|}{q^{2} - 1}\Bigg[\frac{1}{|\mathrm{GL}_{2}(\mathbb{F}_{\! q})|}
\sum_{d \, \in \, \mathscr{P}_{(|\mathfrak{n}|\slash 2) - 1}(\mathfrak{n},\, \mathbb{F}_{\! q})}
\left(\prod_{k = 1}^{r} P_{\scriptscriptstyle C_{d}}(t_{k})\right)\frac{d P_{\scriptscriptstyle C_{d}}}{d t_{r + 1}}(0) \Bigg].
\end{equation*} 
Accordingly, we obtain the desired expression for the coefficients of $A_{\mathfrak{n}}(T, q),$ 
\begin{equation} \label{eq: final_decomp_eigen_A_n(T, q)}
\text{Coefficient}_{\scriptscriptstyle T^{\kappa}} A_{\mathfrak{n}}(T, q)
\,=\, \sum_{\alpha} p_{\alpha}^{(\kappa, \mathfrak{n})}(q) \alpha  \;\; \qquad \;\; 
\text{(for $\kappa  \in  \mathbb{N}^{r}$)}
\end{equation} 
in terms of normalized eigenvalues of Frobenius on the cohomology of the local systems 
$\mathbb{V}'(\lambda)$ ($|\lambda| \le |\kappa| + 1$) on 
$\mathscr{H}_{g}[2] \otimes_{_{\mathbb{F}_{\! q}}} \! \overline{\mathbb{F}}_{\! q}.$ Here $p_{\alpha}^{(\kappa, \mathfrak{n})}(q)$ are polynomials in $q$ with rational coefficients, 
and since $|\mathrm{GL}_{2}(\mathbb{F}_{\! q})| =  q (q + 1) (q - 1)^{2},$ 
$p_{\alpha}^{(\kappa, \mathfrak{n})}(0) = 0,$ for every $\alpha;$ if $\alpha$ and $\alpha'$ 
are conjugates, $p_{\alpha}^{(\kappa, \mathfrak{n})} \! = p_{\alpha'}^{(\kappa, \mathfrak{n})}.$ 
Furthermore, by \eqref{eq: basic-estimate_k+d} (applied also with $r$ replaced by $r + 1$), 
we see that 
\begin{equation*}
\deg \, p_{\alpha}^{(\kappa, \mathfrak{n})} +  m_{\alpha} \le |\kappa| + |\mathfrak{n}|. 
\end{equation*}

\begin{remark} \hskip-2ptBy Poincar\'e duality and Definition \ref{Definition q->1/q} we find that 
\begin{equation*}
\text{Coefficient}_{\scriptscriptstyle T^{\kappa}} A_{\mathfrak{n}}(T, 1\slash q)
\,=\, \sum_{\alpha} p_{\alpha}^{(\kappa, \mathfrak{n})}(1\slash q) \alpha^{- 1}.
\end{equation*} 
\end{remark} 
As in the proof of Lemma \ref{evenness-A}, by increasing $r$ to $r + r'$ in 
\eqref{eq: final_decomp_eigen_A_n(T, q)} with $r'$ sufficiently large, 
differentiating accordingly, and then setting $t_{r + 1}  = \cdots = t_{r + r'}  = 0,$ we see that \eqref{eq: final_decomp_eigen_A_n(T, q)} yields a similar expression for the coefficients of a sum of the form 
\begin{equation*}
\sum_{d \, \in \, \mathscr{P}(\mathfrak{n}, q)} 
\left(H(a_{1}^{*}(d, q), a_{2}^{*}(d, q), \ldots) \cdot \prod_{k = 1}^{r} 
P_{\scriptscriptstyle C_{d}}(t_{k})  \right).
\end{equation*} 
Here $H(\cdots)$ is any given polynomial expression in $a_{1}^{*}(d, q), a_{2}^{*}(d, q), \ldots .$ This applies in particular to \eqref{eq: A_{n, i, j}(T, q)}, and for partitions $\mathfrak{n}, \mathfrak{i}$ and $\mathfrak{j}$ ($\mathfrak{n} \ne {\bf{0}}$), note that the degree of the coefficient of any eigenvalue $\alpha$ occurring in the expression of 
$
\text{Coefficient}_{\scriptscriptstyle T^{\kappa}} A_{\mathfrak{n}, \mathfrak{i}, \mathfrak{j}}(T, q)
$ 
is at most 
$
|\kappa| + |\mathfrak{n}| + |\mathfrak{i}| + |\mathfrak{j}| - m_{\alpha}. 
$

\vskip5pt 
We can now prove the following 

\vskip10pt
\begin{thm} --- For $\kappa = (k_{1}, \ldots, k_{r}) \in \mathbb{N}^{r}\!,$ 
the coefficient of $t_{1}^{k_{1}} \cdots \, t_{r}^{k_{r}}$ in $E(T)^{\epsilon_{l}}A_{l}(T, q)$ 
can be expressed as 
\begin{equation*}
\sum_{\alpha} R_{\alpha}^{(\kappa,  l)}(q) \alpha
\end{equation*} 
the sum being over the distinct (normalized) eigenvalues of Frobenius acting on 
$\left(H_{c,  \mu}^{\bullet}(\mathscr{H}_{g}[2] \otimes \overline{\mathbb{F}}_{\! q}, 
\mathbb{V}'(\lambda))\right)_{g, \lambda, \mu}$ with $g \le [(l - 1) \slash 2],$ and $R_{\alpha}^{(\kappa,  l)}(x) \in x\mathbb{Q}[x].$ Moreover, we have 
$
\deg R_{\alpha}^{(\kappa,  l)} +  m_{\alpha} \le |\kappa| + l.
$
\end{thm}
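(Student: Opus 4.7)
The plan is to proceed by strong induction on $l$, the base cases $l \le 4$ having been handled explicitly in Sections \ref{section 3} and \ref{section 5}. The inductive hypothesis is that for every $l' < l$ and every $\kappa' \in \mathbb{N}^{r}$, the coefficient $\lambda(\kappa', l'; q)$ admits an expansion of the form \eqref{eq: final-formula} with $q$-Weil algebraic integers coming from $H_{c,\mu}^{\bullet}(\mathscr{H}_{g'}[2] \otimes \overline{\mathbb{F}}_{\! q}, \mathbb{V}'(\lambda))$ for $g' \le \lfloor (l'-1)/2 \rfloor$, satisfying the bounds \eqref{eq: degree P + mj}; by \eqref{eq: final-formula q->1/q} the same holds for $\lambda(\kappa', l'; 1/q)$, and hence for all $\Lambda_{l'}(\pm q^{\mu} T^{\mu}, 1/q^{\mu})$.

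The first step is to invoke \eqref{eq: tag 4.2} to write
\begin{equation*}
E(T)^{\epsilon_{l}} A_{l}(T, q) \;=\; \sum_{(\mu, \nu) \ne ((1),(l))} E(T)^{\epsilon_{l}} A_{\mu, \nu}(T, q)
\end{equation*}
and reduce to analyzing each summand. For a fixed pair $(\mu, \nu)$ with $\mu = (\mu_{1} \ge \cdots \ge \mu_{n} \ge 1)$ and $\sum \nu_{j}\mu_{j} = l$, Proposition \ref{Proposition 4.3} (when every $\nu_{j}$ is even) and \eqref{eq: tag 4.4}--\eqref{eq: tag 4.5} (in general) exhibit $A_{\mu, \nu}(T, q)$ as a product of two types of factors: (a) a character-sum factor built from sums
\begin{equation*}
\sum_{d_{0} \in \mathscr{P}_{\!\nu}(\mu)} N_{\mu,\nu}(d_{0}, \delta) \prod_{k=1}^{r} P_{\scriptscriptstyle C_{d_{0}}}\!(t_{k})
\end{equation*}
which, by Proposition \ref{Proposition 4.9} and the extension of \eqref{eq: final_decomp_eigen_A_n(T, q)} to moments $A_{\mathfrak{n}, \mathfrak{i}, \mathfrak{j}}(T, q)$ sketched at the end of \ref{Decomp Al(T, q) in q-Weil numbers}, decompose into $\mathbb{Q}[q]$-linear combinations of normalized Frobenius eigenvalues on $H_{c,\mu}^{\bullet}(\mathscr{H}_{g}[2], \mathbb{V}'(\lambda))$; and (b) a product of generating series $\Lambda_{\nu_{j}}(\pm q^{\mu_{j}} T^{\mu_{j}}, 1/q^{\mu_{j}})$ with $\nu_{j} < l$, to which the inductive hypothesis applies. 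Multiplying these together, extracting the coefficient of $T^{\kappa}$ (a finite operation commuting with both expansions), and using that a product of $q$-Weil numbers is $q$-Weil with additive weights, yields the desired expansion $\sum_{\alpha} R_{\alpha}^{(\kappa,l)}(q)\,\alpha$.

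The main obstacles are the two bounds. The genus constraint $g \le \lfloor (l-1)/2 \rfloor$ arises because $d_{0} \in \mathscr{P}_{\!\nu}(\mu)$ has $\deg d_{0} = \sum_{j \in J_{0}} \mu_{j} \le \sum_{j} \nu_{j}\mu_{j} = l$ (using that $\nu_{j} \ge 1$ on $J_{0}$), so the hyperelliptic curve $C_{d_{0}}$ has genus at most $\lfloor (l-1)/2 \rfloor$, and the inductive contribution from $\Lambda_{\nu_{j}}$ with $\nu_{j} \le l-1$ only involves genera $\le \lfloor (\nu_{j}-1)/2 \rfloor \le \lfloor (l-1)/2 \rfloor$. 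The degree estimate $\deg R_{\alpha}^{(\kappa,l)} + m_{\alpha} \le |\kappa| + l$ follows from combining the weight bound $\deg p_{\alpha}^{(\kappa', \mathfrak{n})} + m_{\alpha} \le |\kappa'| + |\mathfrak{n}| + |\mathfrak{i}| + |\mathfrak{j}|$ for the character-sum factor with the inductive bound for each $\Lambda_{\nu_{j}}$-factor (where the variable change $T \mapsto q^{\mu_{j}} T^{\mu_{j}}$ contributes exactly $\nu_{j}\mu_{j}$ extra units to the degree in $q$), so that summing over $j$ and adding the character-sum contribution reconstitutes $|\kappa| + l$ via $\sum \nu_{j}\mu_{j} = l$; tracking this bookkeeping carefully through Proposition \ref{Proposition 4.9} is the main technical step. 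Finally, the vanishing $R_{\alpha}^{(\kappa,l)}(0) = 0$, equivalent to $R_{\alpha}^{(\kappa,l)} \in x\mathbb{Q}[x]$, propagates from the factor $|\mathrm{GL}_{2}(\mathbb{F}_{\! q})|^{-1} = (q(q+1)(q-1)^{2})^{-1}$ present in every character-sum factor (cf. \eqref{eq: final_decomp_eigen_A_n(T, q)}) together with the inductive vanishing for $\Lambda_{\nu_{j}}$.
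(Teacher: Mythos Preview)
Your overall strategy matches the paper's proof: reduce via \eqref{eq: tag 4.2} to individual $A_{\mu,\nu}(T,q)$, split each into a character-sum factor $A_{\mathfrak{n},\mathfrak{i},\mathfrak{j}}(T,q)$ and a product of $\Lambda_{\nu_j}$-factors from \eqref{eq: tag 4.4}--\eqref{eq: tag 4.5}, and invoke the induction hypothesis on the latter together with the eigenvalue decomposition \eqref{eq: final_decomp_eigen_A_n(T, q)} on the former.

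The degree estimate, however, has a genuine gap. You correctly record that the character-sum factor contributes degree plus weight at most $|\kappa_2| + |\mathfrak{n}|+|\mathfrak{i}|+|\mathfrak{j}| = |\kappa_2| + |\mu|$. For the total to be $|\kappa|+l$, the $\Lambda_{\delta,\mu,\nu}$-factor must therefore contribute at most $|\kappa_1| + l - |\mu|$, not $|\kappa_1|+l$. Your bookkeeping (``the variable change $T\mapsto q^{\mu_j}T^{\mu_j}$ contributes exactly $\nu_j\mu_j$ extra units'') does not produce this saving of $|\mu|$, and in fact the variable change itself contributes $\mu_j|\tilde\kappa_j|$, while $\nu_j\mu_j$ comes from the $q^l$ prefactor in \eqref{eq: tag 4.5}. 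The missing ingredient is that one must use \emph{both} inequalities in the inductive hypothesis \eqref{eq: degree P + mj}, not just the first. Writing $Q_\alpha^{(\kappa,\nu_j)}(x)=x^{|\kappa|+\nu_j-m_\alpha}P_\alpha^{(\kappa,\nu_j)}(1/x)$, the divisibility condition in \eqref{eq: degree P + mj} forces $\deg Q_\alpha^{(\kappa,\nu_j)}\le(|\kappa|+\nu_j-m_\alpha-2)/2$, whence the sharper bound
\[
\deg Q_\alpha^{(\kappa,\nu_j)} + m_\alpha \;\le\; |\kappa| + \nu_j - 1.
\]
After the substitution $q\to q^{\mu_j}$, this $-1$ scales to $-\mu_j$, and summing over $j$ yields exactly the $-|\mu|$ required. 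This is precisely how the paper obtains $|\kappa_1|+l-|\mu|$ for the $\Lambda_{\delta,\mu,\nu}$-part before combining with the $|\kappa_2|+|\mu|$ from the character-sum side.
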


\begin{proof} \hskip-0.3ptBy \eqref{eq: tag 4.2}, it suffices to investigate the 
coefficients of $E(T)^{\epsilon_{l}}A_{\mu, \nu}(T, q)$ for 
$\mu \ne (1)$ and $\nu \ne (l).$ We recall that 
$\mu = (\mu_{1} \ge \cdots \ge \mu_{n} \ge 1)$ and $\nu = (\nu_{1}, \ldots, \nu_{n})$ 
are such that $\nu_{1} \mu_{1} + \cdots + \nu_{n} \mu_{n} =  l.$
By the induction assumptions (see \eqref{eq: final-formula}, 
\eqref{eq: degree P + mj} and \eqref{eq: final-formula q->1/q}), 
the coefficients of 
$
q^{\nu_{j} \mu_{j}} \Lambda_{\nu_{j}}\!(q^{\mu_{j}}T^{\mu_{j}}\!, 1\slash q^{\mu_{j}}) 
$ 
can be expressed as 
\begin{equation*} 
q^{\mu_{j}(|\kappa| + \nu_{j})}\lambda(\kappa, \nu_{j}; 1\slash q^{\mu_{j}}) 
= \sum_{\alpha} q^{\mu_{j}(|\kappa| + \nu_{j} - m_{\alpha})}
P_{\alpha}^{(\kappa, \nu_{j})}(1\slash q^{\mu_{j}})\alpha^{\mu_{j}}
\qquad \text{($\kappa  \in \mathbb{N}^{r}$)}
\end{equation*} 
where 
$
Q_{\alpha}^{(\kappa, \nu_{j})}(x) : 
= x^{|\kappa| + \nu_{j} - m_{\alpha}} P_{\alpha}^{(\kappa, \nu_{j})}(1\slash x) \in \mathbb{Q}[x]
$ 
with 
$
x^{\deg Q_{\alpha}^{(\kappa, \nu_{j})}  + \, 2} \mid P_{\alpha}^{(\kappa, \nu_{j})}(x) 
$ 
for all $\alpha$ and $1\le j \le n;$ then, clearly, 
$\deg \, Q_{\alpha}^{(\kappa, \nu_{j})} \le (|\kappa| + \nu_{j} - m_{\alpha} - 2)\slash 2,$ 
and so 
\begin{equation*}
\deg \, Q_{\alpha}^{(\kappa, \nu_{j})} + m_{\alpha} \le (|\kappa| + \nu_{j} + m_{\alpha} - 2)\slash 2
\le |\kappa| + \nu_{j} - 1.
\end{equation*} 
It follows that for $\kappa_{1}  \in \mathbb{N}^{r}\!,$ the coefficients of the $q$-Weil numbers 
$\alpha_{1}$ occurring in 
$
\text{Coefficient}_{\scriptscriptstyle T^{\kappa_{1}}} \Lambda_{\delta, \mu, \nu}(T, q), 
$ 
with $\Lambda_{\delta, \mu, \nu}(T, q)$ defined by \eqref{eq: tag 4.5}, 
are polynomials of degree at most 
$ 
|\kappa_{1}| + l - |\mu| - m_{\alpha_{1}}
$ 
with rational coefficients.

To handle the other part of $E(T)^{\epsilon_{l}}A_{\mu, \nu}(T, q)$ in \eqref{eq: tag 4.4}, 
note that the sum 
\begin{equation*}
\sum_{d_{0} \, \in \, \mathscr{P}_{\! \scriptscriptstyle \nu}(\mu)}
\bigg(N_{\mu, \nu}(d_{0}, \delta) \prod_{i = 1}^{r} P_{\scriptscriptstyle C_{d_{0}}}\!(t_{i}) \bigg) 
\end{equation*} 
is (up to a constant) just $A_{\mathfrak{n}, \mathfrak{i}, \mathfrak{j}}(T, q)$ with $\mathfrak{n} = (\mu_{j})_{j\in J_{0}},$ and $\mathfrak{i}, \mathfrak{j}$ determined by splitting $(\mu_{j})_{j\in J_{1}}$ (depending on $\delta$) using Proposition \ref{Proposition 4.9}. As 
$
|\mathfrak{n}| + |\mathfrak{i}| + |\mathfrak{j}| = |\mu|, 
$ 
the coefficient of any eigenvalue $\alpha_{2}$ in 
$
\text{Coefficient}_{\scriptscriptstyle T^{\kappa_{2}}} A_{\mathfrak{n}, \mathfrak{i}, \mathfrak{j}}(T, q)
$ 
($\kappa_{2} \in \mathbb{N}^{r}$) is a polynomial of degree at most 
$
|\kappa_{2}| + |\mu| -  m_{\alpha_{2}}
$ 
with rational coefficients. Our assertions follow now for contributions to 
$E(T)^{\epsilon_{l}}A_{l}(T, q)$ given by \eqref{eq: tag 4.4}.

To complete the proof, note that, for $A_{\mu, \nu}(T, q)$ given by Proposition \ref{Proposition 4.3}, 
the same estimates apply. 
\end{proof}

We can further assume that the sum in the theorem is {\it reduced} in the sense that 
$\alpha' = q^{n}\alpha$ for some $n\in \mathbb{N}$ if and only if $\alpha' = \alpha.$ Since \begin{equation*}
\sum_{\alpha} R_{\alpha}^{(\kappa, l)}(q^{n}) \alpha^{n} 
= - \sum_{\alpha} q^{n(|\kappa| + l - m_{\alpha} + 1)}R_{\alpha}^{(\kappa, l)}(1\slash q^{n}) \alpha^{n}
\;\; \qquad \;\; \text{(for all $n \ge 1$)}
\end{equation*} 
(obtained from \eqref{eq: func-eq A_{l}(T, q)} applied over a finite extension $\mathbb{F}_{\! q^{n}}$ 
of $\mathbb{F}_{\! q}$), it follows  
that 
\begin{equation*}
R_{\alpha}^{(\kappa, l)}(x) = -\, x^{|\kappa| + l - m_{\alpha} + 1}R_{\alpha}^{(\kappa, l)}(1\slash x)
\;\; \qquad \;\; \text{(for all $\alpha$)}.
\end{equation*} 
Using this combined with \eqref{eq: prel-final-formula}, one defines $\lambda(\kappa, l; q)$ by cutting off the coefficient of each $q$-Weil algebraic integer in the expression of \eqref{eq: prel-final-formula} 
accordingly.

\section{An application}\label{section 10}
For $0 \le i \le 6,$ let $\mathscr{A}_{2}(w^{i})$ denote the quotient stack 
$\mathscr{A}_{2, 2}\slash \mathbb{S}_{6 - i},$ where $\mathbb{S}_{6 - i}$ is the subgroup of 
$\mathbb{S}_{6}$ fixing $\{1, \ldots, i \}$ pointwise. Here we identify 
$\mathrm{GSp}_{4}(\mathbb{Z}\slash 2)$ with $\mathbb{S}_{6},$ 
as in \cite[Section 2]{BFvdG1}, under the isomorphism defined by an 
embedding of the stack $\mathscr{H}_{2}[2]$ (denoted by $\mathscr{M}_{2}(w^{6})$ in \cite{BFvdG1}) 
into $\mathscr{A}_{2, 2}.$ Take\footnote{We are taking $i = 1$ for simplicity.} $i = 1,$ and consider the 
Euler characteristic 
\begin{equation*}
e_{\scriptscriptstyle \rm{Eis}}(\mathscr{A}_{2}(w^{1}) \otimes \overline{\mathbb{Q}}, \mathbb{V}(\lambda)) = \sum_{j = 0}^{6}\, (-1)^{j} [H_{\rm{Eis}}^{j}(\mathscr{A}_{2}(w^{1}) \otimes \overline{\mathbb{Q}}, \mathbb{V}(\lambda))]
\end{equation*} 
where $\lambda = (\lambda_{1} \ge \lambda_{2} \ge 0)$ with $|\lambda| = \lambda_{1} + \lambda_{2} \equiv 0 
\pmod 2.$

In what follows, we shall be interested in computing the trace of Frobenius 
\begin{equation*}
\Tr(F^{*} \vert \, e_{\scriptscriptstyle \rm{Eis}}(\mathscr{A}_{2}(w^{1}) \otimes \overline{\mathbb{Q}}, \mathbb{V}(\lambda))) = 
\Tr(F^{*} \vert \, e_{\scriptscriptstyle \rm{Eis}}(\mathscr{A}_{2}(w^{1}) 
\otimes \overline{\mathbb{F}}_{\! q}, \mathbb{V}(\lambda)))
\end{equation*}
see also \cite{vdG} and \cite{BFvdG1}. To do so, we first recall (see Theorem \ref{mom-for-odd_poly}) that 
\begin{equation*} 
\frac{1}{q (q - 1)} \!\!\sum_{\substack{\deg d_{0} = 5 \\ d_{0}-\text{monic \& square-free}}}
\left(\prod_{k = 1}^{r} P_{\scriptscriptstyle C_{d_{0}}}\!(t_{k})  \right)  = \,
(t_{1} \, \cdots \, t_{r})^{2}
\!\!\!\sum_{\substack{\lambda \subseteq (r^{2}) \\ \text{$|\lambda|$-\text{even}}}}
\Tr(F^{*} \vert \,
e_{c}^{\lambda}(\mathscr{H}_{2}(w^{1})  \otimes  \overline{\mathbb{F}}_{\! q}))\,
s_{\scriptscriptstyle \langle \lambda^{\dag} \rangle}(q^{\pm \frac{1}{2}} T^{\pm 1})\,
q^{r - \frac{|\lambda|}{2}}.
\end{equation*} 
Put 
\begin{equation*}
A_{5}^{(0)}(T, q) \;\; : =
\sum_{\substack{\deg d_{0} = 5 \\ d_{0}-\text{monic \& square-free}}}
\left(\prod_{k = 1}^{r} P_{\scriptscriptstyle C_{d_{0}}}\!(t_{k}) \right)
\end{equation*}
and note that we can express
\begin{equation*} 
A_{5}^{(0)}(T, q)
= \frac{q (q - 1)}{|\mathrm{GL}_{2}(\mathbb{F}_{\! q})|} 
\sum_{\nu = (1^{\nu_{1}}\!, \ldots, 6^{\nu_{6}})}
\nu_{1} \; \cdot \sum_{d_{0} \, \in \, \mathscr{P}_{2}(\nu, \mathbb{F}_{\! q})}\,
\left(\prod_{k = 1}^{r} P_{\scriptscriptstyle C_{d_{0}}}\!(t_{k}) \right).
\end{equation*}
For each partition $\nu = (1^{\nu_{1}}\!, \ldots, 6^{\nu_{6}}),$ the (normalized) inner sum
\begin{equation}\label{eq: Jacobians-alg-curves}
\frac{1}{|\mathrm{GL}_{2}(\mathbb{F}_{\! q})|} 
\sum_{d_{0} \, \in \, \mathscr{P}_{2}(\nu, \mathbb{F}_{\! q})}
\left(\prod_{k = 1}^{r} P_{\scriptscriptstyle C_{d_{0}}}\!(t_{k}) \right)
\end{equation}
can be thought of as the contribution corresponding to Jacobians of smooth projective irreducible 
algebraic curves of genus two in the moduli of principally polarized abelian surfaces. As we are interested in the Euler characteristics of $\mathscr{A}_{2}(w^{1}),$ to  
\eqref{eq: Jacobians-alg-curves} we add the remaining contribution  
\begin{equation} \label{eq: prod-ell-curves}
\frac{1}{2} \frac{1}{|\mathrm{G}(\mathbb{F}_{\! q})|^{2}}\sum_{\nu = \sigma \, \cup \,\tau} \,
\sum_{\substack{f \, \in \, \mathscr{P}_{1}(\sigma, \mathbb{F}_{\! q}) \\ 
h \, \in \, \mathscr{P}_{1}(\tau, \mathbb{F}_{\! q})}} 
\left(\prod_{k = 1}^{r} P_{\scriptscriptstyle E_{f}}(t_{k}) P_{\scriptscriptstyle E_{h}}(t_{k})\right)  +\, \frac{1}{2} \frac{1}{|\mathrm{G}(\mathbb{F}_{\! q^{2}})|}
\sum_{f \, \in \, \mathscr{P}_{1}(\nu^{1\slash 2},\, \mathbb{F}_{\! q^{2}})} 
\left(\prod_{k = 1}^{r} P_{\scriptscriptstyle E_{f}}(t_{k}^{2})\right)
\end{equation}
corresponding to pairs of elliptic curves joined at the origin. Here the first sum is over all ordered choices of partitions $\sigma$ and $\tau$ of 3 such that $\nu = \sigma \, \cup \,\tau,$ and 
$\mathscr{P}_{1}(\mu, \mathbb{F}_{\! q})$ (for a partition $\mu$ of 3 and a finite field 
$\mathbb{F}_{\! q}$ of odd characteristic) denotes the subset of $\mathbb{F}_{\! q}[x]$ 
consisting of all square-free cubic polynomials with factorization type $\mu;$ each element $f \in \mathscr{P}_{1}(\mu, \mathbb{F}_{\! q})$ defines an elliptic curve $E_{f}$ given by $y^2 = f(x)$ with 
$x = \infty$ as origin. The normalizing constant 
$$
|\mathrm{G}(\mathbb{F}_{\! q})| = q (q - 1)^{2}
$$ 
represents the number of $\mathbb{F}_{\! q}$-isomorphisms among the elliptic curves 
$E_{f}$ ($f \in \mathscr{P}_{1}(\mu, \mathbb{F}_{\! q})$). The second 
contribution in \eqref{eq: prod-ell-curves}, corresponding to elliptic curves defined 
over $\mathbb{F}_{\! q^{2}},$ each curve being joined at the origin with its Frobenius conjugate, 
occurs only if $\nu$ is of the form $(2^{\nu_{2}}\!, 4^{\nu_{4}}\!, 6^{\nu_{6}}),$ in which case we set 
$\nu^{1\slash 2} : = (1^{\nu_{2}}\!, 2^{\nu_{4}}\!, 3^{\nu_{6}})$ (see \cite[Section5]{BFvdG1}). Accordingly, if $A_{\scriptscriptstyle (2, 1)}(T, q)$ and $A_{\scriptscriptstyle (1^{3})}(T, q)$ are defined by \eqref{eq: A_{n}(T, q)}, the generating function 
\begin{equation}\label{eq: gen-series-cptcohA2(w1)}
(t_{1} \, \cdots \, t_{r})^{2}
\!\!\!\sum_{\substack{\lambda \subseteq (r^{2}) \\ \text{$|\lambda|$-\text{even}}}}
\Tr(F^{*} \vert \, e_{c}(\mathscr{A}_{2}(w^{1}) \otimes \overline{\mathbb{Q}}, \mathbb{V}(\lambda)))\,
s_{\scriptscriptstyle \langle \lambda^{\dag} \rangle}(q^{\pm \frac{1}{2}} T^{\pm 1})\,
q^{r - \frac{|\lambda|}{2}}
\end{equation} 
is just 
\begin{equation*}
\frac{A_{5}^{(0)}(T, q)}{q (q - 1)}
+ \frac{A_{3}^{(0)}(T, q)\big(A_{\scriptscriptstyle (2, 1)}(T, q) + 3 A_{\scriptscriptstyle (1^{3})}(T, q)\big)}{q^{2} (q - 1)^{2}}
\end{equation*}
with 
\begin{equation*}
A_{3}^{(0)}(T, q) \;\;\;  =
\sum_{\substack{\deg f = 3 \\ f-\text{monic \& square-free}}}
\left(\prod_{k = 1}^{r} P_{\scriptscriptstyle E_{f}}(t_{k}) \right).
\end{equation*} 
We recall now that the {\it full} Eisenstein cohomology is the difference 
between the compactly supported and the usual cohomology, and the corresponding 
Euler characteristic is 
\begin{equation*}
e_{\scriptscriptstyle \rm{full\, Eis}}(\mathscr{A}_{2}(w^{1}), \mathbb{V}(\lambda)) \,=\, 
e_{c}(\mathscr{A}_{2}(w^{1}), \mathbb{V}(\lambda)) \,-\, e(\mathscr{A}_{2}(w^{1}), \mathbb{V}(\lambda)).
\end{equation*} 
Letting 
\begin{equation*}
A_{3}^{(2)}(T, q) = A_{\scriptscriptstyle (2, 1)}(T, q) + 3 A_{\scriptscriptstyle (1^{3})}(T, q)
\end{equation*}
it follows that the generating function 
\eqref{eq: gen-series-cptcohA2(w1)} corresponding to 
$
e_{\scriptscriptstyle \rm{full\, Eis}}(\mathscr{A}_{2}(w^{1}), \mathbb{V}(\lambda))
$ 
equals 
\begin{equation}\label{eq: fullEis-coh}
\begin{split}
&\frac{A_{5}^{(0)}(T, q)}{q (q - 1)}
\,+ \,
\frac{A_{3}^{(0)}(T, q)A_{3}^{(2)}(T, q)}{q^{2} (q - 1)^{2}}\\
& - \frac{q^{4} A_{5}^{(0)}(q\, T, 1\slash q)}{q^{-1} - 1}
\,- \,
\frac{q^{5} A_{3}^{(0)}(q\, T, 1\slash q) A_{3}^{(2)}(q\, T, 1\slash q)}
{(q^{-1} - 1)^{2}}.
\end{split}
\end{equation} 
On the other hand, the generating series 
$
A_{5}(T, q)
$ 
introduced at the beginning of Section \ref{section 4} can be decomposed as
\begin{equation*}
A_{5}(T, q) = A_{5}^{(0)}(T, q) \,+ \, \text{degenerate part}
\end{equation*}
where the degenerate part consists of the following contributions: 
\begin{equation}\label{eq: degenerate1}
\begin{split}
&\frac{q^{2}  \Big(q A_{3}^{(0)}(T, q) - A_{3}^{(2)}(T, q) + A_{3}^{(1)}(T, q) \Big)}{2}
\frac{\Lambda_{2}(q \, T, 1\slash q)}{E(T)}\\
&+\, \frac{q^{2}  \Big(q A_{3}^{(0)}(- T, q) -   A_{3}^{(2)}( - T, q) + A_{3}^{(1)}(- T, q) \Big)}{2}
\frac{\Lambda_{2}(- q \, T, 1\slash q)}{E(-T)}
\end{split}
\end{equation}
corresponding to the partitions 
$1 \cdot 1 + 1\cdot 1 + 1\cdot 1 + 2\cdot 1,$ $1 \cdot 2 + 1\cdot 1 + 2\cdot 1$ and 
$1 \cdot 3 + 2\cdot 1,$ 
\begin{equation}\label{eq: degenerate2}
q^{3} A_{3}^{(2)}(T, q) \Lambda_{3}(q\, T, 1\slash q)
\end{equation} 
corresponding to $1 \cdot 1 + 1\cdot 1 + 3\cdot 1$ and $1 \cdot 2 + 3\cdot 1,$ 
\begin{equation}\label{eq: degenerate3}  
\frac{q^{5}(q - 1)^{2}}{4}\frac{\Lambda_{2}(q^{2} T^{2}\!, 1\slash q^{2})}{E(T^{2})}\,
+\,\frac{q^{5}(q^{2} - 1)}{4}\frac{\Lambda_{2}(- q^{2} T^{2}\!, 1\slash q^{2})}{E(- T^{2})} 
\end{equation}
corresponding to $2 \cdot 2 + 1\cdot 1,$ 
\begin{equation}\label{eq: degenerate4}
\frac{q^{5}(q - 1)^{2}}{4}\frac{\Lambda_{2}(q \, T, 1\slash q) \Lambda_{2}(- q \, T, 1\slash q)}{E(T)E(-T)}\, + \, \frac{q^{5}(q - 1)(q - 3)}{8}
\Big(\frac{\Lambda_{2}(q \, T, 1\slash q)^{2}}{E(T)^{2}}  +  
\frac{\Lambda_{2}(- q \, T, 1\slash q)^{2}}{E(- T)^{2}}\Big)
\end{equation}
corresponding to $2 \cdot 1 + 2\cdot 1 + 1\cdot 1,$ 
\begin{equation}\label{eq: degenerate5}
\frac{q^{6}(q - 1)}{2}\Lambda_{3}(q \, T, 1\slash q)
\Big(\frac{\Lambda_{2}(q \, T, 1\slash q)}{E(T)}  + 
\frac{\Lambda_{2}(- q \, T, 1\slash q)}{E(- T)} \Big)
\end{equation}
corresponding to $3 \cdot 1 + 2\cdot 1,$ and finally 
\begin{equation}\label{eq: degenerate6}
\frac{q^{5}(q - 1)}{2}\Big(\frac{\Lambda_{4}(q\, T, 1\slash q)}{E(T)} +  
\frac{\Lambda_{4}(- q\,  T, 1\slash q)}{E(- T)} \Big)
\end{equation}
corresponding to $4 \cdot 1 + 1\cdot 1.$ We recall that 
\begin{equation*}
A_{5}(T, q)  = - \, q^{6}A_{5}(q \, T, 1\slash q).
\end{equation*} 
Dividing both sides by $q^{2} - q,$ and using the trivial identity 
$$
q^{2} - q = - \, q^{3}\bigg(\frac{1}{q^{2}} - \frac{1}{q} \bigg)
$$
we see that 
\begin{equation}\label{eq: func-eq-A5(T, q)-normalized}
\frac{A_{5}(T, q)}{q^{2} - q} - q^{3}\frac{A_{5}(q \, T, 1\slash q)}{q^{-2} - q^{-1}} = 0.
\end{equation} 
Notice that the results of Section \ref{section: The series BarC(X, T, q)} (specifically, see 
\eqref{eq: funct-equation-needed-proof-main-thm}) give, in particular, a formula for the trace of Frobenius on 
$
e_{\scriptscriptstyle \rm{full\, Eis}}^{\lambda}
(\mathscr{H}_{g}(w^{1})\otimes \overline{\mathbb{F}}_{\! q})
$ 
($g \ge 2$); when $g = 2,$ the corresponding formula can be made explicit. \!In what follows, 
we shall illustrate this in the case of $\mathscr{A}_{2}(w^{1}),$ thus recovering the formula in \cite[Corollary 4.6]{BFvdG1}, and proved in \cite[Theorem 9.1]{vdG}.

\vskip10pt 
\theoremstyle{l=5 vs vdG-eis-coh}
\newtheorem*{l=5 vs vdG-eis-coh}{Theorem (Bergstr\"om-Faber-van der Geer)}
\begin{l=5 vs vdG-eis-coh} --- For every $\lambda = (\lambda_{1} \ge \lambda_{2} \ge 0),$ the 
trace of Frobenius $F^{*}$ on 
$e_{\scriptscriptstyle \rm{Eis}}(\mathscr{A}_{2}(w^{1}) \otimes \overline{\mathbb{F}}_{\! q}, \mathbb{V}(\lambda))$ is given by 
\begin{equation*}
\left[\frac{\lambda_{1} - \lambda_{2} - 1}{4} \right] - \left[\frac{\lambda_{1} + \lambda_{2} + 1}{4} \right] q^{\lambda_{2} + 1}
+ \,  \begin{cases} 
2 + 2 T_{\lambda_{2} + 2}(q) & \text{if $\lambda_{2}$ is even}\\ 
- 2T_{\lambda_{1} + 3}(q) & \text{if $\lambda_{2}$ is odd.} 
\end{cases}
\end{equation*}
\end{l=5 vs vdG-eis-coh}

\begin{proof} First, express the left-hand side of \eqref{eq: func-eq-A5(T, q)-normalized} using 
\eqref{eq: degenerate1}--\eqref{eq: degenerate6}, and then apply \eqref{eq: tag 4.1} to transform each 
$\Lambda_{l}(\pm T^{m}\!, q^{m}),$ for $2\le l \le 4,$ into $\Lambda_{l}(\pm q^{m}T^{m}\!, q^{-m});$ 
we also write the corresponding contribution $A_{l}(\pm T^{m}\!, q^{m})$ explicitly by recalling that 
$E(T)A_{2}(T, q) = q^{2} - q,$ that is, 
\begin{equation*}
A_{3}(T, q) = A_{3}^{(0)}(T, q) +  
\frac{q^{3}(q - 1)}{2}\Big(\frac{\Lambda_{2}(q\, T, 1\slash q)}{E(T)} + 
\frac{\Lambda_{2}(- q\, T, 1\slash q)}{E(- T)}\Big)
\end{equation*}
and, by \eqref{eq: tag 5.4}--\eqref{eq: tag 5.8},
\begin{equation*}
\begin{split}
& E(T)A_{4}(T, q) = q A_{3}^{(0)}(T, q)  +  A_{3}^{(1)}(T, q)+\frac{q^{5}(q - 1)}{2} \frac{E(q\, T)\Lambda_{2}(q^{2} T^{2}\!, 1\slash q^2)}{E(- T)}\\
& + q^{3}(q - 1)\left[\frac{q^{2} E(q\, T)\Lambda_{2}(q \, T, 1\slash q)^{2}}{2 E(T)} 
+  \frac{(q - 2)\Lambda_{2}(q \, T, 1\slash q)}{2 E(T)} + 
\frac{q \Lambda_{2}(- q \, T, 1\slash q)}{2 E(- T)}
+  q \Lambda_{3}(q \, T, 1\slash q)\right] 
\end{split}
\end{equation*} 
with $A_{3}^{(1)}(T, q)$ defined as in the proof of Proposition \ref{functional equation A4}. 
To simplify things somewhat, we can use the identities: 
\begin{equation*}
\Lambda_{3}(- q \, T, 1\slash q) = \Lambda_{3}(q \, T, 1\slash q) \qquad
A_{3}^{(0)}(-T, q) = A_{3}^{(0)}(T, q) \;\;\; \text{and} \;\;\; 
A_{3}^{(2)}(-T, q) = A_{3}^{(2)}(T, q).
\end{equation*}
In the expression obtained, the coefficient of every monomial 
\begin{equation*}
\Lambda_{l_{1}}\!(\pm q^{m_{1}}T^{m_{1}}\!, q^{-m_{1}})^{k_{1}}
\cdots \, 
\Lambda_{l_{n}}\!(\pm q^{m_{n}}T^{m_{n}}\!, q^{-m_{n}})^{k_{n}}
\qquad (k_{1}, \ldots, k_{n} \ge 0)
\end{equation*}
has to vanish. By taking the coefficient of 
$
\Lambda_{3}(q \, T, 1\slash q)
$ 
we find, in particular, that
\begin{equation*}
A_{3}^{(2)}(T, q) + q^{4}A_{3}^{(2)}(q\, T, 1\slash q)  
= q (q - 1)^{2}\Big(\frac{1}{E(T) E(q\, T)}  + 
\frac{1}{E(-T) E(-q\, T)} \Big). 
\end{equation*} 
We also need \eqref{eq: tag 5.10} together with the identity
\begin{equation*}
A_{3}^{(0)}(T, q) + q^{4}A_{3}^{(0)}(q\, T, 1\slash q)  
= \frac{q (q - 1)^{2}}{2}\Big(\frac{1}{E(T) E(q\, T)}  + 
\frac{1}{E(-T) E(-q\, T)} \Big)
\end{equation*}
which follows easily by combining the above expression of $A_{3}(T, q)$ with the 
functional equations \eqref{eq: tag 4.1'} and \eqref{eq: tag 4.1}, applied for $l = 3$ and 
$l = 2,$ respectively.

Now subtract the constant term of the expression obtained for the left-hand side of 
\eqref{eq: func-eq-A5(T, q)-normalized} (i.e., all the terms free of factors 
$\Lambda_{l}(\pm q^{m}T^{m}\!, q^{-m})$ with $2\le l \le 4$) from \eqref{eq: fullEis-coh}. 
By using the last two identities and \eqref{eq: tag 5.10}, we can express each 
$A_{3}^{(j)}(q\, T, 1\slash q)$ ($j = 0,1,2$) in terms of $A_{3}^{(j)}(T, q),$ giving 
the following alternative expression for the generating function of the trace of Frobenius 
on $e_{\scriptscriptstyle \rm{full\, Eis}}(\mathscr{A}_{2}(w^{1}) \otimes \overline{\mathbb{F}}_{\! q},\mathbb{V}(\lambda)):$ 
\begin{equation*}
\begin{split}
& \frac{(q + 1)A_{3}^{(0)}(T, q)  + A_{3}^{(1)}(T, q)}{q E(T) E(q\, T)} \, +\, 
\frac{(q + 1)A_{3}^{(0)}(-T, q)  + A_{3}^{(1)}(-T, q)}{q E(-T) E(- q\, T)} \\
& -\, \frac{(q  - 1)^{3}}{8}\bigg( \frac{1}{E(T)^{2} E(q\, T)^{2}} \,+\, \frac{1}{E(- T)^{2} E(- q\, T)^{2}} \bigg)\, 
+ \, \frac{1}{4} \frac{(q - 1)(q + 1)^{2}}{E(- T^{2}) E(- q^{2}  T^{2})}\, -\, 
\frac{(q + 1)(q  - 1)^{2}}{E(T^{2}) E(q^{2}  T^{2})}.
\end{split}
\end{equation*} 
Note that 
\begin{equation*}
(q + 1) A_{3}^{(0)}(T, q)  +     A_{3}^{(1)}(T, q)  \;\;\,  = 
\!\sum_{\substack{\deg f = 3 \\ f-\text{monic \& square-free}}}
\left[\left(q + 1 - a(E_{f}) \right) \cdot \prod_{k = 1}^{r} 
P_{\scriptscriptstyle E_{f}} (t_{k}) \right] .
\end{equation*}
Expressing $P_{\scriptscriptstyle E_{f}}(t_{k})$ for all $f$ as 
\begin{equation*}
P_{\scriptscriptstyle E_{f}}(t_{k}) =  1 - a(E_{f})t_{k}  + \, q t_{k}^{2} 
\, =\,  (1 - \sqrt{q}\, \alpha_{\scriptscriptstyle E_{f}} t_{k})(1 - \sqrt{q}\, \bar{\alpha}_{\scriptscriptstyle E_{f}} t_{k})
\qquad (\text{with $|\alpha_{\scriptscriptstyle E_{f}}| = 1$})
\end{equation*}
it follows from Lemma \ref{BG} that we can write 
\begin{equation} \label{eq: prelim1}
\begin{split}
& \frac{(q + 1)A_{3}^{(0)}(T, q) + A_{3}^{(1)}(T, q)}{E(T) E(q\, T)} \\
&= \sum_{f} \, \left[\left(q + 1 - a(E_{f}) \right)\prod_{k = 1}^{r} 
(1 - \sqrt{q}\, \alpha_{\scriptscriptstyle E_{f}} t_{k})
(1 - \sqrt{q}\, \bar{\alpha}_{\scriptscriptstyle E_{f}} t_{k})
(1 - t_{k}) (1 - q t_{k}) \right]\\
&= q^{r} (t_{1} \, \cdots \, t_{r})^{2} \sum_{f}\, 
\left[\left(q + 1 - a(E_{f}) \right) \, \cdot  \sum_{\lambda \subseteq (r^{2})}  (-1)^{|\lambda|}
s_{\scriptscriptstyle \langle \lambda \rangle}(\alpha_{\scriptscriptstyle E_{f}}^{\pm 1}, q^{\pm \frac{1}{2}}) s_{\scriptscriptstyle \langle \lambda^{\dag} \rangle} 
(q^{\pm \frac{1}{2}} t_{1}^{\pm 1}, \ldots, q^{\pm \frac{1}{2}} t_{r}^{\pm 1}) \right].
\end{split}
\end{equation} 
By applying the Weyl character formula, for each $\lambda \subseteq (r^{2}),$
\begin{equation*}
s_{\scriptscriptstyle \langle \lambda \rangle}(\alpha_{\scriptscriptstyle E_{f}}^{\pm 1}, q^{\pm \frac{1}{2}}) \; = \;
\frac{\begin{vmatrix}
\alpha_{\scriptscriptstyle E_{f}}^{\lambda_{1} + 2} - 
\alpha_{\scriptscriptstyle E_{f}}^{-\lambda_{1} -  2}& 
\alpha_{\scriptscriptstyle E_{f}}^{\lambda_{2} + 1} - 
\alpha_{\scriptscriptstyle E_{f}}^{-\lambda_{2} - 1} \cr 
q^{\frac{\lambda_{1} + 2}{2}} - q^{-\frac{\lambda_{1} + 2}{2}} & 
q^{\frac{\lambda_{2} + 1}{2}} - q^{-\frac{\lambda_{2} + 1}{2}} 
\end{vmatrix}}{\begin{vmatrix}
\alpha_{\scriptscriptstyle E_{f}}^{2} - \,  
\alpha_{\scriptscriptstyle E_{f}}^{-2} & 
\alpha_{\scriptscriptstyle E_{f}}^{} - \, 
\alpha_{\scriptscriptstyle E_{f}}^{- 1} \cr 
q - q^{-1} & q^{\frac{1}{2}} - q^{-\frac{1}{2}} 
\end{vmatrix}}
\end{equation*}
\vskip-7pt
\begin{equation*}
\hskip141pt =
-\, \sqrt{q}\, \frac{s_{\scriptscriptstyle \langle \lambda_{2} \rangle}(q^{\pm \frac{1}{2}})  
\, s_{\scriptscriptstyle \langle \lambda_{1} + 1 \rangle}(\alpha_{\scriptscriptstyle E_{f}}^{\pm 1}) \,-\,  s_{\scriptscriptstyle \langle \lambda_{1} + 1 \rangle}(q^{\pm \frac{1}{2}}) 
\, s_{\scriptscriptstyle \langle \lambda_{2} \rangle}(\alpha_{\scriptscriptstyle E_{f}}^{\pm 1})}
{q + 1 - a(E_{f})}.
\end{equation*}
Substituting this into \eqref{eq: prelim1}, summing over $f$ and applying Birch-Ihara identity, one sees that
\begin{equation*}
\hskip-24pt \frac{(q + 1)A_{3}^{(0)}(T, q)  + A_{3}^{(1)}(T, q)}{q E(T) E(q\, T)} \, +\, 
\frac{(q + 1)A_{3}^{(0)}(-T, q)  + A_{3}^{(1)}(-T, q)}{q E(-T) E(- q\, T)} 
\end{equation*}
\vskip-10pt
\begin{equation}\label{eq: BergFabvdG1}
\begin{split}
&= \, 2 (t_{1} \, \cdots \, t_{r})^{2}  
\!\!\!\sum_{\substack{\lambda \subseteq (r^{2}) \\ \text{$\lambda_{1}, \lambda_{2}$-odd}}}  
(1 + T_{\lambda_{1} + 3}(q))\, q^{r +  \frac{1 - \lambda_{1}}{2}} \Big(q^{\frac{\lambda_{2} + 1}{2}} - q^{-\frac{\lambda_{2} + 1}{2}} \Big)
s_{\scriptscriptstyle \langle \lambda^{\dag} \rangle}(q^{\pm \frac{1}{2}} T^{\pm 1})\\
& + \,  2 (t_{1} \, \cdots \, t_{r})^{2}  
\!\!\!\sum_{\substack{\lambda \subseteq (r^{2}) \\ \text{$\lambda_{1}, \lambda_{2}$-even}}} 
(- 1  - T_{\lambda_{2} + 2}(q))\, q^{r + 1 - \frac{\lambda_{2}}{2}} 
\Big(q^{1 + \frac{\lambda_{1}}{2}} - q^{- 1 - \frac{\lambda_{1}}{2}} \Big) 
s_{\scriptscriptstyle \langle \lambda^{\dag} \rangle}(q^{\pm \frac{1}{2}} T^{\pm 1}).
\end{split}
\end{equation}

On the other hand, we can write 
\begin{equation}\label{eq: BergFabvdG2}
-\, \frac{(q  - 1)^{3}}{8}\bigg( \frac{1}{E(T)^{2} E(q\, T)^{2}} \,+\, \frac{1}{E(- T)^{2} E(- q\, T)^{2}} \bigg)\, + \, \frac{1}{4} \frac{(q - 1)(q + 1)^{2}}{E(- T^{2}) E(- q^{2}  T^{2})}\, -\, 
\frac{(q + 1)(q  - 1)^{2}}{E(T^{2}) E(q^{2}  T^{2})}
\end{equation}
\vskip-10pt
\begin{equation*}
\begin{split}
= \,  q^{r} (t_{1} \, \cdots \, t_{r})^{2}  
\!\!\sum_{\substack{\lambda \subseteq (r^{2}) \\ \text{$|\lambda|$-even}}} 
&\Big[- \tfrac{1}{4} (q - 1)^{3} s_{\scriptscriptstyle \langle \lambda \rangle}(q^{\pm \frac{1}{2}}, q^{\pm \frac{1}{2}}) \, +\,  \tfrac{1}{4}(q - 1) (q + 1)^{2}s_{\scriptscriptstyle \langle \lambda \rangle}(i q^{\pm \frac{1}{2}}, - i q^{\pm \frac{1}{2}})  \\
& \hskip50pt - \, (q + 1)(q  - 1)^{2} s_{\scriptscriptstyle \langle \lambda \rangle}(q^{\pm \frac{1}{2}}, -  q^{\pm \frac{1}{2}}) \Big] 
s_{\scriptscriptstyle \langle \lambda^{\dag} \rangle}(q^{\pm \frac{1}{2}}  T^{\pm 1}). 
\end{split}
\end{equation*} 
Using again the Weyl character formula, we have 
\begin{equation*}
s_{\scriptscriptstyle \langle \lambda \rangle}(q^{\pm \frac{1}{2}}, q^{\pm \frac{1}{2}}) \, =\,  
\frac{(\lambda_{1} + 2)(q^{2 + \lambda_{1}} + 1)(q^{1 + \lambda_{2}} - 1) \,-\, 
(\lambda_{2} + 1) (q^{2 + \lambda_{1}} - 1)(q^{1 + \lambda_{2}} + 1)}{(q - 1)^3}
\, q^{-\frac{|\lambda|}{2}}
\end{equation*}
\begin{equation*}
\hskip-70pt s_{\scriptscriptstyle \langle \lambda \rangle}(i q^{\pm \frac{1}{2}}, 
- i q^{\pm \frac{1}{2}}) \, =\, 
(-1)^{(\lambda_{1} - \lambda_{2})\slash 2}\, \frac{(q^{2 + \lambda_{1}} - (-1)^{\lambda_{1}})
(q^{1 + \lambda_{2}}  + (-1)^{\lambda_{2}})}{(q^{2} - 1) (q + 1)} \, q^{- \frac{|\lambda|}{2}}
\end{equation*}
and 
\begin{equation*}
\hskip-145pt s_{\scriptscriptstyle \langle \lambda \rangle}(q^{\pm \frac{1}{2}}, - q^{\pm \frac{1}{2}}) \, =\, (-1)^{\lambda_{1}} \frac{(q^{2 + \lambda_{1}} - 1)(q^{1 + \lambda_{2}} - 1)}{(q^{2} - 1) (q - 1)} \, q^{-\frac{|\lambda|}{2}}.
\end{equation*} 
Fix $\lambda \subseteq (r^{2})$ of even weight. By adding the coefficients of 
$q^{r - \frac{|\lambda|}{2}}(t_{1} \, \cdots \, t_{r})^{2} s_{\scriptscriptstyle \langle \lambda^{\dag} \rangle}(q^{\pm \frac{1}{2}} T^{\pm 1})$ in \eqref{eq: BergFabvdG1} and \eqref{eq: BergFabvdG2}, we obtain the expression
\begin{equation*}
\hskip-23pt \left[\frac{\lambda_{1} - \lambda_{2} - 1}{4} \right] \, - \, \left[\frac{\lambda_{1} + \lambda_{2} + 1}{4} \right] q^{\lambda_{2} + 1}
+\, \begin{cases} 
2 +  2 T_{\lambda_{2} + 2}(q) & \text{if $\lambda_{2}$ is even}\\ 
- 2T_{\lambda_{1} + 3}(q) & \text{if $\lambda_{2}$ is odd} 
\end{cases}
\end{equation*}
\vskip-5pt
\begin{equation*}
\hskip10pt - \,  \left(\left[\frac{\lambda_{1} - \lambda_{2} - 1}{4} \right]\, - \,  \left[\frac{\lambda_{1} + \lambda_{2} + 1}{4} \right] q^{-\lambda_{2} - 1}
+\, \begin{cases} 
2 + 2 T_{\lambda_{2} + 2}(1\slash q) & \text{if $\lambda_{2}$ is even}\\ 
- 2T_{\lambda_{1} + 3}(1\slash q) & \text{if $\lambda_{2}$ is odd} 
\end{cases}\right) q^{3 + |\lambda|}.
\end{equation*}
We recall that, by convention, we take $T_{2}(q) = -\, q - 1.$

The full Eisenstein cohomology is anti-invariant under Poincar\'e duality, and is determined by 
the Eisenstein cohomology by antisymmetrizing. By \cite[Theorem 3.5]{Peters}, the terms 
in the above expression that come from the compactly supported Eisenstein cohomology are 
precisely those of weights $< |\lambda| + 3$ which completes the proof of the theorem.  \end{proof}

\appendix

\section{Appendix}\label{A} 
In this appendix, \!we give, for each $r\le 3,$ the explicit expression of the power series $Z(T, t_{r + 1}; q)$ in Section \ref{section 2} as a rational function. \!One way to obtain these expressions is by using the averaging technique in \cite{CG1}. In fact, the expression of \eqref{eq: tag 2.6} as a rational function is given in \cite[Example 3.6]{CG1} when $r = 1,$ and \cite[Example 3.7]{CG1} when $r = 2.$ Accordingly, we have 
\begin{equation*}
Z(t_{1}, t_{2}; q) = \frac{1 -  q^2 t_{1} t_{2}}{\left(1 - q t_{1} \right) \left(1 - q t_{2} \right)
\left(1 - q^3 t_{1}^2 t_{2}^2 \right)}
\end{equation*} 
when $r = 1,$ and 
\begin{equation*}
Z(t_{1}, t_{2}, t_{3}; q)  = \frac{1 -  q^2 t_{1} t_{3} - q^2 t_{2} t_{3} +  q^3 t_{1} t_{2} t_{3} 
+ q^3 t_{1} t_{2} t_{3}^2 - q^4 t_{1}^2 t_{2} t_{3}^2 - q^4 t_{1} t_{2}^2 t_{3}^2 
+ q^6 t_{1}^2 t_{2}^2 t_{3}^3}{\left(1 - q t_{1} \right) \left(1 - q  t_{2} \right) \left(1 - q t_{3} \right) 
\left(1 - q^3 t_{1}^2 t_{3}^2 \right) \left(1 - q^3 t_{2}^2 t_{3}^2 \right) 
\left(1 - q^4 t_{1}^2 t_{2}^2 t_{3}^2 \right)}
\end{equation*} 
when $r = 2.$ For $r = 3,$ we used instead the recurrence relations in the proof of Theorem 3.7 in \cite{BD}. 
If we put 
$$
Z(t_{1}, t_{2}, t_{3}, t_{4}; q) = \frac{N(t_{1}, t_{2}, t_{3}, t_{4}; q)}{D(t_{1}, t_{2}, t_{3}, t_{4}; q)}
$$ 
then by using Mathematica, one finds: 
\begin{equation*}
\begin{split}
& N(t_{1}, t_{2}, t_{3}, t_{4}; q) = 
1 -  q^2 t_{1} t_{4} - q^2 t_{2} t_{4} +  q^3 t_{1} t_{2} t_{4} - q^2 t_{3} t_{4} + q^3 t_{1} t_{3} t_{4} 
+ q^3 t_{2} t_{3} t_{4}  -  q^4 t_{1} t_{2} t_{3} t_{4} \\
& + q^3 t_{1} t_{2} t_{4}^2 - q^4 t_{1}^2 t_{2} t_{4}^2 - q^4 t_{1} t_{2}^2 t_{4}^2 
+ q^3 t_{1} t_{3} t_{4}^2 - q^4 t_{1}^2 t_{3} t_{4}^2 + q^3 t_{2} t_{3} t_{4}^2 - 2 q^4 t_{1} t_{2} t_{3} t_{4}^2 
+ q^4 t_{1}^2 t_{2} t_{3} t_{4}^2 \\ 
&+ q^5 t_{1}^2 t_{2} t_{3} t_{4}^2 - q^4 t_{2}^2 t_{3} t_{4}^2 + q^4 t_{1} t_{2}^2 t_{3} t_{4}^2 
+ q^5 t_{1} t_{2}^2 t_{3} t_{4}^2 - q^5 t_{1}^2 t_{2}^2 t_{3} t_{4}^2 - q^4 t_{1} t_{3}^2 t_{4}^2 
- q^4 t_{2} t_{3}^2 t_{4}^2 + q^4 t_{1} t_{2} t_{3}^2 t_{4}^2 + q^5 t_{1} t_{2} t_{3}^2 t_{4}^2 \\ 
&- q^5 t_{1}^2 t_{2} t_{3}^2 t_{4}^2 - q^5 t_{1} t_{2}^2 t_{3}^2 t_{4}^2 + q^6 t_{1}^2 t_{2}^2 t_{4}^3 
- q^5 t_{1} t_{2} t_{3} t_{4}^3 + q^6 t_{1}^2 t_{2} t_{3} t_{4}^3 - q^6 t_{1}^3 t_{2} t_{3} t_{4}^3 
+ q^6 t_{1} t_{2}^2 t_{3} t_{4}^3 - q^6 t_{1}^2 t_{2}^2 t_{3} t_{4}^3 \\ 
& - q^7 t_{1}^2 t_{2}^2 t_{3} t_{4}^3 + q^7 t_{1}^3 t_{2}^2 t_{3} t_{4}^3 - q^6 t_{1} t_{2}^3 t_{3} t_{4}^3 
+ q^7 t_{1}^2 t_{2}^3 t_{3} t_{4}^3 + q^6 t_{1}^2 t_{3}^2 t_{4}^3 + q^6 t_{1} t_{2} t_{3}^2 t_{4}^3 
- q^6 t_{1}^2 t_{2} t_{3}^2 t_{4}^3 - q^7 t_{1}^2 t_{2} t_{3}^2 t_{4}^3 \\ 
& + q^7 t_{1}^3 t_{2} t_{3}^2 t_{4}^3 + q^6 t_{2}^2 t_{3}^2 t_{4}^3 - q^6 t_{1} t_{2}^2 t_{3}^2 t_{4}^3 
- q^7 t_{1} t_{2}^2 t_{3}^2 t_{4}^3 + 3 q^7 t_{1}^2 t_{2}^2 t_{3}^2 t_{4}^3 - q^8 t_{1}^3 t_{2}^2 t_{3}^2 t_{4}^3 
+ q^7 t_{1} t_{2}^3 t_{3}^2 t_{4}^3 - q^8 t_{1}^2 t_{2}^3 t_{3}^2 t_{4}^3 \\ 
& - q^6 t_{1} t_{2} t_{3}^3 t_{4}^3 + q^7 t_{1}^2 t_{2} t_{3}^3 t_{4}^3 + q^7 t_{1} t_{2}^2 t_{3}^3 t_{4}^3
 -  q^8 t_{1}^2 t_{2}^2 t_{3}^3 t_{4}^3 - q^8 t_{1}^3 t_{2}^3 t_{3} t_{4}^4 + q^8 t_{1}^2 t_{2}^2 t_{3}^2 t_{4}^4
- q^8 t_{1}^3 t_{2}^2 t_{3}^2 t_{4}^4 + q^9 t_{1}^4 t_{2}^2 t_{3}^2 t_{4}^4 \\ 
& - q^8 t_{1}^2 t_{2}^3 t_{3}^2 t_{4}^4 + q^9 t_{1}^3 t_{2}^3 t_{3}^2 t_{4}^4 + q^9 t_{1}^2 t_{2}^4 t_{3}^2 t_{4}^4 
- q^8 t_{1}^3 t_{2} t_{3}^3 t_{4}^4 - q^8 t_{1}^2 t_{2}^2 t_{3}^3 t_{4}^4 + q^9 t_{1}^3 t_{2}^2 t_{3}^3 t_{4}^4 
- q^8 t_{1} t_{2}^3 t_{3}^3 t_{4}^4 + q^9 t_{1}^2 t_{2}^3 t_{3}^3 t_{4}^4 \\ 
& - q^9 t_{1}^3 t_{2}^3 t_{3}^3 t_{4}^4 + q^9 t_{1}^2 t_{2}^2 t_{3}^4 t_{4}^4 + q^9 t_{1}^3 t_{2}^3 t_{3} t_{4}^5 
- q^9 t_{1}^2 t_{2}^2 t_{3}^2 t_{4}^5 + q^9 t_{1}^3 t_{2}^2 t_{3}^2 t_{4}^5 - q^{10} t_{1}^4 t_{2}^2 t_{3}^2 t_{4}^5 
+ q^9 t_{1}^2 t_{2}^3 t_{3}^2 t_{4}^5 - q^{10} t_{1}^3 t_{2}^3 t_{3}^2 t_{4}^5 \\ 
& - q^{10} t_{1}^2 t_{2}^4 t_{3}^2 t_{4}^5 + q^9 t_{1}^3 t_{2} t_{3}^3 t_{4}^5 + q^9 t_{1}^2 t_{2}^2 t_{3}^3 t_{4}^5 
- q^{10} t_{1}^3 t_{2}^2 t_{3}^3 t_{4}^5 + q^9 t_{1} t_{2}^3 t_{3}^3 t_{4}^5 - q^{10} t_{1}^2 t_{2}^3 t_{3}^3 t_{4}^5 
+ q^{10} t_{1}^3 t_{2}^3 t_{3}^3 t_{4}^5 - q^{10} t_{1}^2 t_{2}^2 t_{3}^4 t_{4}^5 \\ 
& - q^{10} t_{1}^3 t_{2}^3 t_{3}^2 t_{4}^6 + q^{11} t_{1}^4 t_{2}^3 t_{3}^2 t_{4}^6 
+ q^{11} t_{1}^3 t_{2}^4 t_{3}^2 t_{4}^6 - q^{12} t_{1}^4 t_{2}^4 t_{3}^2 t_{4}^6 - q^{10} t_{1}^3 t_{2}^2 t_{3}^3 t_{4}^6
+ q^{11} t_{1}^4 t_{2}^2 t_{3}^3 t_{4}^6 - q^{10} t_{1}^2 t_{2}^3 t_{3}^3 t_{4}^6 + 
3 q^{11} t_{1}^3 t_{2}^3 t_{3}^3 t_{4}^6 \\ 
& - q^{11} t_{1}^4 t_{2}^3 t_{3}^3 t_{4}^6 - q^{12} t_{1}^4 t_{2}^3 t_{3}^3 t_{4}^6 + 
q^{12} t_{1}^5 t_{2}^3 t_{3}^3 t_{4}^6 + q^{11} t_{1}^2 t_{2}^4 t_{3}^3 t_{4}^6 - 
q^{11} t_{1}^3 t_{2}^4 t_{3}^3 t_{4}^6 - q^{12} t_{1}^3 t_{2}^4 t_{3}^3 t_{4}^6 + 
q^{12} t_{1}^4 t_{2}^4 t_{3}^3 t_{4}^6 + q^{12} t_{1}^3 t_{2}^5 t_{3}^3 t_{4}^6 \\ 
& + q^{11} t_{1}^3 t_{2}^2 t_{3}^4 t_{4}^6 - q^{12} t_{1}^4 t_{2}^2 t_{3}^4 t_{4}^6 
+ q^{11} t_{1}^2 t_{2}^3 t_{3}^4 t_{4}^6 - q^{11} t_{1}^3 t_{2}^3 t_{3}^4 t_{4}^6 
- q^{12} t_{1}^3 t_{2}^3 t_{3}^4 t_{4}^6 + q^{12} t_{1}^4 t_{2}^3 t_{3}^4 t_{4}^6 - q^{12} t_{1}^2 t_{2}^4 t_{3}^4 t_{4}^6 
+ q^{12} t_{1}^3 t_{2}^4 t_{3}^4 t_{4}^6 \\ 
& - q^{13} t_{1}^4 t_{2}^4 t_{3}^4 t_{4}^6 + q^{12} t_{1}^3 t_{2}^3 t_{3}^5 t_{4}^6 
- q^{13} t_{1}^4 t_{2}^3 t_{3}^3 t_{4}^7 - q^{13} t_{1}^3 t_{2}^4 t_{3}^3 t_{4}^7 
+ q^{13} t_{1}^4 t_{2}^4 t_{3}^3 t_{4}^7 + q^{14} t_{1}^4 t_{2}^4 t_{3}^3 t_{4}^7 
- q^{14} t_{1}^5 t_{2}^4 t_{3}^3 t_{4}^7 - q^{14} t_{1}^4 t_{2}^5 t_{3}^3 t_{4}^7 \\
& - q^{13} t_{1}^3 t_{2}^3 t_{3}^4 t_{4}^7 + q^{13} t_{1}^4 t_{2}^3 t_{3}^4 t_{4}^7 
+ q^{14} t_{1}^4 t_{2}^3 t_{3}^4 t_{4}^7 - q^{14} t_{1}^5 t_{2}^3 t_{3}^4 t_{4}^7 
+ q^{13} t_{1}^3 t_{2}^4 t_{3}^4 t_{4}^7 + q^{14} t_{1}^3 t_{2}^4 t_{3}^4 t_{4}^7 
- 2 q^{14} t_{1}^4 t_{2}^4 t_{3}^4 t_{4}^7 + q^{15} t_{1}^5 t_{2}^4 t_{3}^4 t_{4}^7 \\ 
& - q^{14} t_{1}^3 t_{2}^5 t_{3}^4 t_{4}^7 + q^{15} t_{1}^4 t_{2}^5 t_{3}^4 t_{4}^7 
- q^{14} t_{1}^4 t_{2}^3 t_{3}^5 t_{4}^7 - q^{14} t_{1}^3 t_{2}^4 t_{3}^5 t_{4}^7 
+ q^{15} t_{1}^4 t_{2}^4 t_{3}^5 t_{4}^7 - q^{14} t_{1}^4 t_{2}^4 t_{3}^4 t_{4}^8 
+ q^{15} t_{1}^5 t_{2}^4 t_{3}^4 t_{4}^8 + q^{15} t_{1}^4 t_{2}^5 t_{3}^4 t_{4}^8 \\ 
& - q^{16} t_{1}^5 t_{2}^5 t_{3}^4 t_{4}^8+q^{15} t_{1}^4 t_{2}^4 t_{3}^5 t_{4}^8-q^{16} t_{1}^5 t_{2}^4
t_{3}^5 t_{4}^8-q^{16} t_{1}^4 t_{2}^5 t_{3}^5 t_{4}^8+q^{18} t_{1}^5 t_{2}^5 t_{3}^5 t_{4}^9. 
\end{split}
\end{equation*} 
The corresponding denominator is easily seen to be
 \begin{equation*}
\begin{split}
D(t_{1}, t_{2}, t_{3}, t_{4}; q) = &\left(1 - q t_{1} \right) \left(1 - q  t_{2} \right)  \left(1 - q t_{3} \right) 
\left(1 - q t_{4} \right) \left(1 - q^3 t_{1}^2 t_{4}^2 \right) \left(1 - q^3 t_{2}^2 t_{4}^2 \right) 
\left(1 - q^3 t_{3}^2 t_{4}^2 \right) \\
& \cdot \left(1 - q^4 t_{1}^2 t_{2}^2 t_{4}^2 \right) \left(1 - q^4 t_{1}^2 t_{3}^2 t_{4}^2 \right) 
\left(1 - q^4 t_{2}^2 t_{3}^2 t_{4}^2 \right) 
\left(1 - q^5 t_{1}^2 t_{2}^2 t_{3}^2 t_{4}^2 \right) \left(1 - q^6 t_{1}^2 t_{2}^2 t_{3}^2 t_{4}^4 \right).
\end{split}
\end{equation*} 
A simple comparison with the corresponding (transformed) rational function \eqref{eq: tag 2.6} 
shows in each case that our conditions (i) -- (iii) (see Section \ref{section 3}) are satisfied.

We conclude this appendix by remarking that Ian Whitehead established in his doctoral thesis 
that the unique multiple Dirichlet series associated to the fourth moment of quadratic Dirichlet
L-series over $\mathbb{F}_{\! q}(x)$ satisfying the conditions (i) -- (iii) is given by 
\begin{equation}\label{eq: appendA1-1}
Z_{4}(t_{1}, \ldots, t_{5}; q) \,=\, 
\prod_{l = 0}^{\infty} \left(1 - q^{6l + 6} \left(t_{1}  t_{2} t_{3} t_{4} t_{5}^2 \right)^{2l + 2} \right)^{\! -1} \!\left(1 - q^{6l + 7} \left(t_{1}  t_{2} t_{3} t_{4} t_{5}^2 \right)^{2l + 2} \right)^{\! -1}  \cdot \, Z(t_{1}, \ldots, t_{5}; q) 
\end{equation}
where $Z(t_{1}, \ldots, t_{5}; q)$ is such that its residue at $t_{5} = 1 \slash q$ 
corresponds to \cite[eqn. (4.29)]{BD} by setting $t_{j} = q^{-s_{j}}\!,$ for $j = 1, \ldots, 4;$ the series $Z_{4}(t_{1}, \ldots, t_{5}; q)$ can be made quite explicit by expressing it in terms of the average \cite[eqn. (4.16)]{BD}. There is also a number field version of this multiple 
Dirichlet series (obtained from \eqref{eq: appendA1-1} as explained 
in the introduction). The series \eqref{eq: appendA1-1} (or its number field counterpart) is precisely 
the multiple Dirichlet series that we expect to occur in a Fourier-Whittaker coefficient of an Eisenstein series on the double cover of a loop group (with the corresponding affine Weyl group isomorphic to $W_{4}$).

\section{Appendix}\label{B} 
Throughout this appendix, $\mathbb{F}$ will be a fixed finite field of odd 
characteristic. For partitions 
$
\mu = (1^{\mu_{1}}\!, 2^{\mu_{2}}\!, \ldots, n^{\mu_{n}})
$ 
and 
$
\gamma = (1^{ \gamma_{1}}\!, 2^{\gamma_{2}}\!, \ldots, m^{\gamma_{m}}),
$ 
we shall denote 
$
\mu' = (1^{\mu_{1} + 1}\!, 2^{\mu_{2}}\!, \ldots, n^{\mu_{n}})
$ 
and 
$
\gamma' = (1^{ \gamma_{1} + 1}\!, 2^{\gamma_{2}}\!, \ldots, m^{\gamma_{m}}).
$ 
Let $M_{\mu, \gamma}(q)$ (for $\mu$ and $\gamma$ as above) be defined as in Section \ref{section 3}.

We have the following

\vskip10pt 
\begin{proposition} \label{Prop1-appendixB} --- For $|\mu| = 2g + 1,$ 
we have
\begin{equation*}
\frac{\mu_{1} + 1}{q + 1}\left(M_{\mu, \gamma}(q) + M_{\mu', \gamma}(q) \right) 
= M_{\mu, \gamma}(q).
\end{equation*}
\end{proposition}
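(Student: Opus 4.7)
My plan is to invoke the moduli-theoretic framework that will be developed in Section~\ref{section 6}. Specifically, I would reduce the identity to two facts about the normalized moment sum $\tilde{M}_{\mu', \gamma}(q) = |\mathrm{GL}_2(\mathbb{F})|^{-1} \sum_{d_0 \in \mathscr{P}_g(\mu', \mathbb{F})} \prod_j a_j(C_{d_0})^{\gamma_j}$ from that section: first,
\[
M_{\mu, \gamma}(q) + M_{\mu', \gamma}(q) = |\mathrm{GL}_2(\mathbb{F})|\, \tilde{M}_{\mu', \gamma}(q),
\]
and second,
\[
M_{\mu, \gamma}(q) = q(q-1)^2(\mu_1 + 1)\, \tilde{M}_{\mu', \gamma}(q).
\]
Combining these with $|\mathrm{GL}_2(\mathbb{F})| = q(q-1)^2(q+1)$ and dividing by $q+1$ will yield the claim.

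The first identity follows from the observation that $\mathscr{P}_g(\mu', \mathbb{F}) = \mathscr{P}(\mu) \sqcup \mathscr{P}(\mu')$ according to degree parity: for an odd-degree polynomial in $\mathscr{P}(\mu)$ the point at infinity is a Weierstrass point, supplying the $(\mu_1 + 1)$-st rational ramification point and raising the factorization type from $\mu$ to $\mu'$, whereas for an even-degree polynomial in $\mathscr{P}(\mu')$ the Weierstrass locus already realizes $\mu'$. Combined with the explicit formula \eqref{eq: tag-special} for $a_j(C_{d_0})$ --- including its parity offset $-(1+(-1)^{\deg d_0})/2$ --- the summands in the definitions of $M_{\mu, \gamma}$ and $M_{\mu', \gamma}$ are identified with $\prod_j a_j(C_{d_0})^{\gamma_j}$, and the first identity follows by the disjoint union decomposition.

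The second identity, which is the heart of the proof, would be established by an orbit--stabilizer argument. Each $d_0 \in \mathscr{P}(\mu)$ encodes a triple (hyperelliptic curve $C$ of ramification type $\mu'$, marked rational Weierstrass point $W$ sent to infinity, affine coordinate on $\mathbb{P}^1$ fixing infinity and making the polynomial monic). The affine group $\mathrm{Aff}(\mathbb{F})$ of order $q(q-1)$ acts freely (generically) on $\mathscr{P}(\mu)$ by change of coordinate, with orbits in bijection with isomorphism classes of pointed curves $(C, W)$; a second orbit--stabilizer step, on the action of $\mathrm{Aut}(C)$ on the $\mu_1 + 1$ rational Weierstrass points, yields the factor $\mu_1 + 1$. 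The remaining factor of $q - 1$ emerges by unpacking $\tilde{M}_{\mu', \gamma}$: each isomorphism class $[C]$ contributes $|\mathrm{PGL}_2(\mathbb{F})|$ monic polynomials to the sum defining $\tilde{M}$ (generically), while the normalization is by $|\mathrm{GL}_2(\mathbb{F})| = (q-1)|\mathrm{PGL}_2(\mathbb{F})|$, the extra $q-1$ reflecting the scalar $y$-rescaling absent from $\mathrm{PGL}_2$. The main obstacle will be the careful handling of curves with non-generic automorphism groups (beyond the hyperelliptic involution $\iota$), for which the orbit--stabilizer formulas must be verified at each stratum of the moduli space; I would address this uniformly by phrasing the counting as a groupoid cardinality computation in the style of Behrend, automatically incorporating the $1/|\mathrm{Aut}|$ weighting at exceptional strata.
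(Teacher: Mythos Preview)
Your approach takes an unnecessary detour and contains errors in the stated identities, whereas the paper's proof is a two-line explicit bijection.

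The paper simply rewrites the claim as $(\mu_{1}+1)M_{\mu',\gamma}(q)=(q-\mu_{1})M_{\mu,\gamma}(q)$ and exhibits, for each $d\in\mathscr{P}(\mu)$ and each $\alpha\in\mathbb{F}$ with $d(\alpha)\neq 0$, the transformed polynomial
\[
d_{\alpha}(x)=\frac{(x-\alpha)^{2g+2}}{d(\alpha)}\,d\!\Big(\alpha+\frac{1}{x-\alpha}\Big)\in\mathscr{P}(\mu'),
\]
which has $\alpha$ as a root. There are $q-\mu_{1}$ choices of $\alpha$ on one side and $\mu_{1}+1$ roots on the other, the map is reversible, and $C_{d}\cong C_{d_{\alpha}}$; the identity follows by counting. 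No moduli spaces, no automorphism bookkeeping.

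Your proposal has several issues. First, both displayed identities carry an extra factor of $q-1$: the correct relations (see the remark following the proposition) are $M_{\mu,\gamma}=q(q-1)(\mu_{1}+1)\tilde{M}_{\mu',\gamma}$ and $M_{\mu,\gamma}+M_{\mu',\gamma}=q(q^{2}-1)\tilde{M}_{\mu',\gamma}$. Second, the decomposition $\mathscr{P}_{g}(\mu',\mathbb{F})=\mathscr{P}(\mu)\sqcup\mathscr{P}(\mu')$ is false as written, since $\mathscr{P}_{g}(\mu',\mathbb{F})$ contains \emph{all} (not just monic) square-free polynomials; passing from monic to all polynomials is not a uniform factor of $q-1$ on the degree-$(2g+2)$ side, because scaling by a non-square gives the quadratic twist and the character-sum formula~\eqref{eq: tag-special} is only valid when the leading coefficient is a square. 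Third, and most importantly, your ``second identity'' is precisely the proposition in disguise, and the paper explicitly says that the link between $M_{\mu,\gamma}$ and $\tilde{M}_{\nu,\gamma}$ in Section~\ref{section 6} is \emph{derived from} Propositions~\ref{Prop1-appendixB} and~\ref{Prop2-appendixB}, so invoking Section~\ref{section 6} here is circular.

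The orbit--stabilizer idea you sketch for the second identity is not wrong in spirit---indeed the M\"obius transformation $x\mapsto\alpha+1/(x-\alpha)$ that the paper writes down is exactly the concrete realization of ``move a chosen rational Weierstrass point to infinity''. But the paper's version avoids all of the groupoid-cardinality apparatus and the automorphism-stratum analysis you flag as the main obstacle: since the bijection is between \emph{polynomials} rather than isomorphism classes, automorphisms never enter.
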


\begin{proof} The identity we need to prove is equivalent to
\begin{equation}\label{eq: appendB1}
(\mu_{1} + 1)M_{\mu', \gamma}(q) = (q - \mu_{1}) M_{\mu, \gamma}(q).
\end{equation}
For any partition 
$
\nu = (1^{\nu_{1}}\!, 2^{\nu_{2}}\!, \ldots),
$ 
let $\mathscr{P}(\nu)  \subset  \mathbb{F}[x]$ denote the set of all monic square-free polynomials 
with factorization type $\nu.$ For each $d\in \mathscr{P}(\mu)$ and $\alpha \in \mathbb{F}$ such that 
$d(\alpha) \ne 0,$ let $d_{\alpha}\in \mathscr{P}(\mu')$ be defined by
\begin{equation*}
d_{\alpha}(x) = \frac{(x - \alpha)^{2g + 2}}{d(\alpha)} d\Big(\alpha  +  \frac{1}{x - \alpha} \Big).
\end{equation*}
Notice that for each $d$ there are exactly $q - \mu_{1}$ choices of $\alpha,$ and for each such choice, 
we must have $d_{\alpha}(\alpha) = 0.$ Conversely, let $\tilde{d}\in \mathscr{P}(\mu'),$ and let  
$\alpha \in \mathbb{F}$ be a root of $\tilde{d}.$ (The polynomial $\tilde{d}$ has exactly $\mu_{1} + 1$ distinct roots in $\mathbb{F}.$) If we define 
\begin{equation*} 
d(x) = \frac{(x - \alpha)^{2g + 2}}{\tilde{d}^{^{\, '}}\!(\alpha)} \tilde{d}\Big(\alpha  +  \frac{1}{x - \alpha}\Big)
\end{equation*}
then $d\in \mathscr{P}(\mu),$ $d(\alpha) \ne 0$ and $\tilde{d} = d_{\alpha},$ where $d_{\alpha}$ is defined as above. 

Since the hyperelliptic curves $C_{d}$ and $C_{d_{\alpha}}$ corresponding to $d \in \mathscr{P}(\mu)$ and 
$d_{\alpha}$ are isomorphic, the identity \eqref{eq: appendB1} follows by a simple counting argument. 
\end{proof}

We remark that both sides of the identity in the above proposition vanish if the partition $\gamma$ 
has odd weight. \!When $\gamma = (1^{\gamma_{1}}\!, \ldots, g^{\gamma_{g}})$ ($g\ge 2$) 
has even weight, we can express the identity in the equivalent form 
\begin{equation*}
(\mu_{1} + \, 1)\tilde{M}_{\mu', \gamma}(q) = \frac{1}{q  (q - 1)}M_{\mu, \gamma}(q)
\end{equation*}
where $\tilde{M}_{\mu', \gamma}(q)$ is the moment-sum introduced at the beginning of Section \ref{section 6}.

\vskip10pt 
\begin{proposition} \label{Prop2-appendixB} --- For $|\mu| = 2g + 1$ and $|\gamma|$ odd, 
we have the identity
\begin{equation*}
M_{\mu', \gamma'}(q) + M_{\mu, \gamma'}(q) = 
- \, (q + 1)M_{\mu', \gamma}(q).
\end{equation*}
Moreover, if 
$
\mu = (2^{\mu_{2}}\!, 3^{\mu_{3}}\!, \ldots)
$ 
is a partition of weight $2g + 2,$ and $|\gamma|$ is odd, then   
\begin{equation*}
M_{\mu, \gamma'}(q) = - \, (q + 1)M_{\mu, \gamma}(q).
\end{equation*}
\end{proposition}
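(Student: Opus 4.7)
Our plan is to prove both parts through a common device: the \emph{reciprocal polynomial involution} $d \mapsto d^*$ defined on $\{d \in \mathscr{P}(\mu) : d(0) \ne 0\}$ by $d^*(x) := x^{\deg d}\, d(1/x)/d(0)$. One checks directly that $d^*$ is again monic, square-free, of the same factorization type $\mu$ (its roots $1/\alpha$ have the same Galois orbit structure as the roots $\alpha$ of $d$), and that $(d^*)^* = d$. The crucial calculation will be the following: substituting $x = 1/u$ and $y = z/u^{g+1}$ in $y^2 = d^*(x)$ yields $z^2 = d(u)/d(0)$, which over $\mathbb{F}$ is isomorphic to $C_d$ when $d(0)$ is a square and to the quadratic twist of $C_d$ otherwise. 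Since a quadratic twist by a nonsquare negates every Frobenius eigenvalue, this will give the key identity
\[
a_j(C_{d^*}) = \chi(d(0))^j\, a_j(C_d) \qquad (j \ge 1),
\]
so that for $|\gamma|$ odd one has $\prod_j a_j(C_{d^*})^{\gamma_j} = \chi(d(0))\prod_j a_j(C_d)^{\gamma_j}$.

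For Part 2, the hypothesis $\mu_1 = 0$ guarantees $d(0) \ne 0$ for every $d \in \mathscr{P}(\mu)$, so $d\mapsto d^*$ acts on all of $\mathscr{P}(\mu)$. Reindexing $d \mapsto d^*$ identifies $\sum_d \prod_j a_j(C_{d^*})^{\gamma_j}$ with $M_{\mu,\gamma}(q)$, and the twist identity above then yields
\[
\sum_{d \in \mathscr{P}(\mu)} \chi(d(0))\prod_j a_j(C_d)^{\gamma_j} = M_{\mu,\gamma}(q).
\]
Translation invariance (the substitution $d(x) \mapsto d(x+\tau)$ is a bijection of $\mathscr{P}(\mu)$ preserving every $a_j(C_d)$) will upgrade this to $\sum_d \chi(d(\theta))\prod_j a_j(C_d)^{\gamma_j} = M_{\mu,\gamma}(q)$ for every $\theta \in \mathbb{F}$. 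Summing over $\theta$ and invoking $a_1(C_d) = -1 - \sum_{\theta \in \mathbb{F}}\chi(d(\theta))$ (valid since $\mu$ has no linear factors), I conclude $M_{\mu,\gamma'}(q) = -M_{\mu,\gamma}(q) - q\, M_{\mu,\gamma}(q) = -(q+1)\,M_{\mu,\gamma}(q)$.

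For Part 1, the same involution sends odd-degree $d$ to odd-degree $d^*$, so I instead compare $C_{d^*}$ with $C_{x d(x)}$, whose defining polynomial has degree $2g+2$; the analogous substitution gives $a_j(C_{xd}) = \chi(d(0))^j\, a_j(C_{d^*})$. The sum $\sum_{d \in \mathscr{P}(\mu),\, d(0)\ne 0}\prod_j a_j(C_{xd})^{\gamma_j}$ will then be evaluated two ways: via this twist identity followed by the reindexing $d \mapsto d^*$, it becomes $\sum_{d'}\chi(d'(0))\prod_j a_j(C_{d'})^{\gamma_j}$; and directly, noting that $xd$ runs over $\{D \in \mathscr{P}(\mu') : x \mid D\}$ and translating across the $q$ linear factors $x-\theta_0$ of a generic $D \in \mathscr{P}(\mu')$, it equals $\tfrac{\mu_1+1}{q}\,M_{\mu',\gamma}(q)$. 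Combining these with $a_1(C_d) = -\sum_{\theta}\chi(d(\theta))$ for odd-degree $d$ and translation invariance will yield $M_{\mu,\gamma'}(q) = -(\mu_1+1)\,M_{\mu',\gamma}(q)$. Together with Proposition~\ref{Prop1-appendixB} applied to both $\gamma$ and $\gamma'$, this produces the stated identity $M_{\mu',\gamma'}(q) + M_{\mu,\gamma'}(q) = -(q+1)\,M_{\mu',\gamma}(q)$.

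The main obstacle is establishing the twist identity $a_j(C_{d^*}) = \chi(d(0))^j\, a_j(C_d)$: the delicate point is correctly identifying when the rescaling $z \mapsto z/\sqrt{d(0)}$ required to convert $z^2 = d(u)/d(0)$ into the standard model $y^2 = d(u)$ is defined over $\mathbb{F}$ (precisely when $d(0) \in (\mathbb{F}^\times)^2$) versus when it introduces a genuine quadratic twist, and then verifying that such a twist sends each Frobenius eigenvalue $\alpha_i \mapsto -\alpha_i$ and hence each $a_j = \sum_i \alpha_i^j$ to $(-1)^j a_j$. Once this local computation is secured, the two identities follow by transparent combinatorics together with Proposition~\ref{Prop1-appendixB}.
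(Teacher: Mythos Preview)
Your approach is correct and uses the same core device as the paper: the reciprocal involution $d\mapsto d^{*}(x)=x^{\deg d}d(1/x)/d(0)$, interpreted geometrically as a quadratic twist so that $a_{j}(C_{d^{*}})=\chi(d(0))^{j}a_{j}(C_{d})$. For Part~2 your argument and the paper's are essentially identical.

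For Part~1 there is a minor organizational difference. The paper applies the reciprocal involution directly on even-degree polynomials in $\mathscr{P}(\mu')$, splitting according to whether $d(0)=0$, and obtains two separate formulas
\[
M_{\mu',\gamma'}(q)=-(q+1)M_{\mu',\gamma}(q)+q\!\!\sum_{d\in\mathcal{P}^{0}(\mu')}\prod_{j}a_{j}(C_{d})^{\gamma_{j}},
\qquad
M_{\mu,\gamma'}(q)=-q\!\!\sum_{d\in\mathcal{P}^{0}(\mu')}\prod_{j}a_{j}(C_{d})^{\gamma_{j}},
\]
whose sum gives the result without any appeal to Proposition~\ref{Prop1-appendixB}. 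You instead apply the involution on odd-degree polynomials and pass through the $d\mapsto xd$ correspondence to first establish the intermediate identity $M_{\mu,\gamma'}(q)=-(\mu_{1}+1)M_{\mu',\gamma}(q)$, then invoke Proposition~\ref{Prop1-appendixB} (with $\gamma'$) to recover $M_{\mu',\gamma'}(q)$. Both routes are valid; the paper's is self-contained, while yours isolates a clean intermediate relation at the cost of relying on the previous proposition.
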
 

\begin{proof} We first write 
\begin{equation*}
M_{\mu', \gamma'}(q) = -\, M_{\mu', \gamma}(q) - \sum_{\theta \, \in \, \mathbb{F}} \,
\sum_{d \, \in \, \mathscr{P}(\mu')} \chi(d(\theta))
\bigg(\prod_{j = 1}^{m} a_{j}(C_{d})^{\gamma_{j}}\bigg)
\end{equation*}
and let $\mathcal{P}_{\scriptscriptstyle 0}(\mu')$ (respectively 
$\mathcal{P}^{\scriptscriptstyle 0}(\mu')$) 
denote the set of polynomials in $\mathscr{P}(\mu')$ which do not vanish (respectively vanish) at $0.$ 
Replacing $d(x)$ by $d(x - \theta)$ (for each $\theta \in \mathbb{F}$), we see that 
\begin{equation*}
M_{\mu', \gamma'}(q) = -\, M_{\mu', \gamma}(q) \,- \, 
q \!\sum_{d \, \in \, \mathcal{P}_{\scriptscriptstyle 0}(\mu')} \chi(d(0))
\bigg(\prod_{j = 1}^{m} a_{j}(C_{d})^{\gamma_{j}} \bigg).
\end{equation*} 
Since $|\gamma|$ is odd, we can write  
\begin{equation*}
\chi(d(0)) = \chi(d(0))^{|\gamma|} = \prod_{j = 1}^{m} \chi_{j}(d(0))^{\gamma_{j}}
\qquad \text{(for $d \in \mathcal{P}_{\scriptscriptstyle 0}(\mu'))$}.
\end{equation*}
Moreover, one can express $\chi_{j}(d(0)) a_{j}(C_{d}),$ for 
$d \in \mathcal{P}_{\scriptscriptstyle 0}(\mu')$ and $j = 1, \ldots, m,$ as
\begin{equation*}
- 1  - \chi_{j}\bigg(\frac{1}{d(0)}\bigg)\; - \sum_{\theta \, \in \, \mathbb{F}_{\! j}^{\times}} 
\chi_{j}\bigg(\theta^{2g + 2}\, \frac{d(1 \slash \theta)}{d(0)} \bigg).
\end{equation*}
By using the transformation $d(x)\mapsto \frac{x^{2g + 2}}{d(0)}d(1\slash x)$ on 
$\mathcal{P}_{\scriptscriptstyle 0}(\mu'),$ it follows that
\begin{equation*} 
\sum_{d \, \in \, \mathcal{P}_{\scriptscriptstyle 0}(\mu')} \chi(d(0))
\bigg(\prod_{j = 1}^{m} a_{j}(C_{d})^{\gamma_{j}}\bigg)
\;\, = \sum_{d \, \in \, \mathcal{P}_{\scriptscriptstyle 0}(\mu')}\, 
\prod_{j = 1}^{m} a_{j}(C_{d})^{\gamma_{j}}
\end{equation*}
and hence 
\begin{equation}\label{eq: appendB2-1}
M_{\mu', \gamma'}(q) = -\, (q + 1)M_{\mu', \gamma}(q) \,+ \, 
q \!\sum_{d \, \in \, \mathcal{P}^{\scriptscriptstyle 0}(\mu')}\, 
\prod_{j = 1}^{m} a_{j}(C_{d})^{\gamma_{j}}.
\end{equation} 
Similarly, 
\begin{equation*}
M_{\mu, \gamma'}(q) \,=\,  - \sum_{\theta \, \in \, \mathbb{F}}\, 
\sum_{d \, \in \, \mathscr{P}(\mu)} \chi(d(\theta))
\bigg(\prod_{j = 1}^{m} a_{j}(C_{d})^{\gamma_{j}}\bigg) \,= \, 
- \, q \!\sum_{d \, \in \, \mathcal{P}_{\scriptscriptstyle 0}(\mu)} \chi(d(0)) 
\bigg(\prod_{j = 1}^{m} a_{j}(C_{d})^{\gamma_{j}}\bigg)
\end{equation*}
from which we deduce, as before, that 
\begin{equation}\label{eq: appendB2-2}
M_{\mu, \gamma'}(q)  = 
- \, q \!\sum_{d \, \in \, \mathcal{P}^{\scriptscriptstyle 0}(\mu')}\,
\prod_{j = 1}^{m} a_{j}(C_{d})^{\gamma_{j}}.
\end{equation} 
Now from \eqref{eq: appendB2-1} and \eqref{eq: appendB2-2}, it follows that 
\begin{equation*}
M_{\mu', \gamma'}(q) + M_{\mu, \gamma'}(q) = -\, (q + 1)M_{\mu', \gamma}(q)
\end{equation*}
which proves our first assertion. 

The second assertion is proved by a similar argument. \end{proof}

\section{Appendix}\label{C} 
Let us consider \eqref{eq: tag 3.16}, written in the equivalent form 
\begin{equation*}
\lambda(\kappa, 2; q) \, - \, q^{|\kappa| + 3}\lambda(\kappa, 2; 1\slash q) 
\, =\, a(\kappa, 2; q)\, + 
\sum_{\substack{\kappa' < \, \kappa}} 
q^{|\kappa| - \mathrm{r}(\kappa - \kappa') + 3} 
(q - 1)^{\mathrm{r}(\kappa - \kappa')}\lambda(\kappa', 2; 1\slash q)
\end{equation*} 
for $\kappa = (k_{1}, \ldots, k_{r}) \in \mathbb{N}^{r}$ ($r \ge 1$), where 
\begin{equation*}
a(\kappa, 2; q)
\,=\, 
 \begin{cases} 
(- 1)^{|\kappa|}\, q (q - 1) & \text{if $k_{i} = 0$ or $1$ for all $i = 1, \ldots, r$}\\ 
0 & \text{otherwise}.  
\end{cases}
\end{equation*}
This is equivalent to \eqref{eq: tag 4.1} (with $l = 2$), which can be written explicitly as    
\begin{equation}\label{eq: appendC3-1}
\Lambda_{2}(T, q)  =  \frac{q (q - 1)}{E(T)} 
+ q^{3} \frac{E(q \, T)}{E(T)}\Lambda_{2}(q \, T, 1\slash q). 
\end{equation} 
We recall that $T = (t_{1}, \ldots, t_{r}),$ $q\, T = (q t_{1}, \ldots, q t_{r}),$ and 
$$
E(T) = \prod_{i = 1}^{r} (1 - t_{i})^{-1}.
$$ 
As explained in Section \ref{section 5}, in the case $l = 4,$ starting from 
$\lambda(0, \ldots, 0, 2; q) = q^{2},$ we can find recursively the coefficients 
$\lambda(\kappa, 2; q) = P(\kappa, q)$ in polynomial form; the polynomials 
$P(\kappa, q)$ have integer coefficients, are independent of $q,$
$\deg P(\kappa, q) \le |\kappa| + 2$ (condition ensuring that $q^{|\kappa| + 3}\lambda(\kappa, 2; 1\slash q)$ is a polynomial with no constant term), and $q^{[(|\kappa| + 1)\slash 2] + 2} \mid  P(\kappa, q)$ 
(implying that $\deg(q^{|\kappa| + 3}\lambda(\kappa, 2; 1\slash q)) < [(|\kappa| + 1)\slash 2] + 2$).

\vskip5pt
{\it The generating series of $P(\kappa, q).$} Let notations be as above, and take 
\begin{equation*}
\Lambda_{2}(T, q) \; = \sum_{\kappa \, \in \, \mathbb{N}^{r}}P(\kappa, q)\, T^{\kappa}
\end{equation*}
where, for $\kappa = (k_{1}, \ldots, k_{r}),$ we put (as before) 
$T^{\kappa} : = t_{1}^{k_{1}} \cdots \, t_{r}^{k_{r}}.$

\vskip10pt 
\begin{proposition} \label{Prop1-appendixC} --- We have 
\begin{equation*}
\Lambda_{2}(T, q) = q E(q\,  T)\,
\underset{z = 0}{\rm{Res}}\,\bigg[\frac{z^{2} - q}{z(z - 1)} \cdot \frac{1}{E(z\, T)E(q\, T \slash z)}\bigg].
\end{equation*}
\end{proposition}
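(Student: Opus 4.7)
Denote the proposed right-hand side by $F(T,q)$ and write $F(T,q) = qE(q\,T)\,R(T,q)$, where $R(T,q) := \underset{z=0}{\rm Res}\,\phi(z)$ with $\phi(z) := \dfrac{z^2 - q}{z(z-1)}\,f(z)$ and
\[
f(z)\; := \;\prod_{i=1}^{r}(1 - z t_i)(1 - q t_i/z)\;=\;\frac{1}{E(z\,T)\,E(q\,T/z)}.
\]
The crux of the argument is the manifest symmetry $f(z) = f(q/z)$, which on Laurent coefficients $a_k := [f(z)]_{z^k}$ (with $-r\le k\le r$) translates to $a_{-k} = q^{k}\,a_{k}$. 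Granted the characterization of $\Lambda_{2}(T,q)$ recalled in Example \ref{exmpl=2} (functional equation \eqref{eq: appendC3-1}, initial value $\lambda(0,\ldots,0,2;q) = q^2$, and the polynomial/dominance conditions (iii) of Section \ref{section 3}), the plan is to verify each of these for $F(T,q)$.

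The initial value is immediate from the partial fraction decomposition
\[
\frac{z^{2}-q}{z(z-1)} \;=\; 1 \;+\; \frac{q}{z} \;+\; \frac{q-1}{1-z},
\]
which yields $\underset{z=0}{\rm Res}\,\dfrac{z^2-q}{z(z-1)} = q$, and hence $F(0,q) = q^2$. The main work is the functional equation. After clearing $qE(q\,T)$, \eqref{eq: appendC3-1} is equivalent to
\[
R(T,q) \;-\; q\,R(q\,T,\,1/q) \;=\; \frac{q-1}{E(T)\,E(q\,T)} \;=\; (q-1)\,f(1).
\]
I would compute $R(q\,T,1/q)$ via the substitution $w = qz$. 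One first checks that the analog of $f$ for the parameters $(q\,T,1/q)$ equals $f(qz)$ (using $f_{qT,1/q}(1/z)=f(z)$ together with $f(1/z)=f(qz)$ from the symmetry). Substituting $w=qz$ then gives, after a short calculation,
\[
q\,R(q\,T,1/q) \;=\; \underset{w=0}{\rm Res}\!\left[\frac{w^{2}-q}{w(w-q)}\,f(w)\right],
\quad\text{where}\quad \frac{w^{2}-q}{w(w-q)} \;=\; 1 \;+\; \frac{1}{w} \;+\; \frac{q-1}{w-q}.
\]

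Now both residues become explicit linear combinations of the $\{a_k\}$. Expanding each partial fraction into its Laurent series around $0$ (using $\dfrac{1}{1-z} = \sum_{k\ge 0} z^k$ and $\dfrac{1}{w-q} = -\dfrac{1}{q}\sum_{k\ge 0}(w/q)^k$) yields
\[
R(T,q) = a_{-1} + q\,a_{0} + (q-1)\sum_{j=1}^{r} a_{-j},
\quad
q\,R(q\,T,1/q) = a_{-1} + a_{0} - (q-1)\sum_{j=1}^{r} \frac{a_{-j}}{q^{j}}.
\]
Subtracting and using $a_{-j}\,q^{-j} = a_{j}$ from the symmetry, the resulting expression telescopes:
\[
R(T,q) - q\,R(q\,T,1/q) = (q-1)\!\left[a_{0} + \sum_{j=1}^{r}a_{-j}\bigl(1 + q^{-j}\bigr)\right] = (q-1)\sum_{k=-r}^{r} a_{k} = (q-1)\,f(1),
\]
as required. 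Finally, the polynomial/dominance conditions on the $q$-coefficients $P(\kappa,q)$ of $F$ follow by direct inspection of the residue formula: the $T^\kappa$-coefficient is a polynomial in $q$ of degree at most $|\kappa|+2$, divisible by $q^{\lceil(|\kappa|+1)/2\rceil + 2}$. Combined with the functional equation, these properties force $F = \Lambda_{2}$.

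The main obstacle is the careful bookkeeping in the Laurent-coefficient computation of Step 2: the factor $f(w)$ has a pole of order $r$ at $w=0$, so expanding $\frac{w^2-q}{w(w-q)}f(w)$ requires summing over all negative Laurent coefficients of $f$, and only after invoking the symmetry $a_{-k} = q^{k}a_{k}$ does the expression collapse to $(q-1)f(1)$. Verifying the dominance bound on $P(\kappa,q)$ directly from the residue is routine but slightly subtle, as it relies on the $q$-weight structure of $\frac{z^{2}-q}{z(z-1)}$ together with the Laurent symmetry of $f$.
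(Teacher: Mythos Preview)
Your argument for the functional equation is correct and genuinely different from the paper's. The paper packages everything into a single rational function
\[
f_{\mathrm{paper}}(z,T,q)=\frac{qz^{2}-1}{z(z-1)}\cdot\frac{1}{E(qzT)E(T/z)}
\]
and uses the global residue theorem: the sum of residues at $z=0,1$ equals minus the residue at $z=\infty$, and then a single substitution $z\mapsto z/q$ shows the two residues at $0$ that arise agree. Your approach instead isolates the elementary symmetry $f(z)=f(q/z)$, reads it off as $a_{-k}=q^{k}a_{k}$ on Laurent coefficients, and then computes both residues explicitly by partial fractions; the identity $R(T,q)-qR(qT,1/q)=(q-1)f(1)$ drops out as a telescoping sum over all $a_k$. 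The paper's route is slicker (one substitution replaces your coefficient bookkeeping), but yours makes the mechanism---the $z\leftrightarrow q/z$ symmetry of the residue integrand---completely transparent, which is arguably more illuminating.

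One small slip: in the dominance check you wrote $q^{\lceil(|\kappa|+1)/2\rceil+2}$, but the correct exponent is $\lfloor(|\kappa|+1)/2\rfloor+2$ (already at $\kappa=\mathbf{0}$ your stated bound would require $q^{3}\mid q^{2}$). The paper verifies this divisibility via the explicit expansion \eqref{eq: appendC3-4}; you should either reproduce that computation or note that each monomial in the residue contributes $q^{|\kappa-\kappa'|+r_{2}(\kappa')+1}$ times a residue carrying at least $q^{\lceil r_{1}(\kappa')/2\rceil+1}$, giving the required floor bound.
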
 

\begin{proof} Put
$
f(z, T, q) =\frac{q z^{2} - 1}{z(z - 1)} \cdot \frac{1}{E(q  z  T)E(T \slash z)}. 
$ 
Then \eqref{eq: appendC3-1} amounts to: 
\begin{equation}\label{eq: appendC3-2}
\underset{z = 1}{\rm{Res}}\,f(z, T, q)
\, + \,  \underset{z = 0}{\rm{Res}}\,f(z, T, q)
= \, \underset{z = 0}{\rm{Res}}\,\big[f(1 \slash z, T, q)\slash z \big].
\end{equation} 
Since $f(\cdot, T, q),$ as a function of $z$ is rational, the left-hand side of \eqref{eq: appendC3-2} 
equals 
\begin{equation*}
- \, \underset{z = \infty}{\rm{Res}}\,f(z, T, q) =\, \underset{z = 0}{\rm{Res}}\,\big[f(1 \slash z, T, q)\slash z^{2} \big] 
\end{equation*}
and thus we are left to show that 
\begin{equation}\label{eq: appendC3-3}
\underset{z = 0}{\rm{Res}}\,\big[f(1 \slash z, T, q)\slash z^{2} \big]
= \, \underset{z = 0}{\rm{Res}}\,\big[f(1 \slash z, T, q)\slash z \big].
\end{equation} 
Now 
\begin{equation*}
\Big(\frac{z - 1}{z^{2}}\Big)f(1 \slash z, T, q) \,=\,
\frac{1}{E(z\, T)E(q\, T \slash z)} \,-\, \frac{q}{z^{2}} \frac{1}{E(z\, T)E(q\, T \slash z)} 
\end{equation*}
and note that 
\begin{equation*}
\underset{z = 0}{\rm{Res}}\,\bigg[\frac{1}{E(z\, T)E(q\, T \slash z)} \bigg] =  
- \, \underset{z = \infty}{\rm{Res}}\,\bigg[\frac{1}{E(z\, T)E(q\, T \slash z)} \bigg] =\, 
\underset{z = 0}{\rm{Res}}\,\bigg[\frac{1}{z^{2}} \frac{1}{E(T\slash z)E(q z T)} \bigg].
\end{equation*}
Replacing $z$ by $z\slash q$ in the expression of the last residue, we have 
\begin{equation*}
q^{2}\, \underset{z = 0}{\rm{Res}}\,\bigg[\frac{1}{z^{2}} \frac{1}{E(q\, T\slash z)E(z T)} \bigg] = \,  
q \, \underset{z = 0}{\rm{Res}}\,\bigg[\frac{1}{z^{2}} \frac{1}{E(T\slash z)E(q z T)} \bigg] 
\end{equation*}
and \eqref{eq: appendC3-3} follows. As 
\begin{equation*}
q^{2}\Lambda_{2}(q\, T, 1\slash q) \; =
\sum_{\kappa \, \in \, \mathbb{N}^{r}} q^{|\kappa| + 2}P(\kappa, 1\slash q)\, T^{\kappa}
= \,  E(T)\, \underset{z = 0}{\rm{Res}}\,\bigg[\frac{q z^{2} - 1}{z(z - 1)} \cdot \frac{1}{E(q z\, T)E(T \slash z)}\bigg]
\end{equation*}
clearly $q^{|\kappa| + 2}P(\kappa, 1\slash q)$ is a polynomial in $q,$ or, what amounts to the same, 
$\deg P(\kappa, q) \le |\kappa| + 2.$

Finally, let us fix $\kappa \in \mathbb{N}^{r}.$ Then  
\begin{equation}\label{eq: appendC3-4}
P(\kappa, q) \;\;  = 
\sum_{\substack{\kappa' \le \, \kappa \\ k_{1}'\!, \ldots, k_{r}' \, \in \, \{0, 1, 2\}}} 
(- 1)^{r_{1}(\kappa')}q^{|\kappa - \kappa'| + r_{2}(\kappa') + 1} 
\, \underset{z = 0}{\rm{Res}}\,\bigg[\frac{z^{2} - q}{z(z - 1)}
\Big(\frac{q}{z} + z \Big)^{r_{1}(\kappa')}\bigg]
\end{equation}
where, as before, $r_{j}(\kappa')$ ($j = 1, 2$) is the number of components of 
$\kappa'$ equal to $j.$ For each $\kappa',$ the expression  
\begin{equation*} 
q^{|\kappa - \kappa'| + r_{2}(\kappa') + 1} 
\, \underset{z = 0}{\rm{Res}}\,\bigg[\frac{z^{2} - q}{z(z - 1)}
\Big(\frac{q}{z} + z \Big)^{r_{1}(\kappa')}\bigg]
\end{equation*} 
is trivially a polynomial in $q,$ and it is easy to see that the exponent of the largest power of $q$ dividing this polynomial is at least 
\begin{equation*}
|\kappa - \kappa'| + r_{2}(\kappa') + \frac{r_{1}(\kappa')}{2} + 2 = 
\frac{|\kappa - \kappa'|}{2} + \frac{|\kappa|}{2} + 2.
\end{equation*} 
Then clearly $q^{[(|\kappa| + 1)\slash 2] + 2} \mid  P(\kappa, q),$ and our assertion follows now from 
the uniqueness of a polynomial solution $\Lambda_{2}(T,q)$ to \eqref{eq: appendC3-1} satisfying the 
above properties.
\end{proof} 

Notice that by taking $\kappa = (1, 1, \ldots, 1)\in \mathbb{N}^{r}$ in \eqref{eq: appendC3-4}, we 
can write 
\begin{equation*}
\lambda_{2}(r; q)  := \lambda(1,\ldots,1, 2; q)\;\;  = 
\sum_{\substack{\kappa' \le \, \kappa \\ k_{1}'\!, \ldots, k_{r}' \, \in \, \{0, 1\}}} 
(- 1)^{|\kappa'|}\, q^{r - |\kappa'|  +  1} 
\,\underset{z = 0}{\rm{Res}}\,\bigg[\frac{z^{2} - q}{z(z - 1)}
\Big(\frac{q}{z} + z \Big)^{|\kappa'|}\bigg]
\end{equation*} 
from which it follows that 
\begin{equation*}
\lambda_{2}(r; q) \,=\, \sum_{j = 1}^{[\frac{r}{2}]}\, 
\frac{1}{(r - j + 1)(r - j)}\,
\frac{r!}{j!(j-1)!(r - 2j)!}\,q^{r + 2 -  j} 
\end{equation*}
as claimed in Section \ref{section 3}, Example \ref{exmpl=2}.

The following identity was used in the proof of Theorem \ref{Theorem 5.2}.

\vskip10pt 
\begin{proposition} \label{Prop2-appendixC} --- If $\kappa = (k_{1},\ldots, k_{r})\in \mathbb{N}^{r}$ 
is such that $r_{1}(\kappa) = 2R$ is even, and $|\kappa| = r_{1}(\kappa) +  2 r_{2}(\kappa)$ 
then 
\begin{equation*}
\sum_{\substack{\kappa' \le \, \kappa_{_{\phantom{X}}} \\ k_{i}^{} - k_{i}' = 0, 1}} 
(- 1)^{|\kappa - \kappa'|}\, q^{|\kappa'| + 2} \lambda(\kappa', 2; 1\slash q) 
\, = \sum_{j = 0}^{R} \, (2j + 1)\, \frac{(2R)! \,q^{R + r_{2}(\kappa) - j}}{(R - j)!(R + j + 1)!}
\end{equation*}
\end{proposition}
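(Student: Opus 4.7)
The plan is to express the left-hand side as a single residue using the generating function for $\Lambda_{2}(T,q)$ provided in Proposition \ref{Prop1-appendixC}, and then extract the $T^{\kappa}$-coefficient explicitly. First, observe that the map $\kappa' \mapsto \kappa - \kappa'$ identifies the summation set with the set of $\epsilon \in \{0,1\}^{r}$ such that $\epsilon_{i} \le k_{i}$. The constraint $\epsilon_{i} \le k_{i}$ is automatic when $k_{i} \ge 1$, and when $k_{i} = 0$ only $\epsilon_{i} = 0$ contributes (since we extract the coefficient of $t_{i}^{0}$). Therefore the left-hand side equals the $T^{\kappa}$-coefficient of
\begin{equation*}
\prod_{i = 1}^{r}(1 - t_{i}) \cdot q^{2}\Lambda_{2}(q\, T, 1\slash q) \; = \; E(T)^{-1} \cdot q^{2}\Lambda_{2}(q\, T, 1\slash q).
\end{equation*}

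Next, substitute $T \to q\, T$ and $q \to 1\slash q$ in the formula from Proposition \ref{Prop1-appendixC} to get
\begin{equation*}
q^{2}\Lambda_{2}(q\, T, 1\slash q) \; = \; E(T)\,\underset{z = 0}{\rm{Res}}\,\bigg[\frac{q z^{2} - 1}{z(z - 1)} \cdot \frac{1}{E(q z\, T)E(T \slash z)}\bigg].
\end{equation*}
The factor $E(T)$ cancels with $E(T)^{-1}$, so the left-hand side of the proposition equals the $T^{\kappa}$-coefficient of the residue
\begin{equation*}
\underset{z = 0}{\rm{Res}}\,\bigg[\frac{q z^{2} - 1}{z(z - 1)} \prod_{i = 1}^{r}(1 - q z t_{i})(1 - t_{i}\slash z)\bigg].
\end{equation*}
For each $i$, the coefficient of $t_{i}^{k_{i}}$ in $(1 - qzt_{i})(1 - t_{i}\slash z)$ is $1$ if $k_{i} = 0$, is $-(qz + 1\slash z)$ if $k_{i} = 1$, and is $q$ if $k_{i} = 2$. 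Hence, since $(-1)^{2R} = 1$, the $T^{\kappa}$-coefficient equals
\begin{equation*}
q^{r_{2}}\,\underset{z = 0}{\rm{Res}}\,\bigg[\frac{q z^{2} - 1}{z(z - 1)}\,(q z + 1\slash z)^{2R}\bigg].
\end{equation*}

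The remaining step is a straightforward residue computation. Decomposing
\begin{equation*}
\frac{q z^{2} - 1}{z(z - 1)} \; = \; \frac{1}{z} + 1 - (q - 1)\sum_{m \ge 1} z^{m}
\end{equation*}
and expanding $(qz + 1\slash z)^{2R} = \sum_{k = 0}^{2R}\binom{2R}{k}q^{k}z^{2k - 2R}$, one picks off the coefficient of $z^{-1}$: the $1\slash z$ part contributes $\binom{2R}{R}q^{R}$ (when $k = R$), while the geometric part contributes $-(q - 1)\sum_{k = 0}^{R - 1}\binom{2R}{k}q^{k}$. Collecting and shifting the index gives
\begin{equation*}
\binom{2R}{R}q^{R} - (q - 1)\sum_{k = 0}^{R - 1}\binom{2R}{k}q^{k} \; = \; \sum_{k = 0}^{R}\left[\binom{2R}{k} - \binom{2R}{k - 1}\right]q^{k}.
\end{equation*}
Finally, substituting $k = R - j$, the claimed right-hand side has $q^{R - j}$-coefficient equal to $\frac{(2R - 2k + 1)}{2R - k + 1}\binom{2R}{k}$, and the elementary identity
\begin{equation*}
1 - \frac{\binom{2R}{k - 1}}{\binom{2R}{k}} \; = \; 1 - \frac{k}{2R - k + 1} \; = \; \frac{2R - 2k + 1}{2R - k + 1}
\end{equation*}
matches the two expressions and finishes the proof.

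The main obstacle is simply setting up the generating-function interpretation correctly: recognizing that the alternating sum over $\epsilon \in \{0,1\}^{r}$ is exactly the operator $E(T)^{-1} = \prod_{i}(1 - t_{i})$, which cancels the $E(T)$ factor appearing in $q^{2}\Lambda_{2}(q\, T, 1\slash q)$ and reduces everything to a one-variable residue. Once this observation is in place, the rest is routine binomial combinatorics.
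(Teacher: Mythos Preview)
Your proof is correct and follows essentially the same approach as the paper: both identify the left-hand side as the $T^{\kappa}$-coefficient of $q^{2}\Lambda_{2}(q\,T,1/q)/E(T)$ and reduce it to the residue $q^{r_{2}(\kappa)}\,\underset{z=0}{\mathrm{Res}}\big[\frac{qz^{2}-1}{z(z-1)}(qz+1/z)^{2R}\big]$. The paper stops there with ``from which our assertion follows''; you carry out the residue and binomial identity explicitly, which is a welcome addition.
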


\begin{proof} The left-hand side of the identity is the coefficient of 
$T^{\kappa} = t_{1}^{k_{1}} \cdots \, t_{r}^{k_{r}}$ in   
\begin{equation*}
q^{2}\frac{\Lambda_{2}(q\, T, 1\slash q)}{E(T)} =  
\underset{z = 0}{\rm{Res}}\,\bigg[\frac{q z^{2} - 1}{z(z - 1)} 
\cdot \frac{1}{E(q z\, T)E(T \slash z)}\bigg];
\end{equation*} 
it vanishes, unless $|\kappa| = r_{1}(\kappa) +  2 r_{2}(\kappa).$ If $|\kappa| = r_{1}(\kappa) +  2 r_{2}(\kappa),$ and $r_{1}(\kappa) = 2R,$ then we can express this coefficient by 
\begin{equation*}
q^{r_{2}(\kappa)}\cdot \underset{z = 0}{\rm{Res}}\,
\bigg[\frac{q z^{2} - 1}{z(z - 1)}\bigg(q  z  + \frac{1}{z} \bigg)^{\! 2 R}\, \bigg] 
\end{equation*} 
from which our assertion follows. 
\end{proof}

\section{Appendix}\label{D} 
As before, let $\mathbb{F} = \mathbb{F}_{\! q}$ be a fixed finite field of odd characteristic, and let $\chi$ denote the non-trivial real character of $\mathbb{F}^{\times}\!,$ extended to $\mathbb{F}$ 
by $\chi(0) = 0.$ For $r \in \mathbb{N},$ let 
\begin{equation*} 
\mathcal{M}_{3}(r; q)\;\,= \hskip-4pt \sum_{\substack{\deg  d = 3 \\ d-\text{monic \& square-free}}}
\hskip-6pt \left(- \sum_{\theta \, \in \, \mathbb{F}} \chi(d(\theta)) \right)^{\! r} 
\;\;\,\text{and} \;\;\; \mathcal{M}_{4}(r; q) 
\;\, = \hskip-4pt\sum_{\substack{\deg d = 4 \\ d-\text{monic \& square-free}}}
\hskip-6pt \left(-\, 1\, - \, \sum_{\theta \, \in \, \mathbb{F}} \chi(d(\theta)) \right)^{\! r}\!.
\end{equation*} 
be the moment-sums introduced in Section \ref{section 5}.

The purpose of this appendix is to discuss the identity 
\begin{equation*}
\mathcal{M}_{4}(r; q)
\,=\, 
\begin{cases} 
 -\, \mathcal{M}_{3}(r + 1; q) & \text{if $r$ is odd}\\ 
q \mathcal{M}_{3}(r; q) & \text{if $r$ is even}
\end{cases}
\end{equation*}
used in the proof of Proposition \ref{functional equation A4}. Notice that Proposition \ref{Prop2-appendixB}, applied for all partitions of 4, implies easily the equivalence:
\begin{equation*}
\mathcal{M}_{4}(r; q) = - \mathcal{M}_{3}(r + 1; q) \;\, \text{if $r$ is odd}
\iff
\mathcal{M}_{4}(r; q) = q \mathcal{M}_{3}(r; q) \;\, \text{if $r$ is even}.
\end{equation*}

Accordingly, it suffices to prove the following

\vskip10pt 
\begin{proposition} \label{Prop-appendixD} --- For $r \in \mathbb{N}$ even, we have
$
\mathcal{M}_{4}(r; q) = q \mathcal{M}_{3}(r; q).
$
\end{proposition}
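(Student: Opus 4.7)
The proof should use the machinery of Appendix \ref{B} to reduce the claim to a single auxiliary identity, which must then be established by a direct counting/character-sum argument.

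First, I would expand $\mathcal{M}_j(r;q) = \sum_{|\mu|=j}M_{\mu,(1^r)}(q)$ for $j=3,4$ over the factorization types. Proposition \ref{Prop1-appendixB} applied to each of the three partitions of $3$ (namely $\mu = (1^3), (1,2), (3)$, with $\mu_1 = 3, 1, 0$) gives the relations
\[
M_{(1^4),(1^r)} = \tfrac{q-3}{4}\,M_{(1^3),(1^r)}, \quad M_{(1^2,2),(1^r)} = \tfrac{q-1}{2}\,M_{(1,2),(1^r)}, \quad M_{(1,3),(1^r)} = q\,M_{(3),(1^r)}.
\]
Substituting these into the desired equality $\mathcal{M}_4(r;q) = q\,\mathcal{M}_3(r;q)$ and simplifying, the proposition becomes equivalent to the auxiliary identity
\[
M_{(2^2),(1^r)} + M_{(4),(1^r)} \,=\, \tfrac{3(q+1)}{4}\,M_{(1^3),(1^r)} + \tfrac{q+1}{2}\,M_{(1,2),(1^r)} \qquad (*)
\]
for all even $r \geq 0$. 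For the base case $r=0$, this reduces to a cardinality identity among $|\mathcal{P}(\mu)|$ which one verifies directly using the standard count formulas for monic squarefree polynomials of each factorization type.

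For general even $r$, I would interpret both sides as moments of trace of Frobenius over elliptic curves: since $r$ is even, $(-1-S_d)^r = a(C_d)^r$ and $(-S_h)^r = a(C_h)^r$, so $(*)$ becomes a statement about how monic squarefree polynomials (quartics without rational roots on the left; cubics with at least one rational root on the right) distribute among elliptic curves. Grouping by isomorphism class of $E/\mathbb{F}$, $(*)$ would reduce to a ``model-count'' identity holding uniformly in $E$, depending only on the rational $2$-torsion structure $|E[2](\mathbb{F})| \in \{1,2,4\}$: cubic models of type $(1^3)$, $(1,2)$, or $(3)$ correspond respectively to $E$ with full, partial, or trivial rational $2$-torsion, while quartic models of types $(2^2)$ and $(4)$ arise according to whether the Galois action on the four Weierstrass points decomposes into two conjugate pairs or forms a single orbit of size four.

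The main obstacle, where I expect the real difficulty to lie, is verifying this model-count identity for each $E$ with the precise coefficients $\frac{3(q+1)}{4}$ and $\frac{q+1}{2}$. This will require (i) enumerating the quartic models $y^2=d(x)$ of $E$ via the pair of rational points at infinity (choosing one as origin and the other as a distinguished rational point), (ii) tracking the Galois structure on the four Weierstrass points of each such model, and (iii) carefully handling the automorphism groups $|\Aut(E,O)|$, which are generically $\{\pm 1\}$ but can be larger for the special curves with $j(E)\in\{0,1728\}$, since these affect the normalization of the counts. Once the model-count identity is established uniformly in $E$, summing over all isomorphism classes of elliptic curves (weighted by $a(E)^r$) yields $(*)$, and hence the proposition.
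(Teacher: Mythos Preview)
Your reduction to the auxiliary identity $(*)$ via Proposition~\ref{Prop1-appendixB} is correct, but the route you propose is considerably more involved than the paper's, and the final step---verifying $(*)$ by a per-$E$ model-count---is left as a genuine obstacle that you do not resolve.

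The paper bypasses the factorization-type decomposition entirely. It introduces the weighted sum
\[
\mathscr{E}(r;q)=\sum_{[E]}\frac{a(E)^{r}}{\#\Aut_{\mathbb{F}}(E)}
\]
over isomorphism classes of elliptic curves and evaluates it in two ways. First, the group $G=(\GL_{2}(\mathbb{F})\times\mathbb{F}^{\times})/D$ of order $q(q+1)(q-1)^{2}$ acts on the set $\mathscr{P}$ of \emph{all} square-free polynomials of degree $3$ or $4$; orbit-counting gives $\mathscr{E}(r;q)=(q^{3}-q)^{-1}\bigl(\mathcal{M}_{3}(r;q)+\mathcal{M}_{4}(r;q)\bigr)$ for $r$ even. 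Second, the subgroup $G_{0}$ of affine Weierstrass changes of variable, of order $q(q-1)$, acts on monic cubics; the same orbit-counting gives $\mathscr{E}(r;q)=(q^{2}-q)^{-1}\mathcal{M}_{3}(r;q)$. Equating the two expressions yields the proposition in a few lines.

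Your approach trades this for a case analysis over $|E[2](\mathbb{F})|$, which is where the real work hides. For a monic quartic model the four roots $\{Q_{1},\ldots,Q_{4}\}$ are not $E[2]$ itself but an $E[2]$-\emph{torsor}: one has $2Q_{i}=P_{\infty}^{-}$ in the group law with origin $P_{\infty}^{+}$, so the Galois orbit type of the $Q_{i}$ depends not only on $|E[2](\mathbb{F})|$ but also on whether $P_{\infty}^{-}\in 2E(\mathbb{F})$. Establishing that the counts of types $(2^{2})$ and $(4)$ depend only on $|E[2](\mathbb{F})|$ (after pairing $E$ with its twist) therefore requires tracking, for each rational point $P$, the class of $P$ modulo $2E(\mathbb{F})$, together with the automorphism corrections you mention. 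This can be carried out, but it is substantially more delicate than the paper's two-line comparison of $\#G$ and $\#G_{0}$, and your sketch does not make clear how you would handle it.
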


\begin{proof} Recall that every complete, non-singular curve of genus one over 
$\mathbb{F}$ has an $\mathbb{F}$-rational point, and so, it is isomorphic to its Jacobian; 
the Jacobian is an elliptic curve defined over $\mathbb{F}.$ Let 
\begin{equation*}
\mathscr{E}(r; q) \,=\, \sum_{[E]} 
\frac{a(E)^{r}}{\# \text{Aut}_{_{\mathbb{F}}}(E)}
\end{equation*}
the sum being over all $\mathbb{F}$-isomorphism classes 
of elliptic curves over $\mathbb{F},$ and where 
$a(E) = q + 1 - \# E(\mathbb{F}).$ We shall compare 
$\mathcal{M}_{4}(r; q)$ and $\mathcal{M}_{3}(r; q)$ with 
$\mathscr{E}(r; q).$ To do so, let $\mathscr{P}$ denote the set 
of square-free polynomials of degree 3 or 4 with coefficients in 
$\mathbb{F}.$ Let $C_{d}$ denote the curve corresponding to $d \in \mathscr{P}.$ 
The group $G  = \left(\mathrm{GL}_{2}(\mathbb{F}) \times \mathbb{F}^{\times}\right)\slash D,$ where 
$$
D = \left\{\left(\begin{pmatrix}
\alpha &  \\ 
   & \alpha 
\end{pmatrix}, \alpha^{2} \right) : \alpha \in 
\mathbb{F}^{\times} \right\},
$$ 
acts on $\mathscr{P}$ by 
\begin{equation*}
^{g}d(x) = \frac{(\gamma x + \delta)^{4}}{\eta^{2}}
d\left(\frac{\alpha x + \beta}{\gamma x + \delta}\right) 
\;\;\;\qquad \;\;\; 
\left(\text{for $g = \left[\left(\begin{pmatrix}
\alpha &  \beta \\ 
\gamma   & \delta 
\end{pmatrix}, \eta \right)\right] \in G$ and $d \in \mathscr{P}$} \right);
\end{equation*} 
the corresponding curves $C_{d}$ and $C_{^{^{g}}\!\! d}$ are clearly $\mathbb{F}$-isomorphic. \!As explained in \cite[pp. 268-269]{Berg}, we can write 
\begin{equation*}
\mathscr{E}(r; q) \;\, = 
\sum_{[d] \, \in \, \mathscr{P} \slash G}\, 
\frac{a(C_{d})^{r}}{\# \mathrm{Stab}_{\scriptscriptstyle G}(d)} 
\,=\, \frac{1}{\# G}\!\sum_{d \, \in \, \mathscr{P}} a(C_{d})^{r}. 
\end{equation*} 
Since $\# G = |\mathrm{GL}_{2}(\mathbb{F})| = q (q + 1) (q - 1)^{2}$ 
and $r$ is even, this identity can be written in the form
\begin{equation}\label{eq: appendD4-1}
\mathscr{E}(r; q)\, =\, \frac{1}{q^{3} - q}
\big(\mathcal{M}_{3}(r; q) + \mathcal{M}_{4}(r; q)\big). 
\end{equation} 
Now, let $E\slash \mathbb{F}$ be an elliptic curve. It is well-known that 
the Riemann-Roch theorem gives an isomorphism of $E$ onto a curve given by a 
Weierstrass equation of the form $y^{2} = d(x),$ for some monic square-free cubic 
polynomial $d \in \mathbb{F}[x].$ Moreover, any two such Weierstrass equations for $E$ are related by a linear change of variables of the form 
\begin{equation*}
x = \alpha^{2} x' + \beta \;\; \text{and} \;\; y = \alpha^{3} y' 
\;\;\qquad\;\; \text{(with $\alpha \in \mathbb{F}^{\times}$ and 
$\beta \in \mathbb{F}$)}.
\end{equation*}
(Recall that we are assuming that $\mathrm{char}(\mathbb{F}) > 2.$) Identifying the group of these transformations with the corresponding subgroup (say $G_{0}$) of $G,$ it follows that 
\begin{equation*}
\mathscr{E}(r; q) \;= 
\sum_{\left\{d \in \mathbb{F}[x] \, : \, 
\text{monic and square-free, $\deg d = 3$} \right\} \slash G_{0}}\, 
\frac{a(C_{d})^{r}}{\# \mathrm{Stab}_{\scriptscriptstyle G_{0}}\!(d)} 
\,=\, \frac{1}{\# G_{0}}\mathcal{M}_{3}(r; q). 
\end{equation*}
Thus 
\begin{equation*}
\mathscr{E}(r; q)\, =\, \frac{1}{q^{2} - q}\mathcal{M}_{3}(r; q) 
\end{equation*}
which, combined with \eqref{eq: appendD4-1}, gives the identity in the proposition. 
\end{proof}

\vskip1pt

\end{document}